 \def\unit{\Eins}
 \def\gh{\mathbbnew{\Gamma}}
\def\numberbysection{\@addtoreset{equation}{section}
         \renewcommand{\theequation}{\thesection.\arabic{equation}}}
\def\subsubsection{\@startsection{subsubsection}{3}%
  \normalparindent{.5\linespacing\@plus.7\linespacing}{-.5em}%
  {\normalfont\bfseries}}
\newtheorem{thm}{Theorem}[section]
\newtheorem{lem}[thm]{Lemma}
\newtheorem{prop}[thm]{Proposition}
\newtheorem{cor}[thm]{Corollary}
\theoremstyle{definition}
\newtheorem{df}[thm]{Definition}
\newtheorem{rmk}[thm]{Remark}
\newtheorem{ex}[thm]{Example}
\def\C{\mathcal C}
\def\O{\mathcal{O}}
\def\Ab{\mathcal{A}b}
\def\Z{\mathbb{Z}}
\def\Q{\mathbb{Q}}
\def\Vect{\mathcal{V}ect}
\def\kVect{\Vect_k}
\def\Set{\mathcal{S}et}
\def\V{\mathcal V}
\def\H{\mathscr H}
\def\E{\mathcal{E}}
\def\Mor{\it Mor}
\newcommand{\leftsub}[2]{{\vphantom{#2}}_{#1}{#2}}
\def\SSigma{$\Sigma$}
\newcommand{\EZDIAG}[5]{\xymatrix
@C+=2.5cm{*+[r]{#1}
\ar@(u,l)_(0.62){\displaystyle #5}[]
\ar@<1ex>^-{#3}[r]&\ar@<1ex>^-{#4}[l]#2}}
\def\del{\partial}
\def\Coop{\check\O}
\def\id{{\mathrm{id}}}
\def\SS{\mathbb{S}}
\def\odo{\otimes \dots\otimes}
\def\kdk{,\dots,}
\def\eps{\epsilon}
\def\B{\mathscr{B}}
\def\nn{\nonumber}
\def\I{\mathscr {I}}
\def\J{\mathscr {J}}
\def\C{\mathscr {C}}
\def \colim {\mathop {\rm colim}}
\def\F{\mathcal{F}}
\def\FF{\mathfrak F}
\def\asts{{\mathcal V}}
\def\ff{\mathfrak{f}}
\def\clusters{{\mathcal F}}
\def\N{\mathbb{N}}
\def\dottree{\bullet\hskip-4.5pt |}
\def\simpcat{{\Delta}}
\def\egr{\unit_{\emptyset}}
\def\nn{\nonumber}
\def\eps{\epsilon}
\def\G{\Gamma}
\def\Vect{\mathcal{V}ect}
\def\la{\langle}
\def\ra{\rangle}
\def\CycAss{CAssoc}
\def\Graphs{{\mathcal G}\it raphs}
\def\CalC{{\mathcal C}}
\def\Agg{{\mathcal A}gg}
\def\Set{{\mathcal S}et}
\def\Crl{{\mathcal C}\it rl}
\def\CCyclic{\mathfrak  {C}}
\def\operads{{\mathfrak O}}
\def\F{\mathcal F}
\def\FF{\mathfrak F}
\def\GG{\mathfrak G}
\def\R{{\mathbb R}}
\def\C{\CalC}
\def\Z{{\mathbb Z}}
\def\N{{\mathbb N}}
\def\G{\Gamma}
\def\del{\partial}
\def\colim{\mathrm{colim}}
\def\FinSet{\mathcal{F}in\mathcal{S}et}
\def\FFinSet{\mathfrak{F}in\mathfrak{S}et}
\def\surj{FS}
\def\Surj{\mathfrak{FS}}
\def\Inj{\mathfrak{FI}}
\def\inj{FI}
\def\Int{\Inj_{*,*}}
\newcommand{\op}{\mathcal}
\def\O{{\mathcal O}}
\def\SS{{\mathbb S}}
\def\odo{\otimes \cdots \otimes}
\def\crl{*}
\def\deg{\mathrm{deg}}
\def\kill{\it trun}
\def\leaf{\it leaf}
\newcommand{\Hom}{\operatorname{Hom}}
\newcommand{\drpullback}[1][dr]{\save*!/#1-1.2pc/#1:(-1,1)@^{|-}\restore}
\def\V{\asts}
\def\asts{{\mathcal V}}
\def\F{\clusters}
\def\clusters{{\mathcal F}}
\def\Ab{{\mathcal Ab}}
\def\opcat{{\mathcal O }ps}
\def\op{{\mathcal O }p}
\def\I{{\mathcal I}}
\def\inthom{\underline{Hom}}
\def\ta{\twoheadrightarrow}
\def\triv{\underline{*}}
\newcommand{\X}{{\mathcal X}}
\def\Bquot{\B_\Q^{quot}}
\def\Hquot{\H_\Q^{quot}}
\def\epsquot{\eps^{quot}}
\def\Deltaquot{\Delta^{quot}}
\def\etaquot{\eta^{quot}}
\def\epsred{\eps^{red}}
\def\Deltared{\Delta^{red}}
\def\Biso{\B^{iso}}
\def\epsiso{\eps^{iso}}
\def\Deltaiso{\Delta^{iso}}
\def\etaiso{\eta^{iso}}
\def\otb{\leftsub{*}\otimes_*}
\def\ops{\opcat}
\def\op{op}
\def\FFdeco{\FF_{{\it dec}\O}}
\def\emptyset{\varnothing}
\begin{document}

\title[Hopf Algebras II:  general categorical formulation]{Three Hopf algebras from number theory, physics \& topology, and their common background II: general categorical formulation}

\author[I.\ G\'alvez--Carrillo]{Imma G\'alvez--Carrillo}
\email{m.immaculada.galvez@upc.edu}
\address{Departament de Matem\`atiques, Universitat Polit\`ecnica de Catalunya,
Edifici TR1-Palau d'Ind\'ustries, Carrer Colom, 1.
08222 Terrassa,
Spain.}

\author[R.~M.~Kaufmann]{Ralph M.\ Kaufmann}
\email{rkaufman@math.purdue.edu}

\address{Purdue University Department of Mathematics, West Lafayette, IN 47907
}

\author[A.~Tonks]{Andrew Tonks}
\email{apt12@le.ac.uk}
\address{Department of Mathematics,
University of Leicester,
University Road,
Leicester, LE1 7RH, UK.}

\def\emptyset{\varnothing}

\begin{abstract}
We consider three {\it a priori} totally different setups for Hopf algebras from number theory, mathematical physics and algebraic topology.
These are the Hopf algebra of Goncharov for multiple zeta values, that of Connes--Kreimer for renormalization, and a Hopf algebra constructed by Baues to study double loop spaces.
We show that these examples can be successively unified by considering simplicial objects,
co--operads with multiplication and Feynman categories at the ultimate level. These considerations open the door to new constructions and reinterpretations of known constructions in a large common framework which is presented step--by--step with examples throughout. In this second part of two papers, we give the general categorical formulation.
\end{abstract}

\maketitle

\tableofcontents

\section*{Introduction}
In this sequence of two papers we provide a common background for Hopf algebras that appeared prominently in vastly different areas of mathematics. Standout examples are the Hopf algebras of Goncharov \cite{Gont} in number theory, those of Connes and Kreimer \cite{CK,CK1,CK2} in mathematical physics and that of Baues \cite{BauesHopf} in algebraic topology.  There are several Hopf algebras of Connes and Kreimer and variations of these which are of great interest in physics and number theory, e.g.\ \cite{brown,brownann}. The most basic ones being those for rooted trees, see \cite{foissyCR1,foissyCR2}. These algebras and those of Baues and Goncharov have  been identified as examples of universal constructions  stemming from simplicial and operadic setups in the first part \cite{HopfPart1}.
The next level of complexity is represented by the Connes--Kreimer Hopf algebras for renormalization defined on graphs.
These Hopf algebras are most properly discussed on the categorical level. This is the purview of this second part.

The natural setup for this  definitive source of Hopf algebras of this type are the Feynman categories of \cite{feynman}. The results of \cite{HopfPart1}  can be re--derived  by restricting to special types of Feynman categories. As in the first part,  the key observation is that the  Hopf algebras are quotients of bi--algebras. Here we add that these bi--algebras have a natural origin coming from Feynman categories, which can be seen as  a special type of monoidal category. This allows us to uncover  the ``raison d'\^etre'' of the co--product  simply as  the dual to the  partial product given by the composition in the Feynman category.  The quotient is furthermore identified as the natural quotient making the bi--algebras connected.

In particular, we show that under reasonable assumptions a  Feynman category gives rise to a Hopf algebra formed by the free Abelian group of its morphisms. Here the co--product,  motivated by a discussion with D.\ Kreimer, is defined by deconcatenation. With hindsight, this type of co--product goes back at least as far as \cite{JR} or \cite{Moebiusguy}, who considered a deconcatenation co--product from a combinatorial point of view. Feynman categories are monoidal, and this monoidal structure yields a product. Although it is not true in general for any monoidal category that the multiplication and comultiplication are compatible and form a bi--algebra, it is for Feynman categories, and hence also for their opposites. This also gives a new understanding for the axioms of a Feynman category. The case relevant for co--operads with multiplication, treated in the first part, is the Feynman category of finite sets and surjections and its enrichments by operads.
The constructions of the bi--algebra then correspond to the pointed free case considered in \cite{HopfPart1} if the co--operad is the dual of an operad. Invoking opposite categories, one can treat the co--operads appearing in \cite{HopfPart1} directly. For this one notices that the opposite Feynman category, that for co--algebras, can be enriched by co--operads.
 It is here that we can also say that the two constructions of Baues and Goncharov are related by Joyal duality to the operad of surjections.

The quotients  are obtained   by ``dividing out isomorphisms''. This amounts to taking co--invariants or alternatively dividing out by certain co--ideals. This  allows us to distinguish the levels between planar, symmetric, labeled and unlabeled versions analogous to the tree Hopf algebras of Connes and Kreimer and \cite{HopfPart1}. To actually get the Hopf algebras, rather than just bi--algebras, one  has to take  quotients and require certain connectedness assumptions. Here the conditions become very transparent. Namely, the unit, hidden in the three examples \cite[\S\ref{P1-hopfpar}]{HopfPart1} by normalizations, will be given by the unit endomorphism  of the monoidal unit $\unit$ of the Feynman category, viz.\  $id_{\unit}$. Isomorphisms keep the co--algebra from being co--nilpotent. Even if there are no isomorphisms, still all identities are group--like and hence the co--algebra is not connected. This explains the necessity of taking quotients of the bi--algebra to obtain a Hopf algebra. We give the technical details of the two quotients, first removing isomorphisms and then identifying all identity maps.

There is also a distinction here between the non--symmetric and the symmetric case. While in the non--symmetric case, there is a Hopf structure before taking the quotient, the passing to the quotient, viz.\ coinvariants is necessary in the symmetric case.

The categorical constructions are more general than those in \cite{HopfPart1} as there are other Feynman categories besides those which yield  co-operads with multiplication. One of the most interesting examples of a Feynman category which yields a deep connection to  mathematical physics is the Feynman category $\GG$ whose ``morphisms are graphs'', see \cite[\S2]{feynman} and \S\ref{graphexpar}.   This is the medium which allows us to obtain  graph Hopf algebras of Connes and Kreimer,  those based on 1-PI graphs and motic graphs, the latter yielding the new Hopf algebras of Brown \cite{brown}.  Mathematically $\GG$ is also at the heart of the whole zoo of operad--types \cite{feynman,decorated,BergerKaufmann}. Consequentially, there are also the Hopf algebras  corresponding to cyclic operads, modular operads, etc.. To obtain these examples several general constructions on Feynman categories, such as enrichment, decoration, universal operations, and free construction come into play. These constructions also give interrelations between the examples.
Among the examples discussed are the examples of \cite{HopfPart1} which are analyzed in various contexts that provide new depths of understanding.
In particular, we revisit operads, simplicial structures and Joyal duality in the context of Feynman categories, decorations and enrichments.

\subsubsection*{Organization of the paper and overview of results}

We start by giving an overview of the results of the rest of the paper in \S\ref{previewpar}. We then treat the non--symmetric case, where the bi--algebra equation follows directly from the conditions on a Feynman category, viz.\ Theorem \ref{f-to-b-nonsym}. With more work, there is a version for symmetric Feynman categories; see Theorem \ref{Bisothm}. Under certain conditions, there is again a Hopf quotient, see \S \ref{Hopfpar}. In order to get a practical handle, we consider graded Feynman categories.
The result is  Theorem \ref{HopfFeythm}. We conclude the section with a discussion of functoriality in \S\ref{functpar}. This analysis explains why there is no Hopf algebra map from the Hopf algebra of Connes--Kreimer to that of Goncharov.

The shorter \S\ref{alternatepar} gives further constructions and twists. It contains the original construction on indecomposables as well as a different quotient construction.

Having the whole theory at hand, we give a detailed discussion of a slew of examples in \S\ref{constexpar}, including previously undiscussed ones. The reader is encouraged to skip ahead to these examples at any time for concreteness, and some references to these examples are given throughout. Here we first treat the examples introduced in \cite[\S\ref{P1-hopfpar}]{HopfPart1} as well as the Connes--Kreimer category for graphs.
This discussion also identifies the construction of \cite[\S\ref{P1-operadpar} and \S\ref{P1-gencooppar}]{HopfPart1} as the special case of Feynman categories with trivial vertex set.
We then review constructions from \cite{feynman} to put these special cases into a larger context. These include decorations (\S\ref{decopar}), enrichments
(\S\ref{enrichedpar}) and universal operations (\S\ref{universalpar}).
These explain the underlying mechanisms and allow for alterations for future applications. Among the special cases of these general construction is the motic Hopf algebra of Brown. The enrichment adds another layer of technical sophistication and is kept short  referring to \cite{feynman} for additional details.
We also consider colored operads, which naturally appear in this situation and show that the formulas for Goncharov's Hopf algebra become apparent in the colored context. This also gives a bridge back to the simplicial setting, as the nerve of a category is naturally at the same time simplicial and a colored operad,
see Proposition \ref{catprop}.

The subsection \S\ref{simpfeypar} also contains a detailed discussion of simplicial structures and the relationship with Joyal duality.
The latter is of independent interest, since this duality explains
the ubiquitous occurrence of two types of formulas, those with repetition and those without repetition, in the contexts of number theory, mathematical physics and algebraic topology.
This duality also explains the two graphical versions used in this type of calculations, polygons vs.\ trees, which are now just Joyal duals of each other, see especially \S\S\ref{joyal2par}--\ref{joyal2par}. The presentation of Joyal duality is novel, both graphically and combinatorially.

In \S\ref{summarypar}, we give  a short summary of the given constructions in both parts, their interrelations and specializations to the original examples and end with an outlook to further results.

To be self-contained the paper also has an appendix on graphs and their formalization due to \cite{BorMan,feynman} we use.
Again, this can serve as an independent guide to a very useful tool as this particular presentation of graphs is ``just right'' in terms of complexity to handle
all combinatorial intricacies, such as those appearing when considering auto-- and isomorphisms.

\subsection*{Acknowledgments}
We would like to thank  D.~Kreimer, F.~Brown, P.~Lochack,  Yu.~I.~Manin, H.~Gangl, M.~Kapranov, JDS Jones, P.~Cartier and A.~Joyal for enlightening discussions.

RK gratefully acknowledges support from  the Humboldt Foundation and the Simons Foundation, the
Institut des Hautes Etudes Scientifiques and the Max--Planck--Institut
for Mathematics in Bonn and the University of Barcelona for their support. RK also thanks the Institute for Advanced Study in Princeton and the Humboldt University in Berlin for their hospitality and
 their support.

IGC was partially supported by Spanish Ministry of Science and Catalan government grants
MTM2012-38122-C03-01, MTM2013-42178-P, 2014-SGR-634, MTM2015-69135-P, MTM2016-76453-C2-2-P (AEI/ FEDER, UE), MTM2017-90897-REDT, and 2017-SGR-932,
and AT by
MTM2013-42178-P, and MTM2016-76453-C2-2-P (AEI/FEDER, UE)
all of which are gratefully acknowledged.

We also thankfully acknowledge the Newton Institute where the idea for this work was conceived during the activity on ``Grothendieck-Teichm\"uler Groups, Deformation and Operads''.
Last but not least, we thank the  Max--Planck--Institut
for Mathematics for the activity on ``Higher structures in Geometry and Physics'' which allowed our collaboration to put  major parts of the work  into their current form.

\subsection*{Notation}
As usual for a set $X$ with an action of a group $G$, we will denote the invariants by $X^G=\{x|g(x)=x\}$
and the  co--invariants by $X_G=X/\sim$ where $x\sim y$ if and only if there exists a $g\in G:g(x)=y$.

For an object $V$ in a monoidal category, we denote by $TV$ the free unital algebra on $V$, that is $TV=\bigoplus_n V^{\otimes n}$, in the case of an Abelian monoidal category,  and by $\bar TV$ the free algebra on $V$, that is reduced the tensor algebra on $\bar TV=\bigoplus_{n\geq 1}V^{\otimes n}$  in the case of an Abelian monoidal category. Similarly $SV=\bigoplus_{n\geq 0}V^{\odot n}$ denotes the free symmetric algebra and $\bar SV$ the free non--unital symmetric algebra. We use the notation $\odot$ for the symmetric aka.\ symmetrized, aka.\ commutative tensor product: $V^{\odot ^n}=(V^{\otimes n})_{\SS_n}$ where $\SS_n$ permutes the tensor factors.

Furthermore, we use $\underline{n}=\{1,\dots,n\}$ and  denote by $[n]$ to be the category with $n+1$ objects $\{0,\dots, n\}$ and morphisms generated by the chain $0\to 1 \to \dots \to n$.

Given two functors $f:\F'\to \F$ and $g:\F''\to \F$, we denote the comma category by $(f\downarrow g)$, or ---if the functors are clear form the context--- simply by $(\F'\downarrow \F'')$. The objects are triples $(X,Y,\phi:f(X)\to g(Y))$ with $X\in Obj(\F'), Y\in Obj(\F'')$. Morphisms from $(X,Y,\phi)$ to $(X',Y',\phi')$ are pairs $(\psi:X\to X',\psi':Y\to Y')$ such that $g(\psi')\circ\phi=\phi'\circ f(\psi)$.

\section{The general case: Bi-- and Hopf algebras from Feynman categories}
\label{feynmanpar}

\subsection{Preview}
\label{previewpar} As a paradigm, let us consider the Connes--Kreimer Hopf algebra of graphs, in particular the core Hopf algebra \cite{KreimerCore}, see also \S\ref{graphexpar}. The key point to understand this example is that the graphs (along with extra data) form the morphisms of a special type of monoidal
category, a Feynman category as introduced in \cite{feynman}, also see \S\ref{feydefpar} below.
 The graphs in particular appear in the Feynman category $\GG$ defined in \cite[\S2]{feynman}, see \S\ref{graphexpar} below. The pairs of sub-- and quotient graphs appearing in the co--product  \eqref{graphcoprodeq} then represent factorizations of morphisms; see  Figure \ref{phi3fig} for an example. This exhibits the co--product as deconcatenation. The product is induced by the monoidal product which in this case is simply disjoint union.

More generally for a Feynman category we will use deconcatenation and monoidal product to construct a bi--algebra and then show that under certain natural conditions  a quotient of this bi--algebra
yields a Hopf algebras.
This makes the theory particularly transparent and allows to recover the previous constructions of \cite{HopfPart1} by specialization. Or, taking the reverse perspective, we can generalize the constructions appearing in \cite{HopfPart1} by lifting them to the categorical level.

More specifically, in a monoidal category there are two products on morphisms, the tensor product $\otimes$ and the partially defined product of
composition $\circ$.
The product for morphisms will just be their tensor product. The co--product with be dual to partial composition product $\circ$. Unlike the composition, deconcatenation is not a partial operation, but
 rather unconditionally defined. The compatibility, viz.\ bi--algebra equation,  is guaranteed by the axioms of a Feynman category.

 There are three main types of examples for Feynman categories, the first are  of combinatorial type and are based on sets. The second are those of graph type, where the graphs are a structure of the morphisms. These also appear in physics in the form of Feynman diagrams, whence the name. The last type are the enriched Feynman categories. These will be discussed in \S\ref{enrichedpar}.

The Hopf algebras of Goncharov and Baues are combinatorial as are the tree Hopf algebras of Connes and Kreimer.
The graph Hopf algebra of Connes and Kreimer is of graph type.  The Hopf algebras from co--operads more generally are of enriched type, however,  they still have a description of combinatorial type if the co--operad is in $\Set$.

There are also two flavors, depending on whether one is working in symmetric or simply monoidal (non-$\Sigma$) categories.
 We preview the
results of this section:

\begin{thm}
 Let $\FF$ be a non--$\Sigma$ decomposition finite strict monoidal Feynman category. Set $\B=\Z\Mor(\F)$. Let $\mu=\otimes$, $\eta(1)=id_{\unit}$,  set $\Delta(\phi)=$ $\sum_{(\phi_0,\phi_1): \phi=\phi_0\circ \phi_1}\phi_0\otimes \phi_1$ and define $\eps(\phi)=1$ if $\phi=id_X$ for some $X$, else $\eps(\phi)=0$   then $(\B,\mu,\eta,\Delta,\eps)$ is a bi--algebra.

Let $\FF$ be a factorization finite Feynman category. Let $\B^{iso}$ be the free Abelian group on the isomorphisms classes of morphisms. Then there is a bi--algebra structure on $\B^{iso}$ given by $(\mu,\etaiso,\Deltaiso, \epsiso)$ where  $\mu$ is the tensor product on classes $\etaiso=[id_\unit]$, $\Deltaiso$ is the co--product induced on
co--invariants, and $\epsiso$ evaluates to $1$ precisely on the isomorphism classes  of identities.

If $\FF$ is almost connected then there is a bi--ideal $\I$ spanned by $[id_X]-[id_\unit]$
 and the quotient $\H=\B^{iso}/\I$ is connected and Hopf.
\end{thm}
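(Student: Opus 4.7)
The plan is to treat the three claims in order: first establish the bi-algebra structure on $\B$ in the non-$\Sigma$ case, then descend to isomorphism classes for the symmetric case, and finally quotient by the identity-identification ideal to obtain a connected Hopf algebra.

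For the first claim I would verify each bi-algebra axiom directly on $\B=\Z\Mor(\F)$. Associativity of $\mu=\otimes$ and the unit law for $\eta(1)=\id_\unit$ are immediate from strict monoidality. Coassociativity of
\[
\Delta(\phi)=\sum_{\phi=\phi_0\circ\phi_1}\phi_0\otimes\phi_1
\]
reduces to associativity of composition, since both $(\Delta\otimes\id)\Delta(\phi)$ and $(\id\otimes\Delta)\Delta(\phi)$ enumerate ordered three-step factorizations $\phi=\phi_0\circ\phi_1\circ\phi_2$; decomposition finiteness makes each sum finite. The counit laws hold because only the factorizations $\phi=\id\circ\phi$ and $\phi=\phi\circ\id$ contribute on the respective sides.

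The main obstacle is the bi-algebra compatibility, which I would unpack as a bijection
\[
\{\,\phi\otimes\psi=\chi_0\circ\chi_1\,\}\;\longleftrightarrow\;\{\,(\phi=\phi_0\circ\phi_1,\;\psi=\psi_0\circ\psi_1)\,\}
\]
given by $\chi_i=\phi_i\otimes\psi_i$. The nontrivial direction is that every factorization of a tensor product of morphisms splits as the tensor product of factorizations; this is exactly the content of the hereditary (factorization) axiom for a Feynman category from \cite{feynman}, which I would unpack into the required bijection. Strict monoidality is essential to avoid associator and unitor corrections that would otherwise produce extra terms.

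For the symmetric claim, I descend from $\B$ to $\Biso$. The key observation is that conjugation by outer isomorphisms gives a bijection between factorizations of two isomorphic morphisms that preserves the isomorphism classes of the factors, so $\Deltaiso$ is well-defined on classes; tensor product and composition descend likewise because they respect isomorphisms; and factorization finiteness (stronger than mere decomposition finiteness) ensures that the descended coproduct is still a finite sum. All axioms then transport from the non-$\Sigma$ case, with the symmetric twist absorbed into the isomorphism classes.

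For the Hopf quotient, I first check that $\I$ is a bi-ideal. Since identities have only identities as factors up to isomorphism, $\Deltaiso([\id_X])=[\id_X]\otimes[\id_X]$, and hence
\[
\Deltaiso([\id_X]-[\id_\unit])=([\id_X]-[\id_\unit])\otimes[\id_X]+[\id_\unit]\otimes([\id_X]-[\id_\unit])\in\I\otimes\Biso+\Biso\otimes\I,
\]
while $\epsiso$ vanishes on the generators; since $\Deltaiso$ is an algebra map these properties propagate from the generators to the full two-sided ideal $\I$. Almost connectedness of $\FF$ then provides an $\N$-grading on $\Hiso=\Biso/\I$ whose degree-zero piece is $\Z\cdot[\id_\unit]$, after which the antipode is built by the standard recursion
\[
S([\phi])=-[\phi]-\sum S([\phi_0])\cdot[\phi_1]
\]
summed over nontrivial factorizations, which terminates by induction on degree. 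The subtle point, and the main technical burden here, is to verify that the length grading furnished by almost connectedness is preserved by both quotients (co-invariants under isomorphism, and identification of identities) and assigns a uniform degree to every identity morphism, so that $\Hiso_0$ collapses to $\Z$ in the quotient.
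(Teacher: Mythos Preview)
Your approach matches the paper's closely in structure and key ideas. Two points where the paper is more careful than your sketch:

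For the bi-algebra equation in the non-$\Sigma$ case, the hereditary axiom alone only tells you that any factorization $\phi\otimes\psi=\Phi_0\circ\Phi_1$ through an intermediate $Y$ is \emph{isomorphic} to a tensor-product factorization, via isomorphisms $\sigma$ on the source and $\sigma'$ on $Y$ (the paper's diagram \eqref{deltamuisoeq}). To upgrade this to an honest bijection of factorizations---not just of isomorphism classes---you need that $\sigma$ and $\sigma'$ themselves split as tensor products $\sigma_1\otimes\sigma_2$, $\sigma'_1\otimes\sigma'_2$. This is precisely where the non-$\Sigma$ hypothesis enters: in the strict non-symmetric $\V^\otimes$ every isomorphism is a tensor product of isomorphisms in $\V$, so the block pieces can be absorbed into the factors. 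In the symmetric case this fails on $\B$ (the paper analyzes the defect in \S\ref{symfailpar}), which is why one must pass to $\Biso$ first; your phrasing ``all axioms transport from the non-$\Sigma$ case'' slightly obscures that the bi-algebra equation on $\B$ is genuinely false symmetrically and must be re-verified on classes.

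For the Hopf quotient, your claim that almost connectedness forces the degree-zero piece to collapse to $\Z\cdot[\id_\unit]$ is too strong. The paper's Definition~\ref{almostconnecteddef} only requires that $\H_0$ be \emph{connected as a pointed coalgebra}; it may well contain non-identity degree-zero classes (e.g.\ elements of a non-reduced $\O(1)$ in the operadic examples). The paper's argument (Theorem~\ref{HopfFeythm}) establishes co-nilpotence of $\H$ by induction on degree, with the degree-zero contributions handled not by vanishing but by invoking the assumed connectedness of $\H_0$ together with coassociativity. Your antipode recursion is the right shape, but its termination comes from co-nilpotence of the full $\H$, not from $\H_0$ being one-dimensional.
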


For the notion of ``almost connected'' in this context,  see Definition \S\ref{almostconnecteddef}.

In the next section, we give alternative descriptions in terms of indecomposables \S\ref{indecomppar} and in the non--$\Sigma$ case we have a different construction for taking isomorphism classes using a quotient rather than co--invariants, cf.\ \S\ref{quotpar}.

\begin{prop} The relation of being in the same isomorphism class gives rise to a co--ideal $\C$ spanned by $f-g$ for any two  morphisms that are isomorphic in the arrow category.
In the non-$\Sigma$ case $\Bquot:=\B/\C\otimes \Q$ is a bi--algebra for a normalized $\epsquot$.
If $\FF$ is almost--connected then there is an ideal $\J$ and the quotient $\B^{quot}/\J$ is connected and a Hopf algebra over $\Q$ in general.
\end{prop}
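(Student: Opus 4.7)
The strategy is to mimic the coinvariants construction behind $\B^{iso}$, but via an honest quotient by $\C$, which becomes available in the non--$\Sigma$ case once one passes to rational coefficients and chooses a suitably normalized counit. Three steps are needed: $\C$ is a bi--ideal, the normalized counit satisfies the counit axiom in $\Bquot$, and the Hopf quotient by $\J$ is well--defined.

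For the bi--ideal property, suppose $f:X\to Y$ and $g:X'\to Y'$ are isomorphic in the arrow category via $\alpha:X\to X'$ and $\beta:Y\to Y'$, so that $g=\beta\circ f\circ\alpha^{-1}$. Any factorization $f=f_0\circ f_1$ through an object $Z$ yields a factorization $g=(\beta\circ f_0)\circ(f_1\circ\alpha^{-1})$ through the same $Z$, and the assignment $(f_0,f_1)\mapsto(\beta\circ f_0,f_1\circ\alpha^{-1})$, with inverse $(g_0,g_1)\mapsto(\beta^{-1}\circ g_0,g_1\circ\alpha)$, is a bijection between the factorization sets of $f$ and $g$. Moreover $f_0\sim \beta\circ f_0$ in the arrow category via $(\mathrm{id}_Z,\beta)$, and $f_1\sim f_1\circ\alpha^{-1}$ via $(\alpha,\mathrm{id}_Z)$. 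Telescoping,
\[
f_0\otimes f_1 - (\beta f_0)\otimes(f_1\alpha^{-1}) = (f_0-\beta f_0)\otimes f_1 + (\beta f_0)\otimes (f_1-f_1\alpha^{-1}) \in \C\otimes\B + \B\otimes\C,
\]
so $\Delta(f-g)\in \C\otimes\B+\B\otimes\C$. The ideal property is immediate: if $f\sim f'$ via $(\alpha,\beta)$ then $f\otimes g\sim f'\otimes g$ via $(\alpha\otimes\mathrm{id},\beta\otimes\mathrm{id})$.

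For the normalized counit, a natural candidate is $\epsquot(\phi)=1/|\mathrm{Aut}(Y)|$ if $\phi:X\to Y$ is an isomorphism and zero otherwise; this is well defined by factorization finiteness and depends only on the arrow--isomorphism class of $\phi$ (since $Y\cong Y'$ forces $|\mathrm{Aut}(Y)|=|\mathrm{Aut}(Y')|$), so it descends to $\Bquot$. Applied to $[\phi]$, the map $(\epsquot\otimes\mathrm{id})\Deltaquot$ sums over factorizations $\phi=\phi_0\circ\phi_1$ with $\phi_0$ invertible; each such $\phi_0:Z\to Y$ forces $\phi_1=\phi_0^{-1}\circ\phi$, which is arrow--isomorphic to $\phi$ via $(\phi_0^{-1},\mathrm{id})$, and in a skeletal presentation there are exactly $|\mathrm{Aut}(Y)|$ such $\phi_0$, so the normalization cancels and we recover $[\phi]$. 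The right counit identity is symmetric. Working over $\Q$ is essential for the division by $|\mathrm{Aut}(Y)|$, and the remaining bialgebra axioms descend from $\B$ because $\C$ is a bi--ideal.

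For the Hopf quotient, let $\J$ be the ideal of $\Bquot$ generated by the elements $[\mathrm{id}_X]-[\mathrm{id}_\unit]$ as $X$ ranges over $\mathrm{Obj}(\F)$. Since $\Deltaquot[\mathrm{id}_X]=[\mathrm{id}_X]\otimes[\mathrm{id}_X]$,
\[
\Deltaquot([\mathrm{id}_X]-[\mathrm{id}_\unit])=([\mathrm{id}_X]-[\mathrm{id}_\unit])\otimes [\mathrm{id}_X]+[\mathrm{id}_\unit]\otimes ([\mathrm{id}_X]-[\mathrm{id}_\unit])\in \J\otimes \Bquot+\Bquot\otimes \J,
\]
so $\J$ is also a coideal and the quotient inherits a bialgebra structure. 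The almost--connected hypothesis (Definition \ref{almostconnecteddef}) supplies a length grading whose degree zero part consists of identity morphisms; after quotienting by $\J$ this degree zero component reduces to $\Q\cdot[\mathrm{id}_\unit]$, so the quotient is connected and graded, and therefore Hopf via the standard recursive antipode formula. The main obstacle is the counit step: tracking automorphism factors so that the counit axiom closes after the quotient is the precise reason rational coefficients are required and why the construction is confined to the non--$\Sigma$ case, since in the symmetric setting the $\SS_n$--actions interact nontrivially with arrow--isomorphism and obstruct this naive normalization, forcing the coinvariants detour of Theorem \ref{Bisothm} instead.
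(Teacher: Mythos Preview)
Your treatment of the first two steps is essentially the paper's argument: the telescoping computation showing $\Delta(\C)\subset\B\otimes\C+\C\otimes\B$ matches the paper's diagram \eqref{coidealeq}, and your normalized counit $\epsquot(\phi)=1/|Aut(Y)|$ on isomorphisms is the skeletal specialization of the paper's $1/(|Iso(X)||Aut(X)|)$. These parts are fine.

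The genuine gap is in the Hopf quotient. You assert $\Deltaquot[\mathrm{id}_X]=[\mathrm{id}_X]\otimes[\mathrm{id}_X]$, but this is false in $\Bquot$: the coproduct on $\Bquot$ is the \emph{image} of $\Delta$ under the quotient map, not a coinvariants construction. With almost group--like identities (and working skeletally), $\Delta(\mathrm{id}_X)=\sum_{\sigma\in Aut(X)}\sigma\otimes\sigma^{-1}$, and every $\sigma$ maps to $[\mathrm{id}_X]$ in the quotient, so
\[
\Deltaquot([\mathrm{id}_X])=|Aut(X)|\,[\mathrm{id}_X]\otimes[\mathrm{id}_X].
\]
Consequently your ideal $\J=\langle[\mathrm{id}_X]-[\mathrm{id}_\unit]\rangle$ is \emph{not} a co--ideal: one computes
\[
\Deltaquot([\mathrm{id}_X]-[\mathrm{id}_\unit])=|Aut(X)|([\mathrm{id}_X]-[\mathrm{id}_\unit])\otimes[\mathrm{id}_X]+[\mathrm{id}_\unit]\otimes\bigl(|Aut(X)|[\mathrm{id}_X]-[\mathrm{id}_\unit]\bigr),
\]
and the second tensor factor is not in $\J$ unless $|Aut(X)|=1$. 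Worse, $\epsquot([\mathrm{id}_X]-[\mathrm{id}_\unit])=1/|Aut(X)|-1\neq 0$. The paper's fix is to weight the generators: take $\bar\J=\langle\,|Aut(X)||Iso(X)|[\mathrm{id}_X]-|Aut(Y)||Iso(Y)|[\mathrm{id}_Y]\,\rangle$, which \emph{is} a co--ideal precisely because $|Aut(X)||Iso(X)|[\mathrm{id}_X]$ is group--like. Your $\J$ only coincides with this when $\V$ is discrete and $\F$ skeletal, which the paper notes as a special case. You are conflating $\Bquot$ with $\B^{iso}$; the latter is built from coinvariants and does have group--like identities, but that is Theorem \ref{Bisothm}, not the quotient construction under discussion here.
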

Here the ideal $\J$ is spanned by $|Aut(X)||Iso(X)|id_X-|Aut (Y)||Iso (Y)|$  $id_Y$, where $|Aut (X)|$ is the cardinality of the automorphism group and $|Iso (X)|$ is the number of objects isomorphic to $X$. Both are finite if $\F$ is decomposition finite.
If $\F$ is skeletal the $|Iso(X)|=1$, and  if $\V$ is furthermore discrete, the ideal is simply $(id_X-id_Y)$. This is the case for non--sigma co--operads, in which case the two constructions coincide.
  For the symmetric case, it is possible to twist the co--multiplication in certain cases, so that the bi--algebra equation holds, see Theorem \ref{sumthm} for a summary.

 In order to recover the previous cases, one has to use several constructions defined in \cite[\S3, \S4]{feynman}.
 This is done in \S\ref{constexpar}.
  In particular, case I corresponds to the Feynman level category $\FF^+$ and its relation to enriched Feynman categories,  see \cite[\S3.6,\S4]{feynman} and \cite{Frep} for more details, applied to the Feynman category of surjections $\Surj$, that is the Feynman categories $\Surj_\O$, where $\O$ is an operad.
 The  generalization comes from the nc--construction \cite[\S3.2]{feynman} applied to the Feynman category for operads $\operads$ and a $B_+$ operator as given in \cite[Example 3.5.2]{feynman}.
 The construction of simplicial strings is captured by the nc--construction applied to the Feynman category $\Delta_{*,*}$ together with a decoration, that is the construction of $\FF_{dec\O}$, see \cite{decorated} and \cite[\S3.3]{feynman}, see \S\ref{constexpar} in particular \S\ref{enrichedpar} and \ref{anglepar}. Finally, universal operations \ref{universalpar} explain the amputation mechanism.

 We will begin by considering algebra and co--algebra structures for morphisms and isomorphisms classes of morphisms. We then introduce the notion of a Feynman category in the symmetric and non-$\Sigma$ version. Thus allows us to prove the bi--algebra structures under standard assumptions.
Afterwards, we turn to the Hopf algebras and functoriality.

\subsection{Algebra and co-algebra structures for morphisms}
Given a category $\F$ let $\B=\Z [Mor(\F)]\subset Hom(Mor(\F),\Z)$ be the free Abelian group on the morphisms of $\F$.

 \subsubsection{Isomorphism classes}
\label{isopar}
 Set $\B^{iso}=\B/ \sim$ where $\sim$ is the equivalence relation on morphisms  given by isomorphisms in $(\F\downarrow \F)$. In particular,
the equivalence relation $\sim$, which exists on any category, means that for given $f$ and $g$: $f\sim g$ if there is a commutative diagram with isomorphisms as vertical morphisms.
$$
\xymatrix{X\ar[r]^{f}\ar[d]^{\simeq}_{\sigma}&Y\ar[d]_{\simeq}^{ \sigma'}\\
X'\ar[r]^{g}&Y'\\
}
$$
i.e.: $f=\sigma^{\prime -1}\circ g\circ \sigma$.

 $\B^{iso}$ is the free Abelian group on isomorphism classes.
Fixing a skeleton $\F^{sk}$ of $\F$, $\B^{iso}=\Z[\amalg_{X,Y\in Obj(\F^{sk})} {}_{Aut(Y)}Hom(X,Y)_{Aut(X)}]$, that is the free Abelian group of the co--invariants of the left $Aut(Y)$ and right $Aut(X)$ action of the Hom sets of $\F^{sk}$. In general $\B^{iso}(\F)\simeq\B^{iso}(\F^{sk})$.

\begin{rmk}
The morphisms of $\F$ together with these isomorphisms are also precisely the groupoid of vertices $\V'$ of the iterated Feynman category $\FF'$, cf.\ \cite[\S3.4]{feynman}.
\end{rmk}

\begin{lem}
\label{isolem}
$id_X\sim g$ if and only if $g:X'\to Y'$ is an isomorphism and $X\simeq X' \simeq Y'$.
\end{lem}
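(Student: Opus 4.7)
The plan is to verify both directions directly from the definition of $\sim$ unpacked at the start of \S\ref{isopar}, namely that $f\sim g$ exactly when there exist isomorphisms $\sigma,\sigma'$ with $f=\sigma'^{-1}\circ g\circ \sigma$.

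For the forward direction, I would assume $id_X\sim g$ with $g:X'\to Y'$. Then there are isomorphisms $\sigma:X\to X'$ and $\sigma':X\to Y'$ such that $id_X=\sigma'^{-1}\circ g\circ \sigma$. Rearranging gives $g=\sigma'\circ \sigma^{-1}$, which is a composite of isomorphisms, hence itself an isomorphism. The existence of $\sigma$ and $\sigma'$ also witnesses $X\simeq X'$ and $X\simeq Y'$, so by transitivity $X\simeq X'\simeq Y'$.

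For the converse, assume $g:X'\to Y'$ is an isomorphism with $X\simeq X'\simeq Y'$. Choose any isomorphism $\sigma:X\to X'$ and define $\sigma':=g\circ \sigma:X\to Y'$; this is an isomorphism because $g$ and $\sigma$ are. Then the square
$$
\xymatrix{X\ar[r]^{id_X}\ar[d]_{\sigma}^{\simeq}&X\ar[d]^{\sigma'}_{\simeq}\\ X'\ar[r]^{g}&Y'}
$$
commutes since $\sigma'\circ id_X=g\circ \sigma$, which by definition means $id_X\sim g$.

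Since each direction is a one-line check once the definition has been unwound, there is essentially no obstacle; the only thing to be careful about is producing the right $\sigma'$ in the backward direction, which is forced by the requirement that the target of $\sigma'$ match $Y'$, and this is precisely why the choice $\sigma':=g\circ\sigma$ works automatically.
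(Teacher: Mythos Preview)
Your proof is correct and is exactly the routine verification the paper has in mind; the paper itself gives no proof beyond a \qed, treating the lemma as immediate from the definition of $\sim$.
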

\qed

\subsubsection{Algebra of morphisms of a  (strict) monoidal category}
We refer to \cite{Kassel} for details on monoidal categories. An introduction can be found in \cite{Frep}.
\begin{prop}
Let $(\F,\otimes)$ be a strict monoidal category. Then $\B$ is a unital algebra with multiplication $\mu(\phi,\psi)=\phi\otimes \psi$ and unit $1=id_\unit$.

If $(\F,\otimes)$ is a monoidal category then $\B^{iso}$ is a unital algebra with multiplication $\mu([\phi],[\psi])=[\phi \otimes \psi]$ and unit $1=[id_\unit]$.
If $(\F,\otimes)$ is symmetric monoidal then  $\B^{iso}$ is a commutative unital algebra.

\end{prop}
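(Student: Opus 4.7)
The plan is to verify each of the three claims by first extending $\mu$ linearly using the bifunctoriality of $\otimes$, then checking associativity, unitality and (in the symmetric case) commutativity. The structural inputs required at each stage are respectively the strictness axiom, the coherence isomorphisms, and the braiding.

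For the strict monoidal case, since $\otimes$ is a bifunctor $\F \times \F \to \F$, the map $(\phi,\psi)\mapsto \phi\otimes\psi$ on $\Mor(\F) \times \Mor(\F)$ extends $\Z$-bilinearly to $\mu\colon\B\otimes\B\to\B$ by the universal property of the free Abelian group. Strictness of the associator gives $(\phi\otimes\psi)\otimes\chi = \phi\otimes(\psi\otimes\chi)$ on the nose, hence $\mu$ is associative; strictness of the unitors gives $\phi\otimes id_{\unit} = \phi = id_{\unit}\otimes\phi$, so $id_{\unit}$ is a two-sided unit. This settles part (1) essentially by inspection.

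For the non-strict case one first checks that $\sim$ is a congruence for $\otimes$: if $\phi = \sigma'^{-1}\circ \phi'\circ\sigma$ and $\psi = \tau'^{-1}\circ\psi'\circ\tau$ with $\sigma,\sigma',\tau,\tau'$ isomorphisms, then bifunctoriality of $\otimes$ yields
\[
\phi\otimes\psi \;=\; (\sigma'\otimes\tau')^{-1}\circ(\phi'\otimes\psi')\circ(\sigma\otimes\tau),
\]
and $\sigma\otimes\tau$, $\sigma'\otimes\tau'$ are isomorphisms since $\otimes$ preserves isomorphisms. Hence $\mu$ descends to a well-defined bilinear map on $\B^{iso}$. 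Associativity on $\B^{iso}$ is then obtained from the naturality of the associator: for any morphisms $\phi\colon X\to X'$, $\psi\colon Y\to Y'$, $\chi\colon Z\to Z'$, naturality gives the commutative square with vertical isomorphisms $a_{X,Y,Z}$ and $a_{X',Y',Z'}$ witnessing $(\phi\otimes\psi)\otimes\chi\;\sim\;\phi\otimes(\psi\otimes\chi)$. The same argument with $\lambda$ and $\rho$ shows $[id_\unit]$ is a two-sided unit, proving part (2).

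For part (3), in a symmetric monoidal category the braiding $c_{X,Y}\colon X\otimes Y \to Y\otimes X$ is a natural isomorphism. For $\phi\colon X\to X'$ and $\psi\colon Y\to Y'$, naturality gives
\[
c_{X',Y'}\circ(\phi\otimes\psi) \;=\; (\psi\otimes\phi)\circ c_{X,Y},
\]
which is precisely a commutative square with vertical isomorphisms exhibiting $\phi\otimes\psi\sim\psi\otimes\phi$, hence $[\phi\otimes\psi]=[\psi\otimes\phi]$ in $\B^{iso}$.

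I do not expect any genuine obstacle here; the only point requiring attention is the non-strict case of (2), where one must resist the temptation to ignore the coherence isomorphisms and instead use them explicitly as the witnesses for $\sim$. Once it is observed that the associator, unitors and braiding are natural, everything they need to do happens automatically on the level of $\B^{iso}$, and the passage from the strict case to the general case is reduced to bookkeeping via MacLane coherence.
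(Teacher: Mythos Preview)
Your proof is correct and follows essentially the same approach as the paper: strictness gives associativity and unitality on the nose for $\B$, while for $\B^{iso}$ the associator, unitors, and (in the symmetric case) braiding serve as the isomorphisms witnessing $\sim$, with well-definedness of the product on classes checked via bifunctoriality of $\otimes$. The only difference is cosmetic---you emphasize naturality explicitly where the paper simply names the coherence isomorphisms.
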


\begin{proof}
Recall that strict monoidal  means that in  the unit constraints and associativity constraints are identities.  Thus  $X\otimes(Y\otimes Z)=(X\otimes Y)\otimes Z$ which guarantees the associativity $(\phi_1\otimes\phi_2)\otimes\phi_3 = \phi_1\otimes(\phi_2\otimes\phi_3)$. Likewise
$X\otimes \unit=X=\unit\otimes X$ shows that indeed $\phi\otimes id_\unit=\phi=\phi\otimes id_\unit$.

The product is well defined on isomorphism classes, since if $\phi'\simeq \phi,\psi'\simeq\phi$ then $\phi'=\sigma^{-1}\phi\sigma'$ and $\psi'=\tau^{-1}\psi\tau'$ for isomorphisms, $\sigma,\sigma',\tau,\tau'$ and $\phi'\otimes\psi'=(\sigma^{-1}\otimes \tau^{-1})(\phi\otimes \psi)(\sigma'\otimes \tau')$, so that $[\phi\otimes \psi]=[\phi'\otimes\psi']$.
  Without the assumption of strictness,  if $\phi_i:X_i\to Y_i, i=1,2,3$ we have $(\phi_1\otimes\phi_2)\otimes\phi_3 =A(\phi_1\otimes(\phi_2\otimes\phi_3))$ in $\B$ where $A$ is given by pre- and post-composing with associativity isomorphisms $a_{X_1,X_2,X_3}$ and  $a_{Y_1,Y_2,Y_3}^{-1}$. Thus when one passes to isomorphism classes, the algebra structure is strict. In the same way, the unit constraints provide the isomorphism, which make the unit strict on $\B^{iso}$.
If $\F$ is symmetric, then the commutativity constraints $C_{X,Y}$ give the isomorphisms, proving that $[\phi] \otimes[\psi]=[\psi]\otimes[\phi]$.

\end{proof}

\begin{rmk}
The condition of being strict is not severe as by using Mac Lane's coherence theorem \cite{MacLane} one can pass from any monoidal category to an equivalent strict one.
We make this assumption, so that the algebra structure will be unital and associative rather than only weakly unital and weakly associative. After taking isomorphism classes the algebra structure is strict even if the monoidal category is not.
Note that if we are working in the enriched version $Hom(\unit,\unit)=K$ will play the role of a ground ring.
\end{rmk}

\subsubsection{The decomposition co--product}
Suppose that $\F$
is a decomposition finite category. This means
that  for each morphism $\phi$ of $\F$ the set $\{(\phi_0,\phi_1):\phi=\phi_0\circ\phi_1\}$
is finite.
Then, $\B$ carries a co--associative co--product given
by the dual of the composition. On
generators
it is given
by the sum over factorizations:
\begin{equation}
\label{facteq}
\xymatrix{
X\ar[rr]^{\phi}\ar[rd]_{\phi_1}&&Z\\
&Y\ar[ru]_{\phi_0}&
}
\end{equation}
that is
\begin{equation}\label{Delta-decomp}
\Delta(\phi)=\sum_{\{(\phi_0,\phi_1):\phi=\phi_0\circ\phi_1\}}\phi_0 \otimes \phi_1.
\end{equation}
where a morphism  $\phi$  is identified with  its characteristic morphisms $\delta_{\phi}$  that evaluates to $1$ on $\phi$ and zero on all other generators, as an element in $\Hom(\Mor(\F),\Z)$ $=\Z[\Mor(\F)]$

A co--unit is defined on the generators by:
\begin{equation}
\eps(\phi)=\begin{cases} 1 &\text{ if for some object $X$}: \phi=id_X\\
0& \text{else}
\end{cases}
\end{equation}
The co--unit axioms are readily verified and the co--associativity follows from the associativity of composition.

\begin{rmk}
One can enlarge
the setting to the situation in which
the sets of morphisms are graded and composition preserves the grading.
In this case, one only needs the condition: degree--wise composition finite.
This will be the case for any graded Feynman category \cite{feynman}. See also Example \cite[\ref{P1-semiinfex}]{HopfPart1}.
\end{rmk}

\begin{rmk}
We realized with hindsight that the co--product we constructed on indecomposables, guided by remarks from D.\ Kreimer given below \S\ref{indecomppar}, is equivalent to the co--product above. A little bibliographical sleuthing revealed that the
the co--product for any
finite decomposition category appeared already in
\cite{Moebiusguy} and was picked up later in \cite{JR}.
\end{rmk}

\subsubsection{Co--product of the identity morphisms}

\begin{rmk}

\begin{equation}
\label{Duniteq}
\Delta(id_X)=\sum_{(\phi_L,\phi_R): \phi_L\circ \phi_R=id_X}\phi_L\otimes \phi_R
\end{equation}
where  $id_X:X\stackrel{\phi_R}{\to}X'\stackrel{\phi_L}{\to}X$.
This mean that each $\phi_L$ has a right inverse $\phi_R$,
and each $\phi_R$ has a left inverse $\phi_L$. They do not have to be invertible in general.

\end{rmk}

\begin{cor}
In a decomposition finite category the automorphism groups $Aut(X)$ are finite for all objects $X$, as are the classes $Iso(X)$ of objects isomorphic to $X$.
\end{cor}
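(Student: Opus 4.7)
The plan is to deduce both finiteness statements at once by applying decomposition finiteness to the single distinguished morphism $id_X$. By hypothesis the set
\[
\mathcal{S}_X := \{(\phi_L,\phi_R)\mid \phi_L\circ\phi_R=id_X\}
\]
of factorizations appearing in the preceding remark \eqref{Duniteq} is finite, so it suffices to exhibit injections $Aut(X)\hookrightarrow \mathcal{S}_X$ and $Iso(X)\hookrightarrow \mathcal{S}_X$.

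For the automorphism group, I would use the assignment $\sigma\mapsto (\sigma^{-1},\sigma)$: every $\sigma\in Aut(X)$ satisfies $\sigma^{-1}\circ\sigma=id_X$, so the image lies in $\mathcal{S}_X$, and distinct automorphisms clearly give distinct ordered pairs. This immediately yields $|Aut(X)|\leq |\mathcal{S}_X|<\infty$. For $Iso(X)$, interpreted as the collection of objects $Y$ of $\F$ with $Y\simeq X$, I would choose, once and for all, a single isomorphism $\sigma_Y\colon X\to Y$ with inverse $\tau_Y\colon Y\to X$ for each $Y\in Iso(X)$, and send $Y$ to $(\tau_Y,\sigma_Y)\in \mathcal{S}_X$. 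Two distinct objects $Y\neq Y'$ yield factorizations with different intermediate objects, hence different pairs in $\mathcal{S}_X$, so this map is injective and $|Iso(X)|\leq|\mathcal{S}_X|<\infty$.

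The only point worth flagging is that the argument must run in this direction: the remark preceding the corollary observes that a generic factorization $id_X=\phi_L\circ\phi_R$ only exhibits the intermediate object as a retract of $X$, not necessarily as an object isomorphic to $X$, so one cannot expect a bijection between $\mathcal{S}_X$ and either $Aut(X)$ or $Iso(X)$. What saves us is that each element of $Aut(X)$ and each $Y\in Iso(X)$ does produce an honest factorization by pairing an isomorphism with its inverse, which is all that is needed for the injection and hence for the finiteness conclusion. No further technical obstacle is anticipated.
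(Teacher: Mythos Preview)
Your proof is correct and follows essentially the same approach as the paper: both arguments observe that each automorphism $\sigma\in Aut(X)$ and each choice of isomorphism $X\stackrel{\sim}{\to}Y$ yields a factorization $id_X=\sigma^{-1}\circ\sigma$, and then invoke decomposition finiteness of $id_X$. The paper's proof is a one-line version of yours, omitting the explicit description of the injections and your careful remark about why only an injection (not a bijection) is available.
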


\begin{proof}
For each automorphism $\phi$ of $X$ and for each isomorphism $\phi:X\to X'$ there is a factorisation $id_X=\phi^{-1}\circ\phi$, and there are only finitely many such factorisations.
\end{proof}

\begin{lem}
\label{idfinitelem}
If $\F$ is decomposition finite, if the identity of an object has a factorization
$id_X:X\stackrel{\phi_R}{\to}X\stackrel{\phi_L}{\to}X$ then both $\phi_R$ and $\phi_L$ are invertible.
\end{lem}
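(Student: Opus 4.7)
The plan is to exploit decomposition finiteness by generating infinitely many factorizations of $id_X$ out of the single relation $\phi_L\circ\phi_R=id_X$ and then applying a pigeonhole argument. The key idea is that in a monoid (here the endomorphism monoid of $X$), a one-sided inverse that is not two-sided would produce an honest infinite semigroup, which finite decomposition rules out.

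First I would show by a one-line induction that $\phi_L^{\,n}\circ\phi_R^{\,n}=id_X$ for every $n\geq 1$, using the telescoping identity
\[
\phi_L^{\,n+1}\circ\phi_R^{\,n+1}=\phi_L^{\,n}\circ(\phi_L\circ\phi_R)\circ\phi_R^{\,n}=\phi_L^{\,n}\circ\phi_R^{\,n}.
\]
This produces an infinite family of factorizations $(\phi_L^{\,n},\phi_R^{\,n})$ of $id_X$. Since $\F$ is decomposition finite, the set of such factorizations is finite, so there must exist $n<m$ with $(\phi_L^{\,n},\phi_R^{\,n})=(\phi_L^{\,m},\phi_R^{\,m})$. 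Setting $k=m-n\geq 1$, I would then post-compose the equality $\phi_L^{\,n}=\phi_L^{\,n+k}=\phi_L^{\,k}\circ\phi_L^{\,n}$ with $\phi_R^{\,n}$ on the right and use $\phi_L^{\,n}\circ\phi_R^{\,n}=id_X$ to conclude
\[
id_X=\phi_L^{\,n}\circ\phi_R^{\,n}=\phi_L^{\,k}\circ\phi_L^{\,n}\circ\phi_R^{\,n}=\phi_L^{\,k}.
\]

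From $\phi_L^{\,k}=id_X$ it follows that $\phi_L^{\,k-1}$ (interpreted as $id_X$ if $k=1$) is a two-sided inverse of $\phi_L$, so $\phi_L$ is an isomorphism. Then pre-composing $\phi_L\circ\phi_R=id_X$ with $\phi_L^{-1}$ gives $\phi_R=\phi_L^{-1}$, which is therefore also invertible.

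The only step that requires care is the pigeonhole step, since it relies on decomposition finiteness being applied to the pairs (not just to the left factors), but this is immediate from the hypothesis; the rest is routine manipulation in the endomorphism monoid. No functoriality or monoidal structure is needed — the statement is really a property of decomposition finite categories.
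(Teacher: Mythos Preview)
Your proof is correct and follows essentially the same strategy as the paper: generate an infinite family of factorizations from the powers $\phi_L^{\,n},\phi_R^{\,n}$, apply decomposition finiteness and pigeonhole to force $\phi_L^{\,n}=\phi_L^{\,m}$ for some $n<m$, and cancel on the right with $\phi_R^{\,n}$ to conclude $\phi_L^{\,m-n}=id_X$. The only cosmetic difference is that the paper writes these as factorizations of $\phi_L$ (namely $\phi_L=\phi_L^{\,l}\circ(\phi_R^{\,l}\circ\phi_L)$) rather than of $id_X$, but the pigeonhole and cancellation steps are identical.
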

\begin{proof}
Using the powers of $\phi_L$ and $\phi_R$, there are decompositions of $\phi_L=\phi_L^l\circ (\phi_R^l\circ \phi_L)$. Since $\F$ is decomposition finite, we have to have that $\phi_L^l=\phi_L^k$ for some $k>l$. Applying $\phi_R^l$ from the right, we see that $\phi_L^{k-l}=id_X$. That is $\phi_L$ is unipotent and hence an isomorphism.
\end{proof}

 \subsubsection{Co--algebra on isomorphisms classes}

The set $Hom(X,Y)$ has a natural action of $Aut(Y)\times Aut(X)$: $\phi\stackrel{\lambda_{\sigma_Y}\rho_{\sigma^{-1}_X}}{\longrightarrow} \sigma_Y\circ \phi\circ \sigma_X^{-1}$. We let $Aut(\phi)\subset Aut(Y)\times Aut(X)$ be the stabilizer group of $\phi$.
There is an action  of $Aut(Y)$ on
$Hom(Y,Z) \times  Hom(X,Y)$ given by $\bar d:(\sigma)(\phi_0,\phi_1)=(\phi_0\circ\sigma^{-1} ,\sigma\circ \phi_1)$, which leaves the composition map invariant:
$\phi_0\circ\phi_1=\phi_0\circ \sigma^{-1}\circ \sigma\circ \phi_1$.

There is also an action on decompositions which is a specialization of the actions of $I_{X,X'}=Iso(X,X')$, $I_{Y,Y'}=Iso(Y,Y')$ and
$I_{Z,Z'}=Iso(Z,Z')$ that maps $Hom(Y,Z) \times  Hom(X,Y)\to Hom(Y',Z') \times  Hom(X',Y'):$
 $(\phi_0,\phi_1)\mapsto (\phi_0',\phi_1')= (\sigma_Z\phi_0\sigma_Y^{-1},\sigma_Y\phi_1\sigma_X^{-1})$.

 There is an equivalence relation on factorizations $(\phi_0,\phi_1)$ of the type \eqref{facteq}  given by the action of $I_{Y,Y'}$, namely we set:
 $(\phi_0,\phi_1)\sim (\phi_0',\phi_1')$ if $\phi'_0=\phi_0\sigma_Y^{-1}$ and $\phi'_1=\sigma_Y\phi_1$.
For a given class $c$ under this equivalence choose a representative $c=[f]=[(\phi_0,\phi_1)]$  and consider the corresponding summand $\Delta_f$ of $\Delta$ together with  the $I_{X,X'},I_{Y,Y'}$
and $I_{Z,Z'}$ actions and co--invariants on this decomposition.

 \begin{equation}
\label{phicoprodeq}
\resizebox{\textwidth}{!} {
{
\xymatrix{
\phi\ar@{|->}[rr]^{\Delta_f}\ar@{|->}[d]\ar@{|->}@(dl,ul)@<-2pc>[dd]_{\lambda_{\sigma_Z}\rho_{\sigma_X^{-1}}}
\ar@{|->}[dr]^{\pi\circ p\circ \Delta_f}&&(\phi_0, \phi_1)\ar@{|->}[d]^p
\ar@{|->}@{|->}@(dr,ur)@<5pc>[dd]^{\lambda_{\sigma_Z}\rho_{\sigma_Y^{-1}}\otimes \lambda_{\sigma_Y}\rho_{\sigma_X^{-1}}}\\
[\phi]\ar@{|->}[r] &([\phi_0],[\phi_1])&[(\phi_0, \phi_1)]\ar@{|->}[l]_--{\pi}\\
\sigma_Z\phi\sigma_X^{-1}\ar@{|->}[rr]_{\Delta_{f'}}\ar@{|->}[u]
\ar@{|->}[ur]_{\pi\circ p\circ\Delta_{f'}}&&(\sigma_Z\phi_0\sigma_Y^{-1}, \sigma_Y\phi_1\sigma_X^{-1})\ar@{|->}[u]_p\\
}
}
}
\end{equation}
here $f=(\phi_0,\phi_1)$ is a factorization $f'=(\phi'_0,\phi'_1)$ is a different representative of the same class $[(\phi_0,\phi_1)]$ under the action of $\lambda\times \bar d\times \rho$ of
$I_{Z,Z'}\times I_{Y,Y'}\times I_{X,X'}$.

For simplicity assume that $\F$ is skeletal.  To shorten notation, we let $\F(X,Y)=Hom_\F(X,Y)$,
 and $A_X=Aut(X)$.

Fix a representative of the intermediate space $Y$ in its isomorphism class and a choice of representative decompositions $F$,
one for each class of the $\phi\in\F(X,Z)$ this
fixes  a   system of representatives obtained by conjugation $F'$ .

Assume that $\F$ is finite if for any morphism $\phi$ and fixed space $Y$.
Under this condition, the map $\Delta^{iso}_{X,Y,Z}$ in the diagram below is well defined due to the properties of co--limits and the finiteness assumption., we obtain a diagram of the type \cite[\eqref{P1-coprodsqeq}]{HopfPart1}.
 \begin{equation}
\label{coprodfeyeq}
\resizebox{\textwidth}{!} {
{
\xymatrix{
\F(X,Z)\ar[rr]^{\Delta_F}\ar[d]
\ar@(dl,ul)@<-2pc>[dd]_{\lambda_{\sigma_Z}\rho_{\sigma_X^{-1}}'}
\ar[dr]^{\pi\circ p \circ\Delta_F}&&\F(Y,Z)\times\F(X,Y)\ar[d]^p
\ar@(dr,ur)@<5pc>[dd]^{\lambda_{\sigma_Z}\rho_{\sigma_Y^{-1}}\otimes \lambda_{\sigma_Y}\rho_{\sigma_X^{-1}}}\\
{}_{A_Z}\F(X,Z)_{A_X} \ar[r]^-{\Delta^{iso}_{X,Y,Z}} &{}_{A_Z}\F(Y,Z)_{A_Y}\times{}_{A_Y}\F(X,Y)_{A_Z}&
{}_{A_Z}\F(Y,Z)\times_{A_Y}\F(X,Y)_{A_X}\ar[l]_--{\pi}\\
\F(X',Z')\ar[rr]^{\Delta_{F'}}\ar[u]\ar[ur]_{\pi\circ p\circ \Delta_{F'}}&&\F(Y',Z')\times\F(X',Y')\ar[u]_p\\
}
}
}
\end{equation}
Assume further that for any $\phi$ only finitely many isomorphism classes $Y$ appear in the decompositions of $\phi$.
If both finiteness assumptions are satisfied, we call $\F$ {\em factorization finite}.
Fixing a representative $\phi$ and summing over isomorphism classes of $Y$,
 we then obtain  $\Delta^{iso}([\phi])$ if $\F$ is factorization finite.
\begin{lem}
If $\F^{sk}$ is decomposition finite, then $\F$ is factorization finite.\qed
\end{lem}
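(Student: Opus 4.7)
The plan is to reduce the statement to the decomposition finiteness hypothesis by observing that the $\sim$-equivalence classes of factorizations of a morphism in $\F$ are in natural bijection with factorizations that lie entirely inside a chosen skeleton $\F^{sk}$. The two clauses of factorization finiteness (finitely many isomorphism classes of intermediate objects $Y$ and, for each such class, finitely many $\sim$-classes of decompositions through $Y$) will then follow directly from the finiteness of the raw set $\{(\phi_0,\phi_1):\phi^{sk}=\phi_0\circ\phi_1\}$ in $\F^{sk}$.

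First I would fix a morphism $\phi:X\to Z$ in $\F$, and use the equivalence $\F^{sk}\hookrightarrow\F$ to conjugate by isomorphisms on source and target, reducing to the case $X,Z\in\F^{sk}$. This conjugation manifestly respects the action $\lambda_{\sigma_Z}\rho_{\sigma_X^{-1}}$ in diagram \eqref{coprodfeyeq} and so preserves the set of $\sim$-classes of decompositions. Denote by $\phi^{sk}\in\F^{sk}$ the resulting morphism.

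Next I would establish that every $\sim$-class of decompositions $(\phi_0,\phi_1)$ of $\phi^{sk}$ in $\F$ admits a representative whose intermediate object lies in $\F^{sk}$: if $Y$ is the intermediate object and $\sigma:Y\to Y^{sk}$ is an isomorphism to its representative in $\F^{sk}$, then $(\phi_0\sigma^{-1},\sigma\phi_1)$ is equivalent to $(\phi_0,\phi_1)$ under the $\bar d$-action and has intermediate $Y^{sk}\in\F^{sk}$, with source $X$ and target $Z$ also already in $\F^{sk}$; hence this new decomposition is literally a factorization of $\phi^{sk}$ in $\F^{sk}$. Conversely, two factorizations that live in $\F^{sk}$ and are equivalent in $\F$ via some $\sigma_Y:Y\to Y'$ must have $Y=Y'$ (skeletality) and $\sigma_Y\in Aut(Y)$, so they are already equivalent inside $\F^{sk}$. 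Thus the map from $\F^{sk}$-factorizations of $\phi^{sk}$ to $\sim$-classes of $\F$-factorizations of $\phi$ is surjective.

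By the assumption that $\F^{sk}$ is decomposition finite, the set of factorizations of $\phi^{sk}$ in $\F^{sk}$ is finite, and so its image, the set of $\sim$-classes of factorizations of $\phi$, is finite as well. A fortiori only finitely many intermediate isomorphism classes $[Y]$ occur, and for each fixed $[Y]$ only finitely many $\sim$-classes appear, which is precisely what factorization finiteness requires. The only genuinely subtle point is the verification that the reduction to $\F^{sk}$ respects the equivalence relation $\sim$ (both through $\bar d$ acting on intermediate objects and through $I_{X,X'}\times I_{Z,Z'}$ acting on source/target); once this compatibility is in hand, the argument is purely bookkeeping.
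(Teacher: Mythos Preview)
Your argument is correct and is precisely the natural elaboration of what the paper leaves implicit: the lemma carries a bare \qed{} with no written proof, so there is nothing to compare against beyond the evident idea that every factorization in $\F$ can be pushed, via the $\bar d$-action on the intermediate object and conjugation on source and target, to a factorization lying entirely in $\F^{sk}$. One minor remark: you set out to establish a bijection between $\F^{sk}$-factorizations and $\sim$-classes, but you only need (and in the end only use) surjectivity of the map from the former to the latter; the ``Conversely'' sentence about equivalences restricting to $\F^{sk}$ is true but superfluous for the finiteness conclusion.
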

 \begin{prop}
\label{Bisoprop}
If $\F$ is factorization finite, then
the decomposition co--product $\Delta$ and the co--unit $\eps$ descend to a co--product
$\Deltaiso$ and co--unit $\epsiso$
on $\Biso$
as co--invariants and
$(\Biso,\Deltaiso,\epsiso)$ is a co--unital co--algebra.
 \end{prop}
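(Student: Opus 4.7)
The plan is to verify, in turn: (i) that $\Delta$ descends through the combined quotient by the arrow-category equivalence and the middle-object $A_Y$-action on factorizations to a well-defined co-product $\Deltaiso$ on $\Biso$; (ii) that factorization finiteness ensures $\Deltaiso([\phi]) \in \Biso \otimes \Biso$; (iii) that the co-unit $\epsiso$ is well defined; and (iv) that $(\Biso, \Deltaiso, \epsiso)$ satisfies the co-algebra axioms.

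For (i), assume without loss of generality that $\F$ is skeletal. Given $\phi \in \F(X,Z)$ and automorphisms $\sigma_X \in A_X$, $\sigma_Z \in A_Z$, set $\phi' = \sigma_Z \phi \sigma_X^{-1}$. The assignment $(\phi_0, \phi_1) \mapsto (\sigma_Z \phi_0, \phi_1 \sigma_X^{-1})$ is a bijection between factorizations of $\phi$ and of $\phi'$ through the same intermediate object $Y$, and it intertwines the middle $A_Y$-action $(\phi_0, \phi_1) \mapsto (\phi_0 \tau^{-1}, \tau \phi_1)$. Commutativity of diagram \eqref{coprodfeyeq} then yields that $\Delta(\phi)$ and $\Delta(\phi')$ have the same image in $\Biso \otimes \Biso$. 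For (ii), factorization finiteness ensures that only finitely many iso-classes of intermediate objects $Y$ occur in factorizations of $\phi$, and that for each such $Y$ the $A_Y$-orbits of factorizations are finite, so $\Deltaiso([\phi])$ is a finite sum.

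For (iii), define $\epsiso([\phi]) = 1$ when $\phi$ is an isomorphism and $0$ otherwise. This is well-defined on $\Biso$ because being an isomorphism is preserved under the arrow-category equivalence, and by Lemma \ref{isolem} the classes on which $\epsiso = 1$ are exactly the classes $[id_X]$ of identity morphisms. Note that $\eps$ itself is not constant on such classes: it takes value $1$ on the representative $id_X$ but $0$ on non-identity automorphisms, so the descent is not literal, but $\epsiso$ is the natural modification that makes the co-algebra axioms work.

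For (iv), co-associativity is inherited from associativity of composition in $\F$: both $(\Deltaiso \otimes \id) \Deltaiso([\phi])$ and $(\id \otimes \Deltaiso) \Deltaiso([\phi])$ sum over threefold factorization classes $[\phi_0 \circ \phi_1 \circ \phi_2]$ modulo the actions of the two intermediate automorphism groups. For the left co-unit axiom $(\epsiso \otimes \id) \Deltaiso([\phi]) = [\phi]$, only factorization classes with $\phi_0$ an isomorphism contribute; in the skeleton this forces $Y = Z$ and $(\phi_0, \phi_1) = (\sigma, \sigma^{-1}\phi)$ for some $\sigma \in A_Z$. Choosing $\tau = \sigma$ in the $A_Y$-action sends $(\sigma, \sigma^{-1}\phi)$ to $(id_Z, \phi)$, so these $|A_Z|$-many factorizations collapse to a single $A_Y$-orbit contributing $[id_Z] \otimes [\phi]$; applying $\epsiso \otimes \id$ yields $[\phi]$. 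The right co-unit axiom is symmetric. The main obstacle is the correct identification of the quotient in Step (i): the middle $A_Y$-action on factorizations must be quotiented out in addition to the external $A_X \times A_Z$ action, as already in the toy example of a single object with $A_X = \Z/2$ the sum $\Delta(id_X) = id_X \otimes id_X + \sigma \otimes \sigma$ must collapse to a single term $[id_X] \otimes [id_X]$ in $\Biso \otimes \Biso$; without this collapse the co-unit axiom would be off by a factor of $|A_X|$.
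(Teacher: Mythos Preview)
Your proof is correct and follows essentially the same approach as the paper, which relies on the commutative diagram \eqref{coprodfeyeq} set up just before the proposition and then dismisses co-associativity and the co-unit in one line. Your version is in fact more careful on one point: the paper asserts that ``the co-unit $\eps$ is invariant under the left and right actions by automorphisms and descends to $\epsiso$'', but as you correctly note, $\eps$ is \emph{not} constant on the class $[id_X]$ (it vanishes on non-identity automorphisms), so $\epsiso$ is a genuine modification rather than a literal descent; your explicit verification of the co-unit axiom via the collapse of the $|A_Z|$ factorizations $(\sigma,\sigma^{-1}\phi)$ to a single $A_Y$-orbit fills a gap the paper leaves implicit.
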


 \begin{proof}

Co--associativity  follows readily. The co--unit $\eps$ is invariant under the left and right actions by automorphisms and descends to $\eps^{iso}$
\begin{equation}
\eps^{iso}([\phi])=\begin{cases}1&\text{ if } [\phi]=[id_X]\\
0 & \text{else}\\
\end{cases}
\end{equation}

\end{proof}
\subsubsection{Direct formula for $\Delta^{iso}$}
There is a direct way to describe the co--product, by analyzing the image of $\Deltaiso_{X,Y,Z}$.

We call a pair $(\phi_0,\phi_1)$ of  morphisms {\em weakly composable}, if there is an isomorphism $\sigma$, such that $\phi_1\circ\sigma\circ\phi_0$ is composable. A weak decomposition of a morphism $\phi$ is
a  pair of morphisms  $(\phi_0,\phi_1)$ for which there exist isomorphisms $\sigma,\sigma',\sigma''$  such that
$\phi=\sigma\circ \phi_1\circ \sigma' \circ \phi_0 \circ \sigma''$.
In particular, a decomposition $(\phi_0,\phi_1)$ is weakly composable.
We introduce an equivalence relation on weakly composable morphisms, which says that $(\phi_0,\phi_1)\sim (\psi_0,\psi_1)$ if they are weak decompositions of the same morphism. An equivalence class of weak decompositions will be called a decomposition channel. The notation will be $([\phi_0],[\phi_1])$. In this notation, we have that
$\pi([(\phi_0,\phi_1)])=([\phi_0],[\phi_1])$ and the image of $\Deltaiso_{X,Y,Z}$  are precisely the decomposition channels.
 These may, however, appear with multiplicities.

\begin{prop}
For an element/equivalence class $[\phi]\in \Biso$.

\begin{equation}
\label{Deltaiso}
 \Delta^{iso}([\phi])=\sum_{[(\phi_0,\phi_1)]}[\phi_0] \otimes [\phi_1]
\end{equation}
where the sum is over a complete system  of decompositions for a fixed representative $\phi$.
\qed
\end{prop}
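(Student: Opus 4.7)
The plan is to deduce the formula directly from the construction of $\Delta^{iso}$ given in Proposition \ref{Bisoprop} by chasing the commutative diagram \eqref{coprodfeyeq}. The assertion is essentially that the image of a strict factorization under the projection to $\Biso\otimes\Biso$, together with grouping by isomorphism, is precisely what the definition of a decomposition channel records.

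Concretely, I would first fix a representative $\phi : X\to Z$ of the class $[\phi]$. By the commutativity of \eqref{coprodfeyeq} (with the factorization-finiteness hypothesis ensuring all relevant sums are finite and the co-invariant descent is well defined), one has
$$\Delta^{iso}([\phi]) \;=\; (\pi\circ p\circ \Delta)(\phi),$$
where the right-hand side is understood as a sum over intermediate isomorphism classes $Y$. Next, I would expand $\Delta(\phi)$ using the deconcatenation formula \eqref{Delta-decomp} as the sum over strict factorizations $(\phi_0,\phi_1)$ with $\phi_0\circ\phi_1=\phi$. Pushing this sum through $p$ term-wise turns each summand into $[\phi_0]\otimes [\phi_1]\in \Biso\otimes\Biso$; then $\pi$ groups these summands according to the $I_{Z,Z'}\times I_{Y,Y'}\times I_{X,X'}$ action in the bottom row of \eqref{coprodfeyeq}.

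The remaining step is to identify the equivalence classes of strict factorizations of $\phi$ under this action with decomposition channels $[(\phi_0,\phi_1)]$. In one direction this is immediate: any two strict factorizations of $\phi$ are, a fortiori, weak decompositions of the same morphism, so they lie in a common channel. In the other direction, if $(\phi_0,\phi_1)$ and $(\psi_0,\psi_1)$ are in the same channel, then there exist isomorphisms realizing each as weak decompositions of a common morphism, and absorbing these isomorphisms into the endpoints recovers exactly the $I_{X,X'}\times I_{Y,Y'}\times I_{Z,Z'}$ action.

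The main obstacle is this last bijection: one has to be a little careful that the definition of ``weak decomposition'' (which allows isomorphisms on left, right, and middle) exactly matches the equivalence imposed by taking left, right, and intermediate co-invariants. Once that matching is verified, the sum $(p\circ \Delta)(\phi)$ collapses to $\sum_{[(\phi_0,\phi_1)]}[\phi_0]\otimes[\phi_1]$ over a complete system of decomposition channels represented by strict factorizations of the fixed $\phi$, with multiplicities absorbed into the enumeration, yielding \eqref{Deltaiso}.
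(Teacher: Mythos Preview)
Your approach is essentially the same as the paper's: the proposition is marked \qed\ because it follows by unwinding the diagram \eqref{coprodfeyeq}, and that is precisely what you do.

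One terminological point to tighten: in the paper the notation $[(\phi_0,\phi_1)]$ denotes the class of a factorization of the \emph{fixed} $\phi$ under the middle-space action (the image under $p$), whereas a \emph{decomposition channel} $([\phi_0],[\phi_1])$ is the image under $\pi$, i.e.\ the pair of individual isomorphism classes. These are not the same: several classes $[(\phi_0,\phi_1)]$ can map to the same channel, which is exactly the source of the multiplicities the paper mentions just before the proposition. Your final paragraph conflates the two (``a complete system of decomposition channels represented by strict factorizations''), but the sum in \eqref{Deltaiso} is indexed by the former, not the latter. Once you keep that distinction straight, the ``main obstacle'' you flag disappears: there is no bijection to establish, only the observation that $\pi\circ p$ sends each $[(\phi_0,\phi_1)]$ to $[\phi_0]\otimes[\phi_1]$, and the sum over a complete system of representatives of the $p$-classes is by definition $\Delta^{iso}([\phi])$.
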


\begin{rmk}

Notice that there are many ways in which two weakly composable morphisms are composable and hence may yield different compositions. Thus the right hand side may have terms that can be collected together.
To obtain a representative one has to again  rigidify by ``enumerating everything'', as in was done
in \cite[Remark \ref{P1-rigidrmk}]{HopfPart1}.

This is also  the reason that the composition dual to decomposition in $\B^{iso}$ in \cite[\eqref{P1-symopcompeq}]{HopfPart1} is on invariants. A similar phenomenon is known in physics, when composing graphs \cite{KreimerGauge}. For Feynman categories of graph type such aspect have been previously discussed in \cite[\S2.1]{feynman}.
\end{rmk}

\begin{ex}
In particular in
 \S\ref{graphexpar} the construction is made concrete for the Connes-Kreimer Hopf algebra of graphs. An isomorphism class of a morphism is fixed by the ghost graph. The ghost graph of $\phi_1$ is naturally a subgraph of the ghost graph of $\phi$.
  The action on the intermediate space allows to ``forget'' the target of $\phi$ up to isomorphism and identify the ghost graph of $\phi_0$ with the quotient graph.
In the co--product one forgets the structure of being a subgraph, which is also what leads to multiplicities, cf.\ Example \ref{subgraphex}.
\end{ex}
\subsubsection{Bi--algebra structure conditions}
By the above, in any  strict monoidal category with finite decomposition $\B$ has a unital product and a co--unital co--product. However, the compatibility axioms of a bi--algebra {\em do not hold in general}.
 For instance, one needs to check
$$\Delta\circ\mu=(\mu\otimes \mu)\circ \pi_{2,3}\circ (\Delta\otimes \Delta)$$
where $\pi_{2,3}$ switches the 2nd and 3rd tensor factors.
Each side of the equation is represented by a sum over diagrams.

For  $\Delta\circ\mu$ the sum is over diagrams of the type
\begin{equation}
\label{deltamueq}
\xymatrix{
X\otimes X'\ar[rr]^{\Phi=\phi\otimes \psi}\ar[dr]^{\Phi_1}&&Z\otimes Z'\\
&Y\ar[ur]^{\Phi_0}&\\
}
\end{equation}
where $\Phi=\Phi_0\circ\Phi_1$.

When considering $(\mu\otimes \mu)\circ\pi_{23}\circ (\Delta\otimes \Delta)$
the diagrams are of the type
\begin{equation}
\label{mudeltaeq}
\xymatrix{
X\otimes X'\ar[rr]^{\phi\otimes \psi}\ar[dr]^{\phi_0\otimes \psi_0}&&Z\otimes Z'\\
&Y\otimes Y'\ar[ur]^{\phi_1\otimes \psi_1}&\\
}
\end{equation}
where $\phi=\phi_0\circ\phi_1$
and
$\psi=\psi_0\circ\psi_1$. And there is no reason for there to be a bijection of such diagrams.

The compatibility  {\em does hold} when dealing with  Feynman categories; as we now show.
\subsection{Feynman categories and bi--algebra structures}
Here we give the definition of the various Feynman categories and prove the theorems previewed above. Examples can be found in \S\ref{constexpar}.
\subsubsection{Definition of  a Feynman category}
\label{feydefpar}
Consider the following data:

\begin{enumerate}
\item $\V$ a groupoid, with $\V^{\otimes}$ the free symmetric monoidal category on $\V$.
\item $\F$ a  symmetric monoidal category, with monoidal structure
denoted by $\otimes$.
\item $\imath: \V\to \F$ a functor, which by freeness extends to a monoidal functor $\iota^\otimes$ on $\V^{\otimes}$,
\end{enumerate}
$$\xymatrix@C+2em{
\V\ar[d]_{\jmath}\ar^{\imath}[r]&\F\\
\V^{\otimes}\ar_{\imath^\otimes}[ur]\ar[r]&Iso(\F)\ar@{^{(}->}[u]
}$$
where $Iso(\F)$ is the maximal (symmetric monoidal) sub--groupoid of $\F$.

Consider the comma categories $(\F\downarrow \F)$ and $(\F\downarrow \V)$ defined by  $(id_{\F},id_{\F})$ and $(id_{\F},\imath)$.
\begin{df}
A triple $\FF=(\V,\F,\imath)$ as above is called a \emph{Feynman category}
if

\begin{enumerate}
\renewcommand{\theenumi}{\roman{enumi}}

\item \label{objectcond}
 $\imath^{\otimes}$ induces an equivalence of symmetric monoidal groupoids between $\V^{\otimes}$ and $Iso(\F)$.

\item \label{morcond} $\imath$ and $\imath^{\otimes}$ induce an equivalence of symmetric monoidal groupoids\\ $Iso(\F\downarrow \V)^{\otimes}$ and  $Iso(\F\downarrow \F)$.

\item For any object $\ast_v$ of $\asts$, $(\clusters\downarrow\ast_v)$ is
essentially small.

\end{enumerate}
\end{df}
The first condition says that $\V$ knows all about the isomorphisms. The third condition is technical to guarantee that certain colimits exist. The second condition,
also called the \emph{hereditary condition}, is the key condition. It can be understood as follows: any morphism in $\F$ is isomorphic, up to unique isomorphism, to a tensor product of basic morphisms, which are those in $(\F\downarrow \V)$ (aka.\ one-comma generators). Viz.:
\begin{enumerate}
\item
For any morphism
$\phi: X\to X'$, if we choose $X'\simeq \bigotimes_{v\in I} \imath(\ast_v)$ by (i),  there are $X_v$ and $\phi_v:X_v\to\iota(\ast_v)$ in $\F$ such that $\phi$ is isomorphic to $\bigotimes_{v\in I}\phi_v$,
\begin{equation}
\label{morphdecompeq}
\xymatrix
{
X \ar[rr]^{\phi}\ar[d]_{\simeq}&& X'\ar[d]^{\simeq} \\
 \bigotimes_{v\in I} X_v\ar[rr]^{\bigotimes_{v\in I}\phi_{v}}&&\bigotimes_{v\in I} \imath(\ast_v).
}
\end{equation}
\item
For any two such decompositions $\bigotimes_{v\in I} \phi_v$ and $\bigotimes_{v'\in I'}\phi'_{v'}$
there is a bijection
$\psi:I\to I'$ and isomorphisms
$\sigma_v:X_v\to X'_{\psi(v)}$
such that $P\circ\bigotimes_v\phi_v=\bigotimes_v(\phi'_{\psi(v)}\circ\sigma_v)$
where $P$ is the permutation corresponding to $\psi$.

\item These are the only isomorphisms between morphisms.
\end{enumerate}

\subsubsection{Non-symmetric version}
Now let $(\V,\F,\imath)$ be as above with the exception that $\F$ is only a monoidal category, $\V^{\otimes}$ the free monoidal category, and $\imath^{\otimes}$ is the corresponding morphism of monoidal groupoids.

\begin{df}
A triple $\FF=(\V,\F,\imath)$ as above is called a \emph{non-\SSigma{} Feynman category} if

\begin{enumerate}
\renewcommand{\theenumi}{\roman{enumi}}

\item \label{nonsymobjectcond}
 $\imath^{\otimes}$ induces an equivalence of  monoidal groupoids between $\V^{\otimes}$ and $Iso(\F)$.

\item \label{nonsymmorcond} $\imath$ and $\imath^{\otimes}$ induce an equivalence of  monoidal groupoids $Iso(\F\downarrow \V)^{\otimes}$ and  $Iso(\F\downarrow \F)$.

\item For any object $\ast_v$ in $\asts$, $(\clusters\downarrow\ast_v)$ is
essentially small.

\end{enumerate}

\end{df}

\subsubsection{Strict Feynman categories}
We call a Feynman category {\em strict} if the monoidal structure on $\F$ is strict, $\iota$ is an inclusion, and $\V^\otimes=Iso(\F)$ where we insist on using the strict free monoidal category, see e.g.\ \cite{matrix} for a thorough discussion. Up to equivalence in $\V$, $\F$ and  in $\FF$ this can always be achieved.

In the strict case, one can assume that the right vertical arrow in \eqref{morphdecompeq} is an identity,  thus for any morphism $\phi$ we have $\phi=\bigotimes \phi_v\circ P$. Here  $\phi_v:X_v\to\iota(*_v)$ and $P$ is an isomorphism in $\V^\otimes=Iso(\F)$, which we can fix to be simply a permutation $P:X\stackrel{\sim}\to \bigotimes_v X_v$ after absorbing possible isomorphisms $\tau_v:X_v\stackrel{\sim}{\to}X'_v$ into the $\phi_v$ by pre-composition.  The permutation is by definition trivial in the non-\SSigma\ case.

\subsubsection{Bi--algebra structure for non--$\Sigma$ Feynman categories}

\begin{lem}
\label{deltaunitlem}
In a strict decomposition finite Feynman category  $\Delta(id_{\unit})$ is group--like, i.e.: $\Delta(id_\unit)=id_\unit\otimes id_\unit$
\end{lem}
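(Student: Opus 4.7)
The plan is to unwind the definition of $\Delta$ on $id_\unit$ and show that the only pair $(\phi_L,\phi_R)$ with $\phi_L\circ\phi_R=id_\unit$ is the pair $(id_\unit,id_\unit)$. From \eqref{Duniteq} we have
\[
\Delta(id_\unit)=\sum_{(\phi_L,\phi_R):\,\phi_L\circ\phi_R=id_\unit}\phi_L\otimes\phi_R,
\]
where $\phi_R:\unit\to X$ and $\phi_L:X\to\unit$ for some object $X$. Thus the claim reduces to showing that every morphism $\phi_L:X\to\unit$ in a strict Feynman category is equal to $id_\unit$ (and in particular forces $X=\unit$), which automatically forces $\phi_R=id_\unit$ by the relation $\phi_L\circ\phi_R=id_\unit$.

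First I would invoke the strict form of the hereditary condition spelled out after \eqref{morphdecompeq}: any morphism $\phi$ in $\F$ can be written as $\bigotimes_{v\in I}\phi_v\circ P$, where the indexing set $I$ is determined by a decomposition of the target as $\bigotimes_{v\in I}\iota(*_v)$ in $\V^\otimes=Iso(\F)$, and $P\in Iso(\F)$ is a permutation. Under the equivalence $\imath^\otimes:\V^\otimes\stackrel{\sim}{\to}Iso(\F)$ of condition (\ref{objectcond}), the monoidal unit $\unit$ corresponds to the empty word, i.e.\ to the empty indexing set $I=\varnothing$. Applying this to a morphism $\phi_L:X\to\unit$, we conclude that the tensor product indexed by $I=\varnothing$ is empty, so $\phi_L=P$ is a permutation; in the strict case, the only such $P$ with target $\unit$ is $id_\unit$, and consequently $X=\unit$ as well.

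Therefore the only factorization contributing to the sum is $id_\unit=id_\unit\circ id_\unit$, giving $\Delta(id_\unit)=id_\unit\otimes id_\unit$ as required. The main obstacle is really the bookkeeping in applying the hereditary condition to a morphism whose target is $\unit$: one must make sure that the equivalence of comma categories identifies $Iso(\F\downarrow\unit)$ with the empty-word part of $Iso(\F\downarrow\V)^\otimes$ and hence contains only the identity. Once that identification is clear, the statement is immediate, and no appeal to decomposition finiteness (e.g.\ Lemma \ref{idfinitelem}) is actually needed, although the hypothesis is required for $\Delta(id_\unit)$ to be defined as a finite sum in the first place.
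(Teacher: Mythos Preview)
Your proof is correct and follows the same overall strategy as the paper: both use axiom~(ii) to analyze morphisms into $\unit$. The difference lies in the endgame. The paper first uses axiom~(ii) only to conclude $X=\unit$, so that $\phi_L,\phi_R:\unit\to\unit$; it then invokes Lemma~\ref{idfinitelem} (which genuinely uses decomposition finiteness) to deduce that $\phi_L,\phi_R$ are invertible, and finally axiom~(i) to identify them with $id_\unit$. You instead push the strict hereditary decomposition one step further: since $\unit$ corresponds to the empty word in $\V^\otimes$, the decomposition $\phi_L=\bigotimes_{v\in\varnothing}\phi_v\circ P$ collapses to $\phi_L=P$ with $P$ an isomorphism in $\V^\otimes=Iso(\F)$ having target $\unit$, hence $P=id_\unit$. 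This is a cleaner route and, as you observe, shows that Lemma~\ref{idfinitelem} is not actually needed for this lemma --- decomposition finiteness enters only so that $\Delta(id_\unit)$ is a well-defined finite sum.
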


\begin{proof} By
\eqref{Duniteq} $\Delta(id_\unit)=id_{\unit}\otimes id_{\unit}+ \sum \phi_L\otimes \phi_R$ with $\phi_R:\unit \to X$ and $\phi_L:X\to \unit$ with $\phi_L\circ \phi_R=id_\unit$.
It follows from axiom (ii) that there are only morphisms $X\to \unit$ for $X=\unit$ and thus $\phi_L,\phi_R:\unit \to \unit$. By Lemma \ref{idfinitelem} they have to be isomorphisms,
 $\unit\to \unit$ and thus by axiom (i) $\phi_L=\phi_R=id_\unit$. Hence
 $\Delta(id_{\unit})$ only has one summand corresponding to $id_\unit\otimes id_\unit$.
\end{proof}

\begin{thm}\label{f-to-b-nonsym}
For any  strictly monoidal, finite decomposition,
non-\SSigma{}
Feynman category $\FF$
the tuple  $(\B,\otimes,\Delta,\eps,\eta)$ defines a  bi--algebra over $\mathbb{\Z}$.
\end{thm}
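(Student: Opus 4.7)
The plan is to reduce each bi--algebra axiom to facts already proved above together with the hereditary axiom (ii) of the Feynman category. Since associativity of $\mu$ with unit $id_\unit$ and coassociativity of $\Delta$ with counit $\eps$ have been verified in the preceding paragraphs, only the four compatibility identities remain. Three of them are dispatched quickly. The relation $\eps\circ\eta = id_\Z$ is immediate because $\eta(1)=id_\unit$ is itself an identity morphism, so $\eps(id_\unit)=1$. The relation $\Delta\circ\eta=\eta\otimes\eta$ is exactly Lemma \ref{deltaunitlem}, which says $\Delta(id_\unit) = id_\unit\otimes id_\unit$. For $\eps\circ\mu=\eps\otimes\eps$ I must show that $\phi\otimes\psi$ is an identity if and only if each of $\phi,\psi$ is; the nontrivial direction follows from the uniqueness portion of the non-$\Sigma$ hereditary axiom, comparing the one--comma decompositions of $\phi\otimes\psi$ and $id_X\otimes id_{X'}=id_{X\otimes X'}$ with respect to a common vertex decomposition of the target $X\otimes X'$.

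The principal work is the bi--algebra identity $\Delta\circ\mu = (\mu\otimes\mu)\circ\pi_{2,3}\circ(\Delta\otimes\Delta)$. The plan is to exhibit a bijection between the factorizations $\phi\otimes\psi = \Phi_0\circ\Phi_1$ enumerated by \eqref{deltamueq} and the pairs of factorizations $(\phi=\phi_0\circ\phi_1,\ \psi=\psi_0\circ\psi_1)$ enumerated by \eqref{mudeltaeq}. Tensoring gives the map from right to left. For the inverse, suppose $\Phi_1\colon X\otimes X' \to Y$ and $\Phi_0\colon Y\to Z\otimes Z'$ satisfy $\Phi_0\circ\Phi_1=\phi\otimes\psi$. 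I would fix decompositions $Z\simeq\bigotimes_{v\in I}\imath(\ast_v)$ and $Z'\simeq\bigotimes_{v'\in I'}\imath(\ast_{v'})$ into vertices, giving a vertex decomposition of $Z\otimes Z'$ indexed by $I\sqcup I'$. Applying the strict non-$\Sigma$ form of axiom (ii) to $\Phi_0$ with this target decomposition produces a splitting $Y=Y_\phi\otimes Y_\psi$, with $Y_\phi$ (resp.\ $Y_\psi$) the tensor of the factors indexed by $I$ (resp.\ $I'$), together with $\Phi_0=\phi_0\otimes\psi_0$ where $\phi_0\colon Y_\phi\to Z$, $\psi_0\colon Y_\psi\to Z'$. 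Reapplying (ii) to $\Phi_1\colon X\otimes X'\to Y_\phi\otimes Y_\psi$ with the inherited vertex decomposition of the target yields $\Phi_1=\phi_1\otimes\psi_1$ together with a splitting $X\otimes X'=\tilde X\otimes\tilde X'$. Composing, $(\phi_0\circ\phi_1)\otimes(\psi_0\circ\psi_1)=\phi\otimes\psi$, and a final invocation of the uniqueness clause in (ii), applied to the morphism $\phi\otimes\psi$ itself, forces $\tilde X=X$, $\tilde X'=X'$, $\phi=\phi_0\circ\phi_1$ and $\psi=\psi_0\circ\psi_1$. The two constructions are manifestly mutually inverse.

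The delicate point I expect to be the main obstacle is the assertion that the decomposition of the intermediate object $Y$ produced by the hereditary axiom respects the target partition $I\sqcup I'$, so that $Y$ splits as $Y_\phi\otimes Y_\psi$ along this partition. In the symmetric setting, permutations can in principle interleave the two blocks of vertices, which is precisely why the naive compatibility breaks down and the isomorphism--class modification of Theorem \ref{Bisothm} is required instead. It is the non-$\Sigma$ hypothesis that rules out such interleaving, and strictness that upgrades the ``up to unique isomorphism'' clause in (ii) to literal equalities, so that the bijection of factorizations holds on the nose rather than merely up to permutation.
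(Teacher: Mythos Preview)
Your proposal is correct and follows essentially the same strategy as the paper's proof: the three minor compatibilities are handled identically (Lemma \ref{deltaunitlem} for $\Delta\circ\eta$, the hereditary uniqueness for $\eps\circ\mu$), and for the bi--algebra equation both you and the paper apply axiom (ii) first to the morphism $Y\to Z\otimes Z'$ to split $Y$, then to $X\otimes X'\to Y$, relying on the non-$\Sigma$ strict hypothesis to make the resulting decompositions literal equalities rather than merely up to permutation. Your version is in fact slightly more explicit than the paper's in that you spell out the vertex indexing $I\sqcup I'$ and invoke the uniqueness clause of (ii) at the end to identify $\tilde X=X$, $\tilde X'=X'$; the paper instead phrases the same content via the isomorphisms $\sigma,\sigma'$ in diagram \eqref{deltamuisoeq} and argues they must be block--diagonal in the non-$\Sigma$ case.
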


\begin{proof}
We check the compatibility axioms:

(1) The co--unit is multiplicative  $\eps(\phi\otimes \psi)=\eps(\phi)\eps(\psi)$.
First, $id_X\otimes id_Y=id_{X\otimes Y}$, since $\F$ is strict monoidal. Because of axiom (i) this is then the unique decomposition of $id_{X\otimes Y}$, and hence both sides are either zero or $\phi=n\,id_X$ and $\psi=m\,id_Y$,
in which case both sides equal to $nm$.

(2) The unit is co--multiplicative: by Lemma \ref{deltaunitlem}, $\Delta(id_\unit)=\id_\unit\otimes id_\unit$, so $\Delta\circ \eta=\eta\otimes \eta$.

(3) Compatibility of unit and co--unit: $\eps(1)=\eps(id_\unit)=1$ and hence $\eps\circ\eta=id$.

(4) Bi--algebra equation:
In order to prove that $\Delta$ is an algebra morphism, we consider the two sums over the diagrams \eqref{deltamueq}  and \eqref{mudeltaeq} above and show that they
 coincide. First, it is clear that all diagrams of the second type appear in the first sum. Vice--versa, given a diagram of the first type, we know that $Y\simeq \hat Y\otimes \hat Y'$,
since $\Phi_1$ has to factor by axiom (ii) and the Feynman category is strict. Then again by axiom (ii) $\Phi_0$ must factor.
We see that we obtain a diagram:
\begin{equation}
\label{deltamuisoeq}
\xymatrix{
\hat X\otimes \hat X'\ar[r]^{\simeq}_{\sigma=\sigma_1\otimes \sigma_2}\ar[dddrr]_{\hat\phi_0\otimes\hat\psi_0}&X\otimes X'\ar[rr]^{\Phi=\phi\otimes \psi}\ar[dr]^{\Phi_0}&&Z\otimes Z'\\
&&Y=Y'\otimes Y''
\ar[ur]^{\Phi_1}\ar[dd]_{\sigma'=\sigma'_1\otimes \sigma'_2 }^{\simeq}&\\
\\
&&\hat Y\otimes \hat Y'\ar[uuur]_{\hat\phi_1\otimes\hat\psi_1}&\\
}
\end{equation}
Now since the Feynman category is strict and non-symmetric, the two isomorphisms also decompose as
$\sigma=\sigma_1 \otimes \sigma_2,$ and $\sigma'=\sigma'_1\otimes \sigma'_2$, for a splitting  $Y=Y'\otimes Y''$ so that $\Phi_0=\sigma_1^{\prime -1}\circ \hat\phi_0\circ\sigma^{ -1}_1\otimes \sigma_2^{\prime -1}\circ \hat\psi_0\circ\sigma^{-1}_2$ and
$\Phi_1=\hat\phi_1\circ\sigma'_1\otimes \hat\psi_2\circ\sigma'_2:Y=Y'\otimes Y''\to Z\otimes Z'$ and one obtains that both diagram sums
agree.

\end{proof}

Examples are discussed in detail in \S\ref{constexpar}.

\subsubsection{Bi--algebra structure on $\B^{iso}$ for a Feynman category}

\begin{thm}
\label{Bisothm}
Given a factorization finite Feynman category $\FF$,
$(\B^{iso}$, $\otimes,\eta,\Delta^{iso},\eps^{iso})$ is a bi--algebra, both in the symmetric and the non--$\Sigma$ case.
\end{thm}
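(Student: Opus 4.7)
By Mac Lane coherence we may assume $\FF$ is strict, and after Proposition \ref{Bisoprop} together with the algebra structure already established on $\B^{iso}$, what remains is the verification of the bi--algebra compatibilities. The unit/counit axioms are straightforward: $\epsiso\circ\eta=1$ holds by definition, $\Deltaiso\circ\eta=\eta\otimes\eta$ descends from Lemma~\ref{deltaunitlem} via the quotient $\B\to\Biso$, and multiplicativity of $\epsiso$ follows because $[\phi\otimes\psi]=[id_W]$ forces $\phi\otimes\psi$ to be an isomorphism isomorphic in the arrow category to some $id_W$; by axiom (i) and strictness this forces both $\phi$ and $\psi$ to be isomorphisms isomorphic to identities, so both sides of $\epsiso([\phi\otimes\psi])=\epsiso([\phi])\epsiso([\psi])$ vanish or both equal $1$.

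The essential content is the bi--algebra equation on $\Biso$. Fix $\phi\colon X\to Z$ and $\psi\colon X'\to Z'$. The goal is to show that for every isomorphism class $[\Phi_0,\Phi_1]$ of factorizations
\begin{equation*}
X\otimes X'\xrightarrow{\Phi_1} Y\xrightarrow{\Phi_0} Z\otimes Z'\qquad \Phi_0\circ\Phi_1=\phi\otimes\psi
\end{equation*}
there is a canonical isomorphism class of ``split'' factorizations $[(\phi_0\otimes\psi_0)\circ(\phi_1\otimes\psi_1)]$ with $\phi=\phi_0\circ\phi_1$, $\psi=\psi_0\circ\psi_1$, and that this assignment is a bijection of decomposition channels preserving multiplicities. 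Given such a $(\Phi_0,\Phi_1)$, apply the hereditary condition (axiom (ii)) to $\Phi_0\colon Y\to Z\otimes Z'$ using the decomposition of the target $Z\otimes Z'$ regrouped as a tensor product over the vertices of $Z$ followed by those of $Z'$: this yields an isomorphism $Y\simeq Y^{(1)}\otimes Y^{(2)}$ together with morphisms $\Phi_0^{(1)}\colon Y^{(1)}\to Z$ and $\Phi_0^{(2)}\colon Y^{(2)}\to Z'$ such that $\Phi_0$ is isomorphic (in the arrow category) to $\Phi_0^{(1)}\otimes\Phi_0^{(2)}$. Applying the hereditary condition once more to $\Phi_1\colon X\otimes X'\to Y^{(1)}\otimes Y^{(2)}$ with the target grouping already fixed, we get $X\otimes X'\simeq X^{(1)}\otimes X^{(2)}$ and $\Phi_1\simeq\Phi_1^{(1)}\otimes\Phi_1^{(2)}$ with $\Phi_1^{(i)}\colon X^{(i)}\to Y^{(i)}$; since $\Phi_0^{(i)}\circ\Phi_1^{(i)}$ agrees up to isomorphism with $\phi$ (resp.\ $\psi$), axiom (i) forces $X^{(1)}\simeq X$ and $X^{(2)}\simeq X'$. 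Setting $\phi_i:=\Phi_i^{(1)}$ and $\psi_i:=\Phi_i^{(2)}$ yields the required split factorization, well defined on the isomorphism class.

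Conversely, every pair of factorizations $\phi=\phi_0\circ\phi_1$ and $\psi=\psi_0\circ\psi_1$ produces, by tensoring, a factorization of $\phi\otimes\psi$, and these two assignments are mutually inverse on isomorphism classes: the clause (iii) in the hereditary condition (that the only arrow--category isomorphisms are the ones built from permutations and vertex--wise isomorphisms) ensures that any two splittings of a given $(\Phi_0,\Phi_1)$ are related by an isomorphism that restricts to isomorphisms of each factor, and hence represent equal decomposition channels in $\Biso\otimes\Biso$ via \eqref{Deltaiso}. Collecting these bijections over all decomposition channels proves
\begin{equation*}
\Deltaiso([\phi]\cdot[\psi])\;=\;\bigl((\mu\otimes\mu)\circ\pi_{23}\circ(\Deltaiso\otimes\Deltaiso)\bigr)\bigl([\phi]\otimes[\psi]\bigr),
\end{equation*}
which is the bi--algebra equation.

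The main obstacle is the symmetric case: in contrast to the proof of Theorem~\ref{f-to-b-nonsym}, one cannot, in $\B$, decompose the intermediate object $Y$ on the nose as $\hat Y\otimes\hat Y'$ with factors respecting the splitting $X\otimes X'$, because the hereditary condition only provides such a splitting up to a permutation that in principle can mix the $X$-- and $X'$--blocks. The resolution is precisely that we are working in $\Biso$: those ambiguities are absorbed by the left/right automorphism actions, and the identification is governed by axioms (i) and (ii) of a Feynman category. Once this matching of diagrams is in place, associativity of $\otimes$ and naturality of the symmetry together with the definitions of $\mu$, $\Deltaiso$, $\eta$, $\epsiso$ deliver the remaining axioms uniformly in the symmetric and non--$\Sigma$ settings.
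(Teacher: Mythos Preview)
Your proposal is correct and follows essentially the same approach as the paper: apply the hereditary condition (axiom (ii)) to split a factorization $(\Phi_0,\Phi_1)$ of $\phi\otimes\psi$ into tensor factors, observe that in the symmetric case this splitting is only available up to isomorphism, and conclude that on isomorphism classes one obtains the required bijection of decomposition channels. The paper's proof is terser---it simply says to retrace Theorem~\ref{f-to-b-nonsym} up to the point where $\sigma,\sigma'$ would need to decompose, and then reads off from diagram~\eqref{deltamuisoeq} that $[(\Phi_0,\Phi_1)]=[(\hat\phi_0\otimes\hat\psi_0,\hat\phi_1\otimes\hat\psi_1)]$---whereas you spell out both directions of the bijection and articulate explicitly why the permutation ambiguity is absorbed by passing to $\Biso$.
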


\begin{proof}
We can retrace the steps in  the proof of Theorem \ref{f-to-b-nonsym}, up until the decomposition of $\sigma$ and $\sigma'$ into tensor products. Even without this assumption,
the diagram \eqref{deltamuisoeq} clearly shows that $[(\Phi_0,\Phi_1)]=
[(\hat \phi_0\otimes \hat \psi_0),(\hat \phi_1\otimes \hat \psi_1)]$, so that there is indeed a bijection of the equivalence classes and hence the bi--algebra equation holds.
The compatibilities for the
unit and co--unit are simple computations along the lines of the proof of Theorem \ref{f-to-b-nonsym}.
\end{proof}

\begin{rmk} \mbox{}
\begin{enumerate}
\item If $\V$ is discrete in the non--$\Sigma$ case, then $\B^{iso}=\B$.
\item In the symmetric case, there is a difference in the count of diagrams in $\B$, which is controlled by the action $\bar d$ and the symmetric group actions. This is made precise in \S\ref{alternatepar}.
\item We have so far considered Feynman categories over $\Set$. The theorems also hold in the case of enriched Feynman categories such as $\FF_\O$, see (\S\ref{hyppar}) and \cite{Frep} for more details.
 The enrichment can be over a tensor category $\mathcal E$ which has a faithful functor to $\Ab$, e.g. $\kVect$. In this case one should work over the ring $K=Hom_{\F}(\unit,\unit)$, see \S\ref{hyppar} and \cite[\S4]{feynman} for more details.
\item
This co--product actually corresponds to the category $\FF'_{\V'}$ of universal operations \cite[\S6]{feynman}. Here all channels with $[\phi_1]=[\psi]$ corresponds to the class of morphisms in $Hom_{\F'}(\phi,\psi)$. That means that each class of such a morphism under isomorphism corresponds to a channel and contributes a term to the sum.  The associativity of the co--product is then just the associativity of the composition in $\FF'_\V$.
\end{enumerate}
\end{rmk}

\subsection{Co--module structure}
Let $\B_1=\B_1=\Z[Ob(\imath^\otimes \downarrow \imath)]$  be the free Abelian group on the basic morphisms ---see also \S\ref{indecomppar} below.
\begin{prop}
For a decomposition finite (non--$\Sigma$) Feynman category the set of basic morphisms, that is objects of $(\F\downarrow\V)$ form a co--module, viz.\ $\rho:=\Delta|_{\B_1}:\B_1\to \B_1\otimes \B$ is a co--module for $\Delta$.
For a factorization finite Feynman category the analogous statement holds true for $\Biso_1=\B_1/\sim$ and $\Biso$.
\end{prop}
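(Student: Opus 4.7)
The plan is to verify that $\rho:=\Delta|_{\B_1}$ lands in $\B_1\otimes \B$ and then to derive the two comodule axioms from the coassociativity and counitality of $\Delta$ already at hand. The crucial observation is this: if $\phi:X\to\imath(*_v)$ is a basic morphism, then for any factorization $\phi=\phi_0\circ\phi_1$ through an intermediate object $Y$, the outer factor $\phi_0:Y\to\imath(*_v)$ inherits the target of $\phi$ and is therefore itself basic. Hence every summand $\phi_0\otimes\phi_1$ appearing in $\Delta(\phi)$ satisfies $\phi_0\in\B_1$, so $\rho$ is well-defined as a map $\B_1\to\B_1\otimes\B$.

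Once this is in hand, the coassociativity $(\rho\otimes \id_\B)\circ\rho=(\id_{\B_1}\otimes\Delta)\circ\rho$ is immediate: both sides unpack into the same sum over triple factorizations $\phi=\phi_a\circ\phi_b\circ\phi_c$ by coassociativity of $\Delta$, and iterating the observation above shows that the leftmost factor $\phi_a$ remains in $\B_1$. The counit axiom $(\id_{\B_1}\otimes\eps)\circ\rho=\id_{\B_1}$ reduces to the fact that the only factorization of $\phi$ contributing a nonzero value of $\eps$ on $\phi_1$ is the trivial one $\phi=\phi\circ\id_X$, which returns $\phi$.

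For the factorization-finite case, the only extra point is to check that the equivalence relation $\sim$ on $\Mor(\F)$ preserves basicness. If $\phi\sim\psi$ via vertical isomorphisms at source and target and $\phi$ has target $\imath(*_v)$, then the target of $\psi$ is isomorphic to $\imath(*_v)$; by axiom (i) of a Feynman category this target is itself of the form $\imath(*_w)$ for some $*_w\in\V$. Hence $\B_1^{iso}$ is a well-defined subgroup of $\B^{iso}$, and the coaction $\rho^{iso}$ descends from $\rho$ in exactly the same manner as $\Delta^{iso}$ descends from $\Delta$ in Proposition \ref{Bisoprop}; the comodule axioms for $\rho^{iso}$ then follow from those for $\rho$ by passing to the quotient.

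The main difficulty, if any, is conceptual rather than technical: one must keep careful track of the convention that the outer (leftmost) factor of a decomposition inherits the target of $\phi$, which is precisely what makes $\B_1$ a right $\B$-comodule rather than a left one, and one must invoke axiom (i) to propagate the basic property across isomorphism classes. The remaining verifications are bookkeeping on top of the bi-algebra arguments already established in Theorem \ref{f-to-b-nonsym} and Theorem \ref{Bisothm}.
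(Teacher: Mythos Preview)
Your proof is correct and follows the same approach as the paper. The paper's proof is essentially your first paragraph alone: it observes that for any factorization $\phi=\phi_0\circ\phi_1$ of a basic morphism $\phi$, the factor $\phi_0$ inherits the target of $\phi$ and is therefore itself basic, so $\Delta|_{\B_1}$ lands in $\B_1\otimes\B$. The paper stops there, leaving the comodule axioms and the passage to isomorphism classes implicit; your explicit verification of coassociativity, counitality, and the preservation of basicness under $\sim$ via axiom~(i) is correct additional detail but not a different argument.
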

\begin{proof}
If $\phi\in Ob(\imath^\otimes \downarrow \imath) $,
then $\Delta(\phi)\in \B_1\otimes \B$, since the target of $\phi_0$, which is the target of $\phi$, is an object of $\B$ for any factorization $\phi=\phi_0\circ\phi_1$.
\end{proof}
See \S\ref{indecomppar}  for more details on this point of view.
\subsubsection{$B_+$ operator}
\label{B+par}
The definition of $B_+$ operators in general is quite involved, see \cite[\S3.2.1]{feynman}.
Functorially, such an operator gives a morphism $\B\to \B_1$.
Without going into a full analysis, which will be done in  \cite{B+paper}. At this points, we simply make the following definition.
\begin{df} A $B_+$ multiplication or $B_+$--operator for $\FF$ is a morphism $B_+:\B\to \B_1$ such that
$\Delta_1:=(id\otimes B_+)\circ \Delta:\B_1\to \B_1\otimes \B_1$ and $\mu_1:=\B_+\circ \mu_\otimes$ together with the unit and co--unit, yield
a unital, co--unital bi--algebra structure on $\B_1$.
\end{df}
The multiplication for a co--operad with multiplication of \cite[\S\ref{P1-gencooppar}]{HopfPart1} is an example, see  \S\ref{B+parII}.

This description also links the $B_+$ operator to Hochschild homology, as considered in \cite{CK}.

\subsection{Opposite Feynman category yields the co-opposite bi--algebra}
\label{oppositepar}
Notice that usually the opposite category of a Feynman category is not a Feynman category, but it still defines a bi--algebra. Namely,
the constructions above just yield the co-opposite bi--algebra structure $\B^{co-op}$.
This means, the multiplication is unchanged but the co--multiplication is switched. That is $\Delta(\phi^{op})=\sum_{\phi_1\circ \phi_0=\phi}\phi_1^{op}\otimes \phi_0^{op}$.

 The same holds for quotient and Hopf algebra structures discussed below, i.e.\
  $\H$ is replaced by $\H^{co-op}$.

\subsection{Hopf algebras from Feynman categories}
\label{Hopfpar}
The above bi--algebras are usually not connected.
There are several obstructions. Each identity morphism of an object $X$ potentially gives a group--like element. Additionally, unless $\V$ is discrete, there are isomorphisms which are not co--nilpotent. At a deeper level, they can prevent the identities of the different $X$ from being group-like elements and hence keep the putative Hopf quotient form being connected. There are several other obstructions to co--nilpotence, which one has to grapple with in the general case.  We will now formalize this and give checkable criteria that are met by the main examples.

\subsubsection{Almost group--like identities and the putative Hopf quotient}
A Feynman category has {\em almost group--like} identities if each of the $\phi_L$ and hence each of the $\phi_R$ appearing in a co--product of any $id_X$  \eqref{Duniteq}  is an isomorphism.

\begin{ex}
A counter--example, that is a Feynman category that does not have group--like identities, is $\FinSet_<$ or its skeleton $\Delta_+$.  In this case,
the category is also not decomposition finite. The reason is that each $id:\underline n\to \underline n$ factors as $\underline n\hookrightarrow \underline m\twoheadrightarrow \underline n$ for all $m\geq n$. Both $\Surj$ and $\Inj$ as well as all the graphical examples have group--like identities.
For these notations and a detailed analysis of the examples, see \S\ref{constexpar}, especially \S\ref{simpfeypar}, Table \ref{table1} and Table \ref{table2}.
\end{ex}

The assumption of almost group--like identities is, however, very natural and is often automatic.
The example above is symptomatic.

\begin{lem}
 If $\FF$ is  decomposition finite and has almost group--like identities then both in the symmetric and non--$\Sigma$ case:
 \begin{enumerate}
 \item \label{grouplike} The classes $[id_X]$ are group--like in $\Biso$ that is $\Deltaiso([\id_X])=[id_X]\otimes [id_X]$.
\item The two--sided ideal $\I=\langle [id_X]-[id_Y]\rangle$ in $\Biso$ is also a co--ideal.
\end{enumerate}
\end{lem}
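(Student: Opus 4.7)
The plan is to prove (1) by a direct computation with the decomposition co--product on $\id_X$, then deduce (2) from (1) using standard bi--algebra manipulations.

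For (1), start from equation \eqref{Duniteq}, which expresses $\Delta(\id_X)$ as a sum over factorizations $(\phi_L,\phi_R)$ with $\phi_L\circ\phi_R=\id_X$. The almost group--like hypothesis says every such $\phi_R:X\to Y$ is an isomorphism with inverse $\phi_L$, so each intermediate object $Y$ is isomorphic to $X$. Lemma \ref{isolem} then gives $[\phi_L]=[\phi_R]=[\id_X]$ in $\B^{iso}$, so every summand of $\Delta(\id_X)$ projects to $[\id_X]\otimes[\id_X]$.

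The delicate point is to show the total coefficient in $\B^{iso}\otimes\B^{iso}$ equals exactly one, not a larger multiple. This is governed by the direct formula \eqref{Deltaiso}, in which one sums over decomposition channels, i.e.\ equivalence classes of factorizations under the action $\sigma\cdot(\phi_L,\phi_R)=(\phi_L\sigma^{-1},\sigma\phi_R)$ of $A_Y$ on the intermediate object. After fixing a skeleton of $\F$, there is exactly one iso class of intermediate object (namely $X$ itself), and the set of factorizations through it is $\{(\phi_R^{-1},\phi_R):\phi_R\in A_X\}$, a free transitive $A_X$--torsor. Hence there is a single decomposition channel, yielding $\Deltaiso([\id_X])=[\id_X]\otimes[\id_X]$. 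The argument uses no symmetry, so it covers both the symmetric and the non--$\Sigma$ settings.

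For (2), by bilinearity it suffices to check the co--ideal conditions on the generators $[\id_X]-[\id_Y]$. From (1),
\begin{equation*}
\Deltaiso\bigl([\id_X]-[\id_Y]\bigr) = [\id_X]\otimes[\id_X]-[\id_Y]\otimes[\id_Y]=([\id_X]-[\id_Y])\otimes[\id_X]+[\id_Y]\otimes([\id_X]-[\id_Y]),
\end{equation*}
which lies in $\I\otimes\B^{iso}+\B^{iso}\otimes\I$. Since $\Deltaiso$ is an algebra morphism by Theorem \ref{Bisothm} and $\I\otimes\B^{iso}+\B^{iso}\otimes\I$ is an ideal of $\B^{iso}\otimes\B^{iso}$, the inclusion propagates to all of $\I$. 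For the co--unit, $\epsiso$ is an algebra morphism to $\Z$, so $\ker\epsiso$ is an ideal; as $\epsiso([\id_X]-[\id_Y])=1-1=0$ on every generator, we conclude $\I\subseteq\ker\epsiso$.

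The main obstacle is the bookkeeping in (1): one must resist summing term-by-term over factorizations in $\B$ (which overcounts) and instead pass through decomposition channels as in \eqref{Deltaiso}, exploiting that the $A_Y$--action on factorizations of $\id_X$ is free and transitive on each iso class of intermediate object. Everything in (2) is then formal, using only that $\Deltaiso$ and $\epsiso$ are algebra morphisms.
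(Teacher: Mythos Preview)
Your proposal is correct and follows essentially the same approach as the paper: for (1) you identify the factorizations of $\id_X$ as pairs $(\sigma^{-1},\sigma)$ of isomorphisms and observe they form a single decomposition channel, and for (2) you perform the same telescoping computation on $[\id_X]-[\id_Y]$ and the same co--unit check. Your version is slightly more explicit in two places---invoking the skeletal reduction and the $A_X$--torsor structure to pin down multiplicity one, and citing Theorem~\ref{Bisothm} to propagate the co--ideal condition from generators to the whole of $\I$---but these are elaborations of the paper's argument rather than a different route.
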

\begin{proof}
Under the assumption of almost group--like identities:
\begin{equation}
\label{coprodcalceq}
    \Delta(id_X)=\sum_{X', \sigma \in Iso(\F)(X,X')}\sigma \otimes \sigma^{-1}
\end{equation}
thus there is only one decomposition channel with multiplicity $1$,
since $[(\sigma,\sigma^{-1})]=[(id_X\otimes id_X)]$.

Using \eqref{grouplike}: $\Delta^{iso}([id_X]-[id_Y])=[id_X]\otimes [id_X]-[id_Y]\otimes [id_Y]=
([id_X]-[id_Y])\otimes [id_X]+ [id_Y]\otimes([id_X]-[id_Y])$
and $\epsiso([id_X]-[id_Y])=1-1=0$,
so that $\I$ is a co--ideal.
\end{proof}

\begin{df}
 If $\FF$ is  factorization finite and has almost group--like identities then both in the symmetric and non--$\Sigma$ case, we set $\H=\B^{iso}/\I$.
 We call $\FF$ Hopf, if it satisfies the stated conditions and the bi--algebra $\H$ has an antipode, i.e.\ $\H$ is a Hopf algebra.
\end{df}

\begin{thm}
A  Hopf  Feynman category  yields a Hopf algebra
$\H:=\B^{iso}/\I$, both in the symmetric and non--symmetric case. $\H$  is commutative in the symmetric case and not necessarily commutative in the non--symmetric case.
\end{thm}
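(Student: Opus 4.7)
The plan is to assemble the theorem as a direct consequence of the preceding bi-algebra machinery. Since $\FF$ is Hopf, it is factorization finite with almost group-like identities, so Theorem \ref{Bisothm} supplies a bi-algebra structure on $\Biso$ in both the symmetric and non-$\Sigma$ cases. The preceding lemma shows that $\I = \langle [id_X] - [id_Y]\rangle$ is a two-sided ideal by construction and simultaneously a co-ideal, hence a bi-ideal. Standard quotient theory then endows $\H = \Biso/\I$ with an induced bi-algebra structure: the multiplication is $\otimes$ on classes, the comultiplication is $\Deltaiso$, the unit is the common class $[id_\unit] = [id_X]$ for any $X$, and the counit $\epsiso$ descends to a well-defined $\epsiso$ on $\H$. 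By the definition of a Hopf Feynman category, this bi-algebra $\H$ admits an antipode and is therefore a Hopf algebra.

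For the commutativity claim in the symmetric case, I would simply invoke the earlier proposition that $\Biso$ is already a commutative algebra whenever $(\F,\otimes)$ is symmetric monoidal. Explicitly, the commutativity constraints $C_{X,X'}$ and $C_{Y,Y'}$ together with their naturality provide, for any morphisms $\phi : X \to Y$ and $\psi : X' \to Y'$, an isomorphism in the arrow category $(\F \downarrow \F)$ identifying $\phi \otimes \psi$ with $\psi \otimes \phi$. Hence $[\phi \otimes \psi] = [\psi \otimes \phi]$ in $\Biso$, and a fortiori in the quotient $\H$. In the non-$\Sigma$ case no such constraints are available, and the examples treated later in \S\ref{constexpar} confirm that commutativity generally fails; it suffices to exhibit a single pair $\phi, \psi$ whose tensor products in the two orders are not related by any isomorphism in $(\F \downarrow \F)$ and are not identified modulo $\I$.

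The only real work is confirming that the induced structure on the quotient is well defined and that the assumed antipode on $\H$ is automatically compatible with the unit, counit, multiplication, and comultiplication inherited from $\Biso$; both checks are formal once the bi-ideal property of $\I$ and the bi-algebra structure on $\Biso$ are in hand. Consequently I do not anticipate a serious obstacle — the theorem is essentially a clean packaging of the preceding results, with the commutativity half reducing to the symmetric monoidal coherence already exploited when constructing $\Biso$, and the Hopf half being precisely the content encoded in the definition of a Hopf Feynman category.
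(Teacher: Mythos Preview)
Your proposal is correct and mirrors the paper's own proof: the Hopf algebra assertion is immediate from the definition of a Hopf Feynman category together with the preceding bi-algebra and bi-ideal results, and the commutativity in the symmetric case follows because the commutativity constraints are isomorphisms in $(\F\downarrow\F)$ and hence identify $[\phi\otimes\psi]$ with $[\psi\otimes\phi]$ already in $\Biso$. The paper's proof is terser but makes exactly the same two moves.
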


\begin{proof}
The only new claim is the commutativity in the symmetric case. This is due to the fact that the commutativity constraints are isomorphisms and these become identities already in $\B^{iso}$.
\end{proof}

In general, the existence of  an antipode is complicated. We do know that for graded connected bi--algebras an antipode exists. In terms of Feynman categories this situation can be achieved by looking at definite Feynman categories.

\subsubsection{Graded Feynman categories}
\label{reflengthpar}
One thing that helps to check  connectedness and co--nilpotence is a grading. Each Feynman category has a {\em native length} for objects and morphisms.
Due to condition (i)  for a Feynman category every object $X$ has a unique  length $|X|$ given by the tensor word length of any object of $\V^{\otimes}$ representing it.
We  define the length decrease (or just length) of a morphism $\phi:X\to Y$ as $|\phi|=|X|-|Y|$. This is additive under composition and tensor. Isomorphic objects have the same length, so isomorphisms have length zero. Morphisms can also increase length, that is, have negative length (decrease), as one may have a morphism $\unit\to \imath(\ast)$ which increases length by one and hence has  length $-1$, see \cite[Remark 1.4.3]{feynman}.

An {\em integer degree function} for a Feynman category is a function $\deg:Mor(\F)\to \Z$ which is additive under composition and tensor product:
 $\deg(\phi\circ\psi)=\deg(\phi\otimes \psi)=\deg(\phi)+\deg(\phi)$, with the additional condition that isomorphisms have degree $0$. Thus the length function $|\,.\;|$ is a degree function.

A graded Feynman category with an integer degree function is {\em non-negative} or {\em non-positive} if all morphisms have non-negative or  non-positive degree respectively.
We call a graded Feynman category {\em definite} if it is non--positive or non--negative. Of course by changing $\deg$ to $-\deg$, one can change from non--positive to non--negative. One has extra structure in the definite case, which allows one to define the condition of almost connected, see Definition \ref{almostconnecteddef} and Lemma \ref{definitelem}.
All the main examples are definite.

 \begin{rmk}
 In \cite[Definition 7.2.1]{feynman}, similar notions were introduced:  a {\em degree function} has the two additional conditions: (1) to have positive values and (2) all the morphisms are generated by degree $0$ and $1$ morphisms by composition and tensor product.
  It is called a {\em proper}, if all morphisms of degree $0$ are isomorphisms.
 Many, but not all, Feynman categories have a proper degree function.  Proper implies definite.
\end{rmk}

 \begin{ex}\mbox{}
 \begin{enumerate}
\item In the set based examples: for $\FFinSet$, $|\phi|$ is a proper integer degree function. On $\Surj$, $|\phi|$ is a proper degree function and on $\Inj$, $\deg(\phi)=-|\phi|$ is a proper degree function.
 \item

In the case of graphs of higher genus ($b_1>0$), loop contractions are of native length $0$. It is more natural, to have a different grading, in which both loop and edge contractions have degree $1$ and mergers have degree $0$. This makes the
relations homogeneous, cf.\ \cite[\S5.1]{feynman}. For $\Agg^{ctd}$ this is a proper degree function. The degree of a morphism $\phi$ is the number of edges of $\gh(\phi)$.

In most practical examples, mergers are excluded, making life simple. This  includes the Feynman categories for operads, colored operads, modular operads, etc., however this excludes PROPs and other ``disconnected'' types.
In all of $\Agg$ it actually suffices to have the generators (a) isomorphisms, (c) simple loop contractions and (d) mergers. In this setting a proper degree function is given by assigning isomorphisms degree $0$, and loop contractions and mergers degree $1$.

\item
All three main examples are definite. The Feynman categories $\Surj_\O$ for algebras over operads, see \S\ref{hyppar} and \cite[\S4]{feynman} are precisely non--negative, if there is no $\O(0)$;  the length of elements of $\O(n)$ is $n-1$. They are proper if $\O(1)=\unit$ is reduced.  In the split unital case, $\O(1)=\unit \oplus \O^{red}(1)$, if $\O^{red}(1)$ has no invertible morphisms, they are have group--like identities.
Surjections are also non--negative. Dually, regarding only injections is an example of a non--positive Feynman category. All graph examples ---without extra morphisms, see \cite{feynman}--- are also non--negative.

 \end{enumerate}
 \end{ex}

\begin{prop}
\label{degreeprop}
Given a factorization $id_X:X\stackrel{\phi_R}{\to}Y\stackrel{\phi_L}{\to}X$ it follows that
\begin{enumerate}
\item For any integer degree function $\deg(\phi_R)=-\deg(\phi_L)$.
\item $|\phi_R|\leq 0$ and $|\phi_L|\geq 0$.
\item If $\FF$ has a definite integer degree function then $\deg(\phi_R)=\deg(\phi_L)=0$. I.e.\ any morphism with a left or right inverse has degree $0$.
\item If $\FF$ is definite and if  the only morphisms of $\FF$ with length $0$, which have  one--sided inverses are isomorphisms, then $\FF$ has group--like identities.
\item \label{properitem} If $\FF$ has a  proper degree function then $\FF$ has almost group--like identities.
\item  If $\FF$ is decomposition finite, then  the identity of any object $X$ does not have  a factorization
$id_X:X\stackrel{\phi_R}{\to}X\otimes Y\stackrel{\phi_L}{\to}X$ with $|\phi_R|<0$.
\end{enumerate}
\end{prop}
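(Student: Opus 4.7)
Plan: Items (1), (3), (5) are formal consequences of additivity of $\deg$, while items (2), (4), (6) rely on the hereditary condition (ii) of a Feynman category. For (1), $\deg(id_X)=0$ since identities are isomorphisms, and additivity of $\deg$ under composition gives $\deg(\phi_L)+\deg(\phi_R)=0$. For (3), in the non--negative case both $\deg(\phi_L),\deg(\phi_R)\geq 0$ and their sum is zero, forcing both to vanish; the non--positive case is symmetric. For (5), a proper $\deg$ is in particular non--negative, so (3) gives $\deg(\phi_L)=\deg(\phi_R)=0$, and by definition of proper such morphisms are isomorphisms, whence $\FF$ has almost group--like identities.

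The heart of the argument is (2). I would apply the hereditary condition (ii) twice: to $\phi_L:Y\to X$ using the vertex decomposition $X\simeq\bigotimes_{v\in I}\imath(\ast_v)$ of the target, to get $\phi_L=\bigotimes_{v\in I}\phi_{L,v}$ with $\phi_{L,v}:Y_v\to\imath(\ast_v)$ and $Y\simeq\bigotimes_{v\in I}Y_v$; and to $\phi_R:X\to Y$ using the target decomposition $Y\simeq\bigotimes_{w\in J}\imath(\ast_w)$, to get $\phi_R=\bigotimes_{w\in J}\phi_{R,w}$ with $\phi_{R,w}:X_w\to\imath(\ast_w)$ and $X\simeq\bigotimes_{w\in J}X_w$. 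Refining each $Y_v$ into vertices by (i) yields a partition $J=\coprod_{v\in I}J_v$, and the composition $\phi_L\circ\phi_R$ regroups into an $I$--indexed tensor of pieces $\phi_{L,v}\circ\bigotimes_{w\in J_v}\phi_{R,w}:\bigotimes_{w\in J_v}X_w\to\imath(\ast_v)$. The uniqueness clause of (ii), applied to the trivial decomposition $id_X=\bigotimes_{v\in I}id_{\imath(\ast_v)}$, forces each such piece to be isomorphic to $id_{\imath(\ast_v)}$, so the source $\bigotimes_{w\in J_v}X_w\simeq\imath(\ast_v)$. Since $|\unit|=0\neq 1=|\imath(\ast_v)|$, each $J_v$ is non--empty and contains exactly one index $w$ with $X_w\simeq\imath(\ast_v)$ (the others being $\unit$). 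This defines an injection $I\hookrightarrow J$, hence $|X|\leq|Y|$, i.e.\ $|\phi_R|\leq 0$ and $|\phi_L|\geq 0$. Item (4) then follows by combining (2) with the definite hypothesis: when $|\cdot|$ is non--negative (or non--positive), the inequality from (2) upgrades to $|\phi_L|=|\phi_R|=0$, and the assumption that length--zero morphisms with one--sided inverses are isomorphisms forces $\phi_L,\phi_R$ to be isomorphisms.

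For item (6), I iterate tensorially. Given a factorization $id_X:X\stackrel{\phi_R}{\to}X\otimes Y\stackrel{\phi_L}{\to}X$, define inductively $\phi_R^{(n)}=(\phi_R\otimes id_{Y^{\otimes(n-1)}})\circ\phi_R^{(n-1)}:X\to X\otimes Y^{\otimes n}$ and $\phi_L^{(n)}=\phi_L^{(n-1)}\circ(\phi_L\otimes id_{Y^{\otimes(n-1)}})$, with $\phi_R^{(1)}=\phi_R$ and $\phi_L^{(1)}=\phi_L$. Using $\phi_L\circ\phi_R=id_X$ inductively one checks $\phi_L^{(n)}\circ\phi_R^{(n)}=id_X$ for every $n\geq 1$. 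If $|Y|>0$, the intermediate objects $X\otimes Y^{\otimes n}$ have pairwise distinct lengths $|X|+n|Y|$, producing infinitely many distinct factorizations of $id_X$ and contradicting decomposition finiteness.

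The main obstacle is (2): one must carefully align two a priori unrelated vertex decompositions of the intermediate object $Y$ and apply the uniqueness clause of the hereditary condition to the very simple morphism $id_X$, in order to extract the combinatorial injection $I\hookrightarrow J$.
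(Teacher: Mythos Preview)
Your proof is correct and follows essentially the same approach as the paper: items (1), (3), (5) by additivity of $\deg$; item (2) via the hereditary condition (ii) to reduce to vertex--by--vertex factorizations of $id_{\ast_v}$ (your partition $J=\coprod_v J_v$ and injection $I\hookrightarrow J$ spell out in more detail what the paper compresses into the single line ``we end up with sequences $\ast_v\to Y_v\to\ast_v$ with $|Y_v|\geq 1$''); and item (6) by the same iterated tensor construction $\phi_R^{(n)},\phi_L^{(n)}$ producing infinitely many factorizations of $id_X$. One small remark: for (3) you derive it directly from (1), which is cleaner than the paper's ``(3) follows from (2)''.
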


\begin{proof} (1): $\deg(\phi_L)+\deg(\phi_R)=\deg(\phi_L\circ \phi_R)=\deg(\id_X)=0$.
(2): Decomposing the morphisms for $X=\bigotimes_v\ast_v$ according to (ii) we end up with  sequences
$$\ast_v \stackrel{\phi_{R,v}}{\to}Y_v\stackrel{\phi_{L,v}}{\to}\ast_v$$ with $\phi_{L,v}\circ \phi_{R,v}=id_{\ast_v}$. This follows from decomposing $\phi_L$ and $\phi_R$ and then comparing to the decomposition of the isomorphism $\phi$. We see that $|Y_v|\geq 1$ since there are no morphisms from  any $X$ of length greater or equal to one to $\unit$. Thus $|\phi_{R_v}|\leq 0$  and hence $|\phi_R|=\sum_v |\phi_{R_v}|\leq 0$. (3) follows from (2) and (4) and (5) follow from (3).

(6): Define $\phi_R^{(1)}=\phi_R$ and for $n\geq 2$: $\phi_R^{(n)}= \phi_R \otimes id \circ \phi_R^{( n-1)} : X\to X\otimes Y^{\otimes n}$
and likewise set $\phi_L^{(1)}=\phi_L$ and for $n\geq 1$: $\phi_L^{(n)}=\phi_L^{(n-1)}\circ \phi_L\otimes id: X\otimes Y^{\otimes n}\to X$. These satisfy $\phi_L^{(n)}\circ \phi_R^{(n)}=id_X$ and there will be infinitely many possible decompositions of $id_X$, one for each $n$, and, hence, we arrive at a contradiction.
\end{proof}

\subsubsection{Morphisms of degree $0$ and almost--connectedness  in the definite case}
We can reduce the question of the existence of an antipode further in the case of a definite Feynman category to the connectedness of the degree $0$ morphisms.
Let $\B_0$ be the span of the  morphisms of degree $0$ and set $\B_\V=\Z[Hom_{\F}(\imath(\V),\imath(\V))]$.

\begin{lem}\label{definitelem} Assume that $\FF$ is decomposition finite, strict and definite w.r.t. $\deg$, then
\begin{enumerate}
\item
  $\B_0$ together with the restriction of the co--unit $\eps|_{\B_0}$ are a sub--co--algebra of $\B$. Together  with $\otimes$ the unit $\eta$, $\B_0$ is  a sub--algebra.
\item  $\B_0$ is isomorphic to the  symmetric tensor algebra on morphisms $\phi_v:X\to \imath(\ast_v)$ of degree $0$.
\end{enumerate}
If the length function $|\;.\;\,|$ is definite, then
\begin{enumerate}
\setcounter{enumi}{2}

\item$\B_{\V}$ together with
the co--unit $\eps|_{\B_\V}$ and the unit $\eta$ form a pointed co--algebra.
\item \label{isoitem} $\B_\V=Hom_{\F}(\iota(\V),\iota(\V))^{\otimes}=\B_\V^{\otimes}$.
Thus any morphism of length $0$ has a decomposition into morphisms of $\B_\V$
up to permutations in the symmetric case.
\end{enumerate}
\end{lem}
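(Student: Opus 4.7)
The whole lemma is driven by the additivity of $\deg$ (and of the length function) under composition and tensor, combined with the hereditary axiom (ii) of a Feynman category; definiteness is what prevents cancellations between positive and negative contributions in a sum such as $\deg(\phi_0)+\deg(\phi_1)=\deg(\phi)$.

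For (1), additivity of $\deg$ under $\otimes$ together with $\deg(id_\unit)=0$ makes $\B_0$ a unital subalgebra of $(\B,\otimes,\eta)$. For the coalgebra part, take $\phi\in\B_0$ and any factorization $\phi=\phi_0\circ\phi_1$: by additivity $\deg(\phi_0)+\deg(\phi_1)=0$, and by definiteness both summands carry the same sign, so each must vanish. Hence every term in $\Delta(\phi)$ lies in $\B_0\otimes\B_0$. The co-unit $\eps$ vanishes away from identities, and identities have degree $0$, so $\eps|_{\B_0}$ is well-defined and $(\B_0,\Delta,\eps|_{\B_0})$ is a sub-coalgebra.

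For (2), I would invoke axiom (ii): each morphism $\phi$ is, up to unique isomorphism, a tensor product $\bigotimes_v\phi_v$ of basic morphisms $\phi_v:X_v\to\imath(\ast_v)$. Additivity of $\deg$ under $\otimes$ and definiteness force $\deg(\phi_v)=0$ whenever $\deg(\phi)=0$. The uniqueness clause of (ii), which identifies two such decompositions up to a permutation of the factors and isomorphisms of the $X_v$, then identifies $\B_0$ with the free symmetric algebra on the degree-zero basic morphisms.

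For (3) and (4) I would repeat the previous arguments with the length function $|\,\cdot\,|$ in place of $\deg$, using the hypothesis that $|\,\cdot\,|$ is definite. In (3) the new input is: if $\phi:\imath(\ast)\to\imath(\ast')$ satisfies $|\phi|=0$ and factors as $\phi=\phi_0\circ\phi_1$ with $\phi_1:\imath(\ast)\to Y$, then additivity plus definiteness give $|\phi_0|=|\phi_1|=0$, whence $|Y|=1$ and so $Y$ lies (up to axiom (i)) in $\imath(\V)$; thus both factors belong to $\B_\V$ after replacing $Y$ by its unique representative in $\imath(\V)$. Consequently $\Delta$ restricts to $\B_\V\to\B_\V\otimes\B_\V$ and $\eps|_{\B_\V}$ is well-defined; the pointed structure is given by the distinguished grouplike $\eta(1)=id_\unit$. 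In (4), axiom (ii) applied to any $\phi$ with $|\phi|=0$ again produces a tensor decomposition $\bigotimes_v\phi_v$, and definiteness of the length function forces $|X_v|=1$, so each $\phi_v$ lies in $Hom_\F(\imath(\V),\imath(\V))$; uniqueness in (ii) yields the decomposition into $\B_\V$-morphisms, strict in the non-$\Sigma$ case and up to a permutation of the factors in the symmetric case. The only genuine obstacle is the definiteness step: without it, a morphism of degree or length zero could decompose with cancelling nonzero contributions, and both the sub-coalgebra assertion of (1)/(3) and the uniform $|X_v|=1$ bound needed for (2)/(4) would fail; once definiteness is used, everything else is bookkeeping of the hereditary axiom.
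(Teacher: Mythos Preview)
Your proof is correct and follows essentially the same approach as the paper: in each part you use additivity of $\deg$ (resp.\ $|\,\cdot\,|$) under composition and tensor together with definiteness to force all summands in a zero-sum to vanish, and then invoke axiom~(ii) for the tensor decomposition into basic morphisms. The paper's proof is slightly terser but makes exactly the same moves in the same order.
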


\begin{proof}
Suppose $\phi:X\stackrel{\phi_R}{\to}Z\stackrel{\phi_L}{\to}Y$  has degree $0$, then $\deg(\phi_R)+\deg(\phi_L)=\deg(\phi)=0$. In the definite case this implies  $\deg(\phi_R)=\deg(\phi_L)=0$, which shows that $\B_0$ is a sub--co--algebra and since $\otimes$ has is additive in degree, $\B_0$ is also subalgebra. The co-unit restricts and the unit is of length $0$.
Also if $\deg(\phi)=0$ as $\phi\simeq \bigotimes_{v\in V}\phi_v$ with $\deg(\phi_v)\geq 0$ (or $\leq 0$) and $\sum_{v\in V}|\phi_v|=0$, which means that  $\forall v\in V:\deg(\phi_v)=0$.
In particular, if $|Y|=1$, we see that $|X|=1$ as $|\phi|=0$ and since $|\phi_L|=|\phi_R|=0$, also $|Z|=1$ so that $\B_\V$ is a sub--co--algebra. The image of $\eta$ is in $\B_\V$ and $\eps$ restricts as the $id_X\subset \B_V$.
The last statement follows from (2) by the observation that if $|\phi_v|=0$, then $|X_v|=1$ and hence $X_v=\imath(*_v)$.

\end{proof}

\begin{rmk}
The elements of $\B_\V$ split according to whether they are  isomorphisms or not. That is, whether or not they lie in $Mor(\V)$.
\end{rmk}

 By induction, one can see that what can keep things from being connected is $\B_0$ or in the case of $\deg=|\;.\;|$ being definite $\B_{\V}$. This is analogous to the situation for co--operads with multiplication, where, $\V$ is trivial and $\B_\V=\O(1)$ is the pointed co--algebra as in Definition \ref{almostconnecteddef}.

\begin{cor}
Assume that $\FF$ has almost group--like identities. If $\FF$ has a definite degree function
then
\begin{enumerate}
\item $\B_{0}^{iso}:=(\B_0/\sim)$  is a sub--bi--algebra with the induced unit and co--unit.
 \item If  $\I_0$  is the restriction  of the ideal and co--ideal $\I=\langle[id_X]-[id_Y]\rangle$ to $\B_0$ then $(\H_0=\B^{iso}_0,\eta,\eps)$ is a sub--bi--algebra of $\H$.

\item If $|\;.\;|$ is a definite degree function then  $\Biso_{\V}:=(\B_\V/\sim)$ is a sub--bi--algebra with the induced unit and co--unit.
\item Let $\I_\V$ be $\I$ restricted to $\Biso_{\V}$ then $(\H_\V:=\Biso_\V/\I_\V,\Deltaiso,\epsiso,\etaiso)$ is a sub--bi--algebra of $\H$.
\end{enumerate}
\end{cor}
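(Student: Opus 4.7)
The plan is to reduce everything to Lemma \ref{definitelem} by verifying that (a) the equivalence relation defining $\Biso$ respects both subspaces $\B_0$ and $\B_\V$, and (b) the ideal-cum-coideal $\I$ interacts well with these subspaces.

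First I would observe that, by the defining axiom of an integer degree function, every isomorphism has degree $0$; the length function $|\,\cdot\,|$ also satisfies this. Hence if $\phi \sim \phi'$ via $\phi' = \sigma^{-1} \circ \phi \circ \sigma'$ with $\sigma, \sigma'$ isomorphisms, then $\deg(\phi) = \deg(\phi')$ and $|\phi| = |\phi'|$ (and the source/target lengths are preserved). The subspaces $\B_0$ and $\B_\V$ therefore descend to well-defined subgroups $\B_0^{iso}, \B_\V^{iso} \subset \Biso$.

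For (1), Lemma \ref{definitelem}(1) already supplies the statement that $\B_0$ is a sub-bi-algebra of $\B$: it is closed under $\otimes$ by additivity of $\deg$; closed under $\Delta$ because in the definite case any factorization of a degree-$0$ morphism has both factors of degree $0$; contains $\eta(1) = id_\unit$ (of degree $0$); and carries the restriction of $\eps$ as co-unit. The compatibility of $\sim$ with both $\otimes$ and $\Delta$ established in the proofs of Theorems \ref{f-to-b-nonsym} and \ref{Bisothm} then transfers this sub-bi-algebra structure to $\B_0^{iso}$. For (2), I would set $\I_0 := \I \cap \B_0^{iso}$; since every $[id_X]$ has degree $0$ and hence lies in $\B_0^{iso}$, the generators $[id_X]-[id_Y]$ of $\I$ all sit in $\B_0^{iso}$, so $\I_0$ inherits both the ideal and the coideal structure. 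The induced map $\H_0 = \B_0^{iso}/\I_0 \hookrightarrow \B^{iso}/\I = \H$ is then an injective bi-algebra homomorphism, identifying $\H_0$ as a sub-bi-algebra of $\H$.

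For (3) and (4), I would rerun the same argument with $|\,\cdot\,|$ replacing $\deg$, using Lemma \ref{definitelem}(3)--(4). The crucial additional input is that a factorization $\phi = \phi_0 \circ \phi_1$ of $\phi : \imath(\ast_v) \to \imath(\ast_w) \in \B_\V$ through $Y$ forces $|\phi_0| = |\phi_1| = 0$ by definiteness, hence $|Y| = 1$ and $Y \simeq \imath(\ast_u)$ for some $u$; thus $\Delta(\B_\V) \subset \B_\V \otimes \B_\V$, and closure under the unit and co-unit is immediate. The main technical obstacle is closure of $\B_\V$ under the multiplication $\otimes$: prima facie, the tensor of two morphisms between length-$1$ objects is a morphism between length-$2$ objects, which lies outside $\B_\V$. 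The resolution is furnished by Lemma \ref{definitelem}(4), which identifies $\B_0 = \B_\V^{\otimes}$: the multiplicative structure on $\B_\V^{iso}$, and on $\H_\V := \B_\V^{iso}/\I_\V$ with $\I_\V := \I \cap \B_\V^{iso}$, is the one it inherits from the ambient sub-bi-algebra $\B_0^{iso}$, where modding out by $\I$ collapses all identities to the common element $[id_\unit] = [id_{\imath(\ast_v)}]$ which then serves as the unit of $\H_\V$, completing the identification of $\H_\V$ with a sub-bi-algebra of $\H$.
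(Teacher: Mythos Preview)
Your argument is correct and follows exactly the route the paper intends: the paper's own proof is the single line ``Immediate from the above,'' referring to Lemma~\ref{definitelem}. Your unpacking of why degree and length are constant on isomorphism classes, why the generators of $\I$ all lie in $\B_0^{iso}$, and why the grading forces $\I\cap\B_0^{iso}$ to coincide with the ideal generated inside $\B_0^{iso}$ (so that $\H_0\hookrightarrow\H$ is injective) are all points the paper leaves implicit.

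Your flagging of the $\otimes$-closure issue for $\B_\V$ is apt: as defined, $\B_\V$ consists of morphisms between length-$1$ objects and is \emph{not} literally closed under $\otimes$, so ``sub-bi-algebra'' in (3) cannot be read verbatim. The paper is loose here; Lemma~\ref{definitelem}(3) only claims that $\B_\V$ is a pointed \emph{co}-algebra, and item~(4) (whose left-hand side should evidently read $\B_0$ rather than $\B_\V$) says that $\B_\V$ tensor-generates $\B_0$. Your resolution---reading the multiplicative structure on $\H_\V$ through the ambient $\H_0$---is the sensible interpretation, though strictly speaking it exhibits $\H_\V$ as a generating sub-co-algebra of $\H_0$ rather than as a sub-bi-algebra in its own right. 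This is a wrinkle in the paper's statement, not in your reasoning.
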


\begin{proof}
Immediate from the above.
\end{proof}

\begin{df}
\label{almostconnecteddef}
We call $\FF$ almost connected with respect to a given definite degree function if
\begin{enumerate}
\renewcommand{\theenumi}{\roman{enumi}}
\item $\FF$ is factorization finite.
\item $\FF$ has almost group--like identities.
\item $(\H_0^{iso},\Delta^{iso},\eps,\eta)$ is connected as a pointed co--algebra
\end{enumerate}
\end{df}

\begin{lem} Assume $\FF$ is factorization finite, has almost group--like identities and $|\;.\;|$ is a definite degree function. Then:
if $(\H_V,\eps,\eta)$ is almost connected,  $(\H_0,\eps,\eta)$ is as well and hence $\FF$ is almost connected w.r.t.\ $|\;.\;|$.
\end{lem}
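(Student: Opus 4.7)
The plan is to propagate connectedness from $\H_\V$ to $\H_0$ by combining two facts: $\H_0$ is generated as an algebra by the sub-coalgebra $\H_\V$, and the coradical filtration of any bi-algebra is an algebra filtration.

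For the algebra generation, Lemma \ref{definitelem}(\ref{isoitem}), which crucially uses definiteness of $|\cdot|$, shows that every length-zero morphism $\phi\colon X\to Y$ decomposes, up to iso (and up to a permutation in the symmetric case), as a tensor product $\bigotimes_v \phi_v$ of single-vertex morphisms. Passing to iso classes and quotienting by $\I_0$ gives $[\phi]=\prod_v [\phi_v]$ with each $[\phi_v]\in \H_\V$, so $\H_0$ is algebra-generated by $\H_\V$. For exhaustion of the coradical filtration $\{F_n^c\}$ of $\H_0$, I would use that $\{F_n^c\}$ is an algebra filtration ($F_n^c\cdot F_m^c\subseteq F_{n+m}^c$) and that $F_n^c(\H_\V)\subseteq F_n^c(\H_0)$ by the coalgebra inclusion; the hypothesis $\H_\V=\bigcup_n F_n^c(\H_\V)$ then forces any product $\prod [\phi_i]$ with $[\phi_i]\in F_{n_i}^c(\H_\V)$ into $F_{\sum n_i}^c(\H_0)$, and by algebra generation $\H_0=\bigcup_N F_N^c(\H_0)$.

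To identify the coradical itself with $\Z\cdot 1$, I would work in $\Biso_0$ with the source-length grading, which is preserved by $\Delta$ in the definite case thanks to Proposition \ref{degreeprop} (both factors of any decomposition of a length-zero morphism have length zero). Decomposing a group-like $g=\sum g_n$ by source-length degree, the equation $\Delta(g)=g\otimes g$ produces cross terms $g_n\otimes g_m$ for $n\neq m$ which have no counterpart in $\Delta(g)$, forcing $g$ to be concentrated in a single degree. For a pure-degree group-like $g_n$, $\Delta(g_n)=g_n\otimes g_n$ is a single-term sum, so the defining decomposition $\phi=\phi_0\circ\phi_1$ for $[\phi]=g_n$ must reduce (modulo iso) to $[\phi_0]=[\phi_1]=[\phi]$, whence by Lemma \ref{isolem} $\phi$ is an iso, so $g_n=[id_X]$. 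All such $[id_X]$ collapse to $1$ in $\H_0$; that no further group-likes appear in the quotient is checked by lifting and using connectedness of $\H_\V$ to control the non-identity contributions. Together, $(\H_0,\Delta^{iso},\eps,\eta)$ is a connected pointed coalgebra, and with the standing hypotheses Definition \ref{almostconnecteddef} yields that $\FF$ is almost connected w.r.t.\ $|\cdot|$.

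The main obstacle is the last step of the coradical identification: one must carefully track how the $\I_0$-identifications across different source lengths break a naive descent of the source-length grading from $\Biso_0$ to $\H_0$, and show that no genuinely new group-likes are introduced upon quotienting. The remaining bookkeeping—the algebra-filtration property of the coradical and its propagation by generation—is then standard.
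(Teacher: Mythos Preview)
Your overall strategy---reduce to generation by $\H_\V$ via Lemma~\ref{definitelem}(\ref{isoitem}) and then propagate connectedness through products---is exactly the paper's (its entire proof is ``This follows from Lemma~\ref{definitelem}(\ref{isoitem}) by applying the bi--algebra equation''). However, your implementation has a genuine gap: the assertion that ``the coradical filtration of any bi--algebra is an algebra filtration'' is false in general. The coradical filtration $\{F_n^c\}$ of a bialgebra $H$ satisfies $F_n^c\cdot F_m^c\subseteq F_{n+m}^c$ \emph{if and only if} the coradical $F_0^c$ is itself a sub--bialgebra (see e.g.\ Montgomery, \emph{Hopf Algebras and their Actions on Rings}, Lemma~5.2.8). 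In your argument this would require knowing that the coradical of $\H_0$ is $\Z\cdot 1$, which is precisely the connectedness you are trying to establish, so the reasoning is circular.

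The fix is to run the argument in the other direction, which is what the paper's invocation of ``the bi--algebra equation'' actually means. Starting from the coradical filtration $\{(\H_\V)_n\}$ of the connected $\H_\V$, \emph{define} a filtration on $\H_0$ by $G_N:=\sum_{n_1+\cdots+n_k\le N}(\H_\V)_{n_1}\cdots(\H_\V)_{n_k}$. The bi--algebra equation $\Delta(ab)=\Delta(a)\Delta(b)$ together with $\Delta((\H_\V)_n)\subset\sum_{i+j=n}(\H_\V)_i\otimes(\H_\V)_j$ immediately gives $\Delta(G_N)\subset\sum_{I+J=N}G_I\otimes G_J$, so $\{G_N\}$ is a coalgebra filtration with $G_0=\Z\cdot 1$; algebra generation gives $\H_0=\bigcup_N G_N$, hence $\H_0$ is connected. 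With this in place your third and fourth paragraphs become unnecessary: you do not need to classify group--likes directly, and the worry about the source--length grading failing to descend to $\H_0$ (which is a real obstruction to that approach, since $[id_X]=[id_Y]$ across different $|X|$) simply does not arise.
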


\begin{proof}
This follows from Lemma \ref{definitelem} (\ref{isoitem}) by applying  the bi--algebra equation.
\end{proof}

\begin{thm}
\label{HopfFeythm}
 If $\FF$ is almost connected then $\FF$ is  Hopf.
\end{thm}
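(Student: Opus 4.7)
My plan is to exploit the definite degree function implicit in the almost connectedness hypothesis in order to place $\H$ into the framework of a filtered bi--algebra with a Hopf sub--algebra in degree zero, where the antipode can be constructed by the standard convolution--inverse argument.

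First, I would transfer the grading from $\FF$ to $\H$. Fixing a definite integer degree function $\deg$, which after possibly replacing $\deg$ by $-\deg$ may be assumed non--negative, the additivity of $\deg$ under composition and tensor product and its vanishing on isomorphisms imply that it descends to a non--negative grading $\H=\bigoplus_{n\geq 0}\H_n$ compatible with the bi--algebra structure: $\mu(\H_n\otimes\H_m)\subset\H_{n+m}$ and $\Deltaiso(\H_n)\subset\bigoplus_{i+j=n}\H_i\otimes\H_j$. Setting $F_n\H=\bigoplus_{k\leq n}\H_k$ yields a bi--algebra filtration with $F_0\H=\H_0$.

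Next, I would establish the base case by showing $\H_0$ is Hopf. By the Corollary preceding the theorem, $\H_0$ is a sub--bi--algebra of $\H$, and assumption (iii) of almost connectedness states that it is connected as a pointed co--algebra. By the classical result of Sweedler, a pointed connected bi--algebra is automatically a Hopf algebra: the antipode $S_0=\sum_{k\geq 0}(-1)^k(\mathrm{id}-\eta\eps)^{*k}$ converges pointwise, since iterating the reduced co--product on any element of a pointed connected co--algebra eventually vanishes.

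Finally, I would extend $S_0$ to an antipode $S$ on all of $\H$ by induction on degree, using the defining equation $\mu(S\otimes\mathrm{id})\Deltaiso=\eta\eps$. For $\phi\in\H_n$ with $n\geq 1$, writing $\Deltaiso(\phi)=\phi\otimes 1+1\otimes\phi+\bar\Deltaiso(\phi)$ gives the recursion $S(\phi)=-\phi-\mu(S\otimes\mathrm{id})(\bar\Deltaiso(\phi))$. The main obstacle is that $\bar\Deltaiso(\phi)$ may contain factorizations $\phi=\phi_0\circ\phi_1$ where $\phi_1$ is a non--identity element of $\H_0$, so that $\phi_0$ has the full degree $n$; this prevents a naive induction on $\deg$ alone. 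The resolution is to further expand such terms by applying $\bar\Deltaiso|_{\H_0}$ to the $\H_0$--factor. Pointed connectedness of $\H_0$ guarantees that this secondary iteration terminates, so that the combined induction on the pair (degree, coradical depth in $\H_0$) produces a well--defined $S(\phi)$. Equivalently, one verifies that $\mathrm{id}-\eta\eps$ is locally convolution--nilpotent on all of $\H$, so that the series $\sum_{k\geq 0}(-1)^k(\mathrm{id}-\eta\eps)^{*k}$ converges pointwise and defines an antipode globally. A parallel argument with $\mu(\mathrm{id}\otimes S)\Deltaiso=\eta\eps$ confirms that the same $S$ is a two--sided convolution inverse of $\mathrm{id}$, completing the proof that $\H$ is Hopf.
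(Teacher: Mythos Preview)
Your proposal is correct and follows essentially the same strategy as the paper: both arguments use the definite degree function (taken non--negative without loss of generality) together with the connectedness of $\H_0$ to conclude that $\H$ is Hopf. The paper phrases this as showing $\H$ is co--nilpotent and hence connected, handling the degree--zero factors by invoking ``the almost connectedness of $\B_0$ and co--associativity'' in a single sentence, whereas you spell out the same mechanism explicitly as a double induction on degree and coradical depth in $\H_0$, and frame the conclusion as pointwise convergence of the convolution series $\sum_{k\geq 0}(-1)^k(\mathrm{id}-\eta\eps)^{*k}$; these are two ways of saying the same thing.
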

\begin{proof}
We show that $\H$ is co--nilpotent and hence connected. WLOG we assume $\deg$ is non--negative. Any decomposition of a morphisms $\phi$ into $(\phi_0,\phi_1)$  for which $\deg(\phi_0),\deg(\phi_1)\neq 0$ has $\deg(\phi_0),\deg(\phi_1)<\deg(\phi)$ due to the additivity of $\deg$. These terms of $\Delta^{iso}$ of  lesser degree are taken care of by induction. The terms with degree $0$ factors are taken care of by the almost connectedness of $\B_0$ and co--associativity.
\end{proof}

\begin{prop}
If $\deg$ is a proper degree function for a factorization finite $\FF$, then $\FF$ is almost connected and hence Hopf.
\end{prop}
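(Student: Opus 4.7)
The plan is to verify the three conditions of Definition \ref{almostconnecteddef}, after which Theorem \ref{HopfFeythm} applies directly. Condition (i), factorization finiteness, is part of the hypothesis. Condition (ii), almost group-likeness of identities, is precisely the content of Proposition \ref{degreeprop}(\ref{properitem}): a proper degree function forces almost group--like identities. Hence the only real work is in verifying condition (iii): that $(\H_0^{iso},\Delta^{iso},\eps^{iso},\eta)$ is connected as a pointed co--algebra.

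The crucial input is that \emph{properness} strengthens definiteness by demanding that every morphism of degree $0$ is an isomorphism. Consequently $\B_0$ is the free Abelian group on the isomorphisms of $\F$. By Lemma \ref{isolem}, every isomorphism $g\colon X\to Y$ is $\sim$--equivalent to $id_X$; explicitly, taking $\sigma=id_X$ and $\sigma'=g^{-1}$ in the commutative square of \S\ref{isopar} exhibits $g$ as $(\sigma')^{-1}\circ id_X\circ\sigma$. Therefore $\B_0^{iso}$ is generated, as an Abelian group, by the classes $[id_X]$ as $[X]$ runs over isomorphism classes of objects of $\V$.

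After passing to the quotient by the restriction $\I_0$ of the ideal $\I=\langle[id_X]-[id_Y]\rangle$, all of these generators are identified with the single class $[id_\unit]$, so $\H_0^{iso}=\Z\cdot[id_\unit]$: the rank--one pointed co--algebra with distinguished group--like element $[id_\unit]$, which is tautologically connected. Theorem \ref{HopfFeythm} then delivers the Hopf property. The only point requiring care, and hence the only genuine obstacle, is the observation that properness (as opposed to mere definiteness) is what eliminates non--isomorphism morphisms in degree $0$; once this is in hand, the collapse of $\H_0^{iso}$ to the ground ring is immediate from Lemma \ref{isolem} and the very definition of $\I$.
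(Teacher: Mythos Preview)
Your proof is correct and follows essentially the same approach as the paper's: invoke Proposition~\ref{degreeprop}(\ref{properitem}) for almost group--like identities, then observe that properness forces every degree-$0$ morphism to be an isomorphism, so $\H_0$ collapses to the single class $[id_\unit]$. The paper's proof is a two-line sketch (``$\H_0=[id_\unit]$ and is connected''); you have spelled out the collapse via Lemma~\ref{isolem} and the definition of $\I$, which is exactly the content the paper leaves implicit. One minor slip: the classes $[id_X]$ generating $\B_0^{iso}$ should range over isomorphism classes of objects of $\F$ (equivalently $\V^\otimes$), not just $\V$, but this does not affect the argument since all are identified in the quotient anyway.
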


\begin{proof}
By Proposition \ref{degreeprop} (\ref{properitem}) $\FF$ has almost group--like identities. $\H_0=[id_{\unit}]$ and is connected.
\end{proof}

\begin{rmk}
Any morphism $\phi:X\to Y$ satisfies $\Delta(\phi)=id_X\otimes \phi+ \phi\otimes id_Y + \dots$. In the case of almost group--like identities, the $id_X$  are  group--like elements in $\B^{iso}$. Hence it is interesting to study the co--radical filtration and the $([id_X],[id_Y])$--primitive elements in $\B$. They correspond to the generators for morphisms in Feynman categories \cite{feynman}. In the main examples they are all tensors of elements of length $1$.

\end{rmk}

\subsection{Functoriality}
\label{functpar}
Let $\ff:\FF\to \FF'$ be a morphism of Feynman categories. In the strict case, this is  a pair of functors $\ff=(v,f)$: $v:\V\to \V$ and
$f:\F\to \F'$, strict symmetric monoidal, compatible with all the structures, see \cite[Chapter 1.5]{feynman}. In general, one allows strong monoidal functors.
In the non-$\Sigma$ case, the functor $f$ is required to be strict, resp.\ strong monoidal.
For a morphism $\phi\in Mor(\F')$ thought of as a characteristic function $\phi(\psi)=\delta_{\phi,\psi}$ one calculates

\begin{equation}
\label{pullbackeq}
f^*(\phi):=\phi \circ f=\sum_{\hat\phi\in Mor(\F):f(\hat\phi)=\phi}\hat\phi
\end{equation}
This induces a pull--back operation under $\ff$.
The pull--back descends to isomorphism classes. We set $[\phi]([\psi])=1$ if $\phi$ and $\psi$ are in the same class and $0$ otherwise.
The lift is defined by $f^*([\phi])([\hat \psi]):=[\phi]([f(\hat \psi)])$.
\begin{prop}
\label{bialgfunprop}
For non--$\Sigma$ Feynman categories:
\begin{enumerate}
\item Via $f^*$, $\ff$ induces a morphism of unital algebras $\B_{\FF'}\to \B_{\FF}$ .
\item If $f$ is injective on objects, then $f^*$ induces a morphism of co--algebras $\B_{\FF'}\to \B_{\FF}$.
\item If $f^*$ is bijective
on objects, it induces a morphism of co--unital co--algebras $\B_{\FF'}\to \B_{\FF}$.
\end{enumerate}
For Feynman categories:
\begin{enumerate}
\setcounter{enumi}{3}
\item
Via $f^*$, $\ff$ induces a morphism of unital algebras $\B^{iso}_{\FF'}\to \B^{iso}_{\FF}$.
\item If $f$ is essentially injective on objects, then $f^*$ induces a morphism of co--algebras $\B^{iso}_{\FF'}\to \B^{iso}_{\FF}$.
\item If $f^*$ is essentially bijective
on objects, it induces a morphism of co--unital co--algebras $\B^{iso}_{\FF'}\to \B^{iso}_{\FF}$.
\end{enumerate}
\end{prop}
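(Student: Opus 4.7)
The plan is to verify each of the six compatibilities (algebra, coalgebra, counit, and their $\Biso$ counterparts) by unpacking the pullback formula \eqref{pullbackeq} and matching terms using the hereditary axiom (ii) of Feynman categories. I would first handle (1)--(3) in the non-\SSigma{} case, then transfer everything to isomorphism classes for (4)--(6).

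For (1), multiplicativity reduces to exhibiting a bijection between $\{\hat\chi : f(\hat\chi)=\phi\otimes\psi\}$ and $\{(\hat\phi,\hat\psi) : f(\hat\phi)=\phi, \,f(\hat\psi)=\psi\}$. Axiom (ii) decomposes any preimage $\hat\chi = \bigotimes_v \hat\chi_v$ into one-comma generators; matching these against the analogous decomposition of $\phi\otimes\psi$ produces the unique split $\hat\chi=\hat\phi\otimes\hat\psi$ (unique in the non-\SSigma{} case because word order is rigid). Unit preservation $f^*(id_{\unit'})=id_\unit$ follows because a morphism of Feynman categories satisfies $f\circ\iota=\iota'\circ v$ and hence preserves tensor length, so $f(\hat X)=\unit'$ forces $\hat X=\unit$; combined with $Hom_\F(\unit,\unit)=\{id_\unit\}$ (again from axiom (ii)) this gives the result.

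For (2), comultiplicativity becomes a matching of two sums of triples. The sum $(f^*\otimes f^*)\circ\Delta(\phi)$ ranges over $(\phi_0,\phi_1,\hat\phi_0,\hat\phi_1)$ with $\phi=\phi_0\circ\phi_1$ and $f(\hat\phi_i)=\phi_i$, while $\Delta\circ f^*(\phi)$ ranges over $(\hat\phi,\hat\phi_0,\hat\phi_1)$ with $\hat\phi=\hat\phi_0\circ\hat\phi_1$ and $f(\hat\phi)=\phi$. The candidate bijection sends $(\phi_0,\phi_1,\hat\phi_0,\hat\phi_1)$ to $(\hat\phi_0\circ\hat\phi_1,\hat\phi_0,\hat\phi_1)$, but this is only defined when the target of $\hat\phi_1$ equals the source of $\hat\phi_0$; since both project under $f$ to the intermediate object of the image factorization, injectivity on objects forces them to agree. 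The inverse direction is immediate from functoriality. For (3), counit preservation amounts to $\#\{\hat X : f(\hat X)=X\}=1$ for each $X$ (vanishing on non-identities is automatic), which is exactly the bijectivity hypothesis.

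For (4)--(6), one first checks that $f^*$ descends to $\Biso$: this holds because $f$ takes isomorphisms in $(\F\downarrow\F)$ to isomorphisms in $(\F'\downarrow\F')$, so equivalent morphisms have equivalent preimages. The arguments of (1)--(3) then transpose verbatim, with ``injective'' and ``bijective on objects'' read up to isomorphism. The main obstacle I anticipate is the symmetric version of (4): at the level of $\B$ itself, the tensor split $\hat\chi=\hat\phi\otimes\hat\psi$ is no longer unique because of symmetry constraints reordering the one-comma factors of axiom (ii), and one must check that these multiplicities collapse correctly after passing to co-invariants. The book-keeping here is precisely parallel to the diagram \eqref{deltamuisoeq} used in the proof of Theorem~\ref{Bisothm}, and I would reuse that strategy, identifying equivalent tensor splits via the isomorphisms in the hereditary decomposition so that the bijection of equivalence classes is recovered in $\Biso$.
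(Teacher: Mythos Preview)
Your proposal is correct and follows essentially the same approach as the paper's proof: both establish (1) by splitting preimages of $\phi\otimes\psi$ via the hereditary axiom, (2) by matching the two factorization sums through the composability forced by injectivity on objects, and (3) by counting object-fibers, then transfer (4)--(6) to isomorphism classes using the same decomposition-up-to-isomorphism argument as in Theorem~\ref{Bisothm}. Your treatment is in places slightly more explicit about the bijections (and correctly isolates the symmetric obstruction in (4)), but there is no substantive divergence from the paper's argument.
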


\begin{proof}
In the non--$\Sigma$ case:
For a strictly monoidal $f:\F\to \F'$, $f^*$ is  functorial in the algebra structure: using \eqref{pullbackeq}. Consider $\phi:X\to Y, \psi:X'\to Y'$, then
$(f^*\otimes f^*)(\phi\otimes \psi)=(\phi\circ f)\otimes (\psi\circ f)=(\phi\otimes \psi)\circ f=f^*(\phi\otimes \psi)$. Here for the penultimate equality:
let $\hat X,\hat X',\hat Y$ and  $\hat Y'$ be lifts of $X,X',Y$ and $Y'$ and let
$\hat \Phi:\hat X\otimes \hat X'\to \hat Y\otimes \hat Y'$ then $\hat \Phi$
decomposes as $\hat \Phi^1\otimes \hat \Phi^2$, $\hat\Phi^1:\hat X\to \hat Y, \hat\Phi^2:\hat X'\to \hat Y'$, since we are in the non--$\Sigma$ case, and thus $f(\hat \Phi)=f(\hat\Phi^1\otimes \Phi^2)=f(\hat\Phi^1)\otimes f (\Phi^2)$.

For the co--product one calculates:
\begin{eqnarray}
\Delta(f^*\phi)&=&\sum_{\hat\phi\in Mor(\F):f(\hat\phi)=\phi} \sum_{(\hat\phi_0,\hat\phi_1):\hat\phi_1\circ \hat \phi_0=\hat \phi}\hat\phi_0\otimes \hat\phi_1\\
(f^*\otimes f^*)\Delta(\phi)&=& \sum_{(\phi_0,\phi_1):\phi_1\circ\phi_0=\phi}
\sum_{\hat\phi_0, \hat\phi_1\in Mor(\F):f(\hat\phi_0)=\phi_0, f(\hat\phi_1)=\phi_1}\hat\phi_0\otimes \hat\phi_1\nn
\end{eqnarray}
We now check that the sums coincide. Certainly for any term in the first sum corresponding to  decomposition $\hat\phi=\hat\phi_1\circ\hat\phi_0$ appears in the second sum, since $f$ is a functor: $
f(\hat\phi_1)\circ f(\hat \phi_0)=f(\hat\phi_0\circ\hat\phi_1)=f(\hat\phi)=\phi$.
The second sum might be larger, since the lifts need not be composable. If, however, $f$ is injective on objects, then all lifts of a composition are composable and the two sums agree.
The unit  agrees,  because of the injectivity and uniqueness of the unit object and the triviality of $Hom(\unit,\unit)$. For the co--unit, we need bijectivity.
Namely, $1=\eps(id_X)$, but if $f$ is not surjective, then some $f^*(id_X)=0$ and $\eps(f^*(id_X))=0\neq1$. If $f$ is not injective, then as all the $f(id_{\hat X})=id_X$
for all $\hat X: f(\hat X)=X$,
$\eps(f^*(id_X)= \sum_{\hat X:f(\hat X)=X}$ and the sum is $>2$ for some $X$. Thus the condition is necessary. It is also sufficient. If $f$ is bijective on objects, then, $\widehat{id_X}=id_{\hat X} +T$, with $\eps(T)=0$.
This implies that $\eps(f^*(id_X))=\eps(id_{\hat X})=1$ and $\eps(f^*(\phi))=0$ if $\phi\neq id_{\hat X}$.
as there is no $id_{\hat X}$ in the fiber over $\phi$ if $\phi$ is not an identity. If the functor is not injective, we might have more objects in the fiber and if it is not surjective  $f^*(id_X)$ can be $0$.

In the symmetric situation, the arguments are analogous using isomorphism classes. Although one cannot guarantee the decomposition of $\Phi$ as above, there is a decomposition up to isomorphism $\hat\Phi=\hat\Phi^1\otimes \hat\Phi^2\circ \sigma$. Likewise, the essential injectivity ensures that the lifts are composable as classes and the essential surjectivity is needed to preserve the co--unit.
\end{proof}

\begin{rmk}
A functor between Feynman categories is an indexing if it is bijective on objects, cf.\ \cite{Frep} for more details on indexings.
\end{rmk}

\begin{df} We call a functor $\ff$ as above Hopf compatible if it is essentially bijective and $f^*( \I_{\FF'})\subset I_{\FF}$.
\end{df}
The following is straightforward.
\begin{prop} If $\FF$ and $\FF'$ are  Hopf, a Hopf compatible functor induces a morphism of
Hopf algebras $\H_{\FF'}\to \H_{\FF}$. \qed
\end{prop}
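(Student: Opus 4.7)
The plan is to assemble the result from pieces already in hand. By hypothesis $\FF$ and $\FF'$ are Hopf, so $\H_{\FF}=\B^{iso}_{\FF}/\I_{\FF}$ and $\H_{\FF'}=\B^{iso}_{\FF'}/\I_{\FF'}$ carry Hopf algebra structures. Since $\ff$ is Hopf compatible it is in particular essentially bijective on objects, so Proposition \ref{bialgfunprop}, parts (4) and (6), directly provides a morphism of unital algebras and co--unital co--algebras $f^*:\B^{iso}_{\FF'}\to \B^{iso}_{\FF}$. The first step is therefore simply to observe that the combined statement is exactly that $f^*$ is a bi--algebra morphism; no new calculation is needed.

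Second, I would use the second half of the Hopf--compatibility hypothesis, $f^*(\I_{\FF'})\subset \I_{\FF}$, to show that $f^*$ descends to the quotients. Since $\I_{\FF}$ and $\I_{\FF'}$ are both ideals and co--ideals (they are bi--ideals, as established in the discussion preceding the definition of $\H$), and since $f^*$ is a bi--algebra map sending $\I_{\FF'}$ into $\I_{\FF}$, the induced map
\begin{equation*}
\bar f^*: \H_{\FF'}=\B^{iso}_{\FF'}/\I_{\FF'}\longrightarrow \B^{iso}_{\FF}/\I_{\FF}=\H_{\FF}
\end{equation*}
is well defined and inherits the bi--algebra structure, i.e.\ it is a morphism of bi--algebras.

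Finally, for the Hopf part: on a bi--algebra for which an antipode exists, the antipode is unique and is characterized by the convolution equation $S\star id=\eta\circ\eps=id\star S$. Consequently, any bi--algebra morphism between two Hopf algebras automatically intertwines their antipodes; this is a standard fact that follows from uniqueness of the antipode and the fact that $\bar f^*\circ S_{\FF'}$ and $S_{\FF}\circ \bar f^*$ are both convolution inverses of $\bar f^*$ in $\Hom(\H_{\FF'},\H_{\FF})$. Applying it to $\bar f^*$ concludes the argument.

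I expect no genuine obstacle: the content is packaged into the two defining conditions of Hopf compatibility. The only point requiring a small verification is the automatic compatibility of any bi--algebra morphism with the antipodes when source and target are already Hopf, but this is standard and does not depend on the Feynman category context.
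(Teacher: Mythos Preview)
Your proposal is correct and is precisely the expansion of the argument the paper leaves implicit with its bare \qed: combine Proposition~\ref{bialgfunprop}(4) and (6) to get a bi--algebra morphism on $\B^{iso}$, use $f^*(\I_{\FF'})\subset\I_{\FF}$ to descend to the quotients, and invoke the standard fact that a bi--algebra morphism between Hopf algebras automatically commutes with the antipodes.
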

The following is a useful criterion:
\begin{prop} If in addition to being essentially bijective $f$ does not send any non--invertible elements of $\Mor(\F)$ to invertible elements in\\ $\Mor(\F')$, then $\ff$ is Hopf compatible.
\end{prop}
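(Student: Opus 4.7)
The plan is to reduce the problem to verifying $f^*([id_X]-[id_Y])\in \I_{\FF}$ on the generators of $\I_{\FF'}$. Since by hypothesis $f$ is essentially bijective, Proposition \ref{bialgfunprop}(4) and (6) already give that $f^*$ is a morphism of co-unital bi-algebras $\B^{iso}_{\FF'}\to\B^{iso}_{\FF}$, and in particular an algebra homomorphism. Because $\I_{\FF'}$ is a two-sided ideal, the inclusion $f^*(\I_{\FF'})\subseteq \I_{\FF}$ will follow once we show that $f^*$ sends each generator $[id_X]-[id_Y]$ into $\I_{\FF}$.

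The core of the argument is a computation of $f^*([id_X])$. Unraveling the definition of pull-back on isomorphism classes, one has
\[
f^*([id_X]) \;=\; \sum_{[\hat\psi]\,:\,[f(\hat\psi)]=[id_X]} [\hat\psi],
\]
where the sum runs over isomorphism classes in the arrow category of $\F$. By Lemma \ref{isolem}, $f(\hat\psi)\simeq id_X$ is equivalent to $f(\hat\psi)$ being an isomorphism with domain isomorphic to $X$. Here the second hypothesis intervenes: since $f$ does not send non-invertible morphisms to invertible ones, invertibility of $f(\hat\psi)$ forces invertibility of $\hat\psi$. Applying Lemma \ref{isolem} again, this means $[\hat\psi]=[id_{\hat X}]$ for some object $\hat X$ with $f(\hat X)\simeq X$.

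Essential bijectivity of $f$ on objects now guarantees that there is a \emph{unique} isomorphism class $[\hat X]$ with $f(\hat X)\simeq X$, so the sum collapses to a single term and $f^*([id_X])=[id_{\hat X}]$. The same argument gives $f^*([id_Y])=[id_{\hat Y}]$, whence
\[
f^*([id_X]-[id_Y]) \;=\; [id_{\hat X}]-[id_{\hat Y}] \;\in\; \I_{\FF}
\]
by definition of $\I_{\FF}$. Since $f^*$ is an algebra homomorphism and the two-sided ideal $\I_{\FF'}$ is generated by such differences, $f^*(\I_{\FF'})\subseteq \I_{\FF}$, i.e.\ $\ff$ is Hopf compatible.

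The main subtlety, and the step worth being careful about, is the reduction of the sum defining $f^*([id_X])$ to a single identity class: one needs both hypotheses simultaneously, the non-invertibility preservation to ensure every summand is (the class of) an identity, and the essential bijectivity to ensure exactly one such class contributes. Without the first one could pick up classes of non-isomorphisms whose image is only isomorphic to an identity; without the second one would get a sum of several $[id_{\hat X}]$, which still lies in $\I_{\FF}$ but requires a slightly different argument.
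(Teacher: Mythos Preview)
Your proof is correct and follows essentially the same approach as the paper: both compute $f^*([id_X])=[id_{\hat X}]$ by using the hypothesis to force any lift to be an isomorphism and essential bijectivity to pin down the unique class. Your version is simply more explicit about reducing to generators via the algebra homomorphism property and about invoking Lemma~\ref{isolem}, whereas the paper's proof is terse; one small quibble is your final parenthetical remark that a sum of several $[id_{\hat X_i}]$ ``still lies in $\I_{\FF}$'' --- this need not hold if the counts over $X$ and $Y$ differ, so essential injectivity is genuinely needed, not just a convenience.
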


\begin{proof}
That the condition is necessary is clear. Fix $X$, then up to isomorphism there is a unique lift $\hat X$ of $X$.
Any lift of $id_X$ is then an isomorphism $\hat \phi:\hat Y \to \hat Y'$  with both $\hat Y$ and $\hat Y'$
 being isomorphic to $\hat X$ which means that $[\hat \phi]=[\id_{\hat X}]$ and $f^*[id_X]=[id_{\hat X}]$.
\end{proof}

These criteria reflect that  Hopf algebras are very sensitive to invertible elements. It says that we can identify isomorphisms and are allowed to identify morphisms, but only in each class separately.

\begin{ex}
\label{operadex1}
An example is provided by the map of operads: rooted 3--regular forests $\to$ rooted corollas.
This give a functor of  Feynman categories enriching $\Surj$ or in the planar version of $\Surj_{<}$, see \ref{hyppar}.
This functor is Hopf compatible thus induces a  map of Hopf algebras which is the morphism considered by Goncharov in \cite{Gont}.
\end{ex}
\begin{ex}
\label{operadex2}

Another example is given by the map of rooted forests with no binary vertices $\to$ corollas.
The corresponding morphisms of Feynman categories is again Hopf compatible.

However, if we consider the functor of Feynman categories induced by rooted trees $\to$ rooted corollas is not Hopf compatible. It sends all morphisms corresponding to binary trees to the identity morphism of the corolla with one input. Thus is maps non--invertible elements to invertible elements. The presence of these extra morphisms in $\H_{CK}$ is what makes it especially interesting. They also correspond to a universal property, see  \cite{MoerdijkKreimer} and \cite[Example \ref{P1-moeex}]{HopfPart1}.
\end{ex}

\section{Variations on the bi-- and Hopf algebra structures}
\label{alternatepar}
Here we will give some variations of the structures above. The first is an analysis of the role of basic morphisms as indecomposables.
The second is the possibility to modify the bi--algebra structure and how to twist by co--cycles.
For the latter there are two relevant constructions. The first involves quotienting by  isomorphisms
and the second uses co--cycles to twist the co--multiplication.

The need to regard twists stems from the fact, that in the symmetric case the bi--algebra equation fails on the level of morphisms, i.e.\ without passing to the isomorphism classes. The reason for this is that $Aut(X)\times Aut(X')\subset Aut(X\otimes X')$ is a proper subset due to the permutation symmetries. To remedy this one can twist in certain situations, for example  if $\bar d$ is a free action.

There would be a third alternative, which is to use representations, in the spirit in which they appear in the fusion rules in physics. But, we will not delve into this further technical complication at this point and leave it for future study.

\subsection{Bi-algebra structure induced from indecomposables}
\label{indecomppar}
For a strict Feynman category  $Mor(\F)=Obj(\imath^\otimes \downarrow \imath)^\otimes$ and hence $\B$ is the  strictly associative free  monoid on $\B_1=\Z[Ob(\imath^\otimes \downarrow \imath)]\subset \B$ with additional symmetries possibly given by the commutativity constraints induced by $\F$.

\begin{lem}
\label{indecomplem}
If $\F$ is strict and non--$\Sigma$, $\B_1$ is the set of indecomposables.
\end{lem}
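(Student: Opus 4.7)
The plan is to unpack ``indecomposable'' for the monoid algebra $\B = \Z[\Mor(\F)]$ under $\otimes$ with unit $id_\unit$: a basis element $\phi \in \Mor(\F)$ is indecomposable iff $\phi \neq id_\unit$ and every factorisation $\phi = \psi \otimes \psi'$ forces $\psi = id_\unit$ or $\psi' = id_\unit$. The target is to identify this set with $Ob(\imath^\otimes \downarrow \imath)$, the basic morphisms.

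The key observation is that in the strict non-$\Sigma$ setting, axiom (ii) applies in the strong form of \eqref{morphdecompeq} with vertical arrows equal to identities and the permutation $P$ trivial: every morphism $\phi : X \to Y$ factors on the nose as $\phi = \bigotimes_{v \in I} \phi_v$ with $\phi_v : X_v \to \imath(\ast_v)$ basic, and this decomposition is unique. Thus $\Mor(\F)$ is literally a free monoid on the basic morphisms, and indecomposables of any free monoid are exactly its generators. To make this rigorous I would verify both implications directly.

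For the forward direction: given $\phi : X \to \imath(\ast_v)$ and $\phi = \psi \otimes \psi'$ with $\psi : A \to B$ and $\psi' : A' \to B'$, strict freeness of $\V^\otimes = Iso(\F)$ provides a well-defined tensor word length with $|B| + |B'| = |\imath(\ast_v)| = 1$, so WLOG $B = \unit$. Axiom (ii) applied to $\psi$ uses the empty vertex decomposition of $\unit$, so the tensor factorisation of $\psi$ is an empty product; strictness together with $Aut(\unit) = \{id_\unit\}$ then pins $\psi = id_\unit$. Conversely, if $\phi : X \to Y$ is not basic and not $id_\unit$, then $|Y| \neq 1$; the case $|Y| = 0$ would force $\phi = id_\unit$ by the same argument, so $|Y| \geq 2$ and the strict non-$\Sigma$ form of (ii) presents $\phi$ as a tensor product of $\geq 2$ non-unit basic morphisms, a genuine nontrivial factorisation.

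The main obstacle --- and the reason both hypotheses are essential --- is getting ``$\psi = id_\unit$'' rather than the weaker ``$\psi \cong id_\unit$''. Without strictness, associator and unitor isomorphisms could absorb nontrivial pieces of $\psi$, and in the symmetric case the permutation $P$ of \eqref{morphdecompeq} could mix the two tensor factors, so a morphism landing in $\imath(\ast_v)$ could a priori be written as a ``twisted'' tensor product of two morphisms with nontrivial targets. The symmetric analogue of the lemma can therefore only be stated after passing to isomorphism classes, in line with the role of $\B^{iso}$ elsewhere in the paper.
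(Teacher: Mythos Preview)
Your proof is correct and follows essentially the same route as the paper: both arguments use the target length to split cases, invoke axiom (ii) to decompose morphisms with target of length $\geq 2$, and use $\Hom_\F(\unit,\unit)=\{id_\unit\}$ (from strictness and axiom (i)) to handle the length-$0$ factor. You are somewhat more thorough in spelling out both implications and the $|Y|=0$ case, and your closing remarks on why strictness and the non-$\Sigma$ hypothesis are needed are accurate and match the paper's later discussion of the symmetric case via $\B^{iso}$.
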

\begin{proof} By axiom (ii) any morphism with target of length greater or equal to $2$ is decomposable.
If the target of a morphism $\phi$ has length $1$,  it can only decompose as $\phi=\hat\phi\otimes_\Z \lambda$ with $\lambda\in \Z[Hom(\unit,\unit)]=\Z id_\unit$, since the only object of length $0$ is unit $\unit$ and $\FF$ was taken to be strict. Hence $\lambda=\pm id_\unit$ is itself a unit in the algebra
and $\phi=\pm \hat\phi$.
\end{proof}

We now suppose that $\B_1$ is decomposition finite, which means that the sum in \eqref{Delta-decomp-basic} is finite.
Consider the one--comma generators $\B_1$ and  define
\begin{equation}\label{Delta-decomp-basic}
\Delta_{indec}(\phi)=\sum_{\{(\phi_0,\phi_1):
\phi=\phi_0\circ\phi_1\}}\phi_0 \otimes \phi_1
\end{equation}
here $\phi_0\in \B_1$ and $\phi_1=\bigotimes_{v\in V} \phi_v$ for $\phi_v\in \B_1$.
We extend the definition of $\Delta_{indec}$ to all of $\B$ via the bi-algebra equation.
\begin{equation}
\Delta_{indec}(\phi\otimes \psi):=
\sum (\phi_0\otimes \psi_0)\otimes (\phi_1\otimes \psi_1)
\end{equation}
where  we used Sweedler notation.\footnote{If there is a non--trivial commutativity constraint, we take this to mean $\sigma_{23}\circ \Delta\otimes \Delta$}
$$\eps(\phi)=
\begin{cases} 1& \text{if } \phi=id_X\\
0&\text{else }
\end{cases}
$$

In this case there is a direct proof of the bi--algebra structure.  {\it A posteriori} using Lemma \ref{indecomplem} it follows that this bi--algebra structure coincides with the decomposition bi--algebra structure.

\begin{prop}
 With the assumptions on $\F$ as above and that $\B_1$ is decomposition finite, the tuple $(\B,\otimes_\F,\Delta_{indec},\unit,\eps)$ is a bi--algebra.  A posteriori $\Delta=\Delta_{indec}$.
\end{prop}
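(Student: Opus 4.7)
The strategy is to verify the bi--algebra axioms on the algebra generators $\B_1\cup\{id_{\unit}\}$ and then propagate the identities to all of $\B$ using the fact that, by Lemma \ref{indecomplem} and the strict non-\SSigma\ hypothesis, $\B$ is the free associative unital monoid on $\B_1$ with unit $id_{\unit}$. Because $\Delta_{indec}$ has been \emph{defined} on decomposable elements by the bi--algebra formula $\Delta_{indec}(\phi\otimes\psi)=\sum (\phi_{0}\otimes\psi_{0})\otimes(\phi_{1}\otimes\psi_{1})$, the multiplicativity of $\Delta_{indec}$ is automatic and the bi--algebra equation holds tautologically. The counit $\eps$ is multiplicative on generators by the same argument as in Theorem \ref{f-to-b-nonsym}(1), and it extends to an algebra map. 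Thus the only genuine content is coassociativity and the counit property on $\B_1$.

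For the counit axiom on $\phi:X\to\iota(\ast)$ in $\B_{1}$, the summand of $\Delta_{indec}(\phi)$ on which $\eps\otimes id$ or $id\otimes\eps$ does not vanish is precisely the one where the left (resp.\ right) factor is an identity. By Lemma \ref{idfinitelem} and the strictness of $\FF$, such a factorization must be $\phi=id_{\iota(\ast)}\circ\phi$ (resp.\ $\phi=\phi\circ id_{X}$), returning $\phi$ in both cases. Coassociativity on $\B_{1}$ will follow from the associativity of composition in $\F$ together with the hereditary axiom (ii). Concretely, both $(id\otimes\Delta_{indec})\Delta_{indec}(\phi)$ and $(\Delta_{indec}\otimes id)\Delta_{indec}(\phi)$ must be shown to equal
\[
\sum_{\phi=\phi_{0}\circ\phi_{1}\circ\phi_{2}}\phi_{0}\otimes\phi_{1}\otimes\phi_{2},
\]
where $\phi_{0}\in\B_{1}$ and $\phi_{1},\phi_{2}$ are tensor products of basic morphisms. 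On the left-hand side the inner $\Delta_{indec}$ is applied to a tensor product and distributes by construction, producing exactly such triple factorizations; on the right-hand side the inner $\Delta_{indec}$ is applied to a single basic morphism and factors it further. The hereditary axiom guarantees that any factorization of an intermediate tensor product arises from a compatible tensor decomposition of the individual factors (this is exactly the argument already used in the proof of Theorem \ref{f-to-b-nonsym} to identify \eqref{deltamueq} with \eqref{mudeltaeq} in the non--\SSigma\ case).

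For the a posteriori equality $\Delta=\Delta_{indec}$, observe first that both maps agree on $\B_{1}$: for $\phi\in\B_{1}$, the decomposition sum defining $\Delta(\phi)$ in \eqref{Delta-decomp} has $\phi_{0}$ with the same target as $\phi$, hence $\phi_{0}\in\B_{1}$, which is literally the sum \eqref{Delta-decomp-basic}. They also agree on $id_{\unit}$ by Lemma \ref{deltaunitlem}. Both $\Delta$ and $\Delta_{indec}$ are algebra maps $\B\to\B\otimes\B$ with respect to $\otimes_{\F}$, the former by Theorem \ref{f-to-b-nonsym} and the latter by construction. Since $\B$ is generated as a unital algebra by $\B_{1}\cup\{id_{\unit}\}$, the two coproducts coincide on all of $\B$.

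The main technical point is the coassociativity verification on $\B_{1}$, i.e.\ checking that the bijection between triple factorizations of $\phi$ and iterated pair factorizations is compatible with the tensor-decomposition of intermediate morphisms forced by axiom (ii); this is where strictness and the non-\SSigma\ hypothesis are crucially used so that no permutation correction appears, mirroring the end of the proof of Theorem \ref{f-to-b-nonsym}. Everything else reduces to straightforward bookkeeping on generators and the definition of $\Delta_{indec}$ on decomposables.
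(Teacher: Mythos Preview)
Your proposal is correct and follows essentially the same route as the paper: both reduce coassociativity to the bijection between iterated pair factorizations and triple factorizations $\phi=\phi_0\circ\phi_1\circ\phi_2$, both observe that the bi--algebra equation is tautological by the multiplicative extension, and both deduce $\Delta=\Delta_{indec}$ from agreement on the indecomposables $\B_1$ together with multiplicativity. Your write-up is somewhat more explicit than the paper's about why checking on generators suffices (the free-monoid structure of $\B$ over $\B_1$), and your invocation of Lemma \ref{idfinitelem} for the counit is unnecessary---the statement $\phi=id_{\iota(\ast)}\circ\phi$ follows directly from matching sources and targets---but these are cosmetic points.
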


\begin{proof} The multiplication is unital and associative. That the co--product is co--associative and $\eps$ is a co-unit is a straightforward check. The latter follows from the decomposition $id_X=\otimes_v id_{\ast_v}$ if $X=\otimes_v \ast_v$. The fact that the bi--algebra equation holds, follows from the fact that all elements in $\B_1$ are indecomposable with respect to this product. For the co--associativity, we notice that in both iterations we get sum over decomposition diagrams $\phi=\phi'''\circ\phi''\circ\phi'$.
\begin{equation}
    \xymatrix{
    X=\bigotimes_v \bigotimes_{w\in V_v} X_w\ar@{=}[r]\ar[d]_{\phi'=\otimes_w\phi_w}&\bigotimes_v \bigotimes_{w\in V_v}\bigotimes_{u\in V_w}*_u\ar[rr]^{\phi=\bigotimes_u \phi_u}&&\ast\\
    Z_1=\bigotimes_v \bigotimes_{w\in V_v} \ast_w\ar@{=}[r]&\bigotimes_v Z_v \ar[rr]^{\phi''=\bigotimes_v \phi''_v}&&Z_2=\bigotimes_v \ast_v \ar[u]_{\phi'''=\bigotimes \phi'''_v}
    }
\end{equation}
where the order of the factors is fixed and the sum is over the possible morphisms and bracketings.
That $\Delta=\Delta_{indec}$ follows from the equality of the co--products on indecomposables for the bi--algebra which by Lemma \ref{indecomplem} are precisely $\B_1$.
\end{proof}

\begin{rmk}
This two step process corresponds to the free construction $\check \O^{nc}$ in Chapter 1.
A prime example is the bi--algebra of rooted planar trees aka.\ bi--algebra of forests of Connes and Kreimer \cite{CK}. The usual way this is defined is to give the co--product on indecomposable, viz.\ trees, and then extend using the bi--algebra equation.
\end{rmk}

\subsection{Isomorphisms, quotients and twists}
\label{quotpar}
We collect more precise information about the isomorphisms and their role in order to make the more specialized constructions. The first is a quotient by the co--ideal of isomorphisms in the non--$\Sigma$ case. In the symmetric case, although we have a co--ideal to divide by, there is a problem with the bi--algebra equation already on the level of the morphisms. Note, we are not taking isomorphisms yet. To remedy the situation, one can introduce twists in certain situations.

\subsubsection{Iso-- and Automorphisms}
\label{isoformulapar}
By the  conditions of a Feynman category for  $X=\bigotimes_{i=1}^k\ast_i$.
In the non--symmetric case, any automorphism factors, so
$$Aut(X)\simeq Aut(\ast_1)\times \dots\times Aut(\ast_k) \text{ in the non--symmetric case.}$$
In the symmetric case its automorphisms group is the wreath product $$Aut(X)\simeq(Aut(\ast_1)\times \dots\times Aut(\ast_k))\wr \SS_k \text{ in the non--symmetric case.}$$

\subsubsection{The co--ideal generated by the isomorphisms relation}

Recall that $f\sim g$ if they are isomorphic, c.f.\ \S\ref{isopar}.

\begin{prop}
Let $\C$ be the ideal generated by elements $f-g$ with $f\sim g$. Then
\begin{equation}
\Delta(\C)\subset \B\otimes \C + \C\otimes \B
\end{equation}
and hence $\B/\C$ is a unital algebra and (non--co--unital) co--algebra.

Extending scalars to $\Q$,  there is a co--unit on $\Bquot=\B/\C\otimes_\Z \Q$
\begin{equation}
\epsquot([f]):=\begin{cases}\frac{1}{|Iso(X)| |Aut(X)|}&
 \text{if } [f]=[id_X]\\
0 & \text{else}\\
\end{cases}
\end{equation}

\end{prop}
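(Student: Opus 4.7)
The plan is to verify three independent things: (a) $\C$ is a two-sided ideal for $\otimes$, (b) $\Delta(\C)\subset \C\otimes \B + \B\otimes \C$, and (c) $\epsquot$ is well-defined on $\Bquot$ and satisfies the co-unit axiom.

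For (a), if $f\sim g$ via isomorphisms $(\sigma,\sigma')$, then for any morphism $h$ both $f\otimes h\sim g\otimes h$ and $h\otimes f\sim h\otimes g$, using $(\sigma\otimes \id,\sigma'\otimes \id)$ and its variant; so the generators span a two-sided ideal.

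For (b), the key step, I would start with a generator $f-g$, where $g=\sigma'\circ f\circ \sigma^{-1}$ with $f:X\to Y$, $g:X'\to Y'$. Each factorization $f=\phi_0\circ\phi_1$ through an intermediate object $Z$ yields a unique factorization $g=(\sigma'\circ\phi_0)\circ(\phi_1\circ\sigma^{-1})$ through the same $Z$, and this is a bijection on factorizations. Then the telescoping identity
\begin{equation*}
\phi_0\otimes \phi_1 - (\sigma'\circ\phi_0)\otimes(\phi_1\circ\sigma^{-1}) = (\phi_0-\sigma'\circ\phi_0)\otimes \phi_1 + (\sigma'\circ\phi_0)\otimes (\phi_1-\phi_1\circ\sigma^{-1})
\end{equation*}
shows each term of $\Delta(f)-\Delta(g)$ lies in $\C\otimes \B+\B\otimes \C$, since $\phi_0$ and $\sigma'\circ\phi_0$ (resp.\ $\phi_1$ and $\phi_1\circ\sigma^{-1}$) differ only by post- or pre-composition with an isomorphism, and hence give commuting squares in the arrow category. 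This simultaneously proves that $\Bquot$ inherits an algebra and a (not yet co-unital) co-algebra structure.

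For (c), well-definedness of $\epsquot$ on classes follows from Lemma \ref{isolem}: if $f\sim \id_X$, then $f$ is an isomorphism between objects of $Iso(X)$, and $|Iso(X)|$ and $|Aut(X)|$ depend only on the isomorphism class of $X$; on all other classes $\epsquot$ is zero, and these two cases are mutually exclusive, so $\epsquot$ vanishes on $\C$ and descends to $\Bquot$ after tensoring with $\Q$. For the co-unit axiom $(\epsquot\otimes \id)\circ \Delta([f])=[f]$, I would argue by counting: the summands of $\Delta(f)$ on which $\epsquot$ is nonzero are precisely those factorizations $f=\phi_0\circ\phi_1$ with $\phi_0:Z\to Y$ an isomorphism and $Z\simeq Y$. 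For fixed $Z\in Iso(Y)$ there are exactly $|Aut(Y)|$ such $\phi_0$, giving $|Iso(Y)|\cdot|Aut(Y)|$ surviving terms; each has $\phi_1=\phi_0^{-1}\circ f$, so $[\phi_1]=[f]$ in $\Bquot$, and each contributes $\frac{1}{|Iso(Y)||Aut(Y)|}[f]$. The total is $[f]$, and the analogous argument handles $(\id\otimes \epsquot)\circ\Delta$.

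The main subtlety will be in (b): ensuring that the bijection on factorizations is correct and does not double-count in the symmetric case, and that the finiteness of $|Iso(X)|$ and $|Aut(X)|$ (needed to normalize $\epsquot$) follows from the factorization/decomposition finiteness hypothesis already assumed on $\FF$.
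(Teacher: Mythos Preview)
Your proposal is correct and follows essentially the same approach as the paper. The paper's proof of the co-ideal property uses exactly your bijection of factorizations (post- and pre-composing with the given isomorphisms, through the same intermediate object $Z$) and the same telescoping identity; for the co-unit, the paper likewise counts the $|Iso(X)|\cdot|Aut(X)|$ factorizations through objects isomorphic to the source (you do the symmetric count through objects isomorphic to the target), invoking Lemma~\ref{isolem} to rule out any other contributions.
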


\begin{proof}
To compute the co--product,
we break up the sum over the factorizations of $f$ and $g$ with $f\sim g$  into the pieces that correspond to a factorization through a fixed space $Z$.
\begin{equation}
\label{coidealeq}
\xymatrix{
&Z\ar[dr]^{f_2}&\\
X\ar[rr]^{f}\ar[d]^{\simeq}_{\sigma'}\ar[ur]^{f_1}&&Y\ar[d]_{\simeq}^{\sigma}\\
X'\ar[rr]^g\ar[dr]_{g_1}&&Y'\\
&Z\ar[ur]_{g_2}&\\
}
\end{equation}
Now the term in $\Delta(f-g)$ corresponding to $Z$ is
$\sum_i f^i_2\otimes f^i_1-\sum_j g^j_2\otimes g^j_1$. Re--summing using  the identification $g^i_1:=f^i_1\circ \sigma^{\prime -1}$ and
$g^i_2:=\sigma\circ f^i_2$ this equals to
$$\sum_i (f^i_2\otimes f^i_1-g^i_2\otimes g^i_1)=
 \sum_i (f^i_2-g^i_2)\otimes g^i_1+ \sum_i f^i_2\otimes (f^i_1-g^i_1)$$

For the co--unit, notice that $\Delta([f])=[\Delta(f)]$ is a sum of terms factoring through an intermediate space $Z$. If $Z\not\simeq X,Y$ then these terms are killed by $\epsquot$ on either side, since there will be no isomorphism in the decomposition. If $Z\simeq X$, then  any factorization $ f\circ \sigma^{-1}\otimes\sigma $ with $\sigma\in Iso(X,Z)$ descends to
$[f\circ\sigma^{-1}]\otimes[\sigma] = [f]\otimes[id_X]$. Since $Iso(X,Z)$ is a left $Aut(X)$ torsor, there are exactly $|Aut(X)||Iso(X)|$ of these terms  and
$\epsquot\otimes id$ evaluates to $1\otimes [f]$ on their sum.
By Lemma \ref{isolem}, all other decompositions will evaluate to $0$ and we obtain that $\epsquot$ is a left co--unit. Likewise $\epsquot$ is a right co--unit by considering
the terms which factor through $Y'\in Iso(Y)$.

\end{proof}

\begin{rmk}\mbox{}
\begin{enumerate}
\item Note that $\C$ is not a co--ideal in general, since for any automorphism $\sigma_X\in Aut(X):[\sigma_X]=[id_X]$ and hence $\eps(\C)\not\subseteq ker(\epsilon)$.  Likewise if $X\simeq Y$ and $\phi:Y\stackrel{\sim}{\to} Y'$ then
$[id_X]=[\phi]$ from Lemma \ref{isolem}. This is why we need a new definition for the co--unit.
If there are no automorphisms and the underlying category is skeletal, then $\eps$ descends as claimed in \cite{JR}.

\item The equivalence relation $\sim$ is coarser
than the equivalence studied in \cite{JR} for the standard reduced incidence category.
\item Extending scalars from $\Z$ all the way to  $\Q$ may not be necessary; we only need that $|Iso(X)|$ and  $|Aut(X)|$ are invertible for all $X$. Although in the symmetric case, the automorphisms groups will contain all $\SS_n$ and hence $\Q$ is necessary.
\item One can get rid of the terms $X'\in Iso(X)$ in $\Delta(id_X)$ and the factor $|Iso(X)|$ by considering a skeletal version.
Recall that skeletal means that there is only one object per isomorphism class.
\item Although in the symmetric case, the bi--algebra equation does not hold on $\B$,  it does on a non--$\Sigma$ Feynman category. The difference is due to \S\ref{isoformulapar}.
 The failure in the symmetric case  is analyzed in detail in \S \ref{symfailpar} below.
\end{enumerate}
\end{rmk}

\begin{thm}
\label{coprodthm}
Let $\F$ be a decomposition finite non--$\Sigma$ Feynman category set  $\Bquot$ with the induced product, unit, co--product and co--unit $\epsquot$ is a bi--algebra.
\end{thm}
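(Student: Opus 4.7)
The plan is to reduce the theorem to the already-established bi--algebra structure on $\B$ given by Theorem \ref{f-to-b-nonsym}, together with the quotient and co--unit analysis of the preceding proposition. What remains is to verify the bi--algebra compatibility axioms for the rescaled co--unit $\epsquot$, since associativity of $\mu$, co--associativity of $\Deltaquot$, the unit property for $\etaquot$, and the co--unit property for $\epsquot$ are already established.

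First I would observe that the bi--algebra equation $\Delta(fg)=\Delta(f)\Delta(g)$ and the identity $\Delta(id_\unit)=id_\unit\otimes id_\unit$ (from Lemma \ref{deltaunitlem}) already hold in $\B$ by Theorem \ref{f-to-b-nonsym}. Since $\C$ is a two--sided ideal with $\Delta(\C)\subset \B\otimes \C+\C\otimes \B$ by the preceding proposition, both identities descend to $\Bquot$. Extending scalars to $\Q$ does not affect them.

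The heart of the argument is multiplicativity of $\epsquot$. The value $\epsquot([f\otimes g])$ is nonzero exactly when $[f\otimes g]=[id_Z]$ for some $Z$, and by Lemma \ref{isolem} this forces $f\otimes g$ to be an isomorphism. In the non--$\Sigma$ case the Feynman category axiom (ii) makes any isomorphism decompose strictly as a tensor of its factor morphisms, so $f\otimes g$ is an isomorphism precisely when both $f$ and $g$ are, in which case $[f]=[id_X]$ and $[g]=[id_Y]$ for some $X,Y$. Then by \S\ref{isoformulapar} we have $Aut(X\otimes Y)\simeq Aut(X)\times Aut(Y)$, and the same absence of symmetry in $\V^{\otimes}$ forces isomorphic tensor words to match position by position, giving $|Iso(X\otimes Y)|=|Iso(X)|\cdot|Iso(Y)|$. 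Combining these equalities yields $\epsquot([id_{X\otimes Y}])=\epsquot([id_X])\epsquot([id_Y])$, and in all remaining cases both sides vanish.

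Finally $\epsquot(\etaquot(1))=\epsquot([id_\unit])=1/(|Iso(\unit)|\,|Aut(\unit)|)=1$, since $\unit$ is the empty tensor word with trivial automorphism group and only itself in its isomorphism class (both $|Iso(\unit)|=1$ and $|Aut(\unit)|=1$ follow from axioms (i)--(ii)). The only delicate step is the identification of $|Iso(X\otimes Y)|$, which genuinely uses non--$\Sigma$--ness: in the symmetric case the wreath product of \S\ref{isoformulapar} introduces permutations of tensor factors, and neither the $Aut$ count nor the $Iso$ count factors, which is exactly why the bi--algebra equation on $\B$ fails symmetrically and motivates passing to co--invariants or twisting, as discussed in \S\ref{symfailpar}.
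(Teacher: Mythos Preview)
Your proof is correct and follows essentially the same approach as the paper: both reduce the bi--algebra equation and unit compatibility to the already-established structure on $\B$ via Theorem \ref{f-to-b-nonsym} and the quotient proposition, and both identify the multiplicativity of $\epsquot$ as the only new check, handled via the non--$\Sigma$ factorizations $|Aut(X\otimes Y)|=|Aut(X)|\,|Aut(Y)|$ and $|Iso(X\otimes Y)|=|Iso(X)|\,|Iso(Y)|$. Your version is somewhat more detailed---explicitly invoking Lemma \ref{isolem} to force $f,g$ to be isomorphisms, and verifying $\epsquot\circ\etaquot=id$ separately---but the argument is the same.
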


\begin{proof}

In the non--symmetric case, the compatibility of product and co--product descend as does the compatibility of the unit.
For the co--unit, we notice that $\epsquot([\phi\otimes \psi])$ as well as $\epsquot([\phi])\epsquot([\psi])$ are $0$ unless $[\phi]=\lambda [id_X]$ and $[\psi]=\mu[id_Y]$. If this is satisfied, by the conditions of a non--symmetric Feynman category $|Aut(X)||Aut(Y)|=|Aut(X\otimes Y)|$ as well as $|Iso(X)||Iso(Y)|=|Iso(X\otimes Y)|$ so that $\epsquot([id_X]\otimes[id_Y])=\epsquot([id_X])$ $\epsquot([id_Y])$.
\end{proof}

We define the ideal $\bar\J=\langle |Aut(X)||Iso(X)|id_X-|Aut(Y)||Iso(Y)|id_Y\rangle$ of $\Bquot$, and then consider $\Hquot=\Bquot/\bar\J$.

\begin{thm}
 Assume that $\FF$ is decomposition finite non--$\Sigma$ and has almost group--like identities,
 then, $\bar\J$ is a co--ideal in $\Bquot$ and $\Hquot=\Bquot/\bar\J$ is a bi--algebra with co--unit induced by $\epsquot$ and unit $\eta_{\Hquot}(1)=[\id_{\unit_{\F}}]$. If $\Hquot$ is connected, then it is a Hopf algebra.
\end{thm}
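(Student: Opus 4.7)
The plan is to observe that the normalization factors in $\bar\J$ are chosen precisely so that the scaled identities $a_X := |Aut(X)||Iso(X)|\,[id_X]$ become group--like elements of $\Bquot$; once this is seen, the bi--ideal property of $\bar\J$ is almost immediate, and the remaining claims are formal descent plus a standard connected--implies--Hopf argument.

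First I would verify the group--likeness. By the almost group--like identities hypothesis, formula \eqref{coprodcalceq} gives $\Delta(id_X) = \sum_{X'\simeq X}\sum_{\sigma\in Iso(\F)(X,X')} \sigma\otimes \sigma^{-1}$. Passing to $\Bquot$, Lemma \ref{isolem} identifies $[\sigma] = [\sigma^{-1}] = [id_X]$ for every such $\sigma$. Counting pairs $(X',\sigma)$ --- there are $|Iso(X)|$ choices for $X'$, and each $Iso(\F)(X,X')$ is an $Aut(X)$--torsor --- yields
\[ \Deltaquot([id_X]) = n_X\,[id_X]\otimes [id_X], \qquad n_X := |Iso(X)||Aut(X)|. \]
Therefore $\Deltaquot(a_X) = a_X\otimes a_X$, and directly from the definition of $\epsquot$ one has $\epsquot(a_X) = 1$, so each $a_X$ is a group--like element of co--unit one.

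Next I would show $\bar\J$ is a co--ideal. For any generator $a_X - a_Y$, the standard group--like identity gives
\[ \Delta(a_X - a_Y) = (a_X - a_Y)\otimes a_X + a_Y\otimes (a_X - a_Y) \in \bar\J\otimes \Bquot + \Bquot\otimes \bar\J, \]
and $\epsquot(a_X - a_Y) = 1 - 1 = 0$. Since $\bar\J$ is a two--sided ideal and $\Deltaquot$, $\epsquot$ are algebra maps on $\Bquot$ by Theorem \ref{coprodthm}, both properties extend to arbitrary $p(a_X - a_Y)q \in \bar\J$ (the subspace $\bar\J\otimes \Bquot + \Bquot\otimes \bar\J$ is itself a two--sided ideal of $\Bquot\otimes\Bquot$). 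Hence $\bar\J$ is a bi--ideal, and $\Hquot = \Bquot/\bar\J$ inherits the bi--algebra structure from Theorem \ref{coprodthm}. The Feynman category axioms force $\unit$ to be the empty tensor product in $\V^\otimes$, so $Aut(\unit) = \{id_\unit\}$ and $Iso(\unit) = \{\unit\}$; thus $n_\unit = 1$, $a_\unit = [id_\unit]$, and the unit of $\Bquot$ descends to $[id_\unit]\in \Hquot$, which is exactly $\eta_\Hquot(1)$. The co--unit $\epsquot$ descends by the vanishing established above.

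Finally, under the connectedness hypothesis, $\Hquot$ is a connected (conilpotent) bi--algebra, so the antipode exists by the classical recursive construction as the convolution inverse of the identity along the coradical filtration; therefore $\Hquot$ is a Hopf algebra. The main obstacle is recognizing why the specific combinatorial factors $|Aut(X)||Iso(X)|$ appearing in $\bar\J$ are the unique scalings that turn the $[id_X]$ into group--like elements of $\Bquot$; once this observation is made, the rest of the proof is a short formal computation combined with the standard theorem on connected bi--algebras.
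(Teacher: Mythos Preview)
Your proof is correct and follows essentially the same approach as the paper: both arguments use \eqref{coprodcalceq} together with the almost group--like hypothesis to obtain $\Deltaquot([id_X]) = |Aut(X)||Iso(X)|\,[id_X]\otimes[id_X]$ in $\Bquot$, then perform the standard telescoping computation on the generators $a_X - a_Y$ to verify the co--ideal property. Your write--up is in fact slightly more explicit than the paper's in naming the group--like elements $a_X$, in justifying why the bi--ideal property extends from generators to all of $\bar\J$, and in checking that $n_\unit = 1$ so that the unit descends.
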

\begin{proof}
In $\Bquot$, \eqref{coprodcalceq}
reads $\Delta([id_X])=|Aut(X)||Iso(X)|[id_X]\otimes[id_X]$,
so that
\begin{multline*}
\Delta(|Aut(X)||Iso(X)|[id_X]) -|Aut(Y)||Iso(Y)|[id_Y]\\
=(|Aut(X)||Iso(X)|)^2[id_X]\otimes[id_X]- (|Aut(Y)||Iso(Y)|)^2 [id_Y]\\
= (|Aut(X)||Iso(X)|[id_X]-|Aut(Y)||Iso(Y)|[id_Y]\otimes |Aut(X)||Iso(X)|[id_X]+\\
|Aut(Y)||Iso(Y)|[id_Y]\otimes
(|Aut(X)||Iso(X)|[id_X]-|Aut(Y)||Iso(Y)|[id_Y])
\end{multline*}

Hence, the ideal $\bar \J$ is
generated by elements $|Aut(X)||Iso(X)|[id_X]-|Aut(Y)||Iso(Y)|[id_Y]$ is also a co--ideal, as these also satisfy $$\epsquot(|Aut(X)||Iso(X)|[id_X]-|Aut(Y)||Iso(Y)|[id_Y])=1-1=0$$
It is easy to check that $\eta_{\Hquot}$ yields a split co--unit.
\end{proof}
\begin{rmk}\mbox{}
\begin{enumerate}
\item One can use a notions of grading and almost connectedness here as in previous analysis of connectedness.
This is entirely analogous to \cite[\S\ref{P1-hopfcondpar}]{HopfPart1}.
\item   If $\V$ is also discrete and  $\F$ skeletal then $\bar \J=\la [id_X]-[id_Y]\ra$ and $\Bquot=\B^{iso}\otimes_\Z\Q$.
This is the case for the non--$\Sigma$ operads, see \S\ref{enrichedpar}.
\end{enumerate}
\end{rmk}

\subsubsection{The symmetric case: a careful analysis of the two sides of the bi--algebra equation}
\label{symfailpar}
The following proposition a finer version of Proposition \ref{f-to-b-nonsym} which also holds in the symmetric case.
\begin{prop}
For any factorization of $\Phi=\phi\otimes \psi:X\times X'\to Z\otimes Z'$ as $\Phi_0\circ \Phi_1:X\times X'\to Y\to Z\otimes Z'$ there exists a decomposition $\sigma': Y\simeq \hat Y\otimes \hat Y'$ and a factorization $(\phi_0\otimes \psi_0,\phi_1\otimes\psi_1)$ factoring through  $\hat Y\otimes \hat Y'$ such that $(\Phi_0,\Phi_1)=\bar d(\sigma')(\phi_0\otimes \psi_0,\phi_1\otimes\psi_1)= ( \phi_0\otimes \psi_0\circ\sigma^{\prime -1},\sigma'\circ \phi_1\otimes\psi_1)$. Furthermore, all such factorizations are in 1--1 correspondence with the cosets $Iso(Y,\hat Y\otimes \hat Y')/Aut(\hat Y)\times Aut(\hat Y')$.
\end{prop}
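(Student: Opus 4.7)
The plan is to apply the hereditary condition (axiom (ii) of a Feynman category) twice, once to $\Phi_0$ and once to a transported version of $\Phi_1$, using the splitting of the outer targets to induce a splitting of the intermediate object $Y$. I work in the strict setting, which is justified by the discussion following the definition of a Feynman category.

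First I apply (ii) to $\Phi_0 : Y \to Z \otimes Z'$. Since the target is already a tensor product (with $Z$ and $Z'$ themselves tensor products of vertices), grouping the one-comma decomposition of $\Phi_0$ according to the split $Z \otimes Z'$ produces an isomorphism $\sigma' : Y \to \hat Y \otimes \hat Y'$ together with morphisms $\phi_0 : \hat Y \to Z$ and $\psi_0 : \hat Y' \to Z'$ such that
\[
\Phi_0 = (\phi_0 \otimes \psi_0) \circ \sigma'.
\]
Setting $\tilde\Phi_1 := \sigma' \circ \Phi_1 : X \otimes X' \to \hat Y \otimes \hat Y'$, I apply (ii) again to get morphisms $\phi_1 : \tilde X \to \hat Y$ and $\psi_1 : \tilde X' \to \hat Y'$ with $\tilde\Phi_1 = \phi_1 \otimes \psi_1$, up to a permutation of factors and factorwise isomorphisms of the source.

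Next I invoke the \emph{uniqueness} clause of (ii) applied to $\Phi$ itself: on one hand $\Phi$ has the given decomposition $\phi \otimes \psi$ through $X \otimes X'$, and on the other hand $(\phi_0 \otimes \psi_0) \circ (\phi_1 \otimes \psi_1) = (\phi_0 \circ \phi_1) \otimes (\psi_0 \circ \psi_1)$ is another tensor decomposition through $\tilde X \otimes \tilde X'$. By the uniqueness of tensor decompositions (the second bullet point following \eqref{morphdecompeq}) these agree up to a permutation and factorwise isomorphisms, which may be absorbed into $\phi_1, \psi_1$. This identifies $\tilde X = X$, $\tilde X' = X'$, $\phi_0 \circ \phi_1 = \phi$ and $\psi_0 \circ \psi_1 = \psi$. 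Rewriting $\Phi_1 = \sigma'^{-1} \circ (\phi_1 \otimes \psi_1)$ exhibits exactly $(\Phi_0, \Phi_1) = \bar d(\sigma')(\phi_0 \otimes \psi_0, \phi_1 \otimes \psi_1)$, completing existence.

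For the counting, I fix the representatives $\hat Y, \hat Y'$ and analyse the ambiguity in the data $(\sigma', \phi_0, \psi_0, \phi_1, \psi_1)$ yielding a prescribed $(\Phi_0, \Phi_1)$. Replacing $\sigma'$ by $(\alpha \otimes \beta) \circ \sigma'$ with $\alpha \in Aut(\hat Y)$, $\beta \in Aut(\hat Y')$ forces the compensating change $\phi_0 \mapsto \phi_0 \circ \alpha^{-1}$, $\psi_0 \mapsto \psi_0 \circ \beta^{-1}$, $\phi_1 \mapsto \alpha \circ \phi_1$, $\psi_1 \mapsto \beta \circ \psi_1$, and leaves $(\Phi_0, \Phi_1)$ unchanged; conversely, two isomorphisms $\sigma', \sigma''$ producing the same $(\Phi_0, \Phi_1)$ must differ by an automorphism of $\hat Y \otimes \hat Y'$ preserving the tensor decomposition, i.e.\ an element of $Aut(\hat Y) \times Aut(\hat Y')$. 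This gives the bijection with $Iso(Y, \hat Y \otimes \hat Y') / (Aut(\hat Y) \times Aut(\hat Y'))$. The main obstacle is the uniqueness argument in the second paragraph: one must ensure the decomposition of $\tilde\Phi_1$ from (ii) is compatible with the pre-given splitting $X \otimes X'$ of its source, and this is precisely where the full strength of the equivalence $Iso(\F \downarrow \V)^\otimes \simeq Iso(\F \downarrow \F)$ is needed---not merely for existence of one-comma decompositions but to compare two different decompositions of the same $\Phi$.
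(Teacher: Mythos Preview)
Your proof is correct and follows essentially the same approach as the paper: apply axiom (ii) to $\Phi_0:Y\to Z\otimes Z'$ to obtain $\sigma'$ and the splitting of $Y$, then to the transported $\Phi_1$, and finally invoke the uniqueness clause of (ii) on $\Phi$ itself to force the source isomorphism to be block-diagonal (the paper phrases this last step as ``$\sigma$ has to be a block isomorphism as axiom (ii) applies to the two decompositions $\Phi=\phi\otimes\psi$ and $\Phi\simeq(\hat\phi_0\circ\hat\phi_1)\otimes(\hat\psi_0\circ\hat\psi_1)$''). Your counting argument via the $Aut(\hat Y)\times Aut(\hat Y')$-ambiguity in $\sigma'$ is a slightly more explicit rendering of the paper's terse ``absorbing the tensor decomposition means that the remaining terms correspond to non--tensor decomposable permutations, and hence to a sum over the respective cosets.''
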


\begin{proof}
Given a decomposition of $\Phi$ as $(\Phi_0,\Phi_1)$, we can follow the argument of the proof of Theorem \ref{f-to-b-nonsym} up until the discussion of the isomorphisms $\sigma$ and $\sigma'$.

In the symmetric case, {\it a priori} there could be permutations involved for $\sigma$ and $\sigma'$.  This is, however, not the case for $\sigma$, and  we can absorb it to get decompositions of $\Phi$. More precisely, the isomorphism $\sigma$ has to be a block isomorphism as axiom (ii) applies to the two decompositions $\Phi=\phi\otimes \psi$ and $\Phi\simeq \hat\phi_0\circ\hat\psi_0\otimes \hat\phi_1\circ\hat\psi_1$. This means that  $\sigma$ in \eqref{deltamuisoeq} is uniquely a  tensor product of isomorphisms $\sigma=\sigma_1\otimes \sigma_2$,  since both decompositions have the same target decomposition $Z\otimes Z'$. By pre-composing, we get the tensor decomposition $\Phi= (\hat\phi_0\otimes \hat\psi_0)\circ  (\hat\phi_1\otimes \hat\psi_1)\circ(\sigma^{-1}_1\otimes \sigma^{-1}_2) $.

Continuing with the decomposition of this form, we turn to  $\sigma'$. We know that by (ii) that $\sigma'$ can be written as a tensor product decomposition preceded by a permutation. If $\sigma'=\sigma'_1\otimes \sigma'_2$, we have that $Y=Y'\otimes Y''$ and $(\Phi_0,\Phi_1)$ appears  as a tensor product. Again absorbing the tensor decomposition means that the remaining terms corresponding to non--tensor decomposable permutations, and hence to a sum over the respective cosets.
\end{proof}

Notice that fixing any isomorphism in $Iso(Y,\hat Y\otimes \hat Y')$ identifies it with $Aut(\hat Y\otimes \hat Y')$
so that the quotient group
$Iso(Y,\hat Y\otimes \hat Y')/[Aut(\hat Y)\times Aut(\hat Y')]$ becomes identified with $Aut(\hat Y\otimes \hat Y')/
[Aut(\hat Y)\times Aut(\hat Y')]\simeq Aut(Y)/(Aut(Y')\times Aut(Y))$. Using this identification, we can see that if $\F$ is a Feynman category, then in the proof of Theorem \ref{f-to-b-nonsym} the sets of diagrams  agree  up to a choice of cosets of isomorphisms of $\sigma'$ in \eqref{deltamuisoeq}, that is the difference in the count of diagrams will result from  the cosets
$Aut(\hat Y\otimes \hat Y')/(Aut(\hat Y)\times Aut(\hat Y'))$. More precisely:

\begin{cor}
\label{actioncor}

Splitting the sum $\Delta\circ \mu$  into sub--sums over a fixed  decomposition of  $Y=Y'\otimes Y''$, $\Delta\circ \mu= \sum_Y (\Delta\circ \mu)_Y$, we have
\begin{multline}
\label{actioneq}
\sum_Y (\Delta\circ \mu)_Y =\\\sum_{Y=Y'\otimes Y''}\sum_{[\sigma']\in Aut(Y)/(Aut(Y')\times Aut(Y''))} \bar d(\sigma')\left(\mu\otimes \mu\circ\pi_{23}\circ \Delta\otimes \Delta\right)_{\hat Y\otimes \hat Y'}
\end{multline}
where we have fixed a decomposition $\hat Y\otimes \hat Y'\simeq Y$ and used the identification above.

In the non--$\Sigma$ case, $Aut(Y)\simeq Aut(Y')\times Aut(Y'')$, which implies  that \\ $Aut(Y)/(Aut(Y')\times Aut(Y''))$ is trivial and we recover Theorem \ref{f-to-b-nonsym}.\end{cor}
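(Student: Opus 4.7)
The plan is to partition the left--hand side
$\Delta\circ\mu(\phi\otimes\psi)=\sum_{(\Phi_0,\Phi_1):\Phi_0\circ\Phi_1=\phi\otimes\psi}\Phi_0\otimes\Phi_1$
first by the intermediate object $Y$ (producing the subsums $(\Delta\circ\mu)_Y$), and then, within each such subsum, by the choice of tensor splitting of $Y$. The preceding Proposition tells us precisely how each individual factorization of $\Phi=\phi\otimes\psi$ through $Y$ can be rewritten in terms of a tensor decomposition $\hat Y\otimes\hat Y'\simeq Y$ and a factorization $(\phi_0\otimes\psi_0,\phi_1\otimes\psi_1)$ through $\hat Y\otimes \hat Y'$ twisted by $\bar d(\sigma')$ for an isomorphism $\sigma'$. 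So the strategy is simply to repackage the sum on the left according to this parametrization.

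Concretely, for fixed $Y$ I would apply the Proposition to each $(\Phi_0,\Phi_1)$, extracting $\hat Y\otimes \hat Y'$, $\sigma'$, and $(\phi_0\otimes \psi_0,\phi_1\otimes\psi_1)$. Once a reference splitting $\hat Y\otimes \hat Y'\simeq Y$ is fixed, the tensor decomposition datum becomes the ordered pair $Y=Y'\otimes Y''$, and the Proposition's 1--1 correspondence identifies the remaining freedom of $\sigma'$ with the coset set $Iso(Y,\hat Y\otimes\hat Y')/(Aut(\hat Y)\times Aut(\hat Y'))$; after fixing a reference isomorphism this is identified with $Aut(Y)/(Aut(Y')\times Aut(Y''))$, exactly as noted just before the statement. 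Next I would observe that the term indexed by $(\phi_0\otimes\psi_0,\phi_1\otimes\psi_1)$ with $\phi=\phi_0\circ \phi_1$ through $\hat Y$ and $\psi=\psi_0\circ\psi_1$ through $\hat Y'$ is precisely a summand of $(\mu\otimes\mu\circ\pi_{23}\circ\Delta\otimes\Delta)_{\hat Y\otimes \hat Y'}$: unwinding this expression gives $\sum(\phi_0\otimes \psi_0)\otimes(\phi_1\otimes\psi_1)$, which after the twist $\bar d(\sigma')$ recovers the original $(\Phi_0,\Phi_1)$. Summing first over cosets $[\sigma']$, then over splittings $Y=Y'\otimes Y''$, and finally over $Y$ reassembles the left--hand side and gives \eqref{actioneq}.

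The main obstacle is the bookkeeping of overcounting: one must confirm that the subgroup $Aut(\hat Y)\times Aut(\hat Y')$ acts on the set of pairs $(\sigma', (\phi_0\otimes\psi_0,\phi_1\otimes\psi_1))$ in such a way that $\bar d$--equivalent choices produce identical composites $(\Phi_0,\Phi_1)$, whereas distinct cosets produce genuinely distinct composites; this is exactly the content of the Proposition and is what turns the naive sum over isomorphisms $\sigma'\in Iso(Y,\hat Y\otimes\hat Y')$ into a sum over cosets. Finally, in the non--$\Sigma$ setting, \S\ref{isoformulapar} gives $Aut(Y)\simeq Aut(Y')\times Aut(Y'')$, so every coset is a singleton; each tensor splitting of $Y$ then contributes exactly one term, and \eqref{actioneq} collapses to the bijection of diagrams established in the proof of Theorem \ref{f-to-b-nonsym}.
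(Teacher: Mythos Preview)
Your proposal is correct and follows essentially the same approach as the paper. The paper presents this as an immediate corollary of the preceding Proposition together with the identification of $Iso(Y,\hat Y\otimes \hat Y')/(Aut(\hat Y)\times Aut(\hat Y'))$ with $Aut(Y)/(Aut(Y')\times Aut(Y''))$ discussed just before the statement; your write-up simply makes that repackaging explicit, including the appeal to \S\ref{isoformulapar} for the non--$\Sigma$ collapse.
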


Thus, in the symmetric case,  the bi--algebra equation fails on $\B$.
An interesting aspect is the possibility to twist  the co--multiplication by a co--cycle, to make it hold on $\B/\C$,
which in certain cases  leads to a bi--algebra structure.

\begin{ex}
In the case of trivial $\V$, in the symmetric case, we have $Aut(n)\times Aut(m)=\SS_n\times \SS_m\subset \SS_{n+m}=Aut(n+m)$ in $\V^{\otimes}$.
 Let us consider the trivial Feynman category with trivial $\V$, that is $\F=\SS$, the skeletal version of $\V^{\otimes}$, which has the natural numbers as objects and only isomorphisms as morphisms, where $Hom(n,n)=Aut(n,n)=\SS_n$.
We will consider $\Delta(id_n\otimes id_m)= \Delta(id_{n+m})=\sum_{\sigma\in \SS_{n+m}} \sigma\otimes \sigma^{-1}$. We analyze the possible diagrams \eqref{deltamuisoeq} for the summand $\sigma\otimes \sigma^{-1}$ in the proof of Theorem \ref{f-to-b-nonsym}.
\begin{equation}
\label{symmexeq}
\xymatrix{
n\otimes m\ar[r]^-{\sigma'}\ar[ddrr]_{\hat\sigma_n\otimes\hat\sigma_m}&n\otimes m=n+m \ar[rr]^{id_{n+m}=id_n\otimes id_m}\ar[dr]^{\sigma}&&n+m\\
&&n\otimes m
\ar[ur]^{\sigma^{-1}}\ar[d]_{\sigma_n\otimes \sigma_m\circ\sigma^{-1}}&\\
&&n\otimes m\ar[uur]_{\sigma^{-1}_n\otimes \sigma^{-1}_m}&\\
}
\end{equation}
And we see that $\sigma'=\sigma^{-1}_n\otimes \sigma^{-1}_m\circ \hat\sigma_n\otimes \hat\sigma_m=
\sigma^{-1}_n\circ\hat\sigma_n\otimes \sigma^{-1}_m\circ \hat\sigma_m$ absorbing this block isomorphism into $\hat\sigma_n\otimes \hat\sigma_m$, we get the diagram.

\begin{equation}
\label{symmextwoeq}
\xymatrix{
n\otimes m=n+m \ar[rr]^{id_{n+m}=id_n\otimes id_m} \ar[dr]^{\sigma}\ar[ddr]_{\sigma_n\otimes\sigma_m}&&n+m\\
&n\otimes m
\ar[ur]^{\sigma^{-1}}\ar[d]_{\sigma_n\otimes \sigma_m\circ\sigma^{-1}}&\\
&n\otimes m\ar[uur]_{\sigma^{-1}_n\otimes \sigma^{-1}_m}&\\
}
\end{equation}
If $\sigma$ is of the form $\sigma_n\otimes \sigma_m$, then the term appears in $\Delta(id_n)\otimes \Delta(id_m)$. Otherwise, the action of $Aut(Y)$ on $Hom(X,Y)\otimes Hom(Y,Z)$ with $X=Y=Z=n+m$, on the decompositions appearing in $\Delta(id_n)\otimes \Delta(id_m)$ and moreover, picking representatives $\sigma^r$ of
$Aut(n+m)/(Aut(n)\times Aut(m))$ and summing over their action, we get an equality
$$
\Delta(id_n\otimes id_m)=\sum_{\sigma^r\in \SS_{n+m}/(\SS_n\otimes \SS_m)} \rho(\sigma^r) \Delta(id_n)\otimes \Delta(id_m)
$$
In particular for equivalence classes in $\B/C$, we get
$$\Delta([id_n]\otimes [id_m])=\frac{(n+m)!}{n!m!} \Delta([id_n])\otimes \Delta([id_m])
$$
which shows the failure of the bi--algebra equation.

However,  the difference can be absorbed by a co--cycle:
Set $\beta(\sigma_n,\sigma_n^{-1})=\frac{1}{|Aut(n)|}=\frac{1}{n!}$.
Define a new co--multiplication: $\Delta_\beta(id_n)= \beta(\sigma_n,\sigma_n^{-1})\, \sigma_n\otimes \sigma_n^{-1}$
then $\otimes$ and $\Delta_\beta$ on $\Bquot$ satisfy the bi--algebra equation.
\end{ex}

\subsubsection{Actions and  co--cycles}

Recall that there is an  $Aut(Z)$ action on
$Hom(Z,Y) \times  Hom(X,Z)$ given by $\bar d(\sigma)(\phi_0,\phi_1)=( \phi_0 \circ \sigma^{-1},\sigma\circ \phi_1)$.

By a twisting co--cycle for the co--product, we mean a morphism $\B\to Hom(\B\otimes \B,K)$ that is a linear collection of bilinear morphisms $\beta_\phi$, s.t.\
$\Delta_\beta(\phi)=\sum_{(\phi_0,\phi_1)}\beta_\phi(\phi_0,\phi_1)\phi_0\otimes \phi_1 $ is still co--associative. Such a co--cycle is called multiplicative if  $\beta_{\phi\otimes \psi}=\beta_\phi\beta_\psi$ on decomposables. $\beta$ is called co--unital, if there exists a co--unit $\eps_\beta$ for $\Delta_\beta$.

\begin{prop} Assuming for simplicity that we are in the skeletal case.
If the  $Aut(Z)$ action is free on all decompositions, then  we can define a modified co--product
$\Delta_\beta$ on $\B$, defined
the multiplicative co--cycle, which is given by a co--cycle $\beta(\phi_0,\phi_1)=\frac{1}{|Aut(Z)|}$ for a factorization $\phi:X\stackrel{\phi_1}{\to}Z\stackrel{\phi_0}{\to} Y$.  In the  non-$\Sigma$ case the co--cycle is multiplicative, and, if the identities are almost group--like, co--unital.

This co--algebra structure descends to  $\B/\C$ furnishing a bi--algebra structure:
\begin{equation}
    \Deltared([\phi]):= [\Delta_\beta](\phi)= \sum_Z \sum_{i_r}[\phi_1^{i_r}]\otimes [\phi_0^{i_r}]
\end{equation}
where the sum  runs over representatives of the $Aut(Z)$ action.
There is a co--unit
\begin{equation}
\epsred([\phi])=\begin{cases}1&\text{ if } [\phi]=[id_X]\\
0 & \text{else}\\
\end{cases}
\end{equation}
This is true, both in the non--$\Sigma$ and the symmetric case.
\end{prop}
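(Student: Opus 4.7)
My plan is to verify the co--associativity of $\Delta_\beta$ on $\B$, check multiplicativity and co--unitality in the non--$\Sigma$ case, and then descend to $\B/\C$ and use Corollary \ref{actioncor} to establish the bi--algebra equation in both settings.

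Co--associativity on $\B$ is immediate: for any triple factorization $\phi=\alpha\circ\beta\circ\gamma$ through intermediate objects $V_1$ and $V_2$, both iterated co--products assign the same weight $\tfrac{1}{|Aut(V_1)|\,|Aut(V_2)|}$ to the term $\alpha\otimes\beta\otimes\gamma$, so they coincide term--by--term. Multiplicativity of $\beta$ in the non--$\Sigma$ case follows from the identity $|Aut(Y\otimes Y')|=|Aut(Y)|\cdot|Aut(Y')|$ of \S\ref{isoformulapar}: a decomposable intermediate $Y=\hat Y\otimes\hat Y''$ has weight that splits as a product. For co--unitality under the almost group--like hypothesis, the factorizations of $\phi:X\to Y$ whose left factor is identity-like correspond, by Lemma \ref{isolem}, to isomorphisms $Z\stackrel{\sim}{\to}Y$ forming an $Aut(Y)$--torsor; a suitably normalized $\eps_\beta$ supported on identity morphisms then provides the co--unit after the torsor cancellation.

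For the descent to $\B/\C$, the freeness hypothesis does the essential work. Each orbit of the $\bar d(Aut(Z))$--action on factorizations through $Z$ has cardinality exactly $|Aut(Z)|$, and all orbit elements represent the same pair of classes in $\B/\C\otimes\B/\C$ since they differ only by pre-- and post--composition with an isomorphism of the intermediate object. The orbit sum $|Aut(Z)|\cdot\tfrac{1}{|Aut(Z)|}[\phi_0]\otimes[\phi_1]$ therefore contributes a single term of coefficient one, yielding exactly the stated formula for $\Deltared$. Co--unitality of $\epsred$ follows immediately: in $(\epsred\otimes\id)\Deltared([\phi])$ only terms with $[\phi_1^{i_r}]=[id_X]$ survive (Lemma \ref{isolem}), and in a skeletal reduction these form a unique $\bar d(Aut(X))$--orbit whose surviving class is $[\phi_0^{i_r}]=[\phi]$, and symmetrically on the right.

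The main obstacle, and the reason the twist is introduced, is the bi--algebra equation in the symmetric case, where $\beta$ is \emph{not} multiplicative on $\B$. Here Corollary \ref{actioncor} does the heavy lifting: for each intermediate $Y\simeq\hat Y\otimes\hat Y''$, the LHS $\Delta\circ\mu$ includes, in addition to the decomposable diagrams of the RHS, their translates under coset representatives $\sigma'\in Aut(Y)/(Aut(\hat Y)\times Aut(\hat Y''))$. In $\B/\C$ these $\sigma'$--translates collapse to the same class as the untwisted diagram. Counting orbits on the LHS (under the free $Aut(Y)$--action) and on the RHS (under the free $Aut(\hat Y)\times Aut(\hat Y'')$--action), both produce $N_\phi(\hat Y)\,N_\psi(\hat Y'')/(|Aut(\hat Y)||Aut(\hat Y'')|)$ classes, where $N_\phi,N_\psi$ denote the numbers of factorizations. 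Since every orbit carries coefficient one after the twist, the bi--algebra equation holds term--by--term on $\B/\C$ in both the non--$\Sigma$ case (where the coset is trivial) and the symmetric case (where the coset cancels the non--multiplicativity of $\beta$), completing the argument.
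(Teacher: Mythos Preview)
Your proposal is correct and follows essentially the same approach as the paper's proof: co--associativity via the equal weight $\tfrac{1}{|Aut(V_1)||Aut(V_2)|}$ on triple factorizations, multiplicativity in the non--$\Sigma$ case from $|Aut(Y\otimes Y')|=|Aut(Y)||Aut(Y')|$, descent to $\B/\C$ by collapsing each free $Aut(Z)$--orbit to a single class with coefficient one, and the bi--algebra equation via Corollary \ref{actioncor} with the coset index $[Aut(Y):Aut(\hat Y)\times Aut(\hat Y'')]$ cancelling the discrepancy between the two twist factors. The paper phrases the last step as a direct comparison of the factors $\tfrac{1}{|Aut(Y)|}$ on the LHS versus $\tfrac{1}{|Aut(\hat Y)||Aut(\hat Y')|}$ on the RHS against the extra coset sum in \eqref{actioneq}, whereas you do the equivalent orbit count; the content is identical.
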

\begin{proof}
The fact that this is co--associative is a straightforward calculation given that the action is free and that the $Aut(Z_1)$ and $Aut(Z_2)$ actions on decompositions $X\to Z_1\to Z_2\to Y$ commute. The co--unit in the skeletal case is simply $\eps_{\beta}(\phi)=1$ if $\phi=id_X$ and $0$ else. The multiplicativity in the non--$\Sigma$ case corresponds to the fact that $Aut(Y\otimes Y')\simeq Aut(Y)\otimes Aut(Y')$.

On $\B/\C$ one calculates:
\begin{eqnarray*}
\Deltared[(\phi])&=&[\Delta_\beta(\phi)]
=
\sum_Z \sum_i \beta(\phi_0,\phi_1) [\phi^i_0\otimes \phi^i_1]\\
&=&\sum_Z \sum_{i_r} \sum_{\sigma\in Aut(Z)}  \frac{1}{|Aut(Z)|} [\phi_0^{i_r}\circ \sigma^{-1}]\otimes[\sigma\circ \phi_1^{i_r}] \\
&=&\sum_Z \sum_{i_r} [\phi_0^{i_r}]\otimes [\phi_1^{i_r}]\\
\end{eqnarray*}

For the bi--algebra equation in the symmetric case:
Inspecting the proof of Corollary \ref{actioncor}, we get an additional factor of $\frac{1}{|Aut(Y)|}$ for each summand in $\Delta\circ \mu$, while on the other side of the equation the factor is $\frac{1}{|Aut(\hat Y)||Aut(\hat Y')|}$. These  cancel with the additional factor of $\frac{|Aut(Y)|}{|Aut(\hat Y)||Aut(\hat Y')|}$ in \eqref{actioneq}.
\end{proof}

\subsubsection{Balanced actions}

  More generally, one could define the putative co--cycle $\beta(\phi_1^i,\phi_0^i)=\frac{1}{|Or(\phi_1,\phi_0)|}$ where $Or(\phi_0,\phi_1)$ is the orbit under the $Aut(Z)$ action.
  If this is indeed a co--cycle  then we say that $\FF$ has  a  {\em balanced} action by automorphisms.
The trivial and free actions are balanced.

\begin{prop} If $\FF$ is non--symmetric, skeletal in the above sense, and decomposition finite with  balanced actions as above
 then tuple $(\B,\otimes,\Delta_{\beta},\eta,\eps_\beta)$ is also a bi--algebra.
\end{prop}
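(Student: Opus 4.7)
The plan is to adapt the diagrammatic argument of Theorem \ref{f-to-b-nonsym} to incorporate the balanced co--cycle $\beta$. Co--associativity of $\Delta_\beta$ is guaranteed by the very definition of a co--cycle (and is built into the balanced hypothesis), and the existence of a co--unit $\eps_\beta$ is part of the balanced/co--unital data. So the real content of the statement lies in checking the compatibilities between $(\mu,\eta)$ and $(\Delta_\beta,\eps_\beta)$, namely co--multiplicativity of $\eta$, multiplicativity of $\eps_\beta$, and the bi--algebra equation.

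For the unit and co--unit compatibilities I would first invoke Lemma \ref{deltaunitlem} to conclude that $\Delta(id_\unit) = id_\unit \otimes id_\unit$, so the orbit $Or(id_\unit,id_\unit)$ is a singleton, giving $\beta(id_\unit,id_\unit)=1$ and hence $\Delta_\beta \circ \eta = \eta \otimes \eta$. Multiplicativity of $\eps_\beta$ on identities follows from axiom (i) of a non-$\Sigma$ Feynman category exactly as in the proof of Theorem \ref{f-to-b-nonsym}: $id_X \otimes id_Y = id_{X \otimes Y}$ is the unique factorization of this identity, so both sides of $\eps_\beta(\phi \otimes \psi) = \eps_\beta(\phi)\eps_\beta(\psi)$ vanish unless $\phi$ and $\psi$ are (scalar multiples of) identities, in which case the skeletal normalization of $\eps_\beta$ makes both sides match.

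The main task is the bi--algebra equation $\Delta_\beta \circ \mu = (\mu \otimes \mu) \circ \pi_{23} \circ (\Delta_\beta \otimes \Delta_\beta)$. Fortunately, the proof of Theorem \ref{f-to-b-nonsym} already produces a canonical bijection between the summands of the two untwisted sides: for any factorization $(\Phi_0,\Phi_1)$ of $\Phi = \phi \otimes \psi$ through an intermediate $Y$, axiom (ii) applied in the non--$\Sigma$ strict setting forces $Y = \hat Y \otimes \hat Y'$ and a tensor decomposition $(\Phi_0,\Phi_1) = (\phi_0 \otimes \psi_0, \phi_1 \otimes \psi_1)$ with $\phi = \phi_0 \circ \phi_1$, $\psi = \psi_0 \circ \psi_1$. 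It therefore suffices to show that the twisting weights agree on matched pairs of summands, i.e.\ that
\begin{equation*}
\beta(\phi_0 \otimes \psi_0, \phi_1 \otimes \psi_1) = \beta(\phi_0,\phi_1)\,\beta(\psi_0,\psi_1).
\end{equation*}

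The heart of the proof is this multiplicativity, and it is where I expect the main technical work to sit. Using \S\ref{isoformulapar} in the non--symmetric case, $Aut(\hat Y \otimes \hat Y') = Aut(\hat Y) \times Aut(\hat Y')$, and a direct computation shows that the $\bar d$--action of a pair $(\sigma',\sigma'') \in Aut(\hat Y) \times Aut(\hat Y')$ on a tensor decomposition splits as the product of the $\bar d$--actions on $(\phi_0,\phi_1)$ and $(\psi_0,\psi_1)$. Hence the $Aut(\hat Y \otimes \hat Y')$--orbit of a product decomposition is exactly the product of the two smaller orbits, so orbit cardinalities factor and the displayed multiplicativity holds. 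The subtle point, which I expect to be the main obstacle, is checking that every decomposition through $\hat Y \otimes \hat Y'$ really splits as a product so that no extra orbits appear beyond those counted by the product action; this is precisely what the skeletal and non--$\Sigma$ hypotheses ensure, and its failure in the symmetric case is what is already recorded in Corollary \ref{actioncor}. Once multiplicativity is in hand, one substitutes the matched weights into the bijection of diagrams and the bi--algebra equation follows.
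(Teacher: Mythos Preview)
Your proposal is correct and follows essentially the same approach as the paper. The paper's proof is terse: it notes that co--associativity is built into the balanced assumption and then reduces the bi--algebra equation to multiplicativity of $\beta$, which it derives from $Aut(Z\otimes Z')\simeq Aut(Z)\times Aut(Z')$ in the non--$\Sigma$ case---precisely your orbit--factorization argument, just stated without the intermediate step about the $\bar d$--action splitting. Your more detailed treatment of the unit and co--unit compatibilities is not in the paper's proof (it presumably leaves them as implicit carry--overs from Theorem~\ref{f-to-b-nonsym}), but what you write there is correct and in the same spirit.
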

\begin{proof}
The fact that we have an algebra remains unchanged. For the co--algebra, we have to check co--associativity, which is guaranteed by the assumption that the action is balanced. The bi--algebra equation still holds, since the co--cycle is multiplicative: $\beta(\phi_1\otimes \psi_1,\phi_0\otimes \psi_0)=\beta(\phi_1,\psi_1)\beta(\phi_0,\psi_0)$. This follows from the fact that in the non--$\Sigma$ case: $Aut(Z\otimes Z')=Aut(Z)\otimes Aut(Z')$.
\end{proof}

\begin{rmk} \mbox{}
\begin{enumerate}
\item The reduced structure is available for the non--skeletal version. Here, for instance in the free action case, one obtains factors $|Iso(Z)||Aut(Z)|$ which again constitutes a multiplicative co--cycle.

\item {\it A priori} It seems that the two bi--algebra structures $\Delta_{\beta}$ for a balanced action and $\Deltaiso$ may differ. We conjecture that they do coincide for all  Feynman categories of crossed type \cite[\S5.2]{feynman}.
\end{enumerate}
\end{rmk}

\subsubsection{Summary}
Since there are many constructions at work here, we will collect the results for the bi--algebras in an overview theorem:

\begin{thm}
\label{sumthm}
Fix a decomposition finite  non--$\Sigma$ or a factorization finite Feynman category $\FF$, let  $\B=\Z[Mor(\F)$ and $\B^{sk}:=\B_{\F_{sk}}$ based on the skeletal version of $\F$.
 Let $\C$ be the ideal generated by $\sim$ in $\B$ and $\C^{sk}$  the respective ideal in $\B^{sk}$.
 Set $\Biso=\B/\C$, $\Bquot=\Biso\otimes_\Z\Q$.

\begin{enumerate}

\item   Both $\B$ and $\B^{sk}$ are unital algebras with $\otimes$ as product and $id_{\unit}$ as the unit. They are  Morita equivalent as algebras
\item   Both $\B$ and $\B^{sk}$
are co--unital co--algebras with respect to the deconcatenation co--product with co--unit $\eps$.
\item If $\FF$ is a non--$\Sigma$ Feynman category: $\B$ and $\B^{sk}$  are   unital, co--unital bi--algebras.

\item   $\Biso \simeq \B^{sk}/C^{sk}$ as  algebras and there is a bi--algebra structure $(\Biso,\otimes,$ $\etaiso,\Deltaiso,\epsiso)$ defined via co--invariants as in Theorem \ref{Bisothm}.

\item If $\FF$ is non--$\Sigma$ then there is a unital, co--unital quotient bi--algebra
$(\Bquot,\otimes,\etaquot,\Deltaquot,\epsquot)$ as defined in \S\ref{quotpar}.
\item If the action of $Aut(Z)$
on $Hom(X,Z)\times Hom(Z,Y)$ is trivial, free or in general balanced for all $X,Y,Z$, then the twisted $\B$ descends to a bi--algebra
$(\Bquot,\otimes,\etaquot,\Deltared,\epsred)$.
\item All the structures above are graded by the length of a morphism or the degree of a morphism if there is an integer  degree function.
\end{enumerate}

\qed
\end{thm}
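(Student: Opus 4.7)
The plan is to work through the seven items of Theorem \ref{sumthm} largely by invoking the constructions and theorems already proved in the preceding sections, since this statement is essentially a catalog collecting everything in one place. First I would deal with (1): the algebra structure on $\B$ (and on $\B^{sk}$) is immediate from strictness of the monoidal product with $id_\unit$ as two--sided unit, as observed at the start of \S\ref{feynmanpar}. The Morita equivalence comes from the categorical equivalence $\F\simeq\F^{sk}$, which induces an equivalence between the module categories of $\B$ and $\B^{sk}$; the main point to verify is that the free Abelian group construction on morphisms commutes up to Morita equivalence with skeletalization, and that the induced algebra map $\B^{sk}\hookrightarrow \B$ is a retract of a surjective quasi-inverse built by choosing a representative in each isomorphism class.

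For (2) I would simply cite the decomposition co--product defined in \eqref{Delta-decomp}, whose co--associativity is dual to associativity of composition in $\F$ and whose co--unit property is immediate from the definition of $\eps$. For (3), this is precisely Theorem \ref{f-to-b-nonsym}, applied to both $\F$ and its skeleton $\F^{sk}$ (which is again a non--$\Sigma$ Feynman category). For (4), the identification $\Biso\simeq \B^{sk}/\C^{sk}$ as algebras follows by choosing a skeleton: isomorphism classes in $\F$ correspond bijectively to $Aut$--orbits on $\mathrm{Hom}$--sets of $\F^{sk}$, and the tensor product descends. The bi--algebra structure is then the one from Theorem \ref{Bisothm}, reinterpreted through this identification.

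For (5) I would invoke Theorem \ref{coprodthm} for $\Bquot$, noting only that rational coefficients are used to normalize the co--unit $\epsquot$ by the $|Aut(X)||Iso(X)|$ factors coming from \eqref{coidealeq}. For (6), the proposition on balanced actions provides the co--cycle $\beta(\phi_0,\phi_1)=1/|Or(\phi_0,\phi_1)|$, and the proof that $(\Bquot,\otimes,\etaquot,\Deltared,\epsred)$ is a bi--algebra is the one given there, using that balance guarantees both co--associativity and multiplicativity of $\beta$. Finally, (7) is a direct consequence of the additivity of the length function under composition and tensor product as recorded in \S\ref{reflengthpar}: since $|\phi\circ \psi|=|\phi|+|\psi|$ and $|\phi\otimes \psi|=|\phi|+|\psi|$, every structure map preserves the grading, and the same holds for any other integer degree function.

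The only substantive verifications are bookkeeping: that the several quotient operations (skeletalization, passing to $\sim$-classes, inverting the factors $|Aut(X)||Iso(X)|$) commute suitably with each other, so that the various bi--algebras fit into a coherent commutative diagram; and that compatibility of $\Delta_\beta$ with $\sim$ in (6) holds, which reduces to the orbit cardinality $|Or(\phi_0,\phi_1)|$ being an isomorphism invariant of the pair. Neither of these presents a genuine obstacle; the only delicate point is making sure one is consistent about working over $\Z$ versus $\Q$ in (5)--(6), where invertibility of the automorphism counts forces the extension of scalars.
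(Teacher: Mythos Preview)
Your proposal is correct and matches the paper's own approach: the theorem carries a bare \qed\ with no proof, signalling that it is a summary statement collecting the results of Theorems \ref{f-to-b-nonsym}, \ref{Bisothm}, \ref{coprodthm}, the propositions of \S\ref{quotpar} on balanced actions, and the grading discussion of \S\ref{reflengthpar}. Your item-by-item citations are exactly the intended justifications, and the additional bookkeeping you flag (coherence of the quotients, $\Z$ versus $\Q$) is the only content beyond assembly.
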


\subsection{ Feynman categories, groupoids and de--compositions}
\label{GKTpar}
The co--nilpotence of the deconcatenation is related to iterated factorizations, which appear in \cite[\S3.3]{feynman} in the form of iterated Feynman categories $\FF',\dots \FF^{(n)}, \dots$. The associated maximal sub--groupoids $\V^{(n)\, \otimes}, \dots$  form a simplicial groupoid:  objects at level $n$ are factorizations of morphisms into $n$ chains, with the isomorphisms between these chains. In operad theory this type of groupoid explicitly appeared already in \cite{GKmodular} in the context of (twisted) modular operads, cf.\ also \cite{MSS}.

More explicitly, consider the `fat nerve' $\X=\X(\F)$ of any category $\F$, the simplicial groupoid with $\X_n$ the groupoid of $n$-chains $$\alpha_n=(X_0\to X_1\to\dots\to X_n)\text{ in }\F$$
and the isomorphisms between such chains, and $\X_0=Iso(\F)$.
The simplicial operator $d_1:\X_2\to\X_1$ is composition in $\F$. Its homotopy fiber over an object $\phi:X\to X'$ in $\X_1$ is thus the groupoid $\text{Fact}(\phi)$ of factorizations $\phi\simeq \phi_1\circ\phi_2$.

In a special situation, one can use the theory of decompositions which was developed after \cite{feynman} and the beginning of this paper, cf. \cite[\S3.3]{feynmanarxiv}.

In the transition to decomposition spaces, one however looses the simplicity that the co--product was initially just the dual of the composition.

Suppose $\F$ is any Feynman category such that the factorisations of the identity on the monoidal unit form a contractible groupoid. Then it can be shown that in fact $\X(\F)$ is a {\em symmetric monoidal decomposition groupoid} \cite[\S9]{GKT-decomp1}. The tensor and unit of $\F$ clearly define $\eta:*\to\X$, $\mu:\X\times\X\to\X$, but it is the key hereditary condition of a Feynman category that shows that tensor and composition are compatible: they form a homotopy pullback square
$$
\xymatrix@C+1em@R-1ex{
\text{Fact}(\phi)\times\text{Fact}(\phi')\ar[d]^\simeq_-{\otimes}\rto&\X _2\times \X _2\drpullback
\rto^-{\circ\times\circ}\dto_-{\otimes}
&\X _1\times \X _1\ar[d]^-{\otimes}
&{\hspace*{-3em}}{\ni (\phi,\phi')}\\
\text{Fact}(\phi\otimes\phi')\rto&\X _2\ar[r]_-{\circ}&\X _1&{\hspace*{-5em}}{\ni \phi\otimes\phi'},
}
$$
for all $\phi:X \to Y $ and $\phi':X '\to Y '$, that is, $\otimes:\text{Fact}(\phi)\times\text{Fact}(\phi')\to\text{Fact}(\phi\otimes\phi')$ is a groupoid equivalence.

From \cite[Theorem 7.2 and \S9]{GKT-decomp1} we see that $\X(\F)$ induces a bi--algebra in the symmetric monoidal category of  comma categories of groupoids and linear functors between them, and in \cite{GKT-decomp2} the finiteness conditions necessary and sufficient to pass to bi--algebras in the category of $\Q$-vector spaces are studied.

\section{Constructions and Examples}
\label{constexpar}
The main  examples are already directly accessible via the formalism above. However, more context is provided,
by using several universal constructions on Feynman categories from \cite[\S3]{feynman}, see also \cite{Frep} for more details.

We will go through the examples starting with the basic ones, which contain the three main examples, and then introduce further complexity to provide better insight and further examples.

\subsection{Examples with trivial $\V$ a.k.a.\ Operads and the three main examples}

\label{trivVpar}

Let $\V=\underline{*}$ be the trivial category with one object $*$ and its identity morphism $id_*$.
In the {\em non--symmetric case}, there is an equivalence $\V^{\otimes}\simeq {\bf N}_0$  with the discrete category whose objects are the natural numbers, with $n$ representing $*^{\otimes n}$.  The monoidal structure is given by addition. Here $0=*^{\otimes 0}=\emptyset$.
In the {\em symmetric monoidal} case there is an equivalence $\V^{\otimes}=\SS$, which  again has the natural numbers as objects, but with $Hom_\SS(n,m)=\emptyset$ for $n\neq m$ and $Hom_\SS(n,n)=\SS_n$, the symmetric group. This category is sometimes also denoted by $\Sigma$ and it is the skeleton of $Iso(\FinSet)$, where $\FinSet$ is the category of finite sets with set maps. For more details, see \cite{matrix}, especially \S 2.4.

Consider a strict Feynman category $\FF=(\underline{*},\F,\imath)$  with $Obj(\F)=\N_0$. The monoidal unit is $\unit=0$.
 The basic morphisms will be $\F(n,1):=\O(n)$. Since $\Hom_\F(n,n)=\SS_n$,
the collection $\O(n)$ has an action of $\SS_n$ in the symmetric case.
By the hereditary condition \eqref{morcond}:
\begin{multline}
\Hom_\F(n,k)=\amalg_{(n_1,\dots,n_k:\sum n_i=n}\Hom_\F(n_1,1)\amalg\cdots\amalg \Hom_\F(n_k,1)\\=\amalg_{(n_1,\dots,n_k:\sum n_i=n}\O(n_1)\amalg\cdots\amalg \O(n_k)
\end{multline}
 and the composition
 $\circ:\Hom_\F(k,1)\times \Hom_\F(n,k)\to \Hom_\F(n,1)$ will be given by
  \begin{equation}
\circ:  \O(k)\times(\amalg_{(n_1,\dots,n_k:\sum n_i=n}\O(n_1)\amalg\cdots\amalg \O(n_k))\to \O(n)
 \end{equation}
or in components by
\begin{equation}
\gamma_{k;n_1 \kdk n_k}: \O(k)\times (\O(n_1)\amalg\cdots\amalg \O(n_k))\to \O(n)
\end{equation}
The fact that $\circ$ is associative together with the properties of $id_1$ implies that the $\gamma$ give the collection $\O(n)$ the structure of an operad with unit $u=id_1$.

Furthermore, because of axiom \eqref{objectcond}, we see that $Aut(1)=id_1$, so that $\O(1)$ only has $id_1$ as an invertible element.
In principle, there can be morphisms in $\Hom_\F(0,1)=\O(0)$.
The length of a morphisms in $\Hom_\F(n,k)=n-k$.

This recovers \cite[\S\ref{P1-freecooppar}]{HopfPart1} for the duals of operads in $\Set$, which contains the
 examples of rooted trees. For operads in other categories, see \S\ref{hyppar}.

\begin{prop}
The strict  Feynman categories whose underlying $\V$ is trivial are in 1--1 correspondence with set--operads, whose $\O(1)$ splits as $\O(1)=id_1\amalg \O(1)^{red}$ where no element in $\O^{red}(1)$ is invertible.
They are  non--negative with respect to length, if $\O(0)=\emptyset$
and are non--positive w.r.t.\ length, if $\O(i)=\emptyset$ for $i>0$.

The construction of bi--algebras and conditions for Hopf algebras coincide
in both formulations under this translation.

\qed
\end{prop}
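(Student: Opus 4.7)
The plan is to establish the correspondence in both directions, check that the two length conditions translate as claimed, and finally match the bi- and Hopf algebra constructions.

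For the forward direction, the discussion immediately preceding the proposition already produces, from a strict Feynman category $\FF=(\underline{*},\F,\imath)$, the collection $\O(n):=\Hom_\F(n,1)$ together with the composition maps $\gamma_{k;n_1,\dots,n_k}$ and unit $u=id_1$. Associativity and unitality of $\gamma$ follow from associativity of composition in $\F$ and the fact that $id_1$ is a two-sided identity for $\circ$ restricted to $\Hom_\F(n,1)$. In the symmetric case, the $\SS_n$-action on $\O(n)$ comes from $Aut(n)=\SS_n$ in $\V^\otimes$ and is equivariant for $\gamma$ because tensor and composition interact as in axiom (ii). Axiom (i) forces $Aut(1)=\{id_1\}$, so any invertible element of $\O(1)$ would be an isomorphism in $\F$, and hence, by (i) again, equal to $id_1$; this yields the required splitting $\O(1)=id_1\amalg \O(1)^{red}$ with $\O(1)^{red}$ containing no invertibles.

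For the reverse direction, given an operad $\O$ with the stated splitting, I would define $\F$ by $\Obj(\F)=\N_0$, $\Hom_\F(n,k)=\coprod_{n_1+\cdots+n_k=n}\O(n_1)\times\cdots\times\O(n_k)$, composition defined by the $\gamma$-maps componentwise, and identity $id_n=(id_1,\dots,id_1)$. The strict monoidal product is addition on objects and concatenation of tuples on morphisms; symmetry is inserted via the $\SS_n$-actions in the symmetric case, trivial otherwise. One then verifies the three Feynman category axioms: (i) holds because the isomorphisms are exactly the concatenations of invertibles in $\O(1)$, which by hypothesis are identities (and, in the symmetric case, permutations from $\SS_n$), giving $Iso(\F)\simeq\V^\otimes=\N_0$ or $\SS$; (ii), the hereditary condition, holds essentially tautologically from the definition of $\Hom_\F(n,k)$ as a coproduct indexed by ordered partitions, which encodes precisely the tensor decomposition of any morphism into basic morphisms in $(\F\downarrow\V)=\coprod_n\O(n)$; (iii) is automatic since everything is set-sized. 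The two constructions are mutually inverse up to canonical equivalence of strict Feynman categories, giving the bijection. The main subtlety I expect is checking axiom (ii) carefully: one must show that any two decompositions of a morphism differ by a unique isomorphism in $(\F\downarrow\V)^\otimes$, which in the symmetric case requires the equivariance of $\gamma$ under $\SS_k$ together with the block-permutation description of $Aut(n)=\SS_n$.

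For the length statement, recall $|\phi|=n-k$ for $\phi\in\Hom_\F(n,k)$. A morphism of negative length exists iff $k>n$, which forces some $n_i=0$ in the decomposition, i.e.\ some factor lies in $\O(0)$; hence $\FF$ is non-negative w.r.t.\ length iff $\O(0)=\emptyset$. Dually, $\phi$ has positive length iff some $n_i\geq 2$ (after discarding $\O(1)$ factors), so all morphisms have non-positive length iff $\O(i)=\emptyset$ for $i>1$ and $\O(0)=\emptyset$; allowing also $\O(0)$ gives non-positivity modulo the trivial $\O(1)$ contribution. I would state this precisely and note that the proposition's phrasing subsumes both the operadic and anti-operadic (co-operadic) directions.

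Finally, for the coincidence of bi- and Hopf algebra constructions, observe that under the bijection, $\B=\Z[\Mor(\F)]$ becomes $\Z[\coprod_{n,k}\coprod_{n_1+\cdots+n_k=n}\O(n_1)\times\cdots\times\O(n_k)]$, which is precisely the pointed free construction $\check\O^{nc}$ from \cite[\S\ref{P1-freecooppar}]{HopfPart1} applied to the operad $\O$. The tensor product on morphisms corresponds to concatenation, matching the multiplication in that construction. The deconcatenation co-product \eqref{Delta-decomp} sums over factorizations $\phi=\phi_0\circ\phi_1$; via the decomposition of $\Hom_\F(n,k)$ and axiom (ii), each such factorization is canonically a choice of an intermediate partition together with operadic decompositions of each factor of $\phi$, which is exactly the co-operadic dual of $\gamma$ in \cite[\S\ref{P1-freecooppar}]{HopfPart1}. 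Hence $(\B,\otimes,\Delta,\eta,\eps)$ equals the bi-algebra from Part I, and the quotient producing $\H$ (identifying isomorphisms and identities) matches the quotient made there. The Hopf conditions therefore translate directly, completing the proof.
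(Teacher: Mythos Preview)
The paper gives no proof beyond the \qed; the proposition is meant to follow from the discussion immediately preceding it, which already extracts $\O(n)=\Hom_\F(n,1)$, the $\gamma$-maps, the unit $id_1$, and the observation that $Aut(1)=\{id_1\}$ forces $\O(1)^{red}$ to contain no invertibles. Your proposal spells out exactly this argument in both directions and adds the explicit verification of the Feynman category axioms for the reverse construction, so the approach is the same and correct in outline.

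One genuine muddle: your analysis of the non-positive case is off. You write that non-positivity requires $\O(i)=\emptyset$ for $i>1$ \emph{and} $\O(0)=\emptyset$, but $\O(0)$ only contributes morphisms of negative length, which are perfectly compatible with non-positivity. The correct condition is simply $\O(i)=\emptyset$ for all $i>1$; the paper's printed condition ``$\O(i)=\emptyset$ for $i>0$'' cannot be literally right since $\O(1)$ must contain $id_1$, and should be read as $i>1$. Your hedging sentence about ``allowing also $\O(0)$'' suggests you sensed the discrepancy but did not resolve it cleanly --- just drop the extra $\O(0)=\emptyset$ and state the condition as $\O(i)=\emptyset$ for $i\geq 2$.
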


\subsection{Connes--Kreimer tree algebras}

Let $\FF_{CK}$ be the Feynman category with trivial $\V$, $\F$ having objects $\N_0$ and morphisms given by
rooted forests: $Hom_{\F_{CK}}(n,m)$ is the set of $n$-labeled rooted forests with $m$ roots. The composition is given by gluing the roots to the leaves. This is the twist of $\Surj$ by the
 operad of leaf-labeled rooted trees, see \S \ref{hyppar}.

In the non-\SSigma{} version, one uses planar forests/trees and omits labels or equivalently uses orders on the sets of labels. Here this is the twist by the non--\SSigma{} operad of planar forests of $\Surj_<$.

Here there is non--trivial $\O(1)$. This is basically the difference of the $+$ and the $hyp$ construction, see \S\ref{hyppar}. The grading $n-p$ is the native grading and the co--radical length
is the word length of a morphism and is given by the number of vertices.

\subsubsection{Leaf labeled and planar version of Connes--Kreimer}
We now give complete details.
Let  $\O$ be the operad of leaf labeled rooted trees or planar planted trees.
Here $\O(1)$ has two generators: $id_1$ which we denote by $|$, and $\dottree$, the rooted tree with one binary non--root vertex. Composing $\dottree$ with itself $n$ times
will result in $\dottree \, n$, the rooted tree with $n$ binary non--root vertices, aka.\ a ladder with $n$ vertices.
We also identify $\dottree \,0=|$.
Taking the dual, either as the free Abelian group of morphisms, or simply the dual as a co--operad, we obtain a co--operad and the multiplication is either $\otimes$ from the Feynman category or $\otimes$ from the free construction. That these two coincide follows from condition \eqref{morcond} of a Feynman category.
$\eta$ is given by $|=id_1$. The Feynman category and the co--operad are almost connected, since $\Delta(\dottree\, n)=\sum_{(n_1,n_2): n_1,n_1\geq 0,n_1+n_2=n}
\dottree \, n_1\otimes \dottree \, n_2$ and hence the reduced co--product is given by
$\bar\Delta (\dottree\, n)=\sum_{(n_1,n_2): n_1,n_1\geq 1,n_1+n_2=n}
\dottree \, n_1\otimes \dottree \, n_2$ whence $\check\O(1)$ is nilpotent.

If we take planar trees, there are no automorphisms and we obtain the first Hopf algebra
of planted planar labeled forests. Notice that in the quotient $[|]=[||\dots|]=[1]$ which says that there is only one empty forest.
If we are in the non--planar case, we obtain a Hopf algebra of rooted forests, with labeled leaves.
These structures are also discussed in \cite{HopfPart1}, and in \cite{foissyCR1},\cite{foissyCR2} and \cite{KreimerEbrahimi-Fard}.

\subsubsection{Algebra over the operad description for Connes--Kreimer}
\label{Moealg}
If one considers  algebras over the operad $\O$, then for a given algebra $(\rho,V)$,
$\rho(\dottree)\in Hom(V,V)$ is a ``marked'' endomorphism. This is the basis of the   constructions of \cite{MoerdijkKreimer}.
One can also add more extra morphisms, say $\dottree \, c$ for $c\in C$ where $C$ is some indexing set of colors. This was considered in \cite{vanderLaan}.
In general one can include such marked morphisms into Feynman categories (see \cite{feynman}[2.7]) as morphisms of $\emptyset\to \ast_{[1]}$.

\subsubsection{Unlabeled and symmetric version}
In the non--planar case, we have the action of the symmetric groups as $Aut(n)$. The bi--algebra on the co--invariants and the Hopf quotient of Theorem \ref{Bisothm}   yield
the same results as the constructions \cite[\S\ref{P1-operadpar}]{HopfPart1} in the symmetric case.
The result is the commutative Hopf algebra of rooted forests with unlabeled tails.

The action of the automorphisms is free and hence there is also the reduced version of the co-- and Hopf algebras.

\subsubsection{No tail version}
For this particular operad, there is the construction of forgetting tails and we can use the construction of \cite[\S\ref{P1-freeampsec}]{HopfPart1}.
In this case, we obtain the Hopf algebras of planted planar forests without tails or
the commutative Hopf algebra of rooted forests, which is called $\H_{CK}$. On the Feynman category level, this construction is done using universal operations of \S \ref{universalpar} applied to the decorated Feynman categories, see \S\ref{decopar}, $\Surj_{dec \, \O}$ and $\Surj_{<,dec \, \O}$ for the (non--$\Sigma$) operad of leaf labeled trees.

\subsection{Colored operads and their dual co--operads}
\label{coloroppar}
Colored operads are partial operads, where the compositions are allowed if the colors match. More precisely, fix a set of colors $C$ then, a colored operad is a collection $\O(c_1\kdk c_n;c)$ with $c,c_i\in C$ and there is a composition
$\gamma:\O(c_1\kdk c_n;c)\otimes \O(c^1_1\kdk c_1^{m_1};c_1)\odo
\O(c_n^1\kdk c_n^{m_n};c_n)\to \O(c^1_1\kdk c_1^{m_1} \kdk c_n^1\kdk c_n^{m_n};c)$.
\begin{rmk}
\label{cooprmk}
The dual of a colored operad is a co--operad. Indeed, one only decomposes into factors that are {\it a priori} composable.
\end{rmk}
In the Feynman category terms, cf.\ \cite[\S2.5]{feynman}, these are $\ops$ for a Feynman category whose vertices are rooted corollas together with a morphisms of the flags to $C$. This is technically a decoration, see \S\ref{decopar}. One then restricts to those morphisms whose underlying ghost graphs have the property that both flags of any ghost edge have the same color, see \S\ref{graphexpar}. Coloring is a form of decoration and restriction as discussed in \cite[\S6.4]{decorated}.
Such a colored operad also furnishes an (enriched) Feynman category whose vertices are
$c\in C$ and whose basic morphisms are given by the $\O(c_1\kdk c_n;c):
\amalg_{i=1}^n c_i\to c$. The $c_i$ are called input colors and $c$ is the output color.

\begin{prop}
The strict  Feynman categories based on colored operads as above
are  non--negative with respect to length, if $\O(\emptyset,c)=\emptyset$
and are non--positive w.r.t.\ length if $\O(c_1\kdk c_n,c)=\emptyset$ for $n>0$.

The construction of bi--algebras and conditions for Hopf algebras coincide
in both formulations under this translation to the bi--algebras and Hopf algebras obtained from the dual co--operads.

\qed
\end{prop}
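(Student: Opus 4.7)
My plan is to proceed in close analogy with the proof sketched (as \textit{qed}) for the trivial-$\V$ case in the preceding proposition, since the colored setting differs only by decorating the sources and targets of the basic morphisms with colors and restricting composability accordingly.

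\textbf{Length analysis.} First I would compute the length of a basic morphism. By construction, an element of $\O(c_1,\dots,c_n;c)$ represents a basic morphism $\phi:\amalg_{i=1}^n c_i \to c$ in $(\F\downarrow\V)$, whose source has native length $n$ and whose target has native length $1$; hence $|\phi|=n-1$. Under the hypothesis $\O(\varnothing,c)=\varnothing$ we have $n\geq 1$ for every basic morphism, so $|\phi|\geq 0$. By the hereditary condition (ii) every morphism $\Phi$ of $\F$ is isomorphic (up to a permutation in the symmetric case) to a tensor product $\bigotimes_v \phi_v$ of basic morphisms, and the length $|\,\cdot\,|$ is additive under both $\otimes$ and composition while isomorphisms have length $0$; thus $|\Phi|=\sum_v |\phi_v|\geq 0$ and $\FF$ is non--negative. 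The non--positive case is entirely symmetric: if $\O(c_1,\dots,c_n;c)=\varnothing$ for all $n>0$ then the only basic morphisms come from $\O(\varnothing;c)$ and have length $-1$, hence all morphisms have non--positive length.

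\textbf{Identification of the bi--algebra and Hopf constructions.} By Remark \ref{cooprmk}, the linear dual of a colored operad is a co--operad in the usual sense, because the decomposition implicit in the dual only sees composable factorizations. I would then verify, component by component, that the two constructions give the same data. On one side, Theorem \ref{f-to-b-nonsym} (resp.\ Theorem \ref{Bisothm}) applied to the colored Feynman category produces a bi--algebra on $\B$ (resp.\ $\B^{iso}$) with product $\otimes$ and with co--product $\Delta(\phi)=\sum_{\phi=\phi_0\circ\phi_1}\phi_0\otimes\phi_1$, the sum running over factorizations in $\F$. On the other side, the construction for co--operads with multiplication of \cite[\S\ref{P1-gencooppar}]{HopfPart1} takes the dual co--composition $\gamma^\vee$ as co--product and concatenation (i.e.\ tensor in $\F$) as product. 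Since a factorization of a morphism $\Phi \simeq \bigotimes_v \phi_v$ in $\F$ amounts, via the hereditary condition, to factorizing each basic factor $\phi_v$ in the operad (and colors ensure that only composable factorizations appear), the two sums indexed by factorizations agree term by term. The unit $id_\unit$ corresponds to the empty product, and the co--unit $\eps$ picks out identities on both sides, so the two bi--algebra structures coincide.

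\textbf{Hopf quotients.} For the Hopf algebra statement, the identities $id_X$ for $X=\amalg_i c_i$ are group--like in $\B^{iso}$ exactly as in Lemma \ref{idfinitelem}--\ref{deltaunitlem}: the only basic morphisms $c\to c$ of length $0$ are the operadic units (since $\O(c;c)_0=\{id_c\}$ after imposing the almost-group-like/reducedness hypothesis that $\O(1)$-like colored pieces contain no non--trivial invertible elements), and thus the ideal $\I=\la [id_X]-[id_Y]\ra$ is a co--ideal by the same calculation as in the trivial-$\V$ case. Quotienting by $\I$ yields $\H$, which is connected by the induction on length provided by part one, and so the Hopf structure exists; this matches the Hopf quotient produced by the co--operad construction of \cite[\S\ref{P1-gencooppar}]{HopfPart1}.

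\textbf{Expected main obstacle.} The length and hereditary arguments are routine. The one point that requires care is the bookkeeping that identifies the Feynman-category co--product (indexed by factorizations in $\F$) with the colored co--operadic co--product (indexed by operadic partitions with matching colors), in particular making sure that the coloring restriction on composability in $\F$ matches precisely the composability of the colored operad and that symmetries are treated consistently---this is precisely the content of Remark \ref{cooprmk} and the hereditary condition, so in practice the verification reduces to unwinding definitions, and no genuinely new argument beyond the trivial-$\V$ case is needed.
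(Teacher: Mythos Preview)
Your argument is correct and is precisely the routine verification the paper elides with its bare \qed: compute the length of basic morphisms, propagate via the hereditary condition, and match factorizations in $\F$ with colored operadic decompositions term by term. One small slip in the non--positive case: you say the only basic morphisms have length $-1$, but the identities $id_c\in\O(c;c)$ (length $0$) must still be present for $\F$ to be a category---this does not affect the conclusion, since $0$ is still non--positive, but the statement (and your reading of it) should really be for $n>1$ rather than $n>0$.
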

This includes the examples of Goncharov and Baues in the form discussed in\cite[\S\ref{P1-overlapex}]{HopfPart1}.

\begin{rmk}
If the co--operads are not in $\Set$ the construction and statement are analogous, see \S\ref{hyppar} below.
\end{rmk}

\subsubsection{Bi-- and Hopf algebras from categories, sequences and Goncharov's Hopf algebra}

\begin{prop}
\label{catprop}
Every category defines a colored operad and
thus we obtain an associated bi--algebra and possibly a Hopf algebra from any category.

This recovers the Hopf algebra of Goncharov's and Baues' construction when considering a complete groupoid.
\end{prop}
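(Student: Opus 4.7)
The plan is to construct from a category $\mathcal{C}$ the colored operad $\mathcal{O}_\mathcal{C}$ whose set of colors is $\mathrm{Mor}(\mathcal{C})$ and whose operations are
\[
\mathcal{O}_\mathcal{C}(\phi_1,\ldots,\phi_n;\phi) = \begin{cases} \{*\} & \text{if the }\phi_i\text{ are sequentially composable and }\phi=\phi_n\circ\cdots\circ\phi_1,\\ \emptyset & \text{otherwise,} \end{cases}
\]
with the trivial identity operation $\phi=\phi$ as unit. Since every operation set is at most a singleton, operadic composition (grafting refinements of each $\phi_i$ into an outer factorization of $\phi$) is forced and the operad axioms reduce to the associativity of composition in $\mathcal{C}$. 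By Remark~\ref{cooprmk} the dual is genuinely a co-operad, not just a partial one.

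Next I would invoke the proposition preceding Proposition~\ref{catprop} to package $\mathcal{O}_\mathcal{C}$ as a strict Feynman category $\FF_\mathcal{C}$ with discrete vertex groupoid $\V=\mathrm{Mor}(\mathcal{C})$, and apply Theorem~\ref{f-to-b-nonsym} (or Theorem~\ref{Bisothm}) to obtain the bi-algebra $\B_{\FF_\mathcal{C}}$. To promote it to a Hopf algebra via Theorem~\ref{HopfFeythm}, I verify that the length $|\cdot|$ is a proper degree function: a length-zero basic morphism is an operation $\mathcal{O}_\mathcal{C}(\phi_1;\phi)$ with one input, which forces $\phi_1=\phi$ and is the operadic identity, so $\B_\V$ is spanned only by identities, $\H_\V\cong\Z[id_\unit]$ is connected, and $\FF_\mathcal{C}$ is almost connected by the statement at the end of \S\ref{Hopfpar}.

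For a complete groupoid $G$ on an object set $S$ (unique morphism between every ordered pair), a color is determined by a pair $(a,b)\in S\times S$ and a composable chain by a sequence $(a_0,a_1,\ldots,a_n)$; I identify the corresponding basic morphism with Goncharov's generator $I(a_0;a_1,\ldots,a_{n-1};a_n)$. The deconcatenation coproduct \eqref{Delta-decomp} then sums over factorizations of the unique arrow $a_0\to a_n$ through intermediate objects, which are in bijection with strict subsequences $0=i_0<i_1<\cdots<i_k=n$: the outer/first-tensor piece is $I(a_0;a_{i_1},\ldots,a_{i_{k-1}};a_n)$ and the inner/second-tensor piece is the tensor $\bigotimes_{j=1}^{k} I(a_{i_{j-1}};a_{i_{j-1}+1},\ldots,a_{i_j-1};a_{i_j})$, matching Goncharov's coproduct formula term for term. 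The Baues case is handled analogously through the simplicial-to-colored-operad correspondence of \S\ref{simpfeypar}.

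The only real obstacle is bookkeeping in the last paragraph: one must pin down the convention $\Delta(\phi)=\sum \phi_0\otimes\phi_1$ with $\phi=\phi_0\circ\phi_1$ against Goncharov's ``subsequence first, gap-chains second'' tensor ordering, and observe that $\mathrm{Aut}_G(a)$ is trivial for every $a\in S$, so $\B=\B^{iso}$ and the isomorphism-class quotient plays no role in identifying the resulting Hopf algebra with Goncharov's on the nose.
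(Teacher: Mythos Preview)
Your construction is essentially identical to the paper's: both take $\mathrm{Mor}(\mathcal{C})$ as the color set and define $\mathcal{O}(\phi_1,\ldots,\phi_n;\phi)$ to be the (at most singleton) set of composable chains $X_0\stackrel{\phi_1}{\to}\cdots\stackrel{\phi_n}{\to}X_n$ in the nerve $N_\bullet(\mathcal{C})$ with composite $\phi$, with operadic composition given by inserting an $m$-simplex refining $\phi_i$ into an $n$-simplex; both then specialize to a complete groupoid by replacing the unique morphism $a\to b$ with the word $ab$, so that an $n$-simplex becomes the sequence $X_0\cdots X_n$ and the deconcatenation coproduct reproduces Goncharov's formula.

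Your extra paragraph verifying that $|\cdot|$ is a proper degree function is correct as far as it goes, but it slightly over-claims relative to the statement: the proposition says ``possibly a Hopf algebra'' precisely because factorization finiteness of $\FF_{\mathcal{C}}$ is \emph{not} automatic for an arbitrary category $\mathcal{C}$ (a morphism may admit infinitely many two-step factorizations), so Theorem~\ref{HopfFeythm} does not apply in general. For a complete groupoid on a set $S$ this is of course fine, and your observation that $\mathrm{Aut}_G(a)$ is trivial so $\B=\B^{iso}$ is a useful point the paper leaves implicit.
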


\begin{proof}
Consider $X_n=N_n(\C)$ the simplicial object given by the nerve of a category.
Let $C=N_1(\C)=Mor(\C)$ be the set of colors. Then there is a colored operad defined by
$\O(\phi_1\kdk \phi_n,\phi)=\{
X_0\stackrel{\phi_1}{\to} \dots\stackrel{\phi_n}{ \to} X_n\in N_n(\C):\phi=\phi_n\circ\cdots\circ\phi_n\}$.
If $X_0\stackrel{\phi_1}{\to} \dots\stackrel{\phi_n}{ \to} X_n$
is an $n$ simplex and $X_{i-1}=Y_0\stackrel{\psi_1}{\to} \dots \stackrel{\psi_m}{\to}
Y_m=X_i$ is an $m$ simplex,
with $\psi_m\circ\dots\circ\psi_1=\phi_{i}$, then we can compose to
$$X_0\stackrel{\phi_1}{\to} \dots \stackrel{\phi_{i-1}}{\to}X_{i-1}=Y_0\stackrel{\psi_1}{\to} \dots \stackrel{\psi_m}{\to} Y_m=X_{i+1}\stackrel{\phi_{i+1}}{ \to}
\dots\stackrel{\phi_n}{ \to} X_n$$

If the underlying category is a complete groupoid, so that there is exactly one morphism per pair of objects, then any $n$--simplex can simply be replaced by the word $X_0\cdots X_n$
of its sources and targets.
\end{proof}

Notice that in the complete groupoid case $\V=\{X_0X_1\}$ is the set of words of length $2$ not $1$.
This explains the constructions of Goncharov involving multiple zeta values, but also polylogarithms \cite{Gont}, and the subsequent construction of Brown.
This matches our discussion in \cite[\S\ref{P1-simplicialpar}]{HopfPart1}  and  \S\ref{brownpar}.

\subsubsection{Marking angles by morphisms}
\label{anglepar}
Considering the simplicial object given by the nerve of a category $N_{\bullet}(\C)$ yields a particularly nice example of the duality between marking angles vs.\ marking tails. An $n$--simplex  $X_0\stackrel{\phi_1}{\to} {X_1}\dots \stackrel{\phi_n}{\to} X_n$ naturally gives rise to a decorated corolla,
where the angles are decorated by the objects and the leaves are decorated by the morphisms, viz.\ the colors, see Figure \ref{angle}. The operation that the corolla represents is the composition of all of the morphisms to get a morphism $\phi=\phi_n\circ \dots \circ \phi_0:X_0\to X_n$, viz.\ the output color.
\begin{figure}
    \centering
    \includegraphics[width=0.8\textwidth]{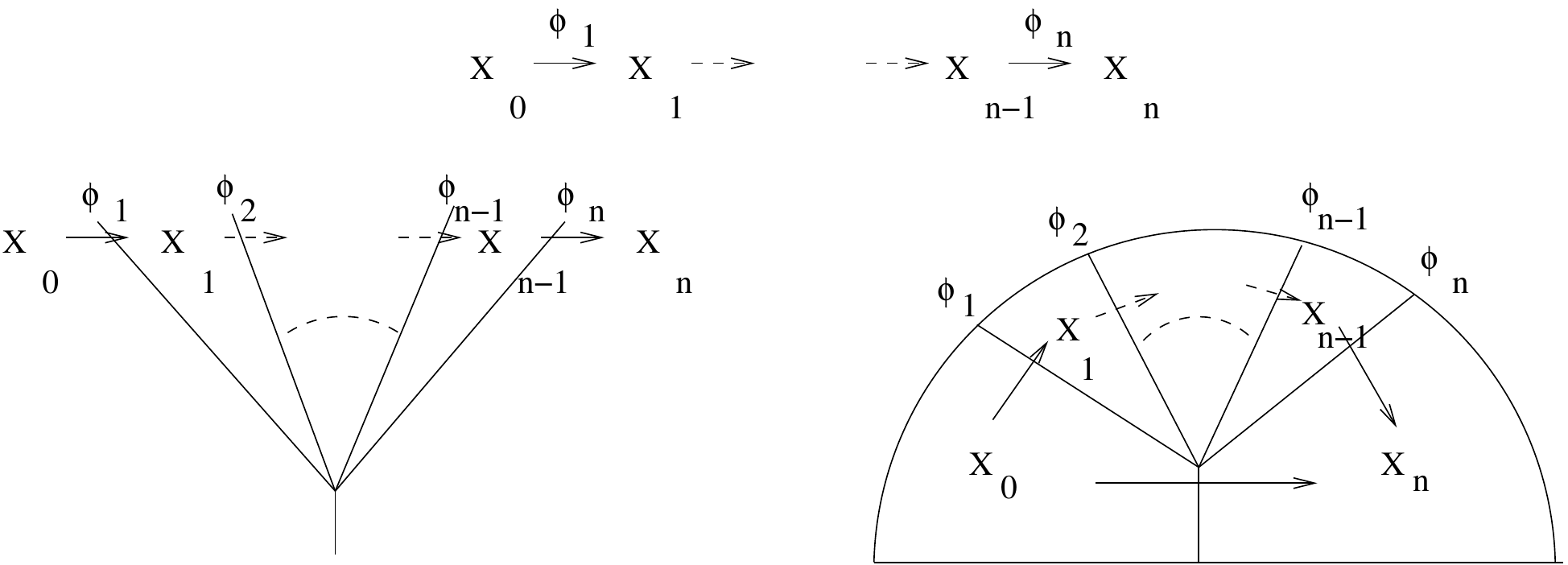}
    \caption{Marking a corolla by a simplex in $N_{\bullet}(\C)$. The morphisms decorate the ends of the tree, while the objects decorate the angles which correspond to the marks on the half circle}
    \label{angle}
\end{figure}
If there is a single morphism between any  two objects
either one of the markings, tail or angle, will suffice to give a simplex. In the general case, one actually needs both the markings.
The angle/tail duality is related to Joyal duality; see \cite[Appendix \ref{P1-Joyalapp}]{HopfPart1} and \S\ref{simpfeypar} below.

\subsection{Graph examples}
\label{graphexpar}
The basic graph Feynman category is $\GG=(\Crl,\Agg,\imath)$, defined in detail in \cite[\S2.1]{feynman}, see also Appendix \ref{graphsec}.
The notion of graph that is used is that of \cite{BorMan}. The BM--graphs from a category, and $\Agg$ is the full subcategory whose objects are aggregates of corollas.
A corolla is a graph with one vertex and no edges, and an aggregate is a disjoint union of these. $\Crl$ is the groupoid of corollas and their isomorphisms, and $\imath$ is inclusion.
To each BM--morphism $\phi:X\to Y$ between two aggregates $X$ and $Y$, one can associate a ghost graph $\gh(\phi)$, see Appendix \S\ref{ghostgraphpar}. A morphism $\phi$ is roughly a graph $\gh(\phi)$, together with an identification of the vertices of $\gh(\phi)$ with the source aggregate and an identification of $\gh(\phi)/E_{\gh(\phi)}$ with the target aggregate, see \cite[\S2.1]{feynman} and the appendix for details.  Different varieties of graph based Feynman categories are then given by restricting or decorating graphs in  a manner respected by composition (see the appendix and the examples in \S5).
A first new example  is that of collections of 1-PI graphs, which we call the Broadhurst--Connes--Kreimer Feynman category.

Without going into all the details, we wish to note the following facts, cf.\ \cite[\S2.1, \S5 and Appendix A]{feynman}.
\begin{enumerate}
\item The morphisms of $\Agg$ are generated by
(a) isomorphisms,
(b) simple edge contractions,
(c) simple loop contractions,
(d) simple mergers.

A simple edge contraction glues two flags from two different corollas together to form an edge and then contracts the edge leaving a corolla. A simple loop contraction does the same with the exception that the two flags come from the same corolla. A simple merger identifies two distinct corollas by identifying their vertices and keeping all flags. The ghost graph keeps track of which flags have been glued together to form edges that are subsequently contracted.
\item The subcategory generated by only the first three classes defines the wide subcategory $\Agg^{ctd}$ of $\Agg$ and the Feynman category $\GG^{ctd}=(\Crl,\Agg^{ctd}, \imath)$. The ghost graphs of morphisms in $(\Agg\downarrow \Crl)$ are connected.
\item A ghost graph does not define a morphism uniquely, but the isomorphisms class $[\phi]$ for $\phi\in \Agg^{ctd}$ is fixed by the ghost graph $\gh(\phi)$. In $\Agg$ the same is true for the morphisms in $(\Agg\downarrow \Crl)$. The ghost graph also fixes the source of a morphism and the target up to isomorphism.
\item Composition of morphisms corresponds to graph insertion. In particular in $\Agg^{ctd}$, $\gh(\phi\circ\psi)=\gh(\phi)\circ\gh(\psi)$ where $\gh(\phi)$ has connected components corresponding to the vertices of $\gh(\psi)$:
$\gh(\phi)=\amalg_{v\in V(\gh(\psi))}\gh_v(\phi)$. The insertion inserts $\gh_v(\phi)$ into the vertex $v$ of $\gh(\psi)$ -- using extra data provided by the morphisms to identify the flags aka.\ half--edges adjacent to $v$ with the tails aka.\  external legs of $\gh_v(\phi)$.
An example is given in Figure \ref{phi3fig}.
\item For a basic morphism in $\Agg$, i.e.\ one whose target is a corolla, the ghost graph determines the isomorphism class. In $\Agg^{ctd}$ the isomorphism class of any morphism is determined by its ghost graph and vice--versa.
\item If $\phi=\phi_0\circ\phi_1$ then (a) $\gh(\phi)=\gh(\phi_0)\circ\gh(\phi_1)$ as above, but also  (b) $\gh(\phi_1)\subset \gh(\phi)$  is (not necessarily connected) subgraph  and
$\gh(\phi_0)\simeq\gh(\phi)/\gh(\phi_1)$.
The corresponding factorization of a morphism in $(\F\downarrow \V)$ is
\begin{equation}
    \xymatrix{
    X\ar[r]^{\phi}\ar[d]_{\phi_1}&\ast\\
    Y\ar[ur]_{\phi_0}&\\
    }   \quad
\raisebox{-5mm}{\tabular{r} and on\\iso classes
    \endtabular }
         \xymatrix{
    X\ar[r]^{\gh(\phi)}\ar[d]_{\gh(\phi_1)}&\ast\\
    Y\ar[ur]_{\gh(\phi_0)=\gh(\phi)/\gh(\phi_1)}&\\
    }      \end{equation}
where $\gh(\phi_1)a$ is a subgraph, $\gh(\phi)/\gh(\phi_0)$ is sometimes called the co--graph and $*$ is the  residue in the physics nomenclature;
see Figure \ref{phi3fig} for an example.
\end{enumerate}

\begin{figure}
    \centering
    \includegraphics[width=.8\textwidth]{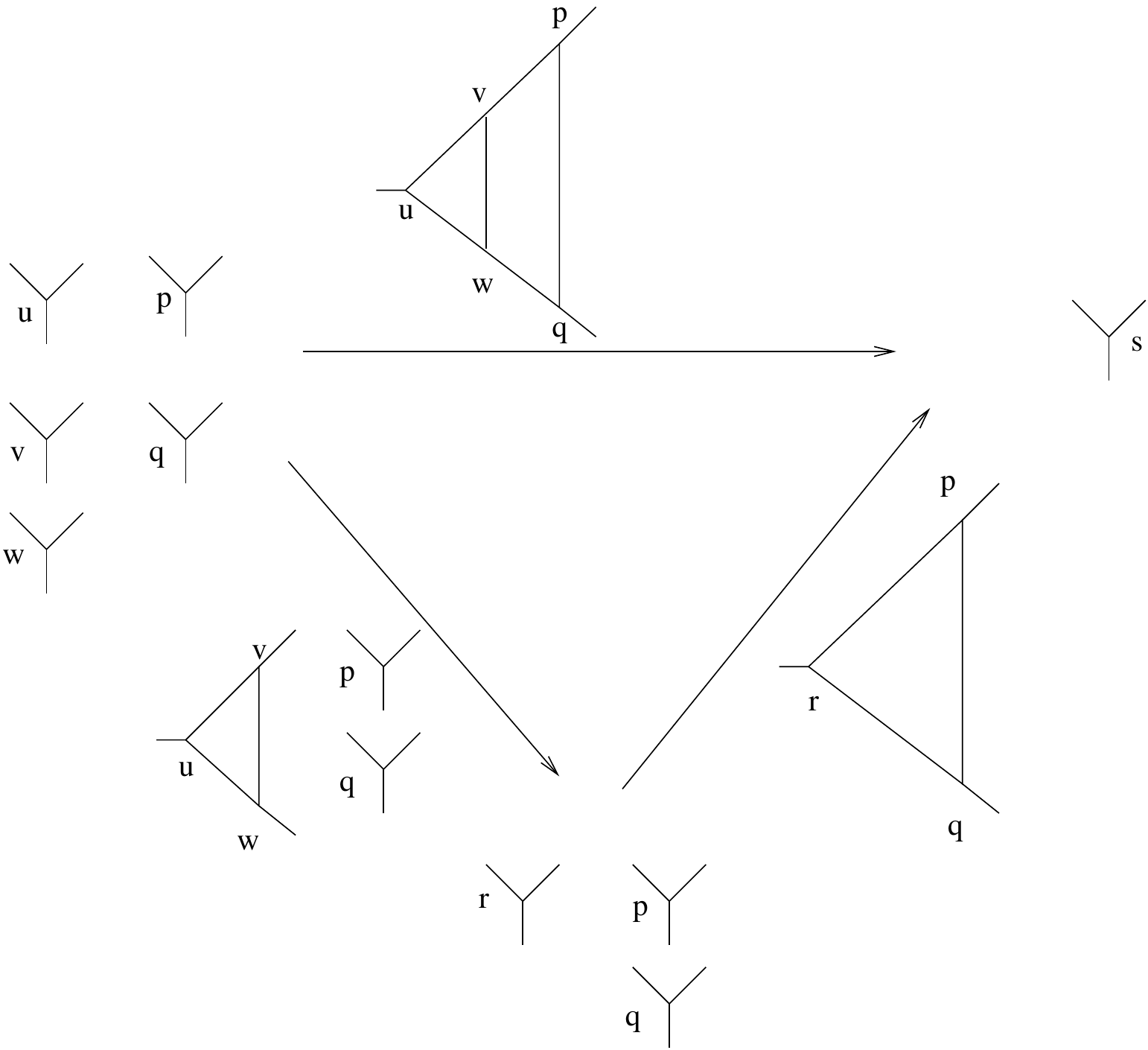}
    \caption{\label{phi3fig}An example of a factorization
in three--valent graphs aka.\ $\phi^3$. Alternatively the top graph $\gh$ results from inserting the left graph $\gh_1$, which has three components, into the right graph according  $\gh_0$ to the vertex map $\{u,v,w\}\mapsto r, p\mapsto p, q\mapsto q$, viz.\ $\gh=\gh_0\circ\gh_1$, or the left graph is a subgraph of the top graph $\gh_1\subset \gh$ and the right graph $\gh_0$ is the quotient graph. $\gh_0=\gh/\gh_1$}
  \end{figure}
\begin{lem}
In $\Agg^{ctd}$ the action of $Aut(Y)$ on $Hom(X,Y)$ is free.
\end{lem}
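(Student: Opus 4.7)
The plan is to use the explicit combinatorial description of morphisms in $\Agg^{ctd}$ via ghost graphs. A morphism $\phi\colon X\to Y$ in $\Agg^{ctd}$ is specified by a vertex map $\phi_V\colon V(X)\to V(Y)$ together with, for each vertex $v\in V(Y)$, a connected ghost component $\gh_v(\phi)$ on the vertex set $\phi_V^{-1}(v)$, and a bijection between the tails of $\gh_v(\phi)$ and the flags incident to $v$. An automorphism $\sigma\in Aut(Y)$ of the aggregate $Y$ (which has no edges) is determined by a vertex permutation $\sigma_V$ together with a compatible flag permutation; post-composition $\sigma\circ\phi$ leaves the ghost data on $X$ unchanged but has vertex map $\sigma_V\circ\phi_V$ and flag identifications altered by $\sigma$.

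First I would verify that $\phi_V$ is surjective for every morphism in $\Agg^{ctd}$. The generators enumerated in the bullet points preceding the lemma --- isomorphisms, simple edge contractions, and simple loop contractions --- all have surjective vertex maps (bijective, $2$-to-$1$, and $1$-to-$1$, respectively), and surjectivity of the vertex map is clearly preserved under composition (i.e.\ insertion). Equivalently, the connectedness of the ghost at each target vertex that defines $\Agg^{ctd}$ forces $\gh_v(\phi)$ to be a non-empty connected graph, so $\phi_V^{-1}(v)\neq\emptyset$ for every $v\in V(Y)$.

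Now suppose $\sigma\in Aut(Y)$ satisfies $\sigma\circ\phi=\phi$. Comparing vertex maps gives $\sigma_V\circ\phi_V=\phi_V$, and the surjectivity of $\phi_V$ forces $\sigma_V=\id_{V(Y)}$. For each $v\in V(Y)$, the flag part of $\sigma$ then restricts to a permutation of the flags incident to $v$; the equality of the flag-identification data for $\phi$ and $\sigma\circ\phi$ then reads that the bijection between the tails of $\gh_v(\phi)$ and the flags at $v$ equals its post-composition with $\sigma$ restricted to the flags at $v$, which forces this restriction to be the identity. Since an automorphism of the (edge-free) aggregate $Y$ is determined by its actions on vertices and on flags, we conclude $\sigma=\id_Y$, so the stabilizer of every $\phi\in Hom(X,Y)$ is trivial.

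The only real obstacle is establishing the surjectivity of $\phi_V$ cleanly; this is a small combinatorial check on the generators of $\Agg^{ctd}$, and it is precisely what breaks without the connectedness assumption if one permits empty ghost components at some target vertex.
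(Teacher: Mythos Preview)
Your argument is correct, but it takes a different route from the paper's. The paper works directly with the flag map: in the Borisov--Manin formalism of the appendix, a morphism $\phi=(\phi_V,\phi^F,i_\phi)$ has $\phi^F\colon F_Y\hookrightarrow F_X$ injective \emph{by definition}, and post-composition with an automorphism $\sigma=(\sigma_V,\sigma^F,\id)$ of $Y$ gives $(\sigma\circ\phi)^F=\phi^F\circ\sigma^F$. If $\sigma\circ\phi=\phi$ then $\phi^F\circ\sigma^F=\phi^F$, and injectivity of $\phi^F$ forces $\sigma^F=\id$, which already determines $\sigma$. The entire proof is one line.

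Your route via the vertex map also works, but the step you flag as the ``only real obstacle'' --- surjectivity of $\phi_V$ --- is in fact part of the Borisov--Manin definition of a graph morphism (item (ii) of the definition in the appendix declares $\phi_V$ to be a surjection), so no argument from generators is needed and the connectedness hypothesis plays no role there. Indeed, neither proof actually uses the $ctd$ restriction; the statement holds throughout $\Agg$. What your approach buys is a description in the language of ghost graphs and flag identifications, which may feel more hands-on; what the paper's approach buys is brevity, since the contravariant flag map is injective outright and delivers the result without separating the vertex and flag parts.
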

\begin{proof}
We use the terminology and formalism of Appendix \ref{graphsec}.
A morphism is given by $\phi=(\phi_V,\phi^F,\imath_\phi)$ the action of $\sigma=(\sigma_V,\sigma^F,id)$ with both $\sigma^F$ and $\sigma_V$ bijections. Now $(\sigma\circ\phi)^F= \phi^F\circ\sigma^F$, which already implies the result as $\sigma^F$ is an injection.
\end{proof}

\begin{cor} In $\Agg^{ctd}$ the action on the middle space is a free action on the decompositions.
\end{cor}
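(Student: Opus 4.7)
The plan is to deduce freeness of the action on decompositions directly from the preceding lemma by projecting onto one coordinate. Recall that the action on decompositions is given by $\bar d(\sigma)(\phi_0,\phi_1) = (\phi_0\circ\sigma^{-1}, \sigma\circ\phi_1)$ for $\sigma\in Aut(Y)$, where $Y$ is the intermediate object. I would begin by fixing a decomposition $\phi = \phi_0 \circ \phi_1$ with $\phi_1: X\to Y$ and $\phi_0: Y\to Z$, and assume $\bar d(\sigma)(\phi_0,\phi_1) = (\phi_0,\phi_1)$.

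Second, I would project to the second coordinate, obtaining the equation $\sigma \circ \phi_1 = \phi_1$ in $Hom(X,Y)$. This is precisely the statement that $\sigma$ stabilizes $\phi_1$ under the left action of $Aut(Y)$ on $Hom(X,Y)$ considered in the previous lemma. Since that lemma asserts the left action of $Aut(Y)$ on $Hom(X,Y)$ is free in $\Agg^{ctd}$, we conclude $\sigma = id_Y$. Hence the stabilizer of $(\phi_0,\phi_1)$ is trivial, proving freeness.

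The only subtlety is to check that the projection argument is legitimate, i.e.\ that the component of $\bar d$ acting on $\phi_1$ really coincides with the left action of the previous lemma. This is immediate from the definition of $\bar d$, so there is no substantive obstacle; the corollary is essentially just a one-line consequence of the lemma. One could alternatively project onto the first coordinate and use the analogous freeness statement for the right action on $Hom(Y,Z)$, which follows by the same flag-map argument (the right action translates via $\sigma^{-1}$ so freeness transfers).
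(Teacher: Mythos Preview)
Your argument is correct and is exactly the intended one: the paper states this as a corollary with no proof, so the implicit reasoning is precisely the projection onto the second coordinate and an appeal to the preceding lemma that the left $Aut(Y)$ action on $Hom(X,Y)$ is free.
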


\begin{prop}
On isomorphism classes $\gh$ in $\Agg^{ctd}$.
\begin{equation}
\label{graphcoprodeq}
\Delta^{iso}(\gh)=\sum_{\gh_1\subset \gh}\gh/\gh_1\otimes \gh_1=\sum_{\gh_1\subset\gh}\gh_0\otimes \gh_1
\end{equation}
Here $\gh$ is the isomorphism class $\gh=[\phi]=\gh(\phi)$ and $\gh_1=\gh(\phi_1)$ is a subgraph, which corresponds to the isomorphism class of a decomposition $[(\phi_0,\phi_1)]$ where then necessarily $\gh(\phi_0)=\gh(\phi)/\gh_1$. Moreover if $\gh$ is connected, so is $\gh_0$. --- both are isomorphism classes in $(\Agg^{ctd}\downarrow \Crl)$.

\end{prop}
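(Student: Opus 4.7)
The plan is to apply the direct formula \eqref{Deltaiso} for $\Delta^{iso}$ and translate the sum over decomposition channels into a sum over subgraphs, using the dictionary between isomorphism classes of morphisms in $\Agg^{ctd}$ and ghost graphs. Concretely, by \eqref{Deltaiso},
\[
\Delta^{iso}([\phi])=\sum_{[(\phi_0,\phi_1)]}[\phi_0]\otimes [\phi_1],
\]
and my task reduces to identifying the index set on the right with the set of subgraphs $\gh_1\subset\gh(\phi)$, and recognizing $[\phi_0]=\gh(\phi)/\gh_1$ and $[\phi_1]=\gh_1$ under this identification.

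First I would invoke fact (5) in the list: in $\Agg^{ctd}$ the isomorphism class of a morphism is determined by, and determines, its ghost graph, and similarly for morphisms in $(\Agg^{ctd}\downarrow\Crl)$. This lets me replace the bracketed morphisms on the right hand side by the ghost graphs, and in particular replaces $[\phi]$ by $\gh=\gh(\phi)$. Next, I would build the bijection between decomposition channels of $\phi$ and subgraphs $\gh_1\subset \gh$. Given a decomposition $\phi=\phi_0\circ\phi_1$, fact (6) says $\gh(\phi_1)\subset \gh(\phi)$ is a subgraph and $\gh(\phi_0)\simeq \gh(\phi)/\gh(\phi_1)$; this defines the forward map. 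Conversely, given $\gh_1\subset\gh$, the subgraph structure determines an aggregate $Y$ (one corolla per connected component of $\gh_1$, with flags read off from the inclusion), a morphism $\phi_1\colon X\to Y$ realizing the insertion data, and a quotient morphism $\phi_0\colon Y\to \ast$ with $\gh(\phi_0)=\gh/\gh_1$; any two such choices differ by an isomorphism of $Y$, hence lie in the same channel.

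The main obstacle, and the key step, is to verify that this correspondence is multiplicity free, i.e.\ that distinct subgraphs $\gh_1$ yield distinct channels and that each subgraph is hit exactly once. Here I would use the freeness of the $Aut(Y)$-action on $Hom(X,Y)$ in $\Agg^{ctd}$ just established (and the consequent freeness on decompositions). Freeness implies that the orbit of a decomposition $(\phi_0,\phi_1)$ under the middle-space action has exactly $|Aut(Y)|$ elements and contains every decomposition with the same ghost data; therefore the channel $[(\phi_0,\phi_1)]$ is uniquely determined by the subgraph $\gh_1=\gh(\phi_1)$ (together with its inclusion in $\gh$, which is already encoded in $\gh_1\subset\gh$). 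This shows both injectivity and surjectivity of the map from channels to subgraphs, and no combinatorial prefactors arise.

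Finally, for the connectedness statement, I would observe that $\gh/\gh_1$ is obtained from $\gh$ by contracting each connected component of $\gh_1$ to a single vertex (this is the content of the $\gh(\phi)=\gh(\phi_0)\circ\gh(\phi_1)$ description from fact (6)). Such contractions cannot disconnect a graph, so if $\gh$ is connected then $\gh_0=\gh/\gh_1$ is connected as well; hence $\phi_0$ lies in $(\Agg^{ctd}\downarrow\Crl)$ together with $\phi$, completing the proof.
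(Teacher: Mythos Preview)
Your proposal is correct and follows essentially the same route as the paper: fix a representative $\phi$, translate the channel sum \eqref{Deltaiso} into a sum over subgraphs via the ghost-graph dictionary (facts (5) and (6)), and use freeness of the middle-space $Aut(Y)$-action to show each subgraph contributes exactly once. You are in fact slightly more explicit than the paper on the connectedness claim for $\gh_0$, which the paper states but does not argue in its proof.
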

\begin{proof}
Given $\phi$ its isomorphism type is fixed by $\gh(\phi)$. We can choose a representative for $\phi$. The claim is that the factorizations up to the action on the middle space are given precisely by the subgraphs. Indeed, given any subgraph, there is surely a factorization. We have to show that there is exactly one term per sub--graph. For this, we ``enumerate everything''.  That is the flags, vertices, ghost edges etc. to fix the morphism. For a given subgraph there is a putative morphism, whose source is fixed and whose target is fixed up to isomorphism. This ambiguity is exactly compensated by the action on the middle space. This actions is free, on the decompositions and does not change the subgraph and hence every subgraph appears exactly once in the sum.
\end{proof}

Note that the multiplicities of the graphs appearing on the right side can be higher than one as the same graph may appear in several ways yielding different subgraphs, but isomorphic quotient graphs.

\begin{ex}
\label{subgraphex}
We consider the morphism of Figure \ref{graphcoprodfig}.
Each edge leads to a factorization. One such factorization is given in Figure \ref{graphcompfig}. If we write $\phi=\phi_0\circ\phi_1$, we note  that $im(\phi_1^F)=\{1,1',2,2'\}$. If $(\hat \phi_0,\hat{\phi}_1)$ is the decomposition with respect to the other edge $\{2,2'\}$, then $im(\phi_1^F)=\{1,1',3,3'\}$. Since this invariant under the $Aut (*_{1,2,3,4})$ action  $(\hat \phi_0,\hat{\phi}_1)$ and $(\phi_1,\phi_0)$ are  not equivalent under this action. But the abstract one edge graphs are the same.  $\gh(\hat{\phi_i})=\gh(\phi_i):i=0,1$. To be clear, different subgraphs, same underlying graph.
\end{ex}
\begin{figure}
    \centering
    \includegraphics[width=0.8\textwidth]{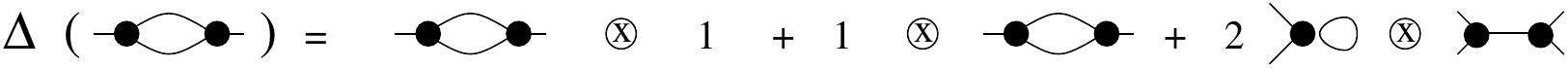}
    \caption{The co--product of a graph. The factor of 2 is there, since there are two distinct subgraphs ---given by the two distinct edges--- which give rise to two factorizations whose abstract graphs coincide}
    \label{graphcoprodfig}
\end{figure}

\begin{figure}
    \centering
    \includegraphics{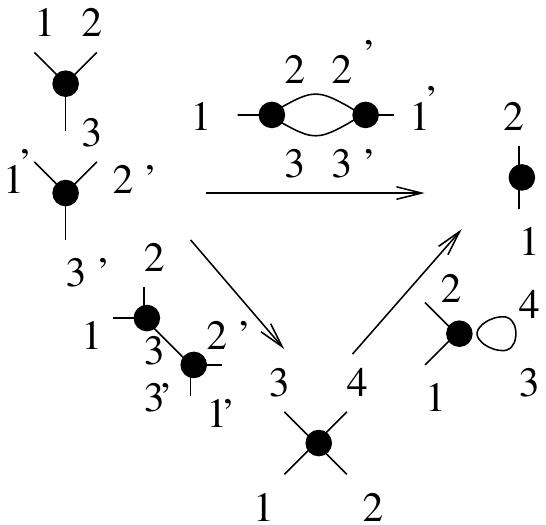}
    \caption{One decomposition. To fix $\phi$ we specify $\phi^F(1)=1,\phi^F(2)=1'$, to fix $\phi_1$, we set $\phi_1^F(1)=1,\phi_1^F(2)=1,\phi_1^F(3)=1',\phi_1^F(4)=2'$ and to fix $\phi_0$ we fix $\phi_0^F(1)=1,\phi_0^F(2)=2$. There is no choice for the vertex maps and the involution is the one given by the ghost graph.
    }
    \label{graphcompfig}
\end{figure}

\subsubsection{Graph based Feynman categories and Connes--Kreimer Hopf algebras}

If we look at the Feynman category $\GG=(\Crl,\Agg,\imath)$ then we obtain the core Hopf algebra
of graphs of Connes and Kreimer \cite{CK}.
The standard ``refined'' grading is as follows. Usually there will be no mergers involved, and edge contractions and loop contractions are assigned degree $1$. The co-radical grading is by word length in the elementary morphisms, that is the grading above,  which coincides with the number of edges.

There are several restrictions and decoration that one can put on the graphs to obtain sub--categories indexed over the category $\GG$. Here indexing means that there is a functor surjective on objects, cf.\ \cite[\S1.2.7]{feynman}.
Decoration is used in the technical sense described below \S\ref{decopar}; see \cite[\S6.4]{decorated} for standard decorations of graphs.

The key thing is that the extra structures and restrictions respect the concatenation of morphisms, which boils down to plugging graphs into vertices. Examples of this type  furnish bi-- and Hopf algebras of
 of
 modular graphs, non--\SSigma{} modular graphs, trees, planar trees, etc..

\subsubsection{1--PI graph version} A not so standard example, at least for mathematicians,  are 1--PI graphs.
Recall that a  connected 1--PI graph is a connected graph that stays connected, when one severs any edge and in general a 1--PI graph is a graph whose every component is 1--PI.
A nice way to write this is as follows \cite{brown}: Let $b_1(\Gamma)$ be the first Betti number of the  graph $\Gamma$. Then a graph is 1--PI if for any proper subgraph
$\gamma\subsetneq \Gamma$: $b_1(\gamma)<b_1(\Gamma)$. This means that 1-PI for non-connected graphs any edge cut
 decreases the first Betti (or loop) number by one.

It is easy to see that the property of being 1--PI is preserved under composition in $\GG$, namely, blowing up a vertex of a 1-PI graphs into a 1-PI graph leaves the defining property (namely connectivity) invariant. Hence, we obtain a bi--algebra of 1--PI graphs. It is almost connected and after amputation, one obtains the Hopf algebra used in physics.

A decorated version of this is Brown's Hopf algebra of motic graphs, see below \S \ref{brownpar}.

\subsection{Decoration: $\FF_{dec\O}$}
\label{decopar}
This type of modification was defined in \cite{decorated} and further analyzed in the set--based case in \cite{ddec}.
It gives a new Feynman category $\FF_{dec\O}$
from a pair $(\FF,\O)$ of a Feynman category $\FF$ and a strong monoidal functor $\O:\F\to \C$.
The objects of $\FF_{dec\O}$ are pairs $(X,a_X), a_X\in \O(X)$ ($a_X\in Hom_{\E}(\unit,\O(X))$ in the general enriched case).
The morphisms  from $(X,a_X)$ to $(Y,a_Y)$ are those $\phi\in Hom_{\F}(X,Y)$ for which $\O(\phi)(a_X)=a_Y$.
For a morphism $\phi$, we let $s(\phi)$ and $t(\phi)$ be the source and target of $\phi$ .

\begin{lem}
The morphism of $\FFdeco$ are pairs $(\phi,a_{s(\phi)}), a_{s(\phi)}\in \O(s(\phi))$.
If $\FF$ is decomposition finite, then so is $\FFdeco$. If $\FF$ is Hopf, then so is $\FFdeco$.
\end{lem}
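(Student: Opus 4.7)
The first assertion is immediate from unpacking the definition: a morphism $\widetilde\phi:(X,a_X)\to(Y,a_Y)$ in $\FFdeco$ is an element $\phi\in\Hom_\F(X,Y)$ subject to $\O(\phi)(a_X)=a_Y$, so once $\phi$ and the source decoration $a_X\in\O(s(\phi))$ are specified the target decoration $a_{t(\phi)}=\O(\phi)(a_X)$ is determined. Hence $\Mor(\FFdeco)$ is in natural bijection with the set of pairs $(\phi,a_{s(\phi)})$. From this, decomposition finiteness is essentially formal: a factorization of $(\phi,a_{s(\phi)})$ through $(Z,a_Z)$ amounts to a factorization $\phi=\phi_0\circ\phi_1$ in $\F$ together with the constraint $a_Z=\O(\phi_1)(a_{s(\phi)})$, which is determined by the data. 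Thus factorizations of $(\phi,a_{s(\phi)})$ in $\FFdeco$ biject with factorizations of $\phi$ in $\F$, and the finiteness property transfers.

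For the Hopf statement, I would verify in turn the three defining conditions: factorization finiteness, almost group--like identities, and existence of an antipode on $\H_{\FFdeco}=\B^{iso}_{\FFdeco}/\I$. Factorization finiteness at the level of isomorphism classes follows from the bijection above, since the forgetful functor $\FFdeco\to\FF$, $(X,a_X)\mapsto X$, is monoidal and maps decorated isomorphisms to isomorphisms; the set of intermediate isomorphism classes in $\FFdeco$ in a decomposition fibers over the corresponding set in $\FF$ with finite fibers (one for each compatible decoration that actually arises). For almost group--like identities, I would take a factorization $id_{(X,a_X)}=(\phi_L,\O(\phi_R)(a_X))\circ(\phi_R,a_X)$, project to $id_X=\phi_L\circ\phi_R$ in $\FF$, conclude $\phi_L,\phi_R$ are $\FF$-isomorphisms by the hypothesis on $\FF$, and observe that a decoration-compatible $\FF$-isomorphism is an $\FFdeco$-isomorphism.

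To obtain the antipode I would transport any definite integer degree function $\deg$ witnessing the Hopf property of $\FF$ to $\FFdeco$ by $\deg(\phi,a_{s(\phi)}):=\deg(\phi)$. Additivity under composition and tensor product, vanishing on isomorphisms, and the sign condition (non--negative or non--positive) are all inherited from $\FF$ because the forgetful functor is strict monoidal and faithful on morphisms. Provided one then shows that $(\H_0^{iso})_{\FFdeco}$ is connected as a pointed coalgebra, Theorem \ref{HopfFeythm} applies to give that $\FFdeco$ is almost connected and hence Hopf. The reduction to the degree--$0$ piece uses Lemma \ref{definitelem}, which only involves the definite grading that we have just produced.

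The main obstacle is precisely this last step. The forgetful functor is surjective but far from injective on isomorphism classes---distinct decorations of the same $X$ produce distinct iso classes in $\FFdeco$---so connectedness of $(\H_0^{iso})_{\FF}$ does not transfer purely formally. The plan is to stratify $(\H_0^{iso})_{\FFdeco}$ by the image in $(\H_0^{iso})_{\FF}$ and argue co--nilpotence fiberwise: a degree--$0$ morphism $(\phi,a_X)$ with $\phi$ not an isomorphism admits a non--trivial decomposition $\phi=\phi_0\circ\phi_1$ with both factors of positive degree in the non--negative case (or of negative degree in the non--positive case, \emph{mutatis mutandis}), and this decomposition lifts, by determination of the intermediate decoration from $\phi_1$ and $a_X$, to a non--trivial decomposition in $\FFdeco$; iterating and invoking the co--nilpotence already available in $\FF$ then yields the required nilpotence of the reduced coproduct in $\FFdeco$.
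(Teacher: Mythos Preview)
Your treatment of the first two assertions is correct and matches the paper: morphisms of $\FFdeco$ are pairs $(\phi,a_{s(\phi)})$, and the bijection between factorizations of $(\phi,a_X)$ and factorizations of $\phi$ gives decomposition finiteness immediately. The paper records this bijection as the explicit formula
\[
\Delta((\phi,a_X))=\sum_{(\phi_0,\phi_1):\,\phi=\phi_0\circ\phi_1}(\phi_0,\O(\phi_1)(a_X))\otimes(\phi_1,a_X),
\]
and then simply observes that this same formula also yields the Hopf property.

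Your route to the Hopf property, however, takes an unnecessary detour and contains a genuine error. You attempt to pass through \emph{almost connectedness} and therefore introduce a definite degree function; but ``Hopf'' in the paper's sense (Definition after Lemma~preceding \S\ref{reflengthpar}) does not presuppose any degree function---it only asks that $\H=\B^{iso}/\I$ have an antipode. So you are assuming more than you are given. Worse, your final paragraph asserts that a degree--$0$ morphism $\phi$ that is not an isomorphism factors as $\phi_0\circ\phi_1$ with \emph{both factors of positive degree}; this is impossible, since additivity forces $\deg(\phi_0)+\deg(\phi_1)=0$ and non--negativity then gives $\deg(\phi_0)=\deg(\phi_1)=0$. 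Your proposed induction on degree therefore never gets off the ground inside $\B_0$.

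The cleaner argument, which is what the paper is gesturing at, is to use the factorization bijection directly at the level of iterated reduced coproducts. Because every factorization of $(\phi,a_X)$ in $\FFdeco$ is the unique lift of a factorization of $\phi$ in $\FF$, and identities lift to identities, the reduced $n$--fold coproduct $\bar\Delta^{n}$ applied to $[(\phi,a_X)]$ is term--by--term the decorated lift of $\bar\Delta^{n}$ applied to $[\phi]$. Hence co--nilpotence of $[\phi]$ in $\H_{\FF}$ forces co--nilpotence of $[(\phi,a_X)]$ in $\H_{\FFdeco}$, and the antipode (e.g.\ via Takeuchi's formula) exists on $\H_{\FFdeco}$ whenever it does on $\H_{\FF}$. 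No degree function is needed, and the ``obstacle'' you identify---that many decorated objects sit over one undecorated object---disappears, since the argument never compares distinct fibers but only lifts a single chain of factorizations.
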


\begin{proof}
By descriptions, any morphism $(X,a_X)\to (Y,a_Y)$ is a lift of a morphism
$\phi:X\to Y$. Such a lift exists if $a_Y=\O(a_X)$.
Thus fixing $\phi:X\to Y$ and $a_X\in \O(X)$, there is a unique morphism $(\phi,a_X):(X,a_X)\to (Y,\O(\phi)(a_X))$ and these are all the morphisms.
Since the source and $\phi$ fix the target:
\begin{equation}
\Delta((\phi,a_X))= \sum_{(\phi_0,\phi_1):\phi=\phi_0\circ\phi_1}(\phi_0,\O(\phi_1)(a_X)) \otimes (\phi_1,a_X)
\end{equation}
This equation also shows that the Hopf property is preserved.
\end{proof}

\subsubsection{Brown's motic Hopf algebras}
\label{brownpar}
In \cite{brown} a generalization of 1--PI graphs is given.
In this case there are the decorations of (ghost) edges
of the morphisms by masses and the momenta; that is, maps $m:E(\Gamma)\to \R$ and $q:T(\Gamma)\to \R^d\cup \{\emptyset\}$.
Notice that these are decorations in the technical sense of \cite{decorated} as well. For this, the decoration operad  is $\O(*_S)=\{S\mapsto \R^d\amalg \R\}$, so that each flag is either decorated by a momentum, or a mass. As a functor, under edge/loop contractions the decoration on the contracted flags is simply forgotten. This gives a decoration of all the flags of the ghost graph. This is not the end result, but we further to restrict to those morphisms whose ghost graphs have the {\it same} decoration for any two flags that make up a ghost edge, which is the standard procedure, cf.\ \cite[\S6.4]{decorated}. This results in the ghost edges being decorated by masses.
The masses carry over onto the new edges upon insertion.
Note that the flags that carry momenta are never glued

A subgraph $\gamma$ of a graph $\Gamma$
is called momentum and {\it mass spanning (m.m.)} if it contains all the tails and all the edges with non--zero mass. This means that as a ghost graph its target has corollas, whose flags are labeled with $0$ mass except possibly one corolla whose flags are labeled with all the external momenta.
A graph $\Gamma$ is called {\em motic} if for any m.m.\ subgraph $\gamma$: $b_1(\gamma)<b_1(\Gamma)$. This condition invented by Brown generalizes 1--PI.
It is again stable under composition, i.e.\ gluing graphs into vertices as can be readily verified, see  \cite[Theorem 3.6]{brown}.

After taking the quotient and amputating all tails marked by momenta, we see that the one vertex ghost graph becomes identified with the empty graph and
we obtain the Hopf algebra structure of \cite[Theorem 4.2]{brown}.

\subsection{Simplicial structures and Feynman categories}
\label{simpfeypar}
In this section, we consolidate and expand the construction of \cite[\S\ref{P1-simplicialpar}]{HopfPart1} in the setting of Feynman categories.

\subsubsection{The Feynman category $\FFinSet$ and variations}

The basic non--trivial Feynman category with trivial $\V$, is $\FFinSet=(\triv,\FinSet,\imath)$  where $\FinSet$, the category of finite sets and set maps with monoidal structure given by the disjoint union $\amalg$. The functor $\imath$ is given by sending $*$ to the atom $\{*\}$.
The equivalence between $\SS$ and $Iso(\FinSet)$ is clear as $\SS$ is the skeleton of $Iso(FinSet)$. Condition (iii) holds as well. Given any morphisms $S\to T$ between finite sets, we can decompose it using fibers as.
\begin{equation}
\label{finseteq}
\xymatrix
{
S \ar[rr]^{f}\ar[d]_{=}&& T\ar[d]^{=} \\
 \amalg_{t\in T} f^{-1}(t)\ar[rr]^{\amalg f_t}&&\amalg_{t\in T} \{*\}
 }
    \end{equation}
where $f_t$ is the unique map $f^{-1}(t)\to \{*\}$. Note that this map exists even if $f^{-1}(t)=\emptyset$.
This shows the condition (ii), since any isomorphisms of this decomposition must preserve the fibers.

$\FFinSet$ has the Feynman subcategories
$\Surj=(\triv,\surj,\imath)$, where the maps are restricted to be surjections and $\Inj=(\triv,\inj,\imath)$
where the maps are restricted to be injections.
This means that none of the fibers are empty or all of the fibers are empty ,respectively.

In the non--$\Sigma$ case, a basic example is $\FFinSet_<=(\triv,\FinSet_<,\imath)$, where $\FinSet_<$ is the category of ordered finite sets with order preserving maps  has as $\F$  the category of and with $\amalg$ as monoidal structure. The order of $S\amalg T$ is lexicographic, $S$ before $T$.
The functor $\imath$ is given by sending $*$ to the atom $\{*\}$.
 Viewing an order on $S$ as a bijection to $\{1,\dots,|S|\}$, we see that ${\bf N}_0$, the set $\N_0$ viewed as a discrete category (that is with only identity morphisms), is the skeleton of $Iso(\FinSet_<)$. The diagram \eqref{finseteq} translates to this situation and we obtain a non--$\Sigma$ Feynman category. The skeleton of Feynman category is the strict Feynman category $(\triv, \Delta_+,\imath)$, where $\Delta$ is the augmented simplicial category and  $\imath(*)=[0])$.
Restricting to order preserving surjections and injections, we obtain the Feynman subcategories $\Surj_<=(\triv,OS,\imath)$ and $\Inj_<=(\triv,OI,\imath)$.
We can also restrict the skeleton of $\FinSet_<$ given by $\Delta_+$ and the subcategories of order preserving surjections and injections.
See Tables \ref{table1} and \ref{table2}. In $\Delta_+$ the image of $*^{\otimes n}$ under $\imath^{\otimes}$ will be the set $\underline{n}$ with its natural order.

\begin{table}
\begin{tabular}{l|l|l}
$\FF$&$\F$ &definition\\
\hline
$\FFinSet$&$\FinSet$&Finite sets and set maps\\
$\Surj$&$\mathcal{S}urj$&Finite sets and surjections\\
$\Inj$&$\mathcal{I}nj$&Finite sets and injections\\
\end{tabular}
\caption{\label{table1} Set based Feynman categories Feynman categories. $\V=\underline{*}$ is trivial.}
\end{table}
\begin{table}
\begin{tabular}{l|l|l}
non-$\Sigma$ $\FF$&$\F$&definition\\
\hline
$\FFinSet_<$&$\FinSet_<$&Finite sets and order preserving maps. \\
$\Surj_<,$&$OS$&Ordered finite sets and ordered preserving surjections\\
$\Inj_<$&$OI$&Ordered finite sets and order preserving injections\\
$\Delta_+$&$\Delta_+$&Augmented Simplicial category, Skeleton of $\FinSet_<$\\
$\Int$&$OI_{*,*}$&Subcategory of $\Delta_+$ of double base--point preserving \\
&&injections
\end{tabular}
\caption{\label{table2} Set based non-$\Sigma$ Feynman categories. $\V=\underline{*}$ is trivial.}

\end{table}

\begin{ex}[Bi-- and Hopf--algebra structures]

$\FinSet$ and $\FinSet_<$ are not decomposition finite, but the restrictions to injections and surjections in the skeletal version are.
The bi--algebra structure on surjections is as follows: the basic morphisms are surjections $\pi_n:n\ta 1$ which can  be alternatively viewed
as corollas with $n$ inputs. In the non--sigma case, $\V$ is discrete and $\B=\B^{iso}$. We get
\begin{equation}
\Delta(\pi_n)=
\hspace{-10pt}
\sum_{1\leq k\leq n, f: (n,<)\ta (k,<)}
\hspace{-10pt}
\pi_k \otimes f=
\hspace{-12pt}
\sum_{1\leq k\leq n, (n_1,\dots,n_k):n_1\geq 1,\sum n_i=n} \hspace{-12pt}
\pi_k\otimes (\pi_{n_1}\odo \pi_{n_k})
\end{equation}
since an order preserving surjection is uniquely determined by the cardinalities of its ordered set of fibers.
In the Hopf algebra, we get
\begin{equation}
\Delta^{\H}(\pi_n)=\pi_n\otimes 1+1\otimes \pi_n +\sum_{1< k<n, (n_1,\dots,n_k):n_i>1,1<\sum n_i<n}
\pi_k\otimes (\pi_{n_1}\odo \pi_{n_k})
\end{equation}
as in the quotient $[id_1]=[1\ta 1]=1$ as well as its products.
This reproduces the example of corollas
\cite[Example \ref{P1-corrolladeltaex}]{HopfPart1}.

For the case of $\Surj$, we can use a skeleton for the isomorphism classes. The bi--algebra is then
\begin{equation}
\Delta([\pi_n])=
\hspace{-4pt}
\sum_{1\leq k\leq n, [f]: f:\ta (k,<)}
\hspace{-4pt}
[\pi_k] \otimes [f]=
\hspace{-10pt}
\sum_{1\leq k\leq n, \{n_1,\dots,n_k\}:n_1\geq 1,\sum n_i=n}
\hspace{-10pt}
\pi_k\otimes [\pi_{n_1}]\cdots[\pi_{n_k}]
\end{equation}

\begin{equation}
\Delta^{\H}(\pi_n)=[\pi_n]\otimes 1+1\otimes [\pi_n] +\sum_{1< k<n, \{n_1,\dots,n_k\}:n_i>1,1<\sum n_i<n}
[\pi_k]\otimes [\pi_{n_1}]\cdots [\pi_{n_k}]
\end{equation}

Note that this gives the same multiplicities as in Example \cite[\ref{P1-symmetryex}]{HopfPart1}.

\end{ex}

\subsubsection{The Feynman category of simplices, Intervals and  the Joyal dual of $\Surj_<$}
\label{joyal1par}
 As stated previously, there is a very interesting and useful contravariant
duality \cite{joyal} of subcategories of $\simpcat_+$
between  $\Delta$ and the  category $\Delta_{*,*}$,
 which are the endpoint preserving morphisms in $\Delta_+$.
It maps surjections $OS$ in $\simpcat$ to double base point preserving injections $OI_{*,*}$.

Thus the category $\Int^{op}$ is also a non--$\Sigma$ Feynman category with trivial $\V$.
One has to be careful with the monoidal structure: while in $\Delta$ the monoidal structure is disjoint union of small categories, for which
$[n]\otimes[m]=[n+m+1]$,  with unit $\emptyset=[-1]$.
The monoidal structure on $\Delta_{*,*}$ is the one defined e.g.\  in  Definition \cite[\ref{P1-doublebasedef}]{HopfPart1}, whose unit is $[0]$. We will denote
this tensor product by $\otb$, so that $[n]\otb[m]=[n+m]$ by identifying $n$ and $0$.

 $\Int=(\underline{*},OI_{*,*},\otb),\imath)$ is also a subcategory of the non--$\Sigma$ Feynman $\Inj_<$ category. The underlying objects of $\F$ are the natural numbers. To each $n$ we associate $[n]$, technically $\imath(*)=[1]$. For the morphisms, we have the identity $id_{[1]}$ in $Hom([1],[1])$, and one can check that indeed $id_{[1]}^{\otb n}=id_{[n]}$.

To get injections in $\Delta_+$, we only need to add one morphism:
$p:[0]\to [1]$ which we will call special. This generates all injections, cf.\ \cite[\S 2.10.3]{feynman}.
Any double--base point preserving injection from $[n+1]$ to $[m+1]$ in $\Delta_+$
is then represented by a tensor product of identities and special maps for the tensor product $\otimes$. This can be used to give a representation of the Feynman category $\Int$ in terms of generators and relations  in the sense of \cite[\S5]{feynman}. In particular, any double base point preserving injection can be written as $id\otimes p^{n_1-1}\otimes id \otimes p^{n_2-1}\odo p^{n_d-1} \otimes id:[d]\to [N]$, where $N=\sum_{i=1}^d n_i$ is the operadic degree, the length is $N-1$.
Let us introduce the notation $(1;0^{n_1-1},1,0^{n_2-1}\kdk 1, 0^{n_d-1};1)$ for this morphism. Where we think of $0^{n-1}=0,0\kdk 0$ as $n-1$ occurrences of $0$ indicating the elements in the target that are not hit.

Just as  surjections are generated by the unique maps $\underline{n}\ta \underline{1}$, so too are, dually,  the double base point preserving injections generated by the unique maps $[1]\to [n]\in Hom_{*,*}([1],[n])$.  These are the basic morphisms. In the notation above the unique double base point preserving injection $[1]\to [n]$ is $(1;0^{n-1};1)=(1;0 \kdk 0;1)$ with $n-1$ copies of $0$. It is given by $id\otimes p^{\otimes n-1}\otimes id$.
For example: $(1;0^{n-1},1)\otb (1;0^{m-1},1)=(1;0^{n-1},1,0^{m-1};1)=id\otimes p^{\otimes n-1} \otimes id  \otimes p^{\otimes m-1}\otimes id:[1]\otb[1]=[2]\to [n]\otb[m]=[n+m]$ is the morphism that sends $0\mapsto 0, 1\mapsto n, 2\mapsto n+m$.

In general
\begin{multline*}
(1;0^{n_1-1},1,0^{n_2-1}\kdk 1, 0^{n_d-1};1)=\\
 (1;0^{n_1-1};1)\otb (1;0^{n_2-1};1)\otb \cdots \otb (1; 0^{n_d-1};1)
\end{multline*}

The factorizations dual to the surjections $\underline{n}\ta\underline{k}\ta \underline{1}$, i.e.\ $[0]\to [k] \to [n]$ yields the co--product
\begin{multline}
\Delta(1;0^{n-1};1)=
\!\!\!\!\!\!\!\!\!
\sum_{\substack{k\geq 0\\(n_1\kdk n_k):\sum n_i=n)}}
\!\!\!\!\!\!\!\!\!\!\!\!
(1;0^{k-1};1)\otimes (1; 0^{n_1-1},1 \kdk 0^{n_k-1};1)=\\
\!\!\!\!\!\!\!\!\!
\sum_{\substack{k\geq 0\\(n_1\kdk n_k):\sum n_i=n)}}
\!\!\!\!\!\!\!\!\!\!\!\!(1;0^{k-1};1)\otimes \left(
(1;0^{n_1-1};1)\otb(1;0^{n_2-1};1)\otb\cdots\otb
(1;0^{n_k-1};1)\right)
\end{multline}

The Hopf quotient is then given by setting $id_1=(1;\emptyset;1)=1=id_\unit$.

\begin{rmk}
In terms of \cite[\S\ref{P1-gencooppar}]{HopfPart1} a multiplication is given by sending free tensor product $\boxtimes$ to $\otb$ ---and evaluating.
See \S\ref{picturepar}  for  pictorial representations. This corresponds to the equivalence in axiom \eqref{morcond} for Feynman categories by picking a functor from the free monoidal category realizing the equivalence. Identifying $\boxtimes$ with $\otimes$ explains the appearance of (op)-lax monoidal functors, see \S\ref{ncpar} and \cite[Proposition
\ref{P1-oplax}]{HopfPart1}.
\end{rmk}
\begin{rmk}
\label{depthrmk}
Note that the depth is the number of $1$s. Except for the interpretation as a lax monoidal functor, it is not clear how this is exactly related to the multi--zeta values and will be a field of further study.
A different encoding would be to use the symbol $(0;1\kdk n-1;n)$ for the unique double base point preserving injection $[1]\to [n]$. Then the formula becomes.

\begin{multline}
\Delta(0;1\kdk n-1;n)=
\!\!\!\!\!\!\!\!\!
\sum_{\substack{k\geq 0\\(j_1\kdk j_k):\sum j_i=n)}}
\!\!\!\!\!\!\!\!\!\!\!\!
(0;1,\dots, k-1;k)\otimes\\
\left(
(0;1\kdk j_1-1;j_1)\otb(0;1\kdk j_2-1;j_2)\otb\cdots\otb
(0;1\kdk j_k-1;j_k)\right)
\end{multline}

This is the basic structure of Goncharov's co--product, see e.g.\ \cite[\eqref{P1-gontcoprod}]{HopfPart1}, which only needs one more step of decoration, see \S\ref{decopar}, and especially \S\ref{seqpar}.
In this particular case, these are angle markings, see \S\ref{anglepar}. Further connections are given in  Example \ref{coopex} and \S\ref{semisimpex}.
\end{rmk}

\subsubsection{Pictorial representation}
\label{picturepar}

Pictorially, the surjection is naturally depicted by a corolla while
the injection is nicely captured by drawing an
injection as a half circle.  The use of half circles goes back to Goncharov, albeit he did not associate them to double base point preserving injections.
Joyal duality can then be seen
by superimposing the two graphical images. The superposition goes back to \cite{gangl}. The connection to Joyal duality is new.
 This duality is also that of dual graphs
on bordered surfaces.
This is summarized in Figure \ref{figure1}. Notice that in this duality,
the elements of $[n]$ correspond to the angles of the corolla and the elements of $\underline{n}$ label the leaves of the corolla.

This also explains the adding and subtraction of $1$ in the formulas for Joyal duality \cite[\eqref{P1-joyaleq}]{HopfPart1}.

\begin{figure}
    \centering
    \includegraphics[width=.3\textwidth]{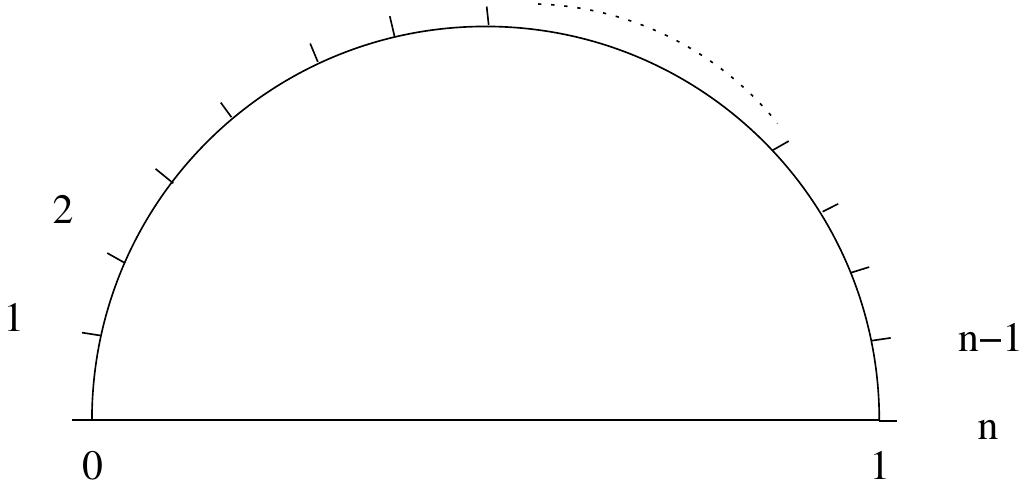}
    \includegraphics[width=.3\textwidth]{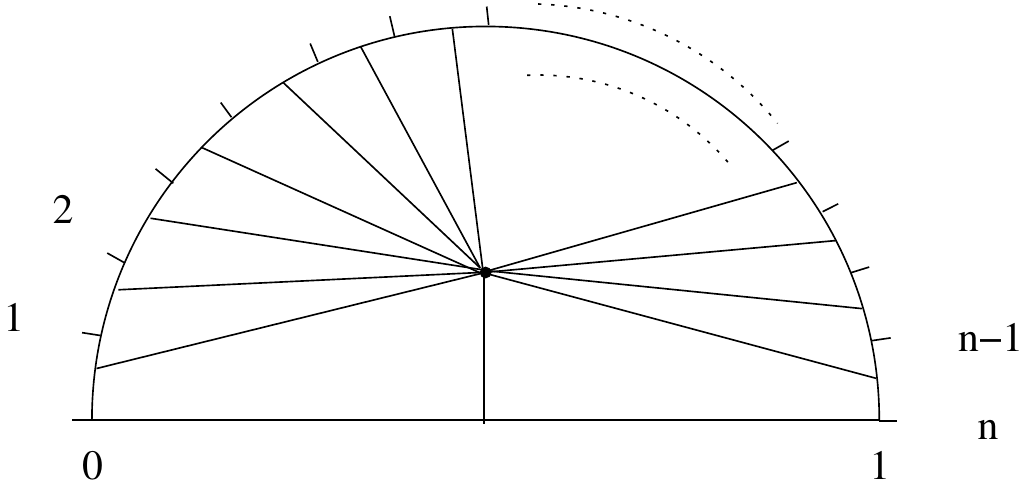}
    \includegraphics[width=.3\textwidth]{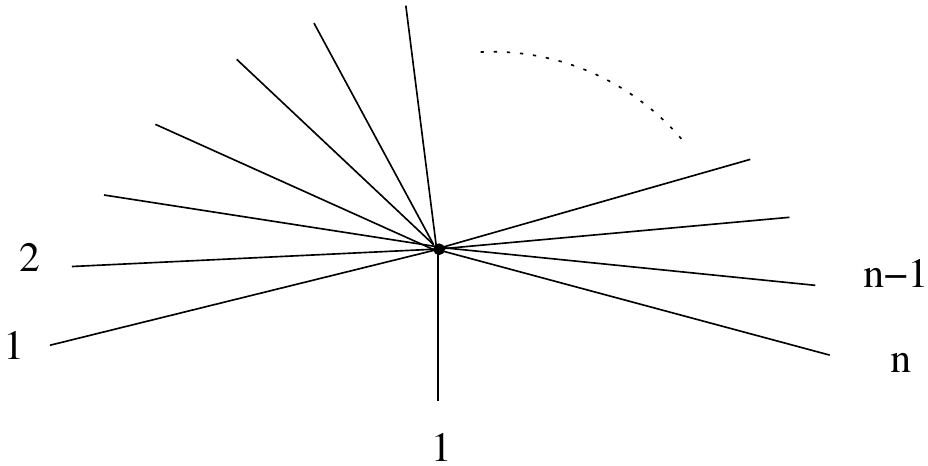}
    \caption{The interval injection $[1]\to [n]$  on the left, the surjection $\underline{n}\to \underline{1}$ on the right and
    and Joyal duality in the middle. Here reading the morphism upwards yields the double base point preserving injection, while reading it downward the surjection.}
    \label{figure1}
\end{figure}

For general surjections, the picture is the a forest of corollas and
a collection of half circles. The composition then is given by composing corollas to corollas
and by gluing on the half circles to the half circles by identifying the beginning and endpoints. This is exactly the map of combining  simplicial strings.
The prevalent picture for this  in the literature on multi--zetas and polylogs is by adding line segments as the base for the arc segments. This is pictured in Figure \ref{figure3}. The composition is then given by contracting the internal edges or dually erasing the internal lines. This is depicted in Figure \ref{figure2}.

\begin{figure}
\begin{tabular}{c}
     \includegraphics[height=.4in]{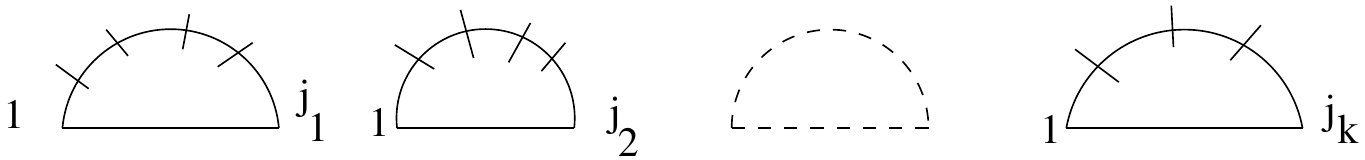}\\
  \includegraphics[height=.4in]{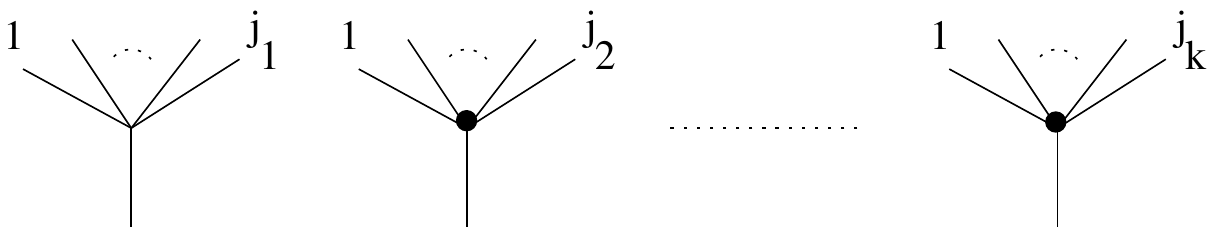}
      \end{tabular}
 \raisebox{-.4in} {\includegraphics[height=.8in]{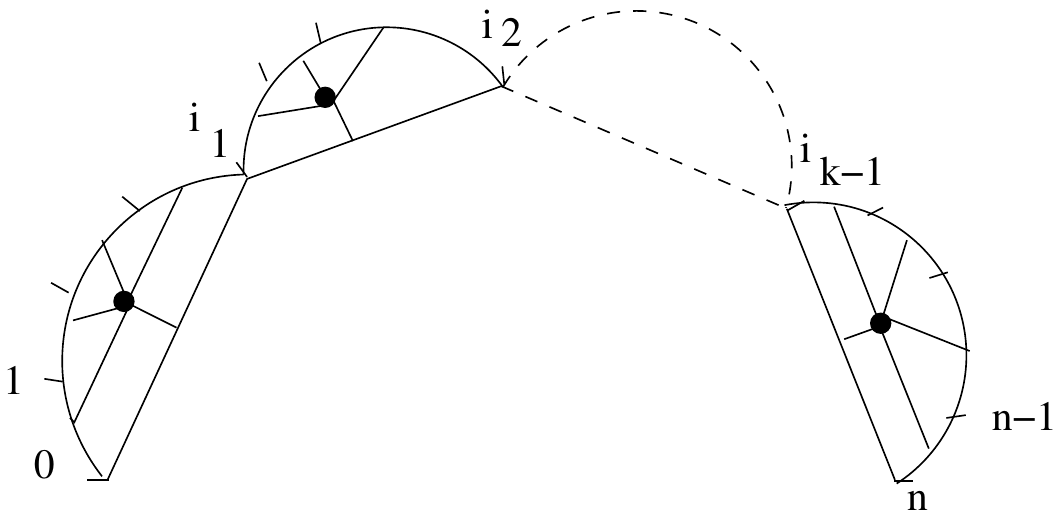}}

    \caption{The first step of the composition is to assemble  a collection of half discs or a forest into one morphism.
    This is pictured on the right.
    The $j$ and $i$ are related by $i_l=j_1+\dots j_k$. Notice that in the half disc assembly is glued at the $i_l$ essentially repeating them, while the forest assembly does not repeat.
    This also corresponds to an iterated cup product.}
    \label{figure3}
\end{figure}

\begin{figure}
    \centering
   \includegraphics[width=.3\textwidth]{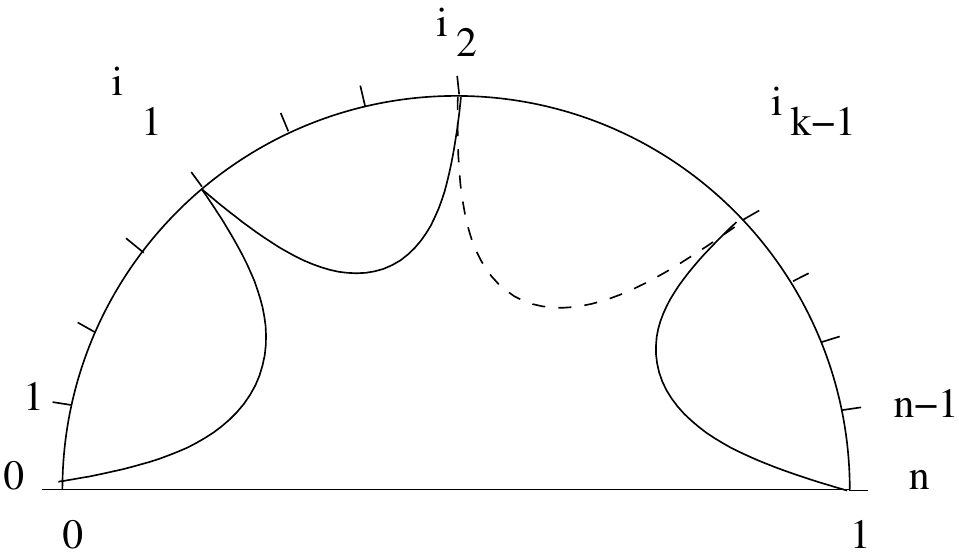}
    \includegraphics[width=.3\textwidth]{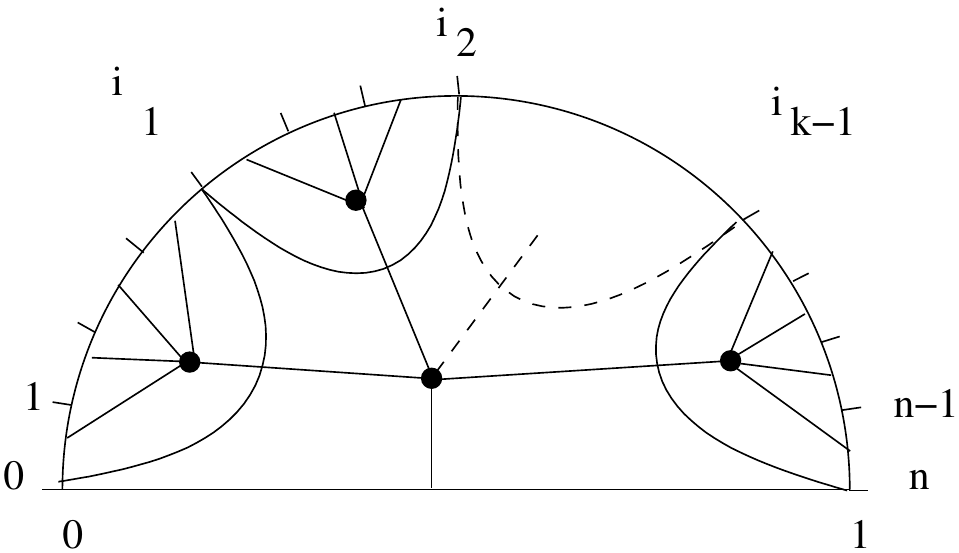}
    \includegraphics[width=.3\textwidth]{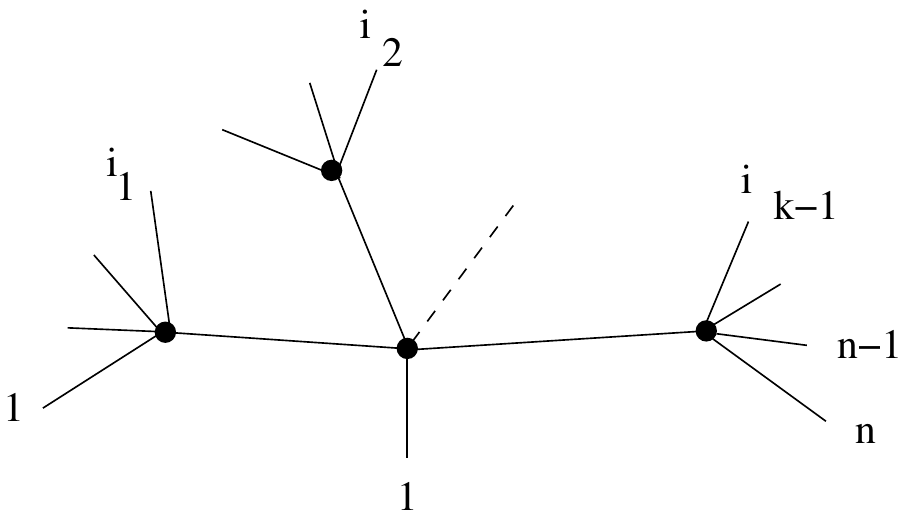}
    \caption{The second step of composition. For  half circles on the left, where we have deformed the half circles such that the outer boundary is now a half circle,
    corollas on the right and the duality in the middle.
    is done in Figure \ref{figure3}.
    The result of the composition is after the third step, which erases the inner curves or segments and in the corolla picture contracts the edges. The result is in Figure \ref{figure1}.}
    \label{figure2}
\end{figure}

We have chosen here the traditional way of using half circles. Another equivalent way would be to use polygons with a fixed base side. Finally, if one includes
both the tree and the half circle, one can modify the picture into a perhaps more  pleasing aesthetic by deforming
the line segments into arcs as is done in \cite[\S\ref{P1-simplicialpar}]{HopfPart1}, where also
one  explicit composition is given in all details, see \cite[Figure \ref{P1-fig:2347}]{HopfPart1}.

\subsubsection{Joyal duality in formulas}
\label{joyal2par} In this formulation Joyal duality is also easy to grasp. A double base point preserving injection is given by the symbol $(1;0^{n_1-1},1 \kdk 0^{n_d-1};1)=(1;w;1):[d]\to [N]$ as above. Where $1$ stands for $id$, $0$ for $p$ and $w$ is a word in these letters.
Now, the word $w$ in the middle is uniquely fixed by knowing the $n_i$. Vice--versa, given the $n_i$ there is a unique order preserving surjection $[N-1]\ta [d-1]$ whose fibers have cardinalities $n_1,\dots n_d$, that is $\pi_{n_1}\amalg \cdots\amalg \pi_{n_d}$.
This gives half of Joyal duality $OI_{*,*}([n+1],[m+1])\simeq OS([m],[n])$, where the bijections are natural. One can think of mapping the intervals in Figures \ref{figure1} and $\ref{figure2}$
surjectively from the top to the bottom.

To get the other direction note that any injection is given uniquely by a word $w$ as above. This will be a morphism $[d-2]\to [N-2]$. The corresponding surjection is a map $[N-1]\to [d-1]$. Now
since $OS=OS_{*.*}$ since all order preserving surjections have to preserve the base points, we have the second part of Joyal duality given by $OI(n,m)\simeq OS_{*,*}(m+1,n+1)$.
We also see the different monoidal structures. In the surjections, the monoidal structure is just $\amalg$;
for the half--circles, intervals, dually this means that they have to be joined at the base points,  see Figure \ref{figure2}.

\begin{rmk}
Using this logic, we also see that $OI(n,m)\simeq OS_{*,*}(m+1,n+1)= OS(m+1,n+1) \simeq OI_{*,*}(n+2,m+2)$, where the bijections are natural.
This is just the isomorphism which sends $w$ to $(1;w;1)$.
In $OI$, we just have the concatenation of words: $w_1\otimes w_2=w_1w_2$. Thus to get the right monoidal structure on $OI_{*,*}$,
we have to use $\otb: (1;w_1;1)\otb (1;w_2;1)= (1;w_1w_2;1)$.
Dually, we see that when combining the words $w_1w_2$ if there are occurrences of $0$ in the middle they will add as $0^{n_d-1}0^{m_1-1}=0^{n_d-1+m_1-1}$ which means that the two surjections will be merged using $\otb$
\end{rmk}

\subsubsection{Decorating with sequences}
\label{seqpar}
Consider the  Feynman category $\Delta_+$ and fix a set $S$. The contravariant functor $Seq:\Delta_+^{op}\to \Set$:
$[n]\to Hom([n],S)$ associates to $[n]$ the set of sequences $\{(a_0\kdk a_n):a_i\in S\}$ in $S$. Injections act as restrictions and surjections as repetitions.
The usual tensor product which takes the ordered sets $([n],[m])$
to the ordered set $[n+m+1]$ concatenates two sequences.
$(a_0\kdk a_n)\amalg(b_0\kdk a_m)=(a_0 \kdk a_n, b_0\kdk b_n)$ thus making $Seq$ into a monoidal functor.
For the Feynman category version, we can consider $Seq:\Delta_+\to \Set^{op}$.
In the decorated version, we have objects $([n],(a_0,\dots,a_n))$ which one can view as an interval with $n-1$ marked internal points (only their order matters), where the $i$--th point, counting both  internal and boundary points, is marked by $a_i$.

Restricting to $\Inj^{op}_{*,*}\simeq \Surj$, we see that alternatively, $Seq:\Surj\to \Set$.  In this setting is more natural to set  the image of $[n]$ to be $\underline{n}=\{1,\dots,n\}$. Now,  the decoration of $\underline{n}$ is by $(a_0,\dots,a_{n})$, that is $n+1$ elements, which we can take as an angle decorations.
The morphism $\pi_n:=\underline{n}\ta \underline{1}$ dual to $(1;0^{n-1};1):[1]\to [n]$ sends a decoration $(a_0\kdk a_n)$ to $(a_0,a_1)$, that is the two outer angle markings.
The graphical depiction of the morphism $\pi_n$ is a planar corolla as previously discussed, and the decoration by $(a_0,\dots,a_n)$ then naturally is carried by the angles, see Figure \ref{anglepar}.

 This gives rise to the colored operad structure of \S\ref{coloroppar} in the context of Goncharov, see also \cite[Example \ref{P1-overlapex}]{HopfPart1} as the decorations need to match and the category splits into connected components whose  final objects are $(\underline{1},(a_0,a_1))$. The monoidal structure in this setting  on $\underline{1}$ is addition while the monoidal structure on the decorations is $\otb$ due to the use of Joyal duality.

\begin{rmk}
 One can view the tensor product $\otb$ as a partial product, whose dual co--product  is the reason for the op--lax structure, namely the dual to the partial multiplication given by
 $\otb$.
\begin{equation}
 \Delta((a_0,\dots,a_n))=\sum_{i=1}^n(a_0\kdk a_i)\otb(a_{i}\kdk a_n)
\end{equation}
which is also the co--derivation discussed in \cite[\S\ref{P1-freeinfsec}]{HopfPart1} and an instance of the Alexander--Whitney map; see the next paragraph.
\end{rmk}

\subsubsection{Sequences as (Semi)--simplicial objects}
\label{semisimplicialpar}

In general, we can decorate $\Surj_{<}^{\boxtimes}$ with the semi--simplicial set $X_{\bullet}$, and then regard the decorated $\Surj_{<,dec X_\bullet}^{\boxtimes}$.
By definition, the objects will be $(\underline{n}_1\boxtimes\dots\boxtimes \underline{n}_{k},x_1 \odo x_k \in X_{n_1}\odo X_{n_k})$. Using the $B_+$ operator given by $\boxtimes\mapsto \otimes$ and the Alexander Whitney map, we re--obtain the simplicial results of \cite[\S\ref{P1-simplicialpar}]{HopfPart1}.

In order to read off the structure for Baues, we see that under the tensor product, we are looking at the tensor algebra on the simplicial objects $C_\bullet$, which is the underlying space of the bar--transform, when we regard everything as graded and use the usual shift $B(C_\bullet)=T C_\bullet[1]$.

Such a transition to the tensor algebra is also known as second quantization, cf.\ e.g.\ \cite{sq}.

\begin{ex} The decoration above can be viewed as a decoration by (semi)--simple objects. For this, we just consider $S$ to be the vertex set of an abstract simplicial complex $\mathscr S$. Then the sequences are simply the ordered simplices of $S$. Their linearization is
$C_*^{\rm ord}(S)$ the ordered simplicial chain complex. In this setting, we have a different tensor product. It corresponds to the tensor product of chain complexes, so that $(a_0\kdk a_n)\otimes (b_0\kdk b_m)\in C^{ord}_n(\mathscr{S})\otimes C^{ord}_m(\mathscr{S)}$.
This gives rise to the construction of Goncharov if we regard the $C_n$ as ungraded objects and use Joyal duality as in the previous paragraph.
In this context, the shuffle product \cite[\eqref{P1-shufflecond}]{HopfPart1} appears naturally, as the Eilenberg--Zilber map $C_n(\mathscr{S})\otimes C_m(\mathscr{S)}\to C_{n+m}(\mathscr{S})$.
\end{ex}
\subsubsection{Bi-- and Hopf--algebra from the decoration by the algebra of co--chains}
As $\Surj$-$\opcat_\C$ are algebras in $\C$, we can decorate by any algebra.

 Given a semi--simplicial set $X_\bullet$ then
$C^*(X_\bullet)$ can be made into a functor from $\Surj_<$, since it is an algebra. Namely, we assign to each $n$
the set $C^*(X_\bullet)^{\otimes n}\simeq C^*(X_{\bullet}^{\times n})$
and to the unique map $n\to 1$ the iterated cup product $\cup^{n-1}$.
After decorating, the objects become collections of co--chains,
and there is a unique map with source an $n$--collection of co--chains
and target a single co--chain, which is the iterated cup product.
Thus, one can identify the morphisms of this type with the objects.
Furthermore, the set of morphisms  possesses a natural structure of Abelian group. Dualizing this Abelian group, we get
the co--operad structure on $C_*(X_{\bullet})$ and the co--operad structure with multiplication on $C_*(X_{\bullet})^{\otimes}$ that coincides with the one considered in \cite[\S\ref{P1-simplicialpar}]{HopfPart1}.

The bi--algebra is almost connected if the 1--skeleton of $X_{\bullet}$ is connected. And after quotienting we obtain the same Hopf algebra structure from both constructions.

\subsubsection{Decorating with the bar/cobar complex}
\label{cobarpar}
Given an algebra $A$, we can decorate $\Surj_<$ directly.
Alternatively, we can decorate $\Surj_<$ with $BA$ as an $op$ decoration.
$OS\to \C^{op}$. Conversely given a co--algebra $C$, we can decorate with the algebra $\Omega C$. This leads to the construction of Baues.

\subsubsection{Relation to $\cup_i$ products}
\label{cupsec}
It is here that we find the similarity to the $\cup_i$ products also noticed by JDS Jones.
Namely, in order to apply $\cup^{n-1}$ to a simplex, we first use the Joyal dual map
$[1]\to [n]$ on the simplex. This is the map that is also used for the $\cup_i$ product.
The only difference is that instead of using $n$ co--chains, one only uses two.
To formalize this one needs a surjection that is not in $\simpcat$, but uses a permutation, and hence lives in $S\simpcat_+$. Here the surjection $\Surj$ gives rise
to what is alternatively called the sequence operad. Joyal duality is then the fact that one uses sequences and overlapping sequences.
The pictorial realizations and associated representations can be found in \cite{hoch2} and \cite{postnikov}. This is also related to the notion of discs in Joyal \cite{joyal}.
This connection will be investigated in the future.

In the Hopf algebra situation, we see that the terms of the iterated $\cup_1$ product coincide with  the second factor of the co--product $\Delta$. Compare Figure \ref{figure3}.

\subsection{Non--connected and free Feynman categories, simplicial objects and strings}
\label{ncpar}
Given a Feynman category $\FF$ there are two  associated Feynman categories  $\FF^{\boxtimes}, \FF^{nc}$ (nc stands for non--connected), which
have the properties
\begin{equation}
Fun_{\otimes}(\F^{\boxtimes},\C)=Fun(\FF,\C) \text{ and } Fun_{\otimes}(\F^{nc},\C)=Fun_{lax-\otimes}(\F,\C)
\end{equation}
see \cite[\S3.1,3.2]{feynman}.

\begin{rmk} [Co--operads with multiplication as an example of a $B_+$ operator]
\label{B+parII}
Using \S\ref{B+par} in the particular case of $\Surj_{<,\O}$,  $\mu=B_+:\Coop^{nc}\to \Coop$ is precisely which satisfies the compatibility equations for
a co--operad with multiplication and the conditions for
the unit and co--unit. This allows us to understand the constructions of \cite[\S\ref{P1-gencooppar}]{HopfPart1} which become natural in this definition.
\end{rmk}

\subsubsection{Simplicial objects and links to the simplicial construction of \cite{HopfPart1}}
\label{semisimpex}

By definition a simplicial object in $\C$ is(1) a functor $X_{\bullet}:\Delta^{op}\to \C$, and rewriting this, we see
that this is equivalent either (2) to a functor $X_{\bullet}^{op}:\Delta\to \C^{op}$ or (3) to a functor $X_{\bullet}^{Joy}: \Delta_{*,*}\to \C$.
The second and third descriptions open this up for a description in terms of Feynman categories and
our constructions of \cite[\S\ref{P1-simplicialpar}]{HopfPart1} mostly work with the last interpretation.

For (2) and (3) notice that in this interpretation $X^{op}_{\bullet}$ can be extended to a functor from $\Delta_+$, but it is not monoidal. However, it does give rise
to a functor $X^{op, \boxtimes}_{\bullet}\in\Delta_+^{\boxtimes}\text{-}\opcat_{\C^{op}}$,
or an element in $X^{Joy, \boxtimes}_{\bullet}\in \Delta_{*,*}^{\boxtimes}\text{-}\opcat_\C$.

In particular, the relevant constructions are on semi-simplicial objects in $\C$ which again described as
(1) a functor $X_{\bullet}:\Surj_<^{op}\to \C$, (2) to a functor $X_{\bullet}^{op}:\Surj_<\to \C^{op}$, equivalently  $X^{op, \boxtimes}_{\bullet}\in\Surj_<^{\boxtimes}\text{-}\opcat_{\C^{op}}$, or (3)  a functor $X_{\bullet}^{Joy}: \Delta_{*,*}\to \C$, equivalently an element $X^{Joy, \boxtimes}_{\bullet}\in  \Delta_{*,*}^{\boxtimes}\text{-}\opcat_\C$.

There is one more level of sophistication given by
\cite[Proposition \ref{P1-oplax}]{HopfPart1} which  one can rephrase as:
\begin{equation}
\Delta_+^\boxtimes=\Omega \Delta \text{ and }  \Delta_{*,*}^{nc}=\Omega \Delta
\end{equation}
which identifies simplicial strings as the free, receptively n.c.\ construction  by  using that in the correspondence $Fun(\F^{op},\C)\stackrel{1-1}{\leftrightarrow}Fun(\F,\C^{op})$ an oplax monoidal functors map to lax monoidal functors. What is intriguing is that although in (i) the original tensor product $\otimes$ is basically forgotten, in (ii) the dual tensor product
already is weakly respected by the functor and hence Joyal duality furnishes an intermediate step.
That is one only has to add the
 the oplax monoidal structure \cite[\S\ref{P1-stringsec}]{HopfPart1}, induced by the Alexander--Whitney map $X_{p+q}\to X_p\times X_q$, which is also represented in the monoidal structure of Joyal duality, as explained above, see also \S\ref{seqpar} for a concrete example.

The cubical realization of this using the functors $L$ of
\cite[ \S\ref{P1-stringsec}]{HopfPart1}.
in the more general context of $\FF^\otimes$ and $\FF^{nc}$ will be the subject of further investigation.

\subsection{Enrichment and operad based Feynman categories}
\label{enrichedpar}
\subsubsection{Enrichments, plus construction and hyper category $\FF^{hyp}$}
\label{hyppar}

The first construction is the plus construction $\FF^+$ and its quotient $\FF^{hyp}$ and its equivalent reduced version $\FF^{hyp,rd}$, see \cite{feynman}.
The main result of \cite[Lemma 4.5]{feynman} says that for any Feynman category $\FF$ there exists a Feynman category $\FF^{hyp}$ and the set of monoidal functors $\O:\F^{hyp}\to \E$ is in 1--1 correspondence
with indexed enrichments $\F_{\O}$ of $\F$ over $\E$.

For such an enrichment, one has $Obj(\F_\O)=Obj(\F)$ and
\begin{equation}
Hom_{\F_{\O}}(X,Y)=\coprod_{\phi\in Hom_{\F}(X,Y)}\O(\phi)
\end{equation}
And the additional condition that if $\phi$ is an isomorphism, then
$\O(\phi)\simeq \unit_\E$
 This generalizes the notion of hyper--operads of \cite{GKmodular}, whence the superscript $hyp$.

The compositions in $\F$ then give rise to compositions in $\F_\O$ for instance for the composition $\phi=\phi_1\circ\phi_0$, we get:
\begin{equation}
\O(\phi_1)\otimes \O(\phi_0)\to \O(\phi)
\end{equation}

The extra condition guarantees that one does not have to enlarge $\V$.
A slightly less strict restriction is that one regards $\O:\F^+\to \E$ which is suitably split. This is the $+gcp$ construction of \cite{Frep}.
Then there is $\FF_\O=(\V_\O,\F_\O,\imath)$ with $\F_\O$ as defined as above is still a Feynman category.
In this case one can enlarge $\V$ to $V_\O$ to include any invertible generators. In all of the cases $\FF_\O$ is a weak Feynman category \cite[Definition 1.9.1]{feynman}.
One possible enrichment is given for any $\O:\F^+\to \E$, such that $\O(\phi)$ is free of rank one.  This is called the free enrichment.

\subsubsection{Bootstrap}
There is the following nice observation. The simplest Feynman category is given by $\FF_{triv}= (\V=\triv,\F=\V^{\otimes},\imath)$ and $\FF_{triv}^{+}=\Surj^<$ \cite{Frep}. The underlying category are finite sets with surjections and orders on the fibers. This is indexed over $\Surj$ by forgetting the order on the fibers..
Going further, $\Surj^{+}=\operads$, the Feynman category for operads. Going back $\operads_{\V}$ gives $\Surj_{\O=\text{\it leaf labeled trees}}=\FF_{CK}$.
Decorating by simplicial sets, we obtain the three original examples from these constructions.
More details can be found in \cite{Frep}.

\subsubsection{Bi-- and Hopf algebras in the enriched case}
The bi-- and Hopf algebras in the enriched case use the formulation of the hereditary condition in the enriched setting. We refer the reader to
\cite[\S4]{feynman} for the rather technical details.
In the enriched setting, we will already postulate that the Hom spaces are Abelian groups. This means that the category $\E$ over which $\F$ is enriched, has a faithful functor to the category of Abelian groups. In this case, we say $\F_\O$ is $\Ab$ enriched over $\E$. We also assume that $\E$ has internal Homs and regard it as enriched over itself. A basic example is $\E=dg\Vect$.
Assume that $sk(\F)$ is small, $\F$ is strict.
In this case, we set $\B=\bigoplus_{X,Y}Hom^{\vee}_{sk(\F_\O})(X,Y)$, where $\vee$ is the dual in $\E$ given by $\check V=\inthom(V,\unit)$
and define the multiplication on $\B$ by $\otimes$. The unit is again $\id_\unit$.
For the co--multiplication $\Delta$, we take the dual of the composition $\circ$
\begin{equation}
\circ:\inthom_{\F_O}(Y,Z)\otimes \inthom_{\F_O}(X,Z)\to \inthom_{\F_O}(X,Y)
\end{equation}
as a morphism in $\E$.
\begin{equation}
\Delta: \inthom^{\vee}_{\F_O}(X,Y)\to \inthom^{\vee}_{\F_O}(Y,Z)\otimes \inthom^{\vee}_{\F_O}(X,Z)
\end{equation}
Again it is clear that $\eps(\phi)=1$ if $\phi=id_X$ and $\eps(\phi)=0$ if $\phi$ is not in a component $\unit$ corresponding to $id_X$ is a co--unit.
Similarly to \S\ref{feynmanpar}, assuming that the we can define $\B^{iso}$ by using co--invariants, assuming that these exist.

\begin{thm}
Let $\FF_\O$ be an indexed enriched Feynman category or more generally a weak Feynman category $\Ab$ enriched over a co--complete $\E$, which is enriched over $\Ab$, and $\F$ is factorization finite,
 then $\B^{iso}$ is a bi--algebra in $\E$. In the non--$\Sigma$ case, already $\B$ is a bi--algebra.
\end{thm}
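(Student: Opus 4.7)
The plan is to transport the arguments of Theorems \ref{f-to-b-nonsym} and \ref{Bisothm} into the enriched setting, replacing set--theoretic reasoning by diagrammatic reasoning in the closed symmetric monoidal category $\E$. First I would verify that $\B = \bigoplus_{X,Y \in sk(\F)} \inthom^{\vee}_{\F_\O}(X,Y)$ and its coinvariant version $\B^{iso}$ are well--defined as objects of $\E$: this uses cocompleteness of $\E$, factorization finiteness of $\F$ to control the relevant colimits degree by degree, and the $\Ab$--enrichment of $\E$ so that coinvariants under the automorphism group actions exist as honest objects of $\E$. The structure morphisms are then defined exactly as in the set--based proofs, with $\mu$ induced by the enriched monoidal product, $\Delta$ the dual of the enriched composition $\circ \colon \inthom(Y,Z) \otimes \inthom(X,Y) \to \inthom(X,Z)$, $\eta$ picking out $id_{\unit}$, and $\eps$ the canonical projection onto identity components (this uses the enrichment axiom $\O(\sigma) \simeq \unit_\E$ for isomorphisms).

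Next I would establish associativity of $\mu$ and coassociativity of $\Delta$ as equalities of morphisms in $\E$: these follow formally from the enriched associativity of tensor and composition in $\F_\O$, interpreted diagrammatically. The unit and counit axioms reduce to the identity laws in $\F_\O$.

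The central step is the bialgebra equation. Here I would appeal to the enriched formulation of the hereditary condition, spelled out in \cite[\S4]{feynman}, which asserts that the natural morphism from $\bigoplus_{Y,Y'} \inthom(Y,Z) \otimes \inthom(Y',Z') \otimes \inthom(X,Y) \otimes \inthom(X',Y')$ to $\inthom(X \otimes X', Z \otimes Z')$ becomes an isomorphism in $\E$ after passing to coinvariants with respect to the symmetric group and automorphism actions on the middle objects. Dualizing this identification and chasing through the structure maps yields precisely the bialgebra equation $\Delta \circ \mu = (\mu \otimes \mu) \circ \pi_{23} \circ (\Delta \otimes \Delta)$ on $\B^{iso}$; in the non-$\Sigma$ case the analogous hereditary isomorphism holds already before taking coinvariants because $Aut(Y \otimes Y') \simeq Aut(Y) \times Aut(Y')$ and no coset summation arises, so the equation holds on $\B$ itself.

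The main obstacle will be the careful bookkeeping of coinvariants in the $\Sigma$ case, paralleling Corollary \ref{actioncor}: one must verify that the dual of the enriched composition commutes with the coinvariant functors on both sides of the equation so that the cosets $Aut(Y \otimes Y')/(Aut(Y) \times Aut(Y'))$ are absorbed correctly into the equivalence classes defining $\B^{iso}$. The $\Ab$--enrichment of $\E$ together with its cocompleteness are exactly the hypotheses needed to form such coinvariants and to interchange them with enriched $\otimes$ and $\inthom$. Once this commutation is in place, compatibility of $\eta$ and $\eps$ with $\mu$ and $\Delta$ follows as in Theorem \ref{f-to-b-nonsym} from $id_{X \otimes X'} = id_X \otimes id_{X'}$ and the identification $\O(id_X) \otimes \O(id_{X'}) \simeq \O(id_{X \otimes X'}) \simeq \unit_\E$, completing the verification.
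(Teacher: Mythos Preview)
Your proposal is correct and follows essentially the same approach as the paper. The paper's proof is more terse: it invokes the co--end formula
\[
Hom_{\F}(\imath^{\otimes}\, \cdot\, ,X\otimes Y)
=
\int^{Z,Z'}Hom_{\F}(\imath^{\otimes} Z, X)\otimes
Hom_{\F}(\imath^{\otimes} Z' , Y)\otimes Hom_{\V^{\otimes}}( \,\cdot\,,Z\otimes Z')
\]
from \cite[Proposition 1.8.8, \S4]{feynman} as the enriched form of the hereditary condition, observes that this directly yields the bi--algebra equation on isomorphism classes, and then notes that in the non--$\Sigma$ case $Hom_{\V^\otimes}(W,Z\otimes Z')=Hom_{\V^\otimes}(W,Z)\otimes Hom_{\V^\otimes}(W,Z')$ so the equation already holds on $\B$. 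Your direct--sum--plus--coinvariants description is the unpacked form of this co--end, and your appeal to Corollary \ref{actioncor} for the coset bookkeeping makes explicit what the paper compresses into ``this directly implies''.
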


\begin{proof}
The co--associativity and well--definedness of $\Delta$ follows from the condition the underlying $\F$ is factorization finite.
The hereditary condition (ii) is replaced by a co--end formula which can be written as, cf.\  \cite[Proposition 1.8.8,\S4]{feynman}:
\begin{multline}
\label{dayeq}
Hom_{\F}(\imath^{\otimes}\, \cdot\, ,X\otimes Y)
=\\
\int^{Z,Z'}Hom_{\F}(\imath^{\otimes} Z, X)\otimes
Hom_{\F}(\imath^{\otimes} Z' , Y)\otimes Hom_{\V^{\otimes}}( \,\cdot\,,Z\otimes Z')
\end{multline}
This formula precisely states that the space of morphisms into a product coincides with  the product of the space of morphisms, up to natural isomorphisms changing the intermediate $Z\otimes Z'$.

\begin{equation}
\xymatrix{W\ar[rr]^\phi\ar[dr]_\simeq&&X\otimes Y\\
&Z\otimes Z'\ar[ur]_{\phi_1\otimes \phi_2}&\\
}
\end{equation}
This directly implies that the bi--algebra equation holds on the level of isomorphism classes.

In the non--$\Sigma$ case, the isomorphism between $W$ and $Z\otimes Z'$ must be a product as well, as $Hom_{\V^\otimes}(W,Z\otimes Z')=Hom_{\V^\otimes}(W,Z)\otimes Hom_{\V^\otimes}(W,Z')$
so that the bi--algebra equation already holds on the level of morphism spaces.

\end{proof}

Again, define $\I=\la [id_X]-[id_Y]\ra$ and $\H=\B^{iso}/\I$, then $\H$ is a bi--algebra which may or not be Hopf.
\begin{df}
We call $\F_\O$ as above Hopf, if $\H$ has an antipode.
\end{df}
The discussion of criteria is analogous to that of the non--enriched case, by lifting all the notions from $\F$ to $\F_\O$. This is straightforward and will be omitted here.

\begin{ex}

The relevant example is that $\Surj^{hyp,rd}\simeq\operads_0$ that is the Feynman category for  operads without $\O(0)$ and whose $\O(1)$ is reduced.
Thus any such operad, that is a strong monoidal functor $\O:\operads_0\to \E$ gives rise to a Feynman category $\Surj_{\O}$
whose morphisms are determined by
\begin{equation}
Hom_{\Surj_{\O}}(n,1)=\O(n)
\end{equation}

In particular, if $f:S\twoheadrightarrow T$ then
$\O(f)=\bigotimes_{t\in T} \O(f^{-1}(t))$ since $f$ decomposes as one--comma generators
$f_t:f^{-1}(t)\twoheadrightarrow \{t\}$.

\end{ex}
\begin{rmk}
For operads with not necessarily reduced $\O(1)$, one can use the $\Surj^{+}=\operads$, and restrict to those functors
whose $\O(1)$ is split unital. See also \S\ref{trivVpar}
and \cite[\S3.4]{Frep}.
\end{rmk}

\subsubsection{Bi-- and Hopf algebras}
For concreteness, we will provide the details for the framework of twisted Feynman categories, in the specific case $\Surj_\O$.
In this language, the diagrams \cite[\eqref{P1-coprodsqeq}]{HopfPart1} identify certain summands in the co--product and on the coinvariants one is left with the channels.
Indeed in $\Surj$ decomposing $\pi_S:S\twoheadrightarrow \{*\}$ yields the sum
$S\stackrel{f}{\ta} T\stackrel{\pi_T}{\ta} \{*\}$. This is a typical morphism in $\Surj'$ from $\pi_S$ to $\pi_T$.

The composition operation on the twisted $\Surj_\O$:
$\gamma_f:\O(f)\otimes \O(T)\to \O(S)$, corresponding to the composition $\pi_T\circ f=\pi_S$ cf.\ \ref{hyppar}.
Dually, there is one summand of this type $\check\gamma_f$ in the co-product. We identify two such summands in the co--product under the action of the automorphism groups. This corresponds to the diagrams \cite[\eqref{P1-square}]{HopfPart1} which are the isomorphisms in $\Surj'$. Effectively, this means that fixing the size of $S$ and $T$ there is only one channel per partition of $S=S_1\amalg \dots \amalg S_k$ into fibers of $f$.

If one would like to include $\O(1)$ has more invertible elements, one has to enlarge
$\Surj$ by choosing the appropriate $\V$. In the case of Cartesian $\E$ this is   $Hom_{\V'}(1,1)=\O(1)^{\times}$.
This gives rise to extra isomorphisms and/or a $K$--collection, see \cite[2.6.4]{feynman}.
This means in particular that any operad gives rise to an enriched Feynman category whose morphisms are this operad.
The dual of the morphisms are then co--operads and the co--operadic and Feynman categorical construction coincide.

The non-\SSigma{} case is similar. For this one uses $\Surj_<$ and then obtains enrichments by non--\SSigma{} operads.
Thus again the co--operadic methods apply and yield the same results as the Feynman category constructions.
In this case, we see that $\B$ is the free tensor algebra on the basic morphisms, that is $\B=T\check{\O}(n)$ as in
\cite[\S\ref{P1-subsect:free}]{HopfPart1} and we obtain the following theorem, recovering all of \cite[\S\ref{P1-subsect:free}]{HopfPart1}.

\begin{thm} In both the symmetric case $\Surj_\O$ and non--symmetric case
$\Surj_{<\O}$, we obtain unital, co--unital bi--algebras $\B^{iso}$ respectively $\B$. If the quotient by the ideal $\I=\la id_1-id_\unit\ra$ is connected, we obtain a Hopf algebra. The latter is the case if there (a) there is no $\O(0)$ or (b) there is no $\O(i):i>1$,
 and $(\O(1),\id_1,\eps)$ is connected.
\qed
\end{thm}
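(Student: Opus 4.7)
The plan is to reduce the statement to the general Feynman-category machinery developed earlier in the section, specifically Theorems \ref{f-to-b-nonsym}, \ref{Bisothm} and \ref{HopfFeythm}, applied to the (weak) Feynman categories $\Surj_\O$ and $\Surj_{<\O}$ arising from the enrichment correspondence of \S\ref{hyppar}.

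First I would verify the hypotheses. The category $\Surj_{<\O}$ is a non--$\Sigma$ Feynman category with discrete $\V$, so $\B^{iso}=\B$, and $\Surj_\O$ is a (weak) symmetric Feynman category; in both cases the co--end description \eqref{dayeq} of the hereditary condition shows that the hom--objects are built out of the $\O(n)$. A morphism $\pi:S\twoheadrightarrow T$ factors only through intermediate surjections $S\twoheadrightarrow U\twoheadrightarrow T$ with $|T|\le|U|\le|S|$, and for each such $U$ there are only finitely many equivalence classes of factorizations; this yields decomposition--finiteness (non--$\Sigma$ case) and factorization--finiteness (symmetric case). Theorem \ref{f-to-b-nonsym} then gives the unital, co--unital bi--algebra $(\B,\otimes,\eta,\Delta,\eps)$ in the non--$\Sigma$ case, and Theorem \ref{Bisothm} gives the analogous structure on $\B^{iso}$ in the symmetric case (commutative there).

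Next I would form $\H=\B^{iso}/\I$ with $\I=\langle[\id_1]-[\id_\unit]\rangle$ and verify the hypotheses of Theorem \ref{HopfFeythm} by checking almost connectedness (Definition \ref{almostconnecteddef}) in each case. In case (a), the native length $|\phi|=|S|-|T|$ is non--negative, and since $\unit=\emptyset$, the only morphisms of length $0$ are generated by $\O(1)=\unit\oplus\O^{red}(1)$ where $\O^{red}(1)$ contains no invertible elements; this makes $|\,\cdot\,|$ a proper degree function. Proposition \ref{degreeprop}(\ref{properitem}) then yields almost group--like identities, the subalgebra $\H_0$ collapses to $\mathbb Z\,[\id_\unit]$, and almost connectedness is immediate. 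In case (b), only $\O(1)$ contributes, so every morphism has length $0$ and $\H_{\V}=\H$; I would use the connectedness of $(\O(1),\id_1,\eps)$ as a pointed co--algebra together with Lemma \ref{idfinitelem} to conclude that any factorization of $\id_1$ through itself involves isomorphisms only, giving almost group--like identities, and then read off almost connectedness directly from the co--algebra hypothesis.

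With almost connectedness established, Theorem \ref{HopfFeythm} supplies the antipode and hence the Hopf structure. The main obstacle is case (b): since the degree function is trivial, none of the grading arguments of \S\ref{reflengthpar} are available, and one must argue directly from the splitting $\O(1)=\unit\oplus\O^{red}(1)$ and from finiteness of decomposition that the putative factorizations $\id_1=\phi_L\circ\phi_R$ with $\phi_R,\phi_L\in\O(1)$ force both factors to be invertible, so that \eqref{coprodcalceq} applies and the $[\id_X]$ remain group--like modulo $\I$. The remaining verifications (commutativity in the symmetric case from symmetry constraints becoming identities in $\B^{iso}$, and functoriality of the antipode) are then routine given the general theory.
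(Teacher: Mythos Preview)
Your overall strategy---reduce to Theorems \ref{f-to-b-nonsym}, \ref{Bisothm}, \ref{HopfFeythm}---is exactly what the paper intends; the statement is marked \qed\ with no separate argument, so ``apply the general machinery'' \emph{is} the proof.

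There is, however, a genuine gap in your treatment of case (a). You claim that the split $\O(1)=\unit\oplus\O^{red}(1)$ with no invertibles in $\O^{red}(1)$ makes the native length $|\,\cdot\,|$ a \emph{proper} degree function. It does not: proper means every degree--$0$ morphism is an isomorphism, and any nonzero element of $\O^{red}(1)$ is a degree--$0$ morphism that is not an isomorphism. Properness holds only when $\O(1)=\unit$ is reduced (cf.\ the paper's remark ``They are proper if $\O(1)=\unit$''). So Proposition \ref{degreeprop}(\ref{properitem}) is not available here, and your conclusion that $\H_0$ collapses to $\Z[\id_\unit]$ is unjustified.

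The fix is to read the hypothesis correctly: the connectedness of $(\O(1),\id_1,\eps)$ is meant to apply in \emph{both} cases (a) and (b), not only (b). In case (a) the length is non--negative (definite), $\B_\V$ is built from $\O(1)$, and the assumed connectedness of $(\O(1),\id_1,\eps)$ gives connectedness of $\H_\V$; the Lemma following Definition \ref{almostconnecteddef} then passes this to $\H_0$, yielding almost connectedness and hence Theorem \ref{HopfFeythm} applies. Your case (b) argument is essentially fine, though note that $\O(0)$ may still be present there (the length is then non--positive, still definite), so ``every morphism has length $0$'' is a slight overstatement; the degree--$0$ part is still governed by $\O(1)$ and the same connectedness hypothesis handles it.
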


\subsubsection{Enrichment over $\C^{op}$ and opposite Feynman category.}
Notice that we can regard functors $\FF\to \C^{\op}$ as co--operads.
In particular if we have a functor $\FF^{hyp}\to \C^{op}$, we get a Feynman category $\FF_{\O}$ enriched
over $\C^{op}$. This means that $\FF_{\O}^{op}$ is enriched over $\C$.

\begin{ex}
\label{coopex}
In particular,  $\O:\Surj^{hyp}=\operads_{0}\to \C^{op}$ a reduced co--operad in $\C$.
Then twisting with $\O$ gives us $\Surj_{<,\O}$ which is enriched in $\C^{op}$.
Taking the opposite we get $\Surj_{<,\O}^{op}$. The underlying category is $\Int$ enriched  by $\check\O$, where $\check\O$ is the co--operad in $\C$ corresponding to the operad in $\C^{op}$.
This means that the objects are the natural numbers $n$ and the morphisms are $Hom(1,n)=\check \O(n)$.
This is the enrichment in which the unique map in $Hom_{\Int}([1],[n])$ is assigned $\check\O(n)$ in the overlying enriched category $(\Int)_{\Coop}$.
\end{ex}

Putting all the pieces together then yields the following:

\begin{thm} Given a co--operad $\check{\O}$ that is given by a
functor  $\O:\operads_{0}\to \C^{op}$. Let $\B_{\check{\O}^{nc}}$ be the bi--algebra of
\cite[Example \ref{P1-subsect:free}]{HopfPart1}. And let $\B_{\Surj_{<,\O}^{op}}$
be the bi--algebra of the Feynman category discussed above then these two bi--algebra coincide.

Moreover if  $\Surj_{<,\O}$ is almost connected, the so is $\check{\O}$ and the corresponding Hopf algebras coincide.
\qed
\end{thm}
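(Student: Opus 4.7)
The plan is to establish a bijection of bi-algebras that matches structures on the nose. First I would identify the underlying Abelian groups. For the enriched Feynman category $\Surj_{<,\O}$, the hereditary condition \eqref{dayeq} (or in this non-$\Sigma$ case its strict tensor-product version) gives the decomposition $\Hom_{\Surj_{<,\O}}(n,k) = \coprod_{(n_1,\dots,n_k):\sum n_i = n} \O(n_1)\otimes\cdots\otimes\O(n_k)$, since $f:\underline n\ta\underline k$ is uniquely determined by the ordered tuple of fiber cardinalities and $\O(f) = \bigotimes_{i}\O(f^{-1}(i))$. Passing to $\Surj_{<,\O}^{op}$ and taking duals turns each factor $\O(n_i)$ into $\check\O(n_i)$, so $\B_{\Surj_{<,\O}^{op}}$ is naturally identified (as graded Abelian group) with $T\check\O = \bigoplus_k \bigoplus_{(n_1,\dots,n_k)}\check\O(n_1)\otimes\cdots\otimes\check\O(n_k)$, which is exactly the underlying object of $\B_{\check\O^{nc}}$ from \cite[\S\ref{P1-subsect:free}]{HopfPart1}.

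Next I would match the algebra structures. On the Feynman category side, the multiplication is $\otimes$ of $\Surj_{<,\O}$, which on the one-comma decomposition above concatenates tuples and juxtaposes tensor factors; under the identification with $T\check\O$ this is precisely the free tensor algebra product used to define $\B_{\check\O^{nc}}$. The unit $id_{\unit_\F}$ corresponds to the empty tensor $1\in T\check\O$. For the co-algebra structure, note that a factorization of $\pi_{\underline n}:\underline n\ta \{*\}$ through an intermediate $\underline k$ is a pair $(\pi_{\underline k}, f:\underline n \ta \underline k)$, and by the hereditary description this corresponds exactly to picking a composition $(n_1,\dots,n_k)$ together with an element of $\O(k)\otimes \O(n_1)\otimes\cdots\otimes \O(n_k)$ mapping to an element of $\O(n)$ under the operadic composition. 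Dualizing the composition $\gamma_f:\O(f)\otimes\O(k)\to \O(n)$ gives a summand $\check\gamma_f:\check\O(n)\to \check\O(k)\otimes \check\O(n_1)\otimes\cdots\otimes\check\O(n_k)$, and summing over factorizations reproduces the deconcatenation co-product on $\B_{\Surj_{<,\O}^{op}}$. On more general morphisms $n\to k$ in $\Surj_{<,\O}^{op}$, the bi-algebra equation (which holds by Theorem~\ref{f-to-b-nonsym} and its enriched version) forces this co-product to be the free extension, which is exactly the co-product on $\B_{\check\O^{nc}}$ defined via the $B_+$ operator as in \S\ref{B+parII}. The co-unit matches because $\eps(\phi)=1$ precisely on identities, and the only identities in $\Surj_{<,\O}^{op}$ are tensor products of $id_1$, corresponding to the co-unit of $T\check\O$.

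Finally, for the Hopf quotient statement, I would compare the two notions of almost connectedness. The Feynman category $\Surj_{<,\O}$ has trivial $\V$ and discrete objects, so by Lemma~\ref{indecomplem} and the remark following Theorem~\ref{Bisothm} we have $\B^{iso}=\B$, and the ideal $\I = \langle [id_X]-[id_\unit]\rangle$ is generated solely by $id_1-id_\unit$ since every object is a tensor word in $*$. In the language of \cite{HopfPart1} this is precisely the ideal generated by $id_1 - id_\unit$ used to form the Hopf quotient of $\check\O^{nc}$. Almost group-like identities on the Feynman category side follow from reducedness of $\O(1)$ by Proposition~\ref{degreeprop}, which matches the condition in Part I that $(\check\O(1),\id_1,\eps)$ be connected. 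Co-nilpotence of $\H_0$ in both frameworks reduces to the same induction on operadic length, so the two connectedness criteria agree; the resulting Hopf algebras thus coincide as quotients of the identified bi-algebra. The main obstacle I anticipate is bookkeeping the multiple identifications (opposites, duals, and the Joyal-style passage from surjections to injections implicit in using $\Surj_{<,\O}^{op}$) so that signs and ordering conventions track correctly through the equivalence; once this is done, associativity of composition in $\Surj_{<,\O}$ directly encodes the co-operad axioms for $\check\O$, and the rest is formal.
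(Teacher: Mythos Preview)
Your proposal is correct and follows essentially the approach the paper intends: the theorem is stated with an immediate \qed, relying on the preceding discussion (Example~\ref{coopex} and the enriched bi--algebra theorem) to have already identified the underlying objects, products, and co--products; you have simply made that identification explicit. The only point worth noting is that the paper's route through $\Surj_{<,\O}^{op}$ uses \S\ref{oppositepar} to pass to the co--opposite bi--algebra rather than re--deriving the co--product directly, but this amounts to the same bookkeeping you flag at the end.
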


\begin{rmk}
This extends to split unital operads as split functors from $\Surj^{+gcp}=\operads$, and also to operads with $\O(0)$  (see \cite{Frep} for these notions).
\end{rmk}
This is another explanation of the relation between Joyal duality and the dual co--operad structure to a colored operad structure.

\subsection{Universal operations}
\label{universalpar}
It is shown that $\FF_{\V}$, which is given by $\F_{\V}=\colim_{\V}\imath$,  yields a Feynman category with trivial groupoid $\V_\V\simeq \underline{*}$.  This generalizes the Meta--Operad structure of \cite{del}. The result is again a Feynman category whose morphisms define an operad and
hence the free Abelian group yields a co--operad.

Moreover in many situations,  the morphisms of the category are
weakly generated \cite[\S6.4]{feynman} by a simple Feynman category obtained by ``forgetting tails''.
The action is then via a foliation operator as introduced in \cite{del}. In fact
there is a poly--simplicial structure  here, see also \cite{BataninBerger}.
In order to establish this, we recall that any operad under the equivalence established in \cite{feynman}[Example 4.12] can be thought of either an
enrichment of the Feynman category of sets and surjections or as a functor from the Feynman
category for operads to a target category, see also \S\ref{enrichedpar}. As the latter, we obtain universal operations through colimits, see paragraph \S 6 of \cite{feynman}. On the other hand, we obtain the colimits, in the same form as here,
via the construction in paragraph \S \ref{feynmanpar} below.

\begin{ex}
For the operad of leaf labeled trees, one can effectively amputate the tails using this construction. One obtains the
co--operad dual to the pre--Lie operad \cite{CL,del}.
That is $\H_{amp}$ is realized naturally from
a weakly generating sub--operad.
\end{ex}

\section{Summary and outlook}
\label{summarypar}

\subsection{Constructions}
We have shown that one can construct Bi--algebras that under checkable conditions yield Hopf algebras in the following related constructions, all of which exist in a symmetric and a non-$\Sigma$ version.

\begin{enumerate}
\renewcommand{\theenumi}{\roman{enumi}}
\item From a locally finite (unital) operad.
\item From a locally finite co--operad.
\item From a locally finite co--operad with multiplication.
\item From a simplicial object.
\item From a suitable Feynman category.
\item From a suitable Feynman category with a $B_+$ operator.
\end{enumerate}

Here the transition from (i) to (ii) is dualization. The construction (iii) replaces the free product with a chosen compatible one. Construction (i) and (ii)and (iv) are the special cases of (v) that appear as enriched Feynman categories, in particular enrichments of the Feynman categories of surjections or ordered surjections. The construction (iii) is a special case of the nc construction together with a $B_+$ operator. The construction (iv) can be seen as a special case of (i) and (ii), but there is an additional structure coming from the simplicial category and Joyal duality.

We also gave criteria when these constructions are functorial.
Furthermore, there are infinitesimal versions, which yield Brown's derivations in the (co)--operad case and are related to the generators for the Feynman categories and hence to master equations, cf.\ \cite{feynman,KWZ}.

\subsubsection{Main Results}
The main upshot is that in all these cases and the classical examples the co--algebra structure is simply the dualization of a partial product structure provided by concatenation in a category.
Furthermore, the bi--algebra equation in a general monoidal category is non--trivial and the conditions for Feynman categories are a sufficient condition
for it to hold.
The Hopf algebras of interest are connected and they are quotients of the natural bi--algebras. The quotient effectively identifies all the objects of mentioned categories.
\subsubsection{Further results and constructions}
Further results and constructions concern deformations, co--module structures, derivations/ infinitesimal structures and a detailed analysis of Joyal duality and its consequences among others.

\subsection{Connes--Kreimer}
There are several types of Connes--Kreimer Hopf algebras which appear as special examples. The tree--type Hopf algebras stem from the construction (i) while the graph--type algebras are examples of (iii).

\subsubsection{CK--forests}
The CK--forests in the planar and non--planar version can be viewed as coming from construction (i) for the (non-$\Sigma$) operads of
 leaf--labeled and leaf--labeled planar trees.
These are alternatively constructed using set--based Feynman categories with trivial $\V$, which can be thought of as indexed enrichments.
The amputated versions can be thought of as co--limits, either over a semi--simplicial system of maps, or via the universal operations in Feynman categories.

\subsubsection{Decorated/motic versions}
Using decorations and restrictions, one can obtain other versions, such as the motic versions from Brown, a 1-PI version and more generally colored and weighted versions.
\subsubsection{CK-graphs}
The full graph algebra is the basic example coming from a graphical Feynman category, i.e.\ one that is indexed over the Feynman category $\GG$, which is a full subcategory of the Borisov--Manin category of graphs. A main ingredient is that the ghost graph of a morphism fixes its isomorphism class.

Restricting and decorating allows one to give the ``core'' versions and the ``renormalization'' versions.

\subsection{Goncharov/Baues}
The Hopf algebra of Goncharov and its graded analogue that of Baues can be analyzed in the settings (i), (ii) and (v). In terms of (i) one is using a colored operad. There is an additional structure provided by Joyal duality, which we discussed and which links the constructions to lax--monoidal functors and the nc--construction. This duality also gives rise to the colored operad structure and explains the corolla vs. semi--circle representations.
Furthermore, using the cup product, there is a direct link to the decoration by an algebra.

We also found re--interpretations of the additional structures and restrictions of Goncharov and Baues.

\subsubsection{Goncharov multiple zeta values and polylogarithms}
In terms of (iv) taking the contractible groupoid on $0,1$ we obtain the construction of $\H_{Gon}$ for the multi--zeta values. If we take that with objects $z_i$, we obtain Goncharov's Hopf algebra for polylogarithms \cite{Gont}.

\subsubsection{Baues} This is the case of a general simplicial set, which however is
1-connected. We note that since we are dealing with graded objects, one has to specify that one is in the usual monoidal category of graded $\Z$--modules whose tensor product is given by the Koszul or super sign.
The 1--connectedness is needed for the bi--algebra quotient to be Hopf.
To obtain the connection to double loop spaces, we furthermore need 2--connectedness.

\subsection{Simplicial}
In general, in the simplicial setting, we provided a bi--algebra structure which is Hopf if the simplicial set is 1--connected.
We could explain these constructions on several levels.
\begin{enumerate}
\item as derived from the fact that simplices form an operad.
\item through monoidal and lax--monoidal functors.
\item using Joyal duality.
\item using the fact that the simplicial category is a Feynman category. \item As an operadic enriched Feynman category.
\item As a decorated Feynman using the $\cup$ product as an algebra structure.
This gives the relationship to the iterated $\cup$ product. The symmetric version also give the relationship to iterated $\cup_i$ products.
\end{enumerate}

\subsection{Outlook}
We expect these results to be the basis of further work.
There will be a closer analysis of the role of the $B_+$ operator and its use inside the theory of Feynman categories as well as its Hopf-theoretic nature \cite{B+paper}.
It will also play a role in the truncation/blow--up of moduli spaces and outer space cells \cite{ddec} its sequel and \cite{KaufmannZuniga}.
There are further applications to the theory of Feynman categories, theoretical physics, number theory and algebraic geometry along the basic examples of this paper and {\em loc.\ cit.}. In particular, we will analyze and build upon the combinatorial invariants and analysis of Feynman graphs as put forth by the Kreimer group. Here the next steps are applying our general cubical structures \cite{feynman,ddec} to the understanding of  the Cutkosky rules.

\appendix

 \section{Graph Glossary}
\label{graphsec}

\subsection{The category of graphs}

Interesting examples of Feynman categories used in operad--like  theories are indexed over a Feynman category built from graphs.
It is important to note that although we will first introduce a category of graphs $\Graphs$, the relevant
Feynman category is given by a full subcategory $\Agg$  whose
objects are disjoint unions or aggregates of corollas. The corollas themselves
play the role of $\asts$.

Before giving more examples in terms of graphs it will be useful to recall some terminology.
A very useful presentation is given in \cite{BorMan}, slightly modified in \cite{feynman}, which we follow here.

\subsubsection{Abstract graphs}
An abstract graph $\G$ is a quadruple $\G=(V_{\G},F_{\G},$ $i_{\G},\del_{\G})$
of a finite set of vertices $V_{\G}$, a finite
set of half edges or flags $F_{\G}$,
an involution on flags $i_{\G}\colon F_{\G}\to F_{\G}; i_{\Gamma}^2=id$ and
a map $\del_{\G} \colon F_{\G}\to V_{\G}$.
We will omit the subscript $\G$ if no confusion arises.

Since the map $i$ is an involution, it has orbits of order one or two.
We will call the flags in an orbit of order one {\em tails} and denote the set of tails by $T_{\Gamma}$.
We will call an orbit of order two an {\em edge} and denote the set of edges by $E_{\Gamma}$. The flags of
an edge are its elements.
The function $\del$ gives the vertex a flag is incident to.
It is clear that the set of vertices and edges form a 1-dimensional CW complex.
The realization of a graph is the realization of this CW complex.

A graph is (simply) connected if and only if its realization is.
Notice that the graphs do not need to be connected. Lone vertices, that
is, vertices with no incident flags, are also possible.

We also allow the empty graph $\egr$, that is, the unique graph with $V=\varnothing$.
 It will serve as the monoidal unit.
\begin{ex}
A graph with one vertex and no edges is called a {\em corolla}.
Such a graph only has tails.
For any set $S$ the corolla $\crl_{p,S}$ is the unique graph with $V=\{p\}$ a singleton and $F=S$.

We fix the short hand notation $*_S$ for the corolla with $V=\{\ast\}$ and $F=S$.
\end{ex}

Given a vertex $v$ of a graph, we set
$F_v=\del^{-1}(v)$ and call it {\em the
flags incident to $v$}. This set naturally gives rise to a corolla.
The {\em tails} at $v$ is the subset of tails of $F_v$.

As remarked above, $F_{v}$ defines a corolla $\crl_{v}=\crl_{\{v\},F_v}$.

\begin{rmk}
The way things are set up, we are talking about (finite) sets, so
changing the sets even by bijection changes the graphs.
\end{rmk}

\begin{rmk}
\label{disjointrmk}
 As the graphs do not need to be connected, given two graphs $\G$ and $\G'$
we can form their disjoint union:\\
$$\G\sqcup\G'=(F_{\Gamma}\sqcup F_{\Gamma'},V_{\Gamma}\sqcup V_{\G'},
i_{\Gamma}\sqcup i_{\G'}, \del_{\G}\sqcup \del_{\G'})$$

One actually needs to be a bit careful about how disjoint unions are defined.
Although one tends to think that the disjoint union $X\sqcup Y$
is strictly symmetric, this is not the case. This becomes apparent
if $X\cap Y\neq\emptyset$. Of course
there is a bijection $X\sqcup Y \stackrel{1-1}{\longleftrightarrow}Y\sqcup X$.
Thus the categories here are symmetric monoidal. It is also but not strict symmetric monoidal, since there
technically $X\amalg (Y\amalg Z)$ is not equal to $(X\amalg Y)\amalg Z)$.
This is important, since we  consider functors into other not necessarily strict monoidal categories.

Using Mac Lane's theorem it is, however, possible to make a technical construction that makes the monoidal
structure (on both sides) into a strict symmetric  monoidal structure
\end{rmk}

\begin{ex}
An {\em aggregate of corollas} or aggregate for short is a finite disjoint union of corollas, that is,
a graph with no edges.

Notice that if one looks at $X=\bigsqcup_{v\in I} \ast_{S_v}$ for some finite index set $I$ and some finite sets of flags $S_v$,
 then the set of flags is automatically the disjoint
union of the sets $S_v$. We will just say
just say $s\in F_X$ if $s$ is in some $S_v$.
\end{ex}
\subsubsection{Category structure; Morphisms of Graphs}

\begin{df}\cite{BorMan}\label{grmor}
Given two graphs $\G$ and $\G'$, consider a triple $(\phi^F,\phi_V, i_{\phi})$
where
\begin{itemize}
\item [(i)]
$\phi^F\colon F_{\G'}\hookrightarrow  F_{\G}$ is an injection,
\item [(ii)] $\phi_V\colon V_{\G}\twoheadrightarrow V_{\G'}$ and $i_{\phi}$ is a surjection and
\item [(iii)] $i_{\phi}$
is a fixed point free involution on the tails of $\G$ not in the image of $\phi^F$.
\end{itemize}

One calls the edges and flags that are
not in the image of $\phi$ the contracted edges
and flags. The orbits of $i_{\phi}$ are called \emph{ghost edges} and denoted by $E_{ghost}(\phi)$.

Such a triple is {\it a morphism of graphs} $\phi\colon \G\to \G'$ if

\begin{enumerate}
\item The involutions are compatible:
\begin{enumerate}
\item An edge of $\G$
is either a subset of the image of $\phi^F$ or not contained in it.
\item
If an edge is in the image of $\phi^F$ then its pre--image
is also an edge.
\end{enumerate}
\item $\phi^F$ and $\phi_V$ are compatible with the maps $\del$:
 \begin{enumerate}
\item Compatibility with $\del$ on the image of  $\phi^F$:\\
\quad If $f=\phi^F(f')$ then
$\phi_V(\del f)=\del f'$
\item Compatibility with $\del$ on the complement of the image of  $\phi^F$:\\
 The two vertices of a ghost edge in $\G$ map to
the same vertex in $\G'$ under $\phi_V$.
\end{enumerate}

 \end{enumerate}

If the image of an edge under $\phi^F$ is not an edge,
we say that $\phi$ grafts the
two flags.

The composition $\phi'\circ \phi\colon\G\to \G''$
of two morphisms $\phi\colon\G\to \G'$ and $\phi'\colon\G'\to \G''$
is defined to be  $(\phi^F\circ \phi^{\prime F},\phi'_V\circ \phi_V,i)$
where $i$ is defined by its orbits viz.\ the ghost edges.
Both maps $\phi^{F}$ and $\phi^{\prime F}$ are injective,
so that the complement of their concatenation is in bijection
with the disjoint union of the complements of the two maps.
We take $i$ to be the involution whose
orbits are the union of the ghost edges of $\phi$ and $\phi'$
under this identification.
\end{df}

\begin{rmk}
A {\em na\"ive morphism} of graphs $\psi\colon\G\to \G'$
is given by a pair of maps  $(\psi_F\colon F_{\G}\to F_{\G'},\psi_V\colon V_{\G}\to V_{\G'})$
compatible with the maps $i$ and $\del$ in the obvious fashion.
This notion is good to define subgraphs and automorphisms.

It turns out that this data {\em is not enough} to capture all the needed aspects
for composing along graphs.
For instance it is not possible to contract edges with such a map or graft
two flags into one edge. The basic operations of composition in an operad
viewed in graphs is however exactly grafting two flags and then contracting.

For this and other more subtle aspects one needs the more involved definition above
which we will use.
\end{rmk}
\begin{df}
We let $\Graphs$ be the category whose objects are abstract graphs and
whose morphisms are the morphisms described in Definition \ref{grmor}.
We consider it to be a monoidal category with monoidal product $\sqcup$ (see
Remark \ref{disjointrmk}).
\end{df}

\subsubsection{Decomposition of morphisms}
Given a morphism $\phi\colon X\to Y$ where $X=\bigsqcup_{w\in V_X} \ast_w$ and $Y=\bigsqcup_{v\in V_Y}\ast_v$
are two aggregates, we can decompose $\phi=\bigsqcup \phi_v$ with $\phi_v\colon X_v\to \ast_v$ where $X_v$ is the sub--aggregate $\bigsqcup_{\phi_V(w)=v}*_w$, and $\bigsqcup_v X_v=X$. Here $(\phi_v)_V$ is the restriction of $\phi_V$ to $V_{X_v}$. Likewise   $\phi_v^F$ is the restriction of
 $\phi^F$ to $(\phi^{F})^{-1}(F_{X_v}\cap \phi^F(F_Y))$. This is still injective.
Finally $i_{\phi_v}$ is the restriction of $i_{\phi}$ to $F_{X_v}\setminus \phi^F(F_Y)$.
These restrictions are possible due to the condition (2) above.

\subsubsection{Ghost graph of a morphism}
\label{ghostgraphpar}
The following definition introduced in  \cite{feynman} is essential.
The underlying ghost graph of
a morphism of graphs $\phi\colon\G\to \G'$ is the graph
 $\gh(\phi)=(V(\Gamma),F_{\Gamma},\hat \imath_{\phi})$ where $\hat \imath_{\phi}$ is $i_{\phi}$ on the complement of
$\phi^F(\Gamma')$ and identity on the image of  flags of  $\Gamma'$ under $\phi^F$.
The edges of $\gh(\phi)$ are called the ghost edges of $\phi$.

\subsection{Extra structures}
\subsubsection{Glossary}
This section is intended as a reference section.

Recall that  an order of a finite set $S$ is a bijection $S\to \{1,\dots ,|S|\}$.
Thus the group $\SS_{|S|}=Aut\{1,\dots,n\}$ acts on all orders.
  An  orientation of  a finite set $S$ is an
equivalence class of orders, where two orders are equivalent if they are
obtained from each other by an even permutation.

All the following definitions in Table \ref{treetable} are standard.

\begin{table}
\begin{tabular}{l|l}
A tree& is
a connected, simply connected graph.\\
{A directed  graph $\G$}& is a graph together with  a map $F_{\G}\to \{in,out\}$\\
&such that the two flags of each edge are mapped\\
&to different values. \\

A rooted tree& is a directed tree such that each vertex has exactly\\& one ``out'' flag.\\
A {ribbon or fat graph} &is a graph together with a cyclic order on each of \\&the
sets $F_{v}$.  \\
A planar graph& is a ribbon graph that can be embedded
into the\\
& plane such that the induced cyclic orders of the \\
&sets $F_v$ from the orientation of the plane  \\
&coincide with the chosen cyclic orders.\\

A planted planar tree&is a rooted planar tree together with a \\
&linear order
on the set of flags incident to the root.\\
An oriented graph& is a graph with an orientation on the set of its edges.\\
An ordered graph& is a graph with an order on the set of its edges.\\
A $\gamma$ labeled graph&is a graph together with a  map $\gamma:V_{\Gamma}\to \N_0$.\\
A b/w graph&is a graph $\G$ with a map $V_{\G}\to \{black,white\}$.\\
A bipartite graph& is a b/w graph whose edges connect only \\
&black to white vertices. \\
A $c$ colored graph& for a set $c$ is a graph $\G$ together with a map $F_{\G}\to c$\\
&s.t.\ each edge has flags of the same color.\\
A connected 1--PI graph& is a connected graph that stays connected, \\
&when one severs any edge.\\
A 1--PI graph&is a graph whose every component is 1--PI.
\end{tabular}
\caption{\label{treetable} Nomenclature for Graphs}
\end{table}

\subsubsection{Remarks and language}\mbox{}
\begin{enumerate}

\item Planar means that the graph can be and up to isotopy is embedded into the plane.

\item  In a directed graph one speaks about the ``in'' and the ``out''
edges, flags or tails at a vertex. For the edges this means the one flag of the edges
 is an ``in'' flag at the vertex. In pictorial versions the direction
is indicated by an arrow. A flag is an ``in'' flag if the arrow points to the vertex.

\item A rooted tree is  taken to be a tree with a marked vertex. Note that necessarily a rooted tree as described above has exactly  one ``out'' tail. The unique vertex whose ``out'' flag is not a part of an edge is the root vertex.
The usual picture is obtained by deleting this unique ``out'' tail.

\item A planted planar tree induces a linear order on all sets $F_v$, by declaring the first
flag to be the unique outgoing one.
Moreover, there is a natural order on the edges, vertices and flags
given by its planar embedding.
\end{enumerate}

\subsubsection{Category of directed/ordered/oriented graphs.}
\label{ordsec}
\begin{enumerate}

\item
Define the category of directed graphs $\Graphs^{dir}$ to be the category whose
objects are directed graphs. Morphisms are morphisms $\phi$ of the underlying graphs,
which additionally satisfy that $\phi^F$ preserves orientation of the flags and the $i_{\phi}$ also
only has orbits consisting of one ``in'' and one ``out'' flag, that is the ghost graph is also directed.

\item

The category of edge ordered graphs $\Graphs^{or}$ has as objects graphs with an order on the edges.
A morphism is a morphism together  with an order $ord$  on  all of the edges of the ghost graph.

The composition of orders on the ghost edges is as follows.
$(\phi,ord)\circ \bigsqcup_{v\in V}(\phi_v,ord_v):=(\phi \circ \bigsqcup_{v\in V}\phi_v,ord\circ\bigsqcup_{v\in V}ord_v)$ where the order on the set of all ghost edges, that is $E_{ghost}(\phi)\sqcup \bigsqcup_vE_{ghost}(\phi_v)$,
is given by first enumerating  the elements of $E_{ghost}(\phi_v)$ in the
order $ord_v$ where the order of the sets  $E(\phi_v)$ is given by
the order on $V$, i.e. given by the explicit  ordering of the tensor product in  $Y=\bigsqcup_v \ast_v$.\footnote{Now we are working with ordered
tensor products. Alternatively one can just index the outer order
by the set $V$ by using \cite{TannakaDel}}
 and then enumerating the edges of $E_{ghost}(\phi)$ in their order $ord$.

\item The oriented version $\Graphs^{or}$ is then obtained by passing from orders to equivalence classes.

\end{enumerate}

\subsubsection{Basic Feynman categories/operads}

The Feynman category $\GG=(\Crl,\Agg,\imath)$: $\Crl$ is the groupoid of corollas with isomorphisms.
$\Agg$ is the full subcategory of graphs whose objects are aggregates and $\imath$ is the inclusion.
$\GG^{ctd}=(\Crl,\Agg^{ctd},\imath)$ is the sub--Feynman category whose basic morphisms have connected ghost graphs.
$\CCyclic$ is the sub--Feynman category whose basic morphisms have trees as ghost graphs.

$\operads$ is the restriction of a decorated Feynman category. The decoration of $\CCyclic$ is by assigning the flags of a vertex $in$ and $out$.
And the restriction is that there is only one $out$ per vertex and the ghost graph is a directed graph.

\subsubsection{Non--$\Sigma$ versions/planar structures}
\label{planarsec}
Although it is hard to write down a consistent theory of planar graphs with planar morphisms, if not impossible,
there does exist a planar version of special subcategory of $\Graphs$.

We let $\Crl^{pl}$ have as objects planar corollas --- which simply means that there is a cyclic order on
the flags --- and as morphisms isomorphisms of these, that is isomorphisms of graphs, which preserve the cyclic order.
 The automorphisms of a corolla $\ast_S$ are then isomorphic to $C_{|S|}$, the cyclic group of order $|S|$.
Let $\CCyclic^{\neg \Sigma}$ be the  subcategory of aggregates of planar
corollas whose morphisms are morphisms of the underlying
 corollas, for which the ghost graphs in their planar structure
 induced by the source  is compatible
 with the planar structure on the target via $\phi^F$. For this we use the fact that the tails of a planar tree have a cyclic order.
 This is the Feynman category for non--$\Sigma$ cyclic operads. This is also a decorated Feynman category $\CCyclic^{\neg\Sigma}=\CCyclic_{dec \, \CycAss}$,
 where $\CycAss$ is the cyclic assocative operad, cf.\ \cite{decorated}.

Adding a direction one arrives an  $\Crl^{pl,dir}$ the groupoid of  planar corollas with one output and let $\operads^{\neg\Sigma}$ be the corresponding Feynman category.
This is again a decoration $\operads^{\neg\Sigma}=\operads_{dec \; Assoc}$, see {\it loc.\ cit} and is the Feynman category for non--$\Sigma$ operads.

\subsection{Flag killing and leaf operators; insertion operations}
\label{insertionsec}
\subsubsection{Killing tails}
We define the operator $\kill$, which removes all tails from a graph. Technically, $\kill(\G)=(V_{\G},F_{\G}\setminus
T_{\G},\del_{\Gamma}|_{F_{\G}\setminus T_{\G}},
\imath_{\G}|_{F_{\G}\setminus T_{\G}})$.

\subsubsection{Adding tails}
Inversely, we define the formal expression
$\leaf$ which associates to each $\G$ without tails the formal sum
$$\sum_n\sum_{\G':\kill(\G')=\G;F(\G')=F(\G')\sqcup \underline{n}}\G'$$, that is all possible
additions of tails where these tails are a standard set, to avoid isomorphic duplication.
To make this well defined, we can consider the series as a power series in $t$:
$$\leaf(\G)= \sum_n\sum_{\G':\kill(\G')=\G;F(\G')=F(\G')\sqcup \bar n}\G't^{n}$$

This is the foliage operator of \cite{KS,del} which was rediscovered in \cite{BBM}.

\subsubsection{Insertion}
Given graphs, $\G$,$\G'$, a vertex $v\in V_{\G}$ and an isomorphism $\phi$: $F_v\mapsto T_{\G'}$
 we define $\G\circ_v\Gamma'$ to be the graph obtained by deleting $v$ and identifying the flags of $v$ with
the tails of $\G'$ via $\phi$. Notice that if $\G$ and $\G'$ are ghost graphs
of a morphism then it is just the composition of ghost graphs, with the morphisms at the other vertices being the identity.
\subsubsection{Unlabeled insertion} If we are considering graphs with unlabeled tails, that is, classes $[\G]$ and $[\G']$
of coinvariants under the action of permutation of tails. The insertion naturally lifts as
$[\G]\circ[\G']:=[\sum_{\phi} \G\circ_v\G']$ where $\phi$ runs through all the possible isomorphisms of two fixed lifts.

\subsubsection{No--tail insertion}
\label{nolabcompsec}
If $\G$ and $\G'$ are graphs without tails and $v$ a vertex of $v$, then we
define $\G\circ_v\G'=\G\circ_v {\rm coeff}(\leaf(\G'),t^{|F_v|})$, the (formal) sum of graphs where $\phi$ is
one fixed identification of $F_v$ with $\overline{|F_v|}$.
In other words one deletes $v$ and grafts all the tails to all possible positions on $\G'$.
Alternatively one can sum over all $\del:F_{\G}\sqcup F_{\G'}\to V_{\G}\setminus v \sqcup V_{\G'}$ where
$\del$ is $\del_{G}$ when restricted to $F_w,w\in V_{\G}$ and $\del_{\G'}$ when restricted to $F_{v'}, v'\in V_{\G'}$.

\subsubsection{Compatibility}
Let $\G$ and $\G'$ be two graphs without flags, then for any vertex $v$ of $\G$
 $\leaf(\G\circ_v\G')=\leaf(\G)\circ_v\leaf(\G')$.

\subsection{Graphs with tails and without tails}
\label{kreimerapp}
There are two equivalent pictures one can use for the (co--)operad structure underlying the Connes--Kreimer Hopf algebra of rooted trees. One can either work with tails that are flags, or with tail vertices. These two concepts are of course equivalent in the setting where if one allows flag tails, disallows vertices with  valence  one and  vice--versa if one disallows tails, one allows  one--valenced vertices called tail vertices. In \cite{CK} graphs without tails are used.
Here we collect some combinatorial facts which represent this equivalence as a useful dictionary.

There are the obvious two maps which either add a vertex at each the end of each tail, or, in the other direction, simply delete each valence one vertex and its unique incident flag, but what  is relevant for the Connes--Kreimer example is another set of  maps. The first takes a graph with no flag tails to the tree  which to every  vertex, we {\em add} a tail, we will denote this map by $\sharp$ and we add one extra (outgoing) flag to the  root, which will be called the root flag.

The second map $\flat$ simply deletes all tails. We see that $\flat\circ \sharp=id$. But $\flat$ is not the double sided inverse, since $\sharp\circ \flat$ replaces any number of tails at a given vertex by one tail.
It is the identity on the image of $\sharp$, which we call single tail graphs.

Notice that $\sharp$ is well defined on leaf labeled trees by just transferring the labels as sets. Likewise $\flat$ is well defined on single tail trees
again by transferring the labels. This means that each vertex will be labeled.

There are the following degenerate graphs which are allowed in the two setups: the empty graph $\emptyset$ and the graph with one flag and no vertices $|$. We declare that
\begin{equation}
\emptyset^{\sharp}=| \text{  and vice--versa  } |^{\flat}=\emptyset
\end{equation}

\subsubsection{Planted vs. rooted}

A planted tree is a rooted tree whose root has valence $1$. One can plant a rooted tree $\tau$ to obtain a new planted rooted tree $\tau^{\downarrow}$, by adding a new vertex which will be the root of $\tau^{\downarrow}$ and adding one edge between the new vertex and the old root. Vice--versa, given a planted rooted tree $\tau$, we let $\tau^{\uparrow}$
be the uprooted tree that is obtained from $\tau$ by deleting the root vertex and its unique incident edge, while declaring the other vertex of that edge to be the root.

\subsection{Operad structures on rooted/planted trees}
There are several operad structures on leaf--labeled trees which appear.

For rooted  trees without tails and labeled  vertices, we define
\begin{enumerate}
\item $\tau\circ_i\tau'$ is the tree where the $i$-th vertex of $\tau$ is identified with the root of $\tau'$. The root of the resulting tree being the image of the root of $\tau$.

\item  $\tau\circ^+_i\tau'$ is the tree where the $i$-th vertex of $\tau$ is joined to the root of $\tau'$ by a new edge, with the root of the resulting tree is then the image of the root of $\tau$.
\end{enumerate}
It is actually the second operad structure that underlies the Connes-Kreimer Hopf algebra.

One can now easily check that
\begin{equation}
\tau\circ^{+}_i\tau'=\tau\circ_i\tau^{\prime \downarrow}=(\tau^{\downarrow}\circ_i\tau^{\prime \downarrow})^{\uparrow}
\end{equation}

These constructions also allow us to relate the compositions of trees with and without tails as follows
\begin{equation}
(\tau^{\sharp}\circ_i\tau^{\prime \sharp})^{\flat}=\tau\circ^+_i\tau'
\end{equation}
where the $\circ_i$ operation on the left is the one connecting the $i$th flag to the root flag.

\subsubsection{Planar case: marking angles}
In the case of planar trees, we have to redefine $\sharp$ by adding a flag to every {\em angle} of a planar tree. The labels are then not on the vertices, but rather the angles.
The analogous equations hold as above. Notice that to give a root to a planar tree actually means to specify a vertex and an angle on it. Planting it connects a new vertex into that angle.

This angle marking is directly to the angle marking in Joyal duality, see below and Figures \ref{figure1} and \ref{angle}.
This also explains the appearance of angle markings in \cite{Gont}.

\bibliography{hopfbib}
\bibliographystyle{halpha}
\end{document}